\newcommand{\Hd}{\mathcal H}
\newcommand{\Lb}{\mathcal L}
\newcommand{\T}{\boldsymbol T}
\newcommand{\N}{\mathcal N}
\newcommand{\A}{\mathcal A}
\newcommand{\M}{\mathcal M}
\newcommand{\F}{\mathcal F}
\newcommand{\injN}{\mathrm{inj}_{\mathcal N}}
\newcommand{\radN}{\mathrm{rad}_{\mathcal N}}
\newcommand{\bv}{\boldsymbol v}
\newcommand{\bu}{\boldsymbol u}
\newcommand{\ubu}{{\underline{\boldsymbol u}}}
\newcommand{\bpi}{\boldsymbol \pi}
\newcommand{\bgamma}{\boldsymbol \gamma}
\newcommand{\tbgamma}{\widetilde{\boldsymbol \gamma}}
\newcommand{\bdelta}{\boldsymbol \delta}
\newcommand{\bGamma}{\boldsymbol \Gamma}
\newcommand{\bp}{\boldsymbol p}
\newcommand{\bq}{\boldsymbol q}
\newcommand{\be}{\boldsymbol e}
\newcommand{\ba}{{\boldsymbol a}}
\newcommand{\bff}{\boldsymbol f}
\newcommand{\bw}{\boldsymbol w}
\newcommand{\bnu}{\boldsymbol \nu}
\newcommand{\tbv}{\widetilde{\boldsymbol v}}
\newcommand{\bz}{\boldsymbol z}
\newcommand{\tv}{\widetilde v}
\newcommand{\dd}{\,\mathrm d}
\newcommand{\bmu}{\boldsymbol \mu}
\newcommand{\supp}{\mathrm{supp}\,}
\newcommand{\Hess}{\mathrm{Hess}\,}
\newcommand{\eps}{\varepsilon}
\newcommand{\R}{\mathbb R}
\renewcommand{\S}{\mathbb S}
\newcommand{\dist}{\mathrm{dist}}
\newcommand{\Rn}{\mathbb R^n}
\newcommand{\RN}{\mathbb R^N}
\newcommand{\bbN}{\mathbb N}
\DeclareMathOperator*{\esssup}{ess\,sup}
\DeclareMathOperator*{\hav}{hav}
\newcommand{\res}{\,\raisebox{-.127ex}{\reflectbox{\rotatebox[origin=br]{-90}{$\lnot$}}}\,}
\numberwithin{equation}{section}
\newtheorem{prop}{Proposition}[section]
\newtheorem{thm}[prop]{Theorem}
\newtheorem{lemma}[prop]{Lemma}
\newtheorem{defn}[prop]{Definition}
\newtheorem{cor}[prop]{Corollary}
\theoremstyle{remark}
\newcommand{\subalign}[1]{%
	\vcenter{%
		\Let@ \restore@math@cr \default@tag
		\baselineskip\fontdimen10 \scriptfont\tw@
		\advance\baselineskip\fontdimen12 \scriptfont\tw@
		\lineskip\thr@@\fontdimen8 \scriptfont\thr@@
		\lineskiplimit\lineskip
		\ialign{\hfil$\m@th\scriptstyle##$&$\m@th\scriptstyle{}##$\hfil\crcr
			#1\crcr
		}%
	}%
}
\newcommand{\ignore}[1]{{}}
\newcommand{\lgcomm}[1]{{\color{blue}{\bf [#1]}}}
\newcommand{\sm}[1]{{\color{red!90!black}{#1}}}
\author{Lorenzo Giacomelli*, Micha{\l} {\L}asica$^\dagger$, Salvador
Moll$^\ddagger$ \\
\scriptsize *SBAI Department, Sapienza University of Rome, Via Antonio Scarpa, 16, 00161 Roma, Italy \\
\scriptsize $^\dagger$Institute of Mathematics of the Polish Academy of Sciences,  ul.\;Śniadeckich 8, 00-656 Warszawa, Poland \\
\scriptsize $^\ddagger$Department of Mathematical Analysis, University of
Valencia, C/Dr.\;Moliner, 50, Burjassot, Spain
}
\title{Total variation flow of curves in Riemannian manifolds}
\date{\today}
\begin{document}
 \maketitle

\begin{abstract}
We consider the functional of total variation of maps from an interval into a Riemannian submanifold of $\RN$. We define a notion of strong solution to the system of equations corresponding to the $L^2$-gradient flow of this functional. We prove global existence of strong solutions for initial data of bounded variation. We show that the solutions satisfy a variational equality, and deduce uniqueness in the case of non-positive sectional curvature. We prove convergence of strong solutions to a constant map in finite time.

\end{abstract}
\medskip

{\small
Key words and phrases: total variation flow, $1$-harmonic flow, Riemannian manifold, existence, uniqueness, denoising. 

\smallskip
\noindent 2010 Mathematics Subject Classification: 35K51, 35A01, 35A02, 35B40,
35D35, 35K92, 35R01, 53C21, 68U10.
}

\begin{spacing}{0.5}
\tableofcontents
\end{spacing}
\setlength{\jot}{.4\baselineskip}
\renewcommand{\baselinestretch}{1}\normalsize
%


\section{Introduction}\label{sec:intro}

Let $(\N, g)$ be a complete and connected $n$-dimensional Riemannian manifold without boundary. We assume without loss of generality \cite{nash,mueller} that $\N$ is isometrically embedded in $\RN$ and
that the embedding is closed. We will also assume that {the sectional curvature of $\N$ is bounded from above}.

Let $I$ denote a bounded open interval. We consider the {constrained} vectorial total variation functional $TV_I^\N\colon L^2(I, \N) \to [0, +\infty]$, which can be defined as the lower semicontinuous
envelope of the functional ${\cal TV}_I^\N\colon C^1(\overline{I}, \N)\to [0,+\infty[$ given by
\[{\cal TV}_I^\N(\bw) = \int_I |\bw_x| \dd \Lb^1.\]
Here, $\bw_x$ denotes the derivative of a differentiable function $\bw$. The effective domain of $TV_I^\N$ turns out to coincide with $BV(I, \N) \subset BV(I, \RN)$.
In fact, we have the following representation formula (see Section \ref{sec:preli})
\begin{equation}\label{representation_form} TV_I^\N(\bw) =|\bw_x|_{\N}(I), \end{equation}
where $|\bw_x|_{\N}$ is the measure on $I$ defined by
\begin{equation}
\label{def:measure} |\bw_x|_{\N}(A):=|\bw_x^d|(A)+\sum_{x\in A\cap J_{\bw}}{\dist}_\N(\bw^-(x),\bw^+(x)).
\end{equation}
(here $\bw^\pm(x)$ denote the one-sided limits of $\bw$ at $x$ and the superscript $d$ stands for the diffuse part of the measure, see Section \ref{sec:preli}).

In the present paper, we are concerned with existence of the steepest descent flow of $TV_I^\N$ with respect to the $L^2(I, \N)$ distance---the total variation flow of parametrized curves in a Riemannian manifold $\N$. Though \emph{curve} commonly refers to continuous objects, here by a (parametrized) curve we mean a function $\bw\in BV(I, \N)$, which may therefore have jumps. Following such a flow is a natural way of decreasing the total variation of a function while keeping its range inside $\N$ and controlling its distance from the initial datum. The scalar case, $\N=\R$, which is
known as the total variation flow (see the monograph \cite{acmbook}) received a lot of attention since the seminal work of Rudin, Osher and Fatemi (ROF model) on total variation denoising \cite{rof}. Since then, different choices of manifolds have been proposed for different applications in image processing. The case $\N\subset \S^{N-1}$ has been applied to denoising of
direction data such as color component of RGB images \cite{TSC00, TSC01}. Further, $\N=\R^2\times S^1$, $\N=SO(3)\times \R^3$, $\N=Sym_+(3)$ (the space of positive definite matrices) have been considered as models for Luminance-Chromacity-Hue color space data \cite{weinmanndemaretstorath}, orientation data such as camera trajectories \cite{lellmann}, and diffusion tensor imaging data \cite{weinmanndemaretstorath},
respectively.

We note that ${\cal TV}_I^\N(\bw)$ is equal to the length $L(\bw)$ of a parametrized $C^1$ curve $\bw$. Therefore, $TV_I^\N$ can be seen as a generalization of curve length to parametrized
curves of bounded variation. However, the total variation flow of curves is different than the usual curve shortening flow, which (at least formally) can also be seen as a gradient flow of curve length.
In particular, the former depends on parametrization.

%
In the case that $\N$ has non-positive sectional curvature (NPC), the functional $TV_I^{\N}$ is (geodesically) convex (\cite[Theorem 4.3]{CR-M-PJ}). Therefore, one could use Mayer's generalization  \cite{mayernpc} of the semigroup generation results due to K\={o}mura and Brezis to the case of functions valued in an NPC space to show that the $L^2(I,\N)$ gradient flow of $TV_I^{\N}$ exists for any initial data in the energy space.
%

%
In general, however, the functional $TV_I^\N$ fails to be geodesically $\lambda$-convex in the sense of \cite{ambrosiogiglisavare} for any $\lambda \in \R$, as we demonstrate in Section \ref{sec:counter}. Hence, neither the variational existence result of Mayer \cite{mayernpc}, nor its far-reaching generalization by Ambrosio, Gigli and Savar\' e \cite{ambrosiogiglisavare} can be directly applied. One
may wonder if a similar approach to the one in \cite{ambrosiogiglisavare} for the space of probability measures might work. Some recent progress on gradient flows in the space of
Cartesian currents can be found in Kampschulte's doctoral dissertation \cite{Kampschulte} where he develops relevant theory and uses it to obtain existence for the harmonic map flow in the 2-dimensional case.

\medskip

Our aim in the present paper is two-fold. First of all, we obtain global existence of trajectories of the constrained total variation flow emanating from data of bounded variation in the case of a generic Riemannian manifold constraint. Secondly, our approach permits to characterize the trajectory in PDE terms, thus clarifying Mayer's abstract solution in the case of non-positive sectional curvature. In particular, we are able to identify the vector field $\bz=\bu_x/|\bu_x|$ on the solution's jump set, relating it to the unit tangent vectors to the geodesic connecting the jump values (see \eqref{zeqnj} below).

Let us comment on previous results about $1$-harmonic flows: In \cite{gigakashimayamazaki}, the authors obtained existence of Lipschitz local solutions to the system of PDEs corresponding to the flow,
for small initial data, in the case of the domain being a flat torus. In \cite{gigakurodayamazaki}, they obtained existence and uniqueness of solutions to a discretized Dirichlet problem in the case of
$\N=\S^N$. In \cite{giacomellimoll}, existence of solutions and blow-up in finite time were obtained for the Dirichlet problem under rotational symmetry restrictions (here $\N=\S^2$). In
\cite{gmm1,gmmn} existence of solutions in the case of $\N$ being a circle or a hyperoctant of the $N-$dimensional sphere were shown. Later, the case of $\N$ being a smooth closed curve in $\R^2$
was studied in \cite{dicastrogiacomelli}. There, the authors also proposed a suitable concept of solution for the general case.
 Finally, in \cite{giacomellilasicamoll}, we obtained local existence and uniqueness in the case of Lipschitz initial data. We also mention the numerical study \cite{GigaSakakibara}. 

 If $\bu \colon ]0,T[\times \overline{I} \to \N$ were a smooth trajectory of the flow such that $\bu_x$ does not vanish anywhere on $]0,T[\times \overline{I}$ 
 , the PDE system would read as follows:
\begin{equation} \label{smootheqn}
\bu_t = \pi_{\bu} \left(\frac{\bu_x}{|\bu_x|}\right)_x \quad \text{ on } ]0,T[ \times I,
\end{equation}
\begin{equation} \label{smoothbc}
\frac{\bu_x}{|\bu_x|} = 0 \quad\text{ on } ]0,T[\times \partial I.
\end{equation}
In \eqref{smootheqn} and further on, $\pi_{\bu} \colon \RN \to T_{\bu} \N$ stands for the orthogonal projection onto $T_{\bu} \N$. Note that the meaning of \eqref{smoothbc} is not immediately clear,
as it cannot be satisfied if $\frac{\bu_x}{|\bu_x|}$ is understood naively as a pointwise quotient. 
In general, it needs to be
interpreted as a selection of a multivalued function, see Lemma \ref{localest} below for the proper definition of Lipschitz solutions. In the case of BV solutions, the definition is even more involved
since we need to specify the value of this object $|\bu_x(t)|$-a.\,e.\ in $I$.

We let $\injN$ denote the injectivity radius of the manifold $\N$. Recall that for any two points $\bp_1, \bp_2 \in \N$, the condition $\dist_{\N}(\bp_1, \bp_2) < \injN$ implies that there is exactly
one length-minimizing geodesic segment in $\N$ joining $\bp_1$ and $\bp_2$. We will denote by $\bgamma_{\bp_1}^{\bp_2}$ its parametrization by the distance from its midpoint. Given
$\{\bp_1,\ldots,\bp_n\}$ such that  $\dist_{\N}(\bp_{i}, \bp_{i+1}) < \injN$, for $i=1,\ldots,n-1$ we denote by $[\bp_1,\ldots,\bp_n]$ the arc-length parametrized piecewise-smooth curve $\gamma$ arising as concatenation of reparametrized minimizing segments $\bgamma_{\bp_i}^{\bp_{i+1}}$. 
Given $\bw \in BV(I, \N)$ such that $\dist_{\N}(\bw^-,\bw^+) < \injN$ on $J_{\bw}$ and $x \in J_{\bw}$, we denote by $\boldsymbol T_{\bw}^\pm(x)$ the unit tangent vector to
$\bgamma_{\bw^-(x)}^{\bw^+(x)}$ at $\bw^\pm(x)$ directed consistently with the parametrization of $\bgamma_{\bw^-(x)}^{\bw^+(x)}$ (see Figure \ref{fig:tangent}), i.\,e.\ at any point in $J_{\bw}$ we
have
\[\boldsymbol T_{\bw}^- = \left(\bgamma_{\bw^-}^{\bw^+}\right)'\left(- \tfrac{1}{2}\dist_\N(\bw^-, \bw^+)\right) = \frac{\log_{\bw^-} \bw^+}{\dist_\N(\bw^-, \bw^+)},\]
\[\boldsymbol T_{\bw}^+ = \left(\bgamma_{\bw^-}^{\bw^+}\right)'\left( \tfrac{1}{2}\dist_\N(\bw^-, \bw^+)\right) = - \frac{\log_{\bw^+} \bw^-}{\dist_\N(\bw^-, \bw^+)}{,}\]
\begin{figure}[htb]
	\begin{center}
		\includegraphics[width=0.4\textwidth]{./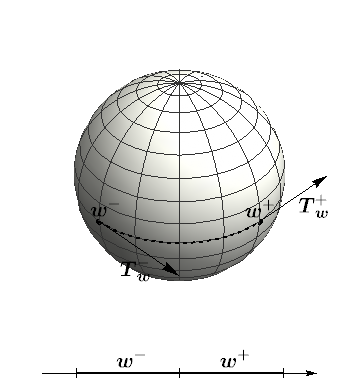}
		\caption{Vectors $\boldsymbol T_{\bw}^\pm$.}\label{fig:tangent}
	\end{center}
\end{figure}
{where here and in the rest of the paper $\log_{\bp}{\bq}$ (${\bp}, {\bq}\in\mathcal N$) denotes the inverse of the exponential map at $T_{\bp}\mathcal N$. \medskip}

With this understanding, as well as with standard ones concerning $BV$ functions (see Section \ref{sec:preli}), we are ready to introduce the notion of strong solutions.

\begin{defn} \label{defsol}
Let $T \in ]0, + \infty]$. Let
\[{\bu \in H^1_{loc}([0,T[; L^2(I, {\N})) \cap L^\infty_{loc}([0,T[; BV(I,\N))}\]
be such that $\dist_{\N}(\bu^-(t,x), \bu^+(t,x)) < \injN$ for all $x \in I$ and a.\,e.\ $t\in ]0,T[$.  We say that $\bu$ is a strong solution to (\ref{smootheqn},\ref{smoothbc}) in $[0,T[$ if there
exists
\[{\bz \in L^\infty_{w^*}(0,T;L^\infty(I, \R^N)) \cap L^2_{w^*,loc}([0,T[;BV(I,\R^N))}\]
such that for a.\,e.\ $t\in ]0,T[$ there holds

\begin{align} \label{maineqn}
	&\bu_t(t) = \pi_{\bu(t)} \bz_x^{a}(t) \quad \Lb^1\text{-a.\,e.\ in } I, \\
	\label{cantor-m}
	&\pi_{\bu(t)}\bz_x^{c}(t)=0 \quad\text{as measures}, \\
	\label{ztang}
	&\bz^{\pm}(t) \in T_{\bu^\pm(t)} \N \quad \text{in } I, \\
	\label{zineq}
	&|\bz(t)| \leq 1 \quad \text{in } I, \\
	\label{zeqn}
	&\bz(t) = \frac{\bu_x(t)}{|\bu_x(t)|} \quad |\bu_x^d(t)|\text{-a.\,e.\ in }I, \\
	\label{zjumpset}
	& J_{\bz(t)} \subseteq J_{\bu(t)}, \\
	\label{zeqnj}
	&\bz^\pm(t)=	\boldsymbol T_{\bu(t)}^\pm \quad \text{on } J_{\bu(t)}, \\
	\label{zbc}
	&\bz(t) = 0 \quad \text{on } \partial I.
\end{align}

	\end{defn}

Though Definition \ref{defsol} may seem complex at first sight, it contains just the essential ingredients for a characterization of $\bz$ and $\bu_t$ and is quite natural in this respect: the PDE \eqref{maineqn} relates $\bu_t$ to $\bz_x$, \eqref{zeqn}-\eqref{zbc} identify $\bz$ on $\bu_x\ne \boldsymbol 0$ (in particular, on $J_{\bu}$) and on $\partial I$, and \eqref{ztang}-\eqref{zineq} provide information on $\bz$ where $\bu_x=\boldsymbol 0$. Finally, \eqref{cantor-m} characterizes $\bz_x^c$ and will be indispensable for our uniqueness result, see Theorem \ref{NPC-result} below. In Section \ref{sec:sphere} we show that Definition \ref{defsol} is equivalent to the one introduced in \cite{giacomellimazonmollAML,gmmn} for the special case $\N=\mathbb S^{N-1}_+$, and in Section \ref{sec:examples} we provide two (semi-)explicit examples of solution which may help understanding its features.

\medskip

For the above notion of solution we are able to prove existence \emph{in the large}. However, we need to restrict a little the class of initial data.

\begin{defn} \label{rad}
	Let $K_\N$ denote the supremum over all sectional curvatures in all points of $\N$, and assume $K_\N$ to be finite. We say that $\bw \in BV(I, \N)$ is {\rm rad} {(and we write $\bw\in BV_{\rm
rad}(I,\N)$)} if
	\[	\dist_{\N}(\bw^-(x), \bw^+(x)) < 2\radN \quad \text{for all  } x\in I,
		\]
	where
	\begin{equation}\label{def-radN}
\radN = \left\{\begin{array}{ll}\tfrac{1}{2}\min\left\{\injN, \tfrac{\pi}{\sqrt{ K_\N}}\right\} & \text{if } K_\N>0,\\ \tfrac{1}{2}\injN & \text{if } K_\N\leq 0.\end{array} \right.
\end{equation}
\end{defn}
\noindent Clearly, if $K_\N \leq 0$ and $\injN = +\infty$, then every $\bw \in BV(I, \N)$ is rad. In the case $\N = \mathbb S^{N-1}$, $\radN=\pi/2$, hence jumps of $\bw$ need to be smaller than $\pi$, i.\,e.\ we exclude jumps between antipodal points.
The number $\radN$ is the standard estimate from below on the radius of convexity of $\N$, see \cite[Theorem 29]{petersen}.

\begin{thm} \label{existence}
	For any $\bu_0 \in BV_{\rm rad}(I, \N)$ and any $T>0$, there exists a strong solution $\bu$ to (\ref{smootheqn},\ref{smoothbc}) in $[0,T[$ such that
	\begin{equation}\label{ic}
	\bu(0) = \bu_0.
	\end{equation}
Furthermore, $\bu$ satisfies the energy inequality 
	\begin{equation}
		\label{energylimit}
		 TV_I^{\N}(\bu(t)) +
		\int_0^t\!\!\int_I |\bu_t|^2 \leq TV_I^{\N}(\bu_0) \quad \text{for a.\,e.\ }t \in ]0,T[
	\end{equation}
 and the pointwise monotonicity property
 \begin{equation}
  \label{clelimit} |\bu_x(t)|_{\N} \leq |\bu_x(s)|_{\N} \leq |(\bu_0)_x|_{\N} \quad\text{as measures for a.\,e.\ } s, t \in ]0,T[,\ s\le t.
\end{equation}
\end{thm}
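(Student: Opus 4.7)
The approach I would take is to construct the solution as a limit of Lipschitz trajectories. Given $\bu_0 \in BV_{\rm rad}(I,\N)$, I approximate it by Lipschitz data $\bu_0^\eps \in W^{1,\infty}(I,\N)$, apply the local Lipschitz existence result of \cite{giacomellilasicamoll} to produce trajectories $\bu^\eps$, extend these globally via a priori estimates, and pass $\eps \to 0$. The principal obstacles will be extending the Lipschitz trajectories to the whole interval $[0,T[$ and identifying the limit field $\bz$ on the jump set of $\bu(t)$, namely equation \eqref{zeqnj}.

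For the approximation step, away from $J_{\bu_0}$ and the Cantor support of $(\bu_0)_x$, standard mollification followed by the nearest-point projection onto $\N$ (well-defined in the tubular neighborhood of the smoothly embedded manifold) produces a smooth approximation. At each jump $x_j \in J_{\bu_0}$, the assumption $\bu_0 \in BV_{\rm rad}$ gives $\dist_\N(\bu_0^-(x_j), \bu_0^+(x_j)) < 2\radN \le \injN$, so the unique minimizing geodesic $\bgamma_{\bu_0^-(x_j)}^{\bu_0^+(x_j)}$ is well defined and can be used as interpolant on a shrinking window around $x_j$; its length equals the jump contribution in \eqref{def:measure}, yielding $TV_I^\N(\bu_0^\eps) \to TV_I^\N(\bu_0)$. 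The local result then produces Lipschitz trajectories $\bu^\eps$ on some maximal intervals $[0, T_\eps^*[$ satisfying \eqref{smootheqn}-\eqref{smoothbc} and an energy identity.

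To extend $\bu^\eps$ to $[0,T[$ and to prepare for the limit, I would establish the pointwise monotonicity \eqref{clelimit} at the Lipschitz level. Differentiating the PDE in space and using the Gauss equation for the embedding yields an evolution inequality for $|\bu_x^\eps|^2$ whose reaction term is controlled by $K_\N$; a maximum principle combined with Rauch-type comparison, valid exactly within balls of radius $\radN$ where $\dist_\N^2$ is convex, then gives $\|\bu_x^\eps(t,\cdot)\|_{L^\infty} \le \|(\bu_0^\eps)_x\|_{L^\infty}$, precluding blow-up of the Lipschitz norm and allowing continuation. The energy identity provides uniform bounds in $L^\infty(0,T;BV(I,\N)) \cap H^1(0,T;L^2)$, and Aubin-Lions yields a subsequential limit $\bu$ with strong $L^2$ convergence. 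The field $\bz^\eps = \bu_x^\eps/|\bu_x^\eps|$ converges weakly$^*$ in $L^\infty$, and lower semicontinuity together with standard weak convergence arguments deliver \eqref{maineqn}, \eqref{cantor-m}, \eqref{ztang}-\eqref{zineq}, \eqref{zbc}, \eqref{energylimit}, and, via Reshetnyak-type semicontinuity applied to $|\bu_x|_\N$, the inequality \eqref{clelimit} for $\bu$.

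The hardest step is \eqref{zeqnj}. Near a jump point $x_j \in J_{\bu(t)}$, the continuous approximation $\bu^\eps(t,\cdot)$ must traverse the gap across a boundary layer of vanishing width $\delta_\eps$; the monotonicity \eqref{clelimit} and strong $L^2$ convergence pin down the variation concentrated there to $\dist_\N(\bu^-(t,x_j), \bu^+(t,x_j))$ at leading order. Because inside a ball of radius $\radN$ the length-minimizing curve is unique, a blow-up analysis of $\bu^\eps(t, x_j + \delta_\eps \cdot)$ forces convergence, in a suitable reparametrization, to the arc-length parametrization of $\bgamma_{\bu^-(t,x_j)}^{\bu^+(t,x_j)}$; taking one-sided traces of $\bz^\eps$, well defined thanks to the uniform $BV$-in-space bound on $\bz^\eps(t)$, identifies $\bz^\pm(t, x_j)$ with $\boldsymbol T^\pm_{\bu(t)}(x_j)$. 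The rad condition is indispensable precisely here, both to ensure existence and uniqueness of the connecting geodesic and to supply the convexity properties needed to make the blow-up rigorous.
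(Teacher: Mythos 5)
Your overall skeleton (Lipschitz regular solutions for smoothed data, strict approximation of the $BV$ datum, energy and pointwise estimates, compactness, then identification of $\bz$) is the same as the paper's, but the decisive step, the identification \eqref{zeqnj} of $\bz^\pm$ with $\boldsymbol T^\pm_{\bu(t)}$ on the jump set, is not actually proved by your blow-up sketch. First, the claim that monotonicity plus strong $L^2$ convergence "pins down" the variation of $\bu^k(t,\cdot)$ concentrated near a jump point to $\dist_\N(\bu^-(t,x_j),\bu^+(t,x_j))$ is unjustified: estimate \eqref{cle} only bounds that mass by $|\bu_{0,x}|_{\N}(\{x_j\})$, which may strictly exceed the jump of $\bu(t)$, and lower semicontinuity only gives the opposite inequality for the limit measure; the absence of excess mass at the jump, i.e.\ \eqref{strictconvergencejump}, is obtained in the paper as a \emph{byproduct} of the identification, not as an input to it, so using it as an input is circular. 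Second, even granting convergence of the rescaled curves to the minimizing geodesic, this does not identify the one-sided traces of the weak-$*$ limit $\bz$ at the single point $x_j$: traces of $BV$ functions are not stable under weak-$*$ convergence, and $\bz^k=\bu^k_x/|\bu^k_x|$ carries no pointwise information where $\bu^k_x$ is small. The paper closes this gap by exploiting the elliptic structure at fixed time: $\bu^k(t)$ solves the $1$-harmonic map equation with source $\bu^k_t(t)$, and Lemma \ref{sliceest} bounds the distance of its image from the geodesic joining its endpoints by $\int|\bu^k_t|$; combined with continuous dependence of short geodesics on endpoints (Lemma \ref{lem-what}), Fermi coordinates flat along the connecting geodesic on a tube quantified by $\radN$ (Proposition \ref{fermiprop}), and the integration-by-parts inequality \eqref{jumppointest}, the constraint $|\bz^\pm|\le 1$ then forces $\bz^\pm=\boldsymbol T^\pm_{\bu(t)}$. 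Some such quantitative PDE input at fixed time is needed; mass accounting alone does not suffice.

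Two secondary points. The mechanism you describe for the Lipschitz-level gradient bound (a reaction term "controlled by $K_\N$" plus Rauch comparison within balls of radius $\radN$) is not how the bound arises and, as stated, would not give a global-in-time estimate; in one spatial dimension the curvature term in the Bochner identity vanishes identically by antisymmetry of the curvature tensor, which is exactly why the regular solutions are global with no curvature restriction — the rad hypothesis is needed only for the jump analysis. Finally, lower semicontinuity alone yields only the weaker bound \eqref{clelimit2} against $|(\bu_0)_x|_{\N}$; the full monotonicity \eqref{clelimit} between a.e.\ times $s\le t$ requires strict convergence of $\bu^k(s)$ to $\bu(s)$ at a.e.\ time slice (Lemma \ref{lem:strict}), which in the paper again rests on the jump identification, so this part of your argument inherits the same gap.
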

\noindent To our knowledge, this is the first existence result for (\ref{smootheqn},\ref{smoothbc}) in the natural energy space for generic target manifolds. 

To clarify, \eqref{clelimit} means that $|\bu_x(t)|_{\N}(B)\leq |\bu_x(s)|_{\N}(B)\leq |(\bu_0)_x|_{\N}(B)$ for any Borel $B \subset I$ and a.\,e.\ $s,t\in ]0, T[$ with $s\le t$.
Notably, in the light of this inequality, $\bu(t) \in BV_{\rm rad}(I, \N)$ for a.\,e.\ $t\in]0,T[$.

Technically, we construct a solution using a two-step approximation procedure. In the first step, we consider a properly smoothed initial datum and produce a global \emph{regular solution} in the sense
of \cite{giacomellilasicamoll}. On the way, we obtain a Lipschitz version of estimate \eqref{clelimit} (see  \eqref{cle2} below), which is an essential tool in the proof of Theorem \ref{existence}.
\begin{lemma}\label{localest}
For any $\bu_0 \in Lip(I, \N)$ there exists a unique
 \[\bu \in C([0, +\infty[\times \overline{I}, \N)\text{ with } \bu_t \in L^2(]0, +\infty[\times I,
\R^N),\ \bu_x \in L^\infty(]0,+\infty[ \times I,
\R^{N})\]
such that there exists
\[\bz \in L^\infty(]0,+\infty[\times I,
\R^N) \text{ with } \bz_x \in L^2_{loc}([0,+\infty[ \times \overline I,
\R^N)\]
satisfying
\begin{equation}
\label{inclZ}
\bz = \tfrac{\bu_x}{|\bu_x|} \ \text{ if } \ \bu_x \neq 0, \quad \bz \in B(\boldsymbol 0,1)\cap T_{\bu}\N \ \text{ if }\  \bu_x =
 0 \qquad \Lb^{2}\text{-a.\,e.\ in }]0,+\infty[\times I,
\end{equation}
\begin{equation}
\label{ctvflowZ}
\bu_t = \pi_{\bu} \bz_x \qquad \Lb^{2}\text{-a.\,e.\ in }]0,+\infty[\times I,
\end{equation}
\begin{equation}
 \label{NeumannZ}
\bz = 0 \qquad \Lb^1\text{-a.\,e.\ in }]0,+\infty[\times \partial I
\end{equation}
and
\begin{equation} \label{icapprox}
\bu(0, \cdot) = \bu_0.
\end{equation}
Furthermore, for a.\,e.\ $t>0$
\begin{equation}
\label{energyinequality}
\int_I |\bu_x(t, \cdot)| +
\int_0^{t}\!\!\! \int_I |\bu_t|^2 \leq \int_I |\bu_{0,x}|
\end{equation}
and
\begin{equation}
\label{cle}
|\bu_x(t,\cdot)|\le |\bu_{0,x}|\qquad  \Lb^1\text{-a.\,e.\ in } I\, .
\end{equation}

\end{lemma}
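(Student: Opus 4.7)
My plan is to bootstrap the local-in-time existence and uniqueness result of \cite{giacomellilasicamoll}, which for any Lipschitz initial datum yields a unique solution $(\bu,\bz)$ to \eqref{inclZ}--\eqref{icapprox} on some maximal interval $[0,T^*[$, with the blow-up alternative that $\|\bu_x(t,\cdot)\|_{L^\infty(I)}\to +\infty$ as $t\nearrow T^*$ whenever $T^*<+\infty$. The whole statement therefore reduces to the pointwise gradient bound \eqref{cle}: once this is in hand, globality is automatic, uniqueness extends from the local result by concatenation, and the energy inequality \eqref{energyinequality} follows by standard $L^2$-testing against $\bu_t$.

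To obtain \eqref{cle}, the natural route is a non-degenerate parabolic approximation. Replace $|\bu_x|$ by $\sqrt{|\bu_x|^2+\eps^2}$ and mollify the initial datum to $\bu_0^\eps \in C^\infty(\overline I,\N)$ with $\|\bu_{0,x}^\eps\|_{L^\infty}\to \|\bu_{0,x}\|_{L^\infty}$, then consider
\[\bu^\eps_t = \pi_{\bu^\eps}\!\left(\frac{\bu^\eps_x}{\sqrt{|\bu^\eps_x|^2+\eps^2}}\right)_{\!x}, \qquad \bu^\eps_x\big|_{\partial I}=0,\qquad \bu^\eps(0,\cdot) = \bu_0^\eps.\]
For each $\eps>0$, this is a uniformly parabolic quasilinear system with smooth coefficients, so classical theory (together with the standard argument that tangentiality of the right-hand side preserves the constraint $\bu^\eps(t,\cdot)\in\N$) yields a globally smooth solution $\bu^\eps$. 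Differentiating the equation in $x$ and contracting with $\bu^\eps_x$ produces, after the usual manipulations, a parabolic inequality of schematic form
\[\tfrac12 (v^\eps)_t \le a^\eps (v^\eps)_{xx} + b^\eps (v^\eps)_x + R^\eps, \qquad v^\eps := |\bu^\eps_x|^2,\ \ a^\eps\ge 0,\]
where $R^\eps$ collects curvature contributions stemming from the embedding $\N\hookrightarrow \RN$ and from the differentiation of $\pi_{\bu^\eps}$. At an interior spatial maximum of $v^\eps$, the conditions $(v^\eps)_x=0$, $(v^\eps)_{xx}\le 0$ together with the identity $\bu^\eps_{xx}\cdot\bnu = -\mathrm{II}_{\bu^\eps}(\bu^\eps_x,\bu^\eps_x)\cdot\bnu$ for $\bnu\perp T_{\bu^\eps}\N$ force $R^\eps\le 0$ there, while the Neumann-type condition rules out boundary maxima. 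The parabolic maximum principle then yields $\|\bu^\eps_x(t,\cdot)\|_{L^\infty(I)}\le \|\bu^\eps_{0,x}\|_{L^\infty(I)}$ uniformly in $\eps$.

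Passing to the limit $\eps\to 0^+$, the uniform gradient bound and the analogous energy estimate for $\bu^\eps$ provide sufficient compactness to extract a limit $(\bu,\bz)$ solving \eqref{inclZ}--\eqref{icapprox}; the uniqueness assertion of \cite{giacomellilasicamoll} identifies this limit with the local solution on $[0,T^*[$, so \eqref{cle} holds and $T^*=+\infty$, and passing to the limit in the approximate energy inequality yields \eqref{energyinequality}. The main obstacle is establishing the correct sign of the curvature remainder $R^\eps$ at maxima of $|\bu^\eps_x|^2$ when the sectional curvature of $\N$ is positive: the tangency $\bu^\eps_x\in T_{\bu^\eps}\N$ and the ensuing second fundamental form contribution are exactly what make the signs cooperate, and the global embedding into $\RN$ is crucial for keeping the algebra of the $x$-differentiation tractable.
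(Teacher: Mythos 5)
The architecture is right — bootstrap local existence from \cite{giacomellilasicamoll}, use the $\eps$-regularized flow and a Bochner-type identity to get the gradient bound, conclude globality, and pass to the limit — and this matches the paper's approach. But the proposal never actually proves \eqref{cle}: the parabolic maximum principle applied to $v^\eps=|\bu^\eps_x|^2$ yields only the \emph{global} bound $\|\bu^\eps_x(t,\cdot)\|_{L^\infty(I)}\le\|\bu^\eps_{0,x}\|_{L^\infty(I)}$, whereas \eqref{cle} is the \emph{pointwise} comparison $|\bu_x(t,x)|\le|\bu_{0,x}(x)|$ for $\Lb^1$-a.e.\ $x$. These are not the same and the maximum principle cannot deliver the latter: for a generic non-degenerate parabolic inequality for $v^\eps$, the value of $v^\eps$ at a fixed $x$ can increase in time (diffusion spreads mass), so $v^\eps(t,x)\le v^\eps(0,x)$ is simply false in general, and the $L^\infty$ bound does not survive localization. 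Your sentence ``so \eqref{cle} holds'' therefore has no support.

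The paper instead obtains \eqref{cle} by a localized $L^p$ estimate that exploits the degeneracy of the $1$-Laplacian as $\eps\to 0$. Concretely, it sets $v^{\eps,\delta}=\sqrt{\eps^2+|\bu^{\eps,\delta}_x|^2}$, tests the Bochner identity with $\varphi^2 v^{p-2}$ for a spatial cutoff $\varphi$ and $p\in]1,3[$, uses the exact algebra $\bz\cdot\bz_x=\eps^2\bu_x\cdot\bu_{xx}/v^4$ (so the problematic transport term is $O(\eps^2)$), and arrives at
\[
\frac{1}{p}\frac{\dd}{\dd t}\int_I\varphi^2 v^p\le\frac{\eps^{p-1}}{p-1}\int_I\varphi_x^2.
\]
Choosing $\varphi$ as a cutoff between $B_r(x_0)$ and $B_R(x_0)$, dividing by $|B_R|$, and passing to the limit in the order $\eps\to0^+$, $p\to1^+$, $r\to R^-$, $R\to0^+$, $\delta\to0^+$ yields the pointwise inequality at Lebesgue points. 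This is the core new ingredient, and it has no analogue in your maximum-principle route. (A minor side remark: the ``curvature remainder'' you worry about vanishes identically in one spatial dimension by antisymmetry of the Riemann tensor, as the paper notes; the sign issue you flag is not the obstacle.)

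Everything else in your proposal — global existence from the $L^\infty$ bound, uniqueness by concatenation of the local result, the energy inequality by testing with $\bu_t$ — is correct and consistent with the paper. The missing piece is precisely the localized $p\to 1^+$ argument for \eqref{cle}.
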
 	
It follows from uniqueness that the approximate solutions in Lemma \ref{localest} satisfy
\begin{equation}
\label{cle2}
|\bu_x(t,\cdot)|\le |\bu_x(s,\cdot)|\leq |\bu_{0,x}|\quad  \Lb^1\text{-a.\,e.\ in } I\quad \text{for a.\,e.\ } 0\leq s\leq t.
\end{equation}

In the second step, given an initial datum of bounded variation, we strictly approximate it with Lipschitz regular functions, for which existence of the flow is guaranteed by Lemma \ref{localest}. Then,
we use the energy inequality \eqref{energyinequality} to obtain the existence of a pair $(\bu,\bz)$ satisfying the regularity assumptions in Definition \ref{defsol} and $\bmu\in L^\infty_{w^*}(]0,T[;
M(I,\R^N))$ satisfying
\[
\bu_t(t)=\bz_x(t)+\bmu(t) \quad \text{for a.\,e.\ } t\in ]0,T[.
\]
The hardest part of this work is the identification of the vector field $\bz$ (and the corresponding measure $\bmu$). This identification is split into two parts, outside the jump set of the
solution and inside it. In both parts, we heavily use estimate \eqref{cle}.

%
%
\medskip
Strong solutions satisfy a variational inequality (actually equality, see Lemma \ref{var_eq}), which in the case of non-positive sectional curvature implies uniqueness.

\begin{thm}
  \label{NPC-result} Let $\N$ be such that $K_\N\leq 0$ and $\injN = \infty$, and let $T>0$. If $\bu$ is a strong solution to (\ref{smootheqn},\ref{smoothbc}) in $[0,T[$ then, for all $\bv\in BV(I,\mathcal N)$,
  \begin{equation}\label{var_ineq}
  \frac{1}{2}\frac{\dd}{\dd t}\int_I \dist_{\mathcal N}^2(\bu(t),\bv)+TV_I^{\N}(\bu(t))\leq TV_I^{\N}(\bv) \quad \text{for a.\,e.\ }t \in]0,T[.
\end{equation}
Hence, $\bu$ is the only strong solution to (\ref{smootheqn},\ref{smoothbc}) in $[0,T[$ with initial datum $\bu(0)$.
\end{thm}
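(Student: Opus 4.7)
The statement is the classical Evolution Variational Inequality (EVI) characterization of gradient flows for geodesically convex functionals on NPC (Cartan--Hadamard) spaces; $\lambda=0$-convexity of $TV_I^{\N}$ (from \cite{CR-M-PJ}) turns the expected $-\langle \bu_t, \log_\bu\bv\rangle$ pairing into (\ref{var_ineq}), and EVI automatically yields contraction in $L^2(I,\N)$ with respect to $\dist_\N$. I would therefore proceed in three steps: (i) differentiate $\frac12\int_I\dist_\N^2(\bu(t),\bv)$ in time; (ii) combine \eqref{maineqn}--\eqref{zeqnj} and Lemma \ref{var_eq} with geodesic convexity of $TV_I^\N$ to identify that derivative with $TV_I^\N(\bv)-TV_I^\N(\bu(t))$ from above; (iii) derive uniqueness by a standard diagonal argument.

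\textbf{Steps 1--2 (chain rule and EVI).} Because $K_\N\le 0$ and $\injN=\infty$, the manifold is Cartan--Hadamard, so $\dist_\N^2(\cdot,\bq)$ is globally smooth and convex on $\N$ with gradient $-2\log_{\cdot}\bq$. Since $\bu\in H^1_{loc}([0,T[; L^2(I,\N))$ stays on $\N$, $\bu_t(t,x)\in T_{\bu(t,x)}\N$ for a.\,e.\ $(t,x)$, and a standard Hilbert-valued chain rule yields
\[\frac{1}{2}\frac{\dd}{\dd t}\int_I \dist_\N^2(\bu(t),\bv)\,\dd x = -\int_I \langle \log_{\bu(t)}\bv,\bu_t(t)\rangle\,\dd x\]
for a.\,e.\ $t$. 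Using \eqref{maineqn} and the fact that $\log_{\bu}\bv\in T_{\bu}\N$ absorbs $\pi_{\bu}$, the right-hand side becomes $-\int_I \langle \log_{\bu(t)}\bv,\bz_x^{a}(t)\rangle\,\dd x$. To match this with $TV_I^\N(\bv)-TV_I^\N(\bu(t))$, consider the geodesic homotopy $\bgamma_s(x)=\exp_{\bu(t,x)}(s\log_{\bu(t,x)}\bv(x))$, which is globally well-defined thanks to Cartan--Hadamard. Geodesic convexity of $TV_I^{\N}$ gives
\[TV_I^{\N}(\bgamma_s)\le (1-s)TV_I^{\N}(\bu(t))+sTV_I^{\N}(\bv),\qquad s\in[0,1],\]
and dividing by $s$, the limsup as $s\to 0^+$ is $\le TV_I^{\N}(\bv)-TV_I^{\N}(\bu(t))$. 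The decisive identification is that this one-sided slope coincides with $-\int_I\langle \log_{\bu}\bv,\bz_x^{a}\rangle\,\dd x$; this follows from Lemma \ref{var_eq} (which is the variational equality alluded to in the paragraph preceding the theorem), since the pointwise identities \eqref{ztang}--\eqref{zeqnj} together with \eqref{cantor-m} (killing the Cantor contribution) say exactly that $-\pi_{\bu}\bz_x$ realizes the subgradient of $TV_I^\N$ at $\bu$, with $\bz^\pm=\boldsymbol T_{\bu}^\pm$ producing the correct jump pairing against $\log_{\bu^\pm}\bu^\mp$. Combining the two pieces gives (\ref{var_ineq}).

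\textbf{Step 3 (uniqueness).} Let $\bu_1,\bu_2$ be two strong solutions sharing the initial datum. For each fixed $s\in]0,T[$, apply (\ref{var_ineq}) to $\bu_1$ with the (time-frozen) test $\bv=\bu_2(s)\in BV(I,\N)$, and symmetrically to $\bu_2$ with test $\bv=\bu_1(t)$; the $TV_I^{\N}$ terms cancel upon summation, yielding
\[\tfrac{1}{2}\partial_t\!\!\int_I\!\dist_\N^2(\bu_1(t),\bu_2(s))+\tfrac{1}{2}\partial_s\!\!\int_I\!\dist_\N^2(\bu_1(t),\bu_2(s))\le 0\]
pointwise a.\,e.\ in $(t,s)$. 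Restricting to the diagonal $s=t$ via a Fubini-type argument on the absolutely continuous function $(t,s)\mapsto \int_I\dist_\N^2(\bu_1(t),\bu_2(s))$ gives $\frac{\dd}{\dd t}\int_I\dist_\N^2(\bu_1(t),\bu_2(t))\le 0$; since this quantity vanishes at $t=0$, $\bu_1\equiv \bu_2$.

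\textbf{Main obstacle.} The genuinely delicate point is the identification in Step 2 of the one-sided slope of $TV_I^\N$ along $\bgamma_s$ with the pairing $-\int_I\langle \log_{\bu}\bv,\bz_x^a\rangle\,\dd x$ for $\bu,\bv$ merely in $BV(I,\N)$: one must keep track of the three parts of $\bu_x$ as well as of $\bv_x$ (absolutely continuous, Cantor, jump), invoke \eqref{cantor-m} to discard Cantor terms, and at every jump point rely on the geometric identity $\bz^\pm=\boldsymbol T_{\bu}^\pm$ and the Cartan--Hadamard inequality on geodesic triangles to bound the jump contribution $\dist_\N(\bgamma_s^-,\bgamma_s^+)-\dist_\N(\bu^-,\bu^+)$ from above by $s\,\langle \boldsymbol T_{\bu}^+,\log_{\bu^+}\bv^+\rangle-s\,\langle \boldsymbol T_{\bu}^-,\log_{\bu^-}\bv^-\rangle +o(s)$. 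This is precisely where Lemma \ref{var_eq}, established earlier in the paper, does the bookkeeping.
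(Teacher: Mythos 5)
Your Step 3 (the doubling argument for uniqueness) matches what the paper delegates to the standard EVI theory of Ambrosio--Gigli--Savar\'e, and your chain-rule computation in Step 1 is consistent with the first lines of the paper's Lemma \ref{var_eq}. The problem is Step 2. There you reduce everything to the claim that the one-sided slope of $TV_I^{\N}$ along the geodesic homotopy $\bgamma_s=\exp_{\bu}(s\log_{\bu}\bv)$ coincides with (or dominates) the pairing $-\int_I \log_{\bu}\bv\cdot \bz_x^a$, i.\,e.\ that $-\pi_{\bu}\bz_x$ is a subgradient of $TV_I^{\N}$ at $\bu$, and you assert that "this follows from Lemma \ref{var_eq}." It does not: Lemma \ref{var_eq} never mentions slopes or geodesic interpolants. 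What it gives (for time-independent $\bv$, $]a,b[=I$, with the boundary term killed by \eqref{zbc}) is the identity
\begin{equation*}
\frac{1}{2}\frac{\dd}{\dd t}\int_I \dist_{\N}^2(\bu,\bv) + H_{\bv,\bu}(t)(I) = \int_I \bz\cdot D_{\bv}\log_{\bu}(\bv)\,\bv_x ,
\end{equation*}
and the theorem is then proved \emph{without any use of geodesic convexity of $TV_I^{\N}$}, by two Riemannian comparison estimates which are the actual substance of the argument and which your proposal never supplies: (i) the Hessian comparison bound \eqref{hess_comp_B} with $h_{\N}\equiv 1$ (via Lemma \ref{hct} and the jump representation \eqref{hess-j}), giving $H_{\bv,\bu}(t)(I)\geq |\bu_x(t)|_{\N}(I)=TV_I^{\N}(\bu(t))$; and (ii) Karcher's identity $D_{\bv}(\log_{\bu}\bv)\,\bv_x=J'(0)$ for the Jacobi field with $J(0)=0$, $J(1)=\bv_x$, together with the NPC estimate $|J'(0)|\leq |J(1)|=|\bv_x|$, which bounds the right-hand side by $\int_I|\bv_x|$ for $\bv\in C^1(I,\N)$, extended afterwards to $\bv\in BV(I,\N)$ by strict approximation (Lemma \ref{GM_density}).

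So as written your proof has a genuine gap: the "decisive identification" of the slope with the pairing is precisely the hard point (it must account for the jump pairing via $\bz^\pm=\T_{\bu}^\pm$, the killing of the Cantor part via \eqref{cantor-m}, and the diffuse part via \eqref{zeqn}), and neither your sketch nor the lemma you invoke establishes it. If you want to keep your convexity-based route you must prove that subgradient inequality directly, which in effect reproduces the comparison estimates above; the more economical path is the paper's: plug the strong-solution properties into \eqref{ambrosio_eq} and conclude with Lemma \ref{hct} and the Jacobi field estimate, plus the $C^1\to BV$ approximation step for the test function, which your proposal also omits.
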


Uniqueness of solutions to the variational inequality \eqref{var_ineq} follows from \cite[4.013]{ambrosiogiglisavare}. Moreover, since $TV_I^{\N}$ is (geodesically) convex if $K_\N\leq 0$ (\cite[Theorem 4.3]{CR-M-PJ}), solutions coincide with those given by the abstract theory of gradient flows for geodesically $\lambda$-convex and lower semicontinuous functionals by Ambrosio--Gigli--Savaré. 

\medskip

On the other hand, we do not know whether uniqueness holds in general for $BV_{rad}(I,\N)$ initial data. However, we believe that both the upper bound $\dist_{\N}(\bu^-(t,x), \bu^+(t,x)) < \injN$ and \eqref{zeqnj}---in particular, the assumption that $\bz^\pm$ are tangent to a minimizing segment---are indispensable for a uniqueness result.

\medskip

The variational inequality can also be used to show that strong solutions converge to a constant in finite time.

\begin{thm} \label{thm:asymp}
	Let $\bu_0\in BV_{rad}(I,\N)$. There exists $T^*\geq 0$ such that if $\bu$ is a strong solution to (\ref{smootheqn},\ref{smoothbc}) in $[0,T[\supsetneqq [0,T^*[$ with initial datum $\bu_0$ and (\ref{energylimit},\ref{clelimit}) hold, then there exists $\overline{\bu} \in \N$ such that $\bu\equiv \overline{\bu}$ in $[T^*, T[$.
\end{thm}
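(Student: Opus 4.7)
The plan is to combine (a) the variational equality of Lemma \ref{var_eq} tested against a \emph{constant} map $\bv\equiv\bp\in\N$ with (b) a one-dimensional Poincar\'e-type bound
\begin{equation*}
\int_I\dist_\N^2(\bw,\bp)\,\dd x\leq |I|\,TV_I^\N(\bw)^2,
\end{equation*}
valid whenever $\bp$ is a continuity value of $\bw\in BV(I,\N)$: indeed $\dist_\N(\bw(x),\bp)\leq |\bw_x|_\N([x\wedge\bar x,x\vee\bar x])\leq TV_I^\N(\bw)$ for a.e.\ $x$, by \eqref{representation_form}--\eqref{def:measure}. This yields a closed differential inequality for $y(t):=\int_I\dist_\N^2(\bu(t),\bp)\,\dd x$ of the form $y'\leq -C\sqrt{y}$, whence finite extinction; the pointwise monotonicity \eqref{clelimit} then freezes $\bu$ at the constant $\bp$ for all subsequent times.

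\textbf{Execution.} First I would fix a continuity point $\bar x$ of $\bu_0$ and set $\bp:=\bu_0(\bar x)$. By \eqref{clelimit} the essential image of $\bu(t,\cdot)$ is contained in the closed geodesic ball of radius $TV_I^\N(\bu(t))\leq TV_I^\N(\bu_0)$ around $\bp$ for a.e.\ $t$. Assuming first that $TV_I^\N(\bu_0)<\injN$, the field $-\log_{\bu(t,x)}\bp$ is well defined and tangent to $\N$ at $\bu(t,x)$. Specializing Lemma \ref{var_eq} to $\bv\equiv\bp$ (so that $TV_I^\N(\bv)=0$), I expect the inequality
\begin{equation*}
\frac{1}{2}\frac{\dd}{\dd t}\int_I\dist_\N^2(\bu(t),\bp)\,\dd x+TV_I^\N(\bu(t))\leq 0\qquad\text{for a.e.\ }t\in\,]0,T[,
\end{equation*}
which is the constant-test-function case of \eqref{var_ineq}. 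Combining with the Poincar\'e bound gives $y'(t)\leq -\tfrac{2}{\sqrt{|I|}}\sqrt{y(t)}$, so $\sqrt{y(t)}\leq\sqrt{y(0)}-t/\sqrt{|I|}$ and $y(T^*)=0$ at some $T^*\leq|I|\,TV_I^\N(\bu_0)$. Then $\bu(T^*,\cdot)\equiv\bp=:\overline\bu$ in $L^2$, and for a.e.\ $t\in[T^*,T[$, applying \eqref{clelimit} between a time $s\in\,]T^*,t[$ where everything is defined and $t$ yields $|\bu_x(t)|_\N\leq|\bu_x(s)|_\N=0$ (since $\bu(s,\cdot)=\bp$ a.e.); hence $\bu(t,\cdot)\equiv\overline\bu$ on $[T^*,T[$.

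\textbf{Main obstacles.} The principal difficulty is justifying the above variational inequality without invoking the NPC hypothesis of Theorem \ref{NPC-result}. The NPC assumption is used there to extend the inequality to arbitrary $\bv\in BV(I,\N)$, whereas for a \emph{constant} $\bv$ the curvature obstruction arising from the second variation of $\dist_\N^2(\cdot,\cdot)$ along the geodesic interpolation between $\bu$ and $\bv$ should collapse: one endpoint is held fixed, and the remaining variation is against the distance from a point on $\N$. A secondary issue is the removal of the smallness assumption $TV_I^\N(\bu_0)<\injN$ needed to make $\log_\bu\bp$ unambiguous. For a generic $\bu_0\in BV_{\mathrm{rad}}(I,\N)$, this should be bypassed by noting that the energy dissipation $\int_0^\infty\!\!\int_I|\bu_t|^2\leq TV_I^\N(\bu_0)<\infty$ together with \eqref{clelimit} forces $TV_I^\N(\bu(t))$ to drop below $\injN$ in finite time (e.g.\ via an $\omega$-limit/stationarity argument), after which the Poincar\'e--variational argument above applies on the remaining time interval.
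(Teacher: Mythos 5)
There is a genuine gap, and it sits exactly where you flag the ``main obstacle'': the claim that for a constant test map $\bv\equiv\bp$ the curvature obstruction ``collapses'' is false when $K_\N>0$. Specializing Lemma \ref{var_eq} to constant $\bv$ does kill the terms with $\bv_x$ and $\bv_t$, but what remains is $\tfrac12\tfrac{\dd}{\dd t}\int_I\dist_\N^2(\bu,\bp)+H_{\bp,\bu}(I)=0$, and the coercivity \eqref{hess_comp_B} only gives $H_{\bp,\bu}\geq h_\N(B)\,|\bu_x|_\N$ under the hypothesis \eqref{hess_comp_B_cond} with $B<2\radN$, where $h_\N(B)=\sqrt{K_\N}\,B\cot(\sqrt{K_\N}B)$ is \emph{negative} once $B$ exceeds $\pi/(2\sqrt{K_\N})$. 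On a positively curved manifold the squared distance to a point is not convex beyond the convexity radius, so your inequality $\tfrac12 y'+TV_I^\N(\bu)\leq 0$ simply does not follow for data whose variation (in particular whose jumps, which for $BV_{\rm rad}$ can be close to $2\radN$, with the connecting geodesic $\bgamma_{\bu^-}^{\bu^+}$ straying correspondingly far from $\bp$) is not already confined well inside $\radN$. The NPC hypothesis in Theorem \ref{NPC-result} is used precisely to have $h_\N\equiv 1$, not merely to allow nonconstant $\bv$. This is why the paper spends most of Section \ref{sec:asymp} (Lemmas \ref{lem:asymp_case1}--\ref{lem:biting}, with the Alexandrov/haversine estimates for large jumps and an iterative partition of $I$) proving that there is a time $t_\bullet$, depending only on $\bu_0$, at which $TV(\bu(t_\bullet))<\tfrac12\radN$; only after that does the argument you propose (variational equality against a point, Poincar\'e bound, ODE $y'\leq -C\sqrt y$) become valid, and the paper runs it with the moving center of mass $\bp_c(t)$, whose zero-mean property \eqref{zeromean} handles the $\bv_t$ term and whose closeness to the image is guaranteed, rather than with a fixed $\bp=\bu_0(\bar x)$.

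Two further concrete problems. First, your Poincar\'e/containment step is not justified as stated: $\bp=\bu_0(\bar x)$ is a continuity value of $\bu_0$, not of $\bu(t)$, and \eqref{clelimit} does not prevent the slice $\bu(\cdot,\bar x)$ from drifting, so $\dist_\N(\bu(t,x),\bp)\leq TV_I^\N(\bu(t))$ has no basis; one must compare with a value of $\bu(t)$ itself (the paper uses $\bu(t,x_1)$ in Lemmas \ref{lem:asymp_case1}--\ref{lem:asymp_case2} and $\bp_c(t)$, resp.\ $\bu(t_\bullet,0)$, at the end). Second, the fallback for large data --- ``dissipation plus an $\omega$-limit argument forces $TV$ below $\injN$ in finite time'' --- is both the wrong threshold (what is needed is smallness relative to $\radN$, so that $h_\N>0$, not relative to $\injN$) and too weak for the statement: the theorem requires a $T^*$ depending only on $\bu_0$, uniformly over all strong solutions, and a compactness/stationarity argument would at best give a solution-dependent time without a quantitative bound. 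Supplying that uniform, quantitative decay for possibly large variation and large jumps is the actual content of the paper's proof and is missing from the proposal.
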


To clarify, $\overline{\bu}$ might depend on the particular solution $\bu$ (which, as far as we know, may in general be non-unique); however $T^*$ is determined by $\bu_0$, albeit in a complicated manner. Thus, Theorem \ref{thm:asymp} implies that we can actually take $T= +\infty$ in Theorem \ref{existence}.

We note that this finite-time stopping property is related to the Neumann boundary condition \eqref{zbc}. It may fail in the case of Dirichlet boundary conditions \cite{GigaKuroda2015}. Recall that for the scalar total variation flow in one dimension, the property holds in both Neumann and Dirichlet case \cite{bonfortefigalli}.

\medskip
The plan of the paper is as follows: in Section \ref{sec:preli}, we recall some definitions and results about manifolds and BV functions and we show how the representation formula
\eqref{representation_form} can be obtained. In Section \ref{sec:counter}, we provide counterexamples to $\lambda$-convexity of $TV_I^{\N}$. Section \ref{sec:cle} is devoted to the proof of Lemma {\ref{localest}}. In
Section \ref{sec:pas}, we give the proof of Theorem \ref{existence}. 
In Section \ref{sec:unique} we show that strong solutions satisfy a variational equality and deduce the uniqueness result, Theorem \ref{NPC-result}. In Section \ref{sec:asymp} we prove Theorem \ref{thm:asymp} on asymptotic behavior.
In Section \ref{sec:sphere} we deal with the special case $\N=\mathbb S^{N-1}_+$ and in Section \ref{sec:examples} we provide two (semi-)explicit examples of solutions. Finally, we collect in the Appendix several results in Riemannian geometry that we use in the paper.

\section{Preliminaries}\label{sec:preli}

In this section we introduce some notation and some preliminary
results that we need in the sequel.

\medskip

 {\noindent\bf General notations}

\smallskip

 Throughout this paper, $C$ denotes a generic positive constant depending on $\N$, $|I|$, $T$ and $\|u_0\|_{BV(I,\N)}$, that may vary from line to line; further dependencies will be stated
 explicitly. $\mathcal H^{0}$ denotes the $0$-dimensional Hausdorff measure on $\R$ and $\mathcal L=\mathcal L^1$ the $1$-dimensional Lebesgue measure. Given an open set $\Omega\subset \R$, we denote by
 $M(\Omega,\R^N)$ the space of $\R^N$-valued finite Radon measures on $\Omega$ (see \cite[Def. 1.40]{afp}). {For $J\subset \R$, we will often write $\int_J f$ instead of $\int_J f \dd\mathcal L$.} Given two measures $\bnu,\bmu\in  M(\Omega,\R^N)$, we write $\bnu\ll \bmu$ if $\nu$ is
 absolutely continuous with respect to $\bmu$. In this case, $\frac{\bnu}{\bmu}$ denotes the Radon--Nikodym derivative of $\bnu$ with respect to $\bmu$.

If $ A\subset \R^N$ and $\Upsilon(\Omega,\R^N)$ is a subspace of the space of $\Lb^m$-measurable functions, we use the notation
\[\Upsilon(\Omega,A):=\left\{\bu\in \Upsilon(\Omega,\R^N)\colon \ \bu(x)\in A \ \text{ for }\Lb^m\text{-a.\,e.\ }x\in \Omega\right\}.\]
For instance, $Lip(I,\mathcal N)$ will denote the space of $\N$-valued Lipschitz functions on $I$.

\medskip

{\noindent\bf Functions of bounded variation on an interval}

\smallskip

It is well known (see \cite[Theorem 3.28]{afp}) that any $\bw \in BV(I, \RN)$ admits a representative that has one-sided {left and right} limits, $\bw(x)^\pm$, at every point $x\in I$. We will always
identify any $\bw \in BV(I, \RN)$ with such a representative. We will also identify $\bw$ with its extension to $\overline{I}$ by its respective one-sided limits. The limits $\bw^\pm$ coincide
everywhere except on a countable set $J_{\bw}$, the jump set of $\bw$.  Furthermore, the distributional derivative of any $\bw \in BV(I, \RN)$ coincides with a vector Radon measure that we will also
denote $\bw_x$. The measure $\bw_x$ can be decomposed into diffuse and atomic (supported on $J_{\bw}$) parts, i.\,e.
\[
\bw_x = \bw_x^d + \bw_x^j, \quad \text{where } \
\bw^j_x = (\bw^+ - \bw^-) \Hd^0\res_{J_{\bw}}\ \ \text{ and }\ \bw_x^j \perp \bw_x^d.
\]
On the other hand, using Radon--Nikodym theorem, we can also decompose $\bw_x$ into absolutely continuous (with respect to the Lebesgue measure $\Lb$) and singular parts:
\[ \bw_x = \bw_x^a \,\Lb + \bw_x^s, \quad \bw_x^s \perp \Lb.\]
We note that $\bw_x^j \perp \Lb$. Thus, denoting by $\bw_x^c$ the Cantor part $\bw_x^c \colon \!\!\! = \bw_x^s - \bw_x^j$, we have
\[\bw_x = \bw_x^a \,\Lb + \bw_x^c + \bw_x^j.\]
We will also make use of the precise representative $\bw^*$ of $\bw \in BV(I,\R^N)$, defined by $\bw^*= (\bw^++\bw^-)/2$. Since $\bp \mapsto \pi_{\bp}$ can be identified with a smooth function $\N \to
 \R^{N \times N}$, we have $\pi_{\bw} \in BV(I, \R^{N \times N})$  and we can define in the same way $\pi_{\bw}^*= (\pi_{\bw}^++\pi_{\bw}^-)/2$.
We state, in the very particular case we need, the following Leibniz formula (proved in \cite{crasta_decicco}).
%
%
\begin{thm} Let $\bw,\bz\in BV(I,\R^N)$. Then $\bw \cdot \bz \in BV(I)$ and
		\begin{equation}\label{green}
			(\bw\cdot \bz)_x = \bw^* \cdot \bz_x + \bz^* \cdot \bw_x.
	\end{equation}
\end{thm}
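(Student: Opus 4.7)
The plan is to verify that $\bw \cdot \bz \in BV(I)$ and then to identify the atomic and diffuse parts of $(\bw \cdot \bz)_x$ separately with the corresponding contributions on the right-hand side of \eqref{green}.

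That $\bw \cdot \bz \in BV(I)$ is immediate: since every BV function on a bounded interval is bounded, the bilinear identity
\[
\bw(x_i)\cdot\bz(x_i) - \bw(x_{i-1})\cdot\bz(x_{i-1}) = \bw(x_i) \cdot (\bz(x_i) - \bz(x_{i-1})) + \bz(x_{i-1}) \cdot (\bw(x_i) - \bw(x_{i-1})),
\]
summed over any partition of $I$, yields $|(\bw \cdot \bz)_x|(I) \le \|\bw\|_\infty |\bz_x|(I) + \|\bz\|_\infty |\bw_x|(I)$.

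On the jump component, $J_{\bw\cdot\bz} \subseteq J_{\bw} \cup J_{\bz}$ is countable, and at each such point the elementary identity
\[
\bw^+ \cdot \bz^+ - \bw^- \cdot \bz^- = \tfrac{\bw^+ + \bw^-}{2}\cdot(\bz^+ - \bz^-) + \tfrac{\bz^+ + \bz^-}{2}\cdot(\bw^+ - \bw^-),
\]
verifiable by direct expansion, yields $(\bw\cdot\bz)_x^j = \bw^* \cdot \bz_x^j + \bz^* \cdot \bw_x^j$.

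For the diffuse part I would proceed by mollification: set $\bw^\eps := \bw \ast \rho_\eps$, $\bz^\eps := \bz \ast \rho_\eps$ with a standard symmetric mollifier, for which the smooth product rule $(\bw^\eps \cdot \bz^\eps)_x = \bw^\eps \cdot \bz^\eps_x + \bz^\eps \cdot \bw^\eps_x$ holds. Testing against $\varphi \in C_c^\infty(I)$ and sending $\eps \to 0$, the left-hand side tends to $\int_I \varphi \dd(\bw\cdot\bz)_x$ by $L^1$ convergence of $\bw^\eps \cdot \bz^\eps$ to $\bw \cdot \bz$. One expects the right-hand side to tend to $\int_I \varphi\, \bw^* \dd\bz_x + \int_I \varphi\, \bz^* \dd\bw_x$, using the uniform bound $\|\bw^\eps\|_\infty \le \|\bw\|_\infty$, the pointwise-everywhere convergence $\bw^\eps \to \bw^*$ on $I$ (which holds for symmetric mollifiers applied to BV functions), and the weak-$*$ convergence $\bz^\eps_x \Lb^1 \rightharpoonup \bz_x$. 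Subtracting the already-identified jump contribution yields \eqref{green} on the diffuse part.

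The main obstacle is justifying the limit on the Cantor components $\bw_x^c, \bz_x^c$, which may concentrate on $\Lb^1$-null sets. Pointwise convergence $\bw^\eps \to \bw^*$ together with weak-$*$ convergence $\bz^\eps_x \Lb^1 \rightharpoonup \bz_x$ does not, in general, commute; one has to establish a finer convergence, e.g.\ $\bw^\eps \to \bw^*$ in $L^1(|\bz_x|)$, or equivalently that the Cantor support can be decomposed into sets on which $\bw^*$ is approximately continuous with respect to $|\bz_x|$. This is precisely the technical core that requires the precise representatives $\bw^*, \bz^*$ (rather than arbitrary pointwise representatives) on the right-hand side of \eqref{green}, and is carried out in \cite{crasta_decicco}.
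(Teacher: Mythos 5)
The paper does not prove this statement at all: it is quoted as a known result, with the proof attributed entirely to \cite{crasta_decicco}, so there is no internal argument to compare against. Measured as a self-contained proof, your proposal is sound on the two elementary parts (the BV bound via the bilinear splitting over partitions, and the exact identification of the jump part via the midpoint identity), but it is not complete: the passage to the limit of $\int_I \varphi\, \bw^\eps \cdot \bz^\eps_x$ on the diffuse part is exactly the step you leave open, and deferring it back to \cite{crasta_decicco} reproduces the paper's citation rather than supplying a proof. Note also that the difficulty is not, in one dimension, a failure of pointwise convergence ($\bw\ast\rho_\eps \to \bw^*$ at \emph{every} interior point, since one-sided limits exist everywhere); the issue is solely that pointwise convergence of the integrand does not combine with weak-$*$ convergence of the measures $\bz^\eps_x\,\Lb^1$.

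The gap is fixable by a standard rearrangement that you may as well record: write $\bz^\eps_x=\rho_\eps\ast\bz_x$ and use Fubini to move the mollifier onto the other factor,
\[
\int_I \varphi\, \bw^\eps\cdot \bz^\eps_x \,\dd\Lb^1=\int_I \big((\varphi\,\bw^\eps)\ast\rho_\eps\big)\cdot \dd\bz_x ,
\]
with a symmetric kernel. Since $\varphi$ is uniformly continuous and $\|\bw^\eps\|_\infty\le\|\bw\|_\infty$, the integrand differs by $o(1)$ (uniformly) from $\varphi\,(\bw\ast\rho_\eps\ast\rho_\eps)$, and $\rho_\eps\ast\rho_\eps$ is again a symmetric approximate identity, so $\bw\ast\rho_\eps\ast\rho_\eps\to\bw^*$ at every point of $I$. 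Dominated convergence with respect to the finite measure $|\bz_x|$ then gives $\int_I\varphi\,\bw^*\cdot\dd\bz_x$, and symmetrically for the other term; this yields \eqref{green} in one stroke (your separate jump-part computation is then subsumed, since the midpoint identity is exactly what the precise representatives produce on atoms). With that insertion your argument becomes a genuine elementary proof of the one-dimensional case, which is more than the paper offers; as written, however, the diffuse-part limit remains an appeal to the cited reference.
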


\medskip{\noindent\bf Comparison of distances in $\N$ and $\R^N$}\smallskip

We will use the following observation, which is proved in the Appendix.
\begin{lemma}\label{lem-dist-comp}
Let $(\N,g)$ be a complete and connected n-dimensional Riemannian manifold without
boundary, isometrically and closedly embedded
in $\R^N$. Then for every $R>0$ there exists $C_R\ge 1$ such that
\begin{equation}\label{equiv-metr}
C_R^{-1} \dist_\N(\bp_1,\bp_2)\le |\bp_1-\bp_2|\le \dist_\N(\bp_1,\bp_2) \quad\forall \bp_1,\bp_2\in \overline{B(0,R)}\cap \N.
\end{equation}
\end{lemma}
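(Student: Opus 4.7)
The upper bound $|\bp_1-\bp_2|\le\dist_{\N}(\bp_1,\bp_2)$ is the standard chord-length estimate for isometric embeddings: for any $C^1$ curve $\gamma\colon[0,1]\to\N$ joining the two points one has
\[
|\bp_1-\bp_2| = \Big|\int_0^1\gamma'(t)\dd t\Big| \le \int_0^1|\gamma'(t)|\dd t = L(\gamma),
\]
since the Euclidean norm of $\gamma'(t)\in\R^N$ coincides with the Riemannian norm of $\gamma'(t)\in T_{\gamma(t)}\N$. Taking the infimum over admissible $\gamma$ yields the inequality, and forces the constant $C_R$ in the lower bound to be at least $1$.

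For the lower bound I would argue by compactness plus a Lebesgue-number scheme. Let $K:=\overline{B(0,R)}\cap\N$. Closedness of the embedding implies that $K$ is closed and bounded in $\R^N$, hence compact, and that the subspace topology inherited from $\R^N$ agrees with the intrinsic manifold topology on $K$; in particular $K$ is also compact in $\N$. Since $\dist_{\N}\colon\N\times\N\to\R$ is continuous and $K\times K$ is compact, the intrinsic diameter $D:=\max_{K\times K}\dist_{\N}$ is finite.

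Next I would establish a local bi-Lipschitz comparison at each point. For every $\bp\in K$, the exponential map $\exp_{\bp}\colon T_{\bp}\N\to\N\subset\R^N$ is smooth, its differential at $0$ equals the isometric inclusion $T_{\bp}\N\hookrightarrow\R^N$, and for $\bv\in T_{\bp}\N$ inside the injectivity radius one has $\dist_{\N}(\bp,\exp_{\bp}\bv)=|\bv|$ together with $|\bp-\exp_{\bp}\bv|=|\bv|+O(|\bv|^2)$. An application of the inverse function theorem (or a direct expansion in normal coordinates) then provides $r_{\bp}>0$ and $c_{\bp}\ge 1$ such that
\[
\dist_{\N}(\bq_1,\bq_2) \le c_{\bp}|\bq_1-\bq_2| \qquad\text{whenever } \bq_1,\bq_2\in\N\cap B(\bp,r_{\bp}).
\]

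Finally I would cover $K$ by finitely many balls $B(\bp_i,r_{\bp_i}/2)$, let $\delta>0$ be a Lebesgue number for this cover, and set $C_1:=\max_i c_{\bp_i}$. For $\bq_1,\bq_2\in K$ with $|\bq_1-\bq_2|<\delta$ both points lie in a single $B(\bp_i,r_{\bp_i})$, so $\dist_{\N}(\bq_1,\bq_2)\le C_1|\bq_1-\bq_2|$; for $|\bq_1-\bq_2|\ge\delta$ the diameter bound gives $\dist_{\N}(\bq_1,\bq_2)\le D\le(D/\delta)|\bq_1-\bq_2|$. Setting $C_R:=\max(C_1,D/\delta,1)$ then closes the argument. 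The only genuinely delicate point is securing the finiteness of the intrinsic diameter of $K$: without the closedness of the embedding, $K$ would not in general be compact in $\N$, and $\dist_{\N}$ could well be unbounded on $K\times K$ (think of a piece of $\N$ which remains in $\overline{B(0,R)}$ but stretches to infinity in intrinsic length).
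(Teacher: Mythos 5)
Your proof is correct, but it follows a different route than the paper. The paper extends the ratio $f(\bp_1,\bp_2)=|\bp_1-\bp_2|/\dist_\N(\bp_1,\bp_2)$ by $1$ on the diagonal, checks that $f$ is continuous on the compact set $(\overline{B(0,R)}\cap\N)^2$ (continuity at the diagonal coming from the expansion $\bp_2-\bp_1=\exp^{-1}_{\bp_1}\bp_2+O(|\bp_1-\bp_2|^2)$), and concludes that $f$ has a positive minimum; this yields $C_R$ in one stroke. You instead patch together local bi-Lipschitz comparisons via a finite cover, a Lebesgue number, and the finiteness of the intrinsic diameter of $K$, which is more elementary and makes the role of each hypothesis explicit, at the cost of being longer. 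One point in your local step deserves to be made explicit: the estimate $\dist_\N(\bq_1,\bq_2)\le c_{\bp}|\bq_1-\bq_2|$ on $\N\cap B(\bp,r_{\bp})$ with a \emph{Euclidean} ball is only legitimate once you know that, for $r_{\bp}$ small, $\N\cap B(\bp,r_{\bp})$ contains no points that are extrinsically close to $\bp$ but intrinsically far from it (think of two nearly touching arms of a hairpin); this is exactly where closedness of the embedding enters, via the agreement of the subspace and intrinsic topologies (equivalently, properness) that you state at the beginning — so you should invoke it there when choosing $r_{\bp}$, not only to get compactness of $K$. The paper's continuity-at-the-diagonal argument quietly relies on the same fact, since $\exp^{-1}_{\bp_1}\bp_2$ is only meaningful once extrinsic closeness forces intrinsic closeness. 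With that remark inserted, your argument is complete.
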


\noindent {\bf Relaxation}
\medskip

Relaxation results for the functional $\mathcal{TV}_I^{\N}$ were obtained by Giaquinta and Mucci  \cite[Proposition 6.1,Theorem 6.2]{giaquinta-mucci} \cite{giaquinta-mucci}. The authors of
\cite{giaquinta-mucci} worked under a standing assumption that $\N$ is compact and oriented. However, we observe that neither of these restrictions is essential for this result in the 1-dimensional
case. First of all, orientability is never used in the proofs in the case of a 1-dimensional domain. Secondly, by Lemma \ref{lem-dist-comp}, inside a ball distances in $\N$ are comparable with distances
in $\R^N$. A sequence that converges weakly-* in $BV(I, \R^N)$ has a subsequence that converges a.\,e.\ and therefore, the proof of Proposition 6.1 works in our case. Theorem 6.2 already treats the case
of a 1-dimensional domain and the strictly convergent sequence constructed in Theorem 6.2 is already contained in a ball in $\R^N$. Thus, we can state the following lower semicontinuity and density results.

\begin{lemma}\label{GM_lsc}
	Let $\bw \in BV(I, \N)$. For every sequence $\bw^k \in C^1(I, \N)$ such that $\bw^k \stackrel{*}{\rightharpoonup} \bw$ in $BV(I, \R^N)$,
	\[\liminf_{k \to \infty} \int_I |\bw^k_x| \geq \left|\bw_x\right|_{\N}(I).\]
\end{lemma}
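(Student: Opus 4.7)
First I would pass to a subsequence realising the liminf and, using the fact that weak-$\ast$ convergence in $BV$ implies $L^1$-convergence, further extract a sub-subsequence (not relabeled) such that $\bw^k\to \bw$ pointwise $\Lb^1$-a.\,e.\ in $I$. Since $\N$ is closed in $\RN$, this gives $\bw\in\N$ a.\,e., and in particular $\bw^{\pm}(x)\in\N$ at every $x\in I$.

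The key idea is to isolate each jump of $\bw$ inside a short subinterval and use the elementary length/distance inequality
\[
\int_a^b |\bw^k_x| \;\geq\; \dist_\N\bigl(\bw^k(a),\bw^k(b)\bigr),
\]
valid because $\bw^k$ is a $C^1$ path in $\N$. Concretely, enumerate $J_{\bw}=\{x_i\}_{i\in\bbN}$, fix $M\in\bbN$ and for $i=1,\ldots,M$ choose $a_i<x_i<b_i$ with pairwise disjoint closures $[a_i,b_i]\subset I$, such that $a_i,b_i\notin J_{\bw}$ and $\bw^k(a_i)\to\bw(a_i)$, $\bw^k(b_i)\to\bw(b_i)$; such points are dense in $I$ since both the jump set and the set of non-convergence of the sub-subsequence are $\Lb^1$-negligible. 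Writing $U=I\setminus \bigcup_{i=1}^M[a_i,b_i]$, which is open, combining the inequality above on each $[a_i,b_i]$ with the classical lower semicontinuity of total variation in $BV(U,\RN)$ yields
\[
\liminf_k \int_I |\bw^k_x| \;\geq\; \sum_{i=1}^M \liminf_k \dist_\N\bigl(\bw^k(a_i),\bw^k(b_i)\bigr) + |\bw_x|(U).
\]

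Next I would pass to the limits in the right order. By Lemma \ref{lem-dist-comp} the intrinsic topology on $\N$ coincides with the one inherited from $\RN$, so $\dist_\N$ is continuous on $\N\times\N$ for the ambient topology; hence each liminf on the right equals $\dist_\N(\bw(a_i),\bw(b_i))$. Letting $a_i\to x_i^-$ and $b_i\to x_i^+$ through continuity points of $\bw$ which are also points of convergence of the sub-subsequence, continuity of $\dist_\N$ gives $\dist_\N(\bw(a_i),\bw(b_i))\to \dist_\N(\bw^-(x_i),\bw^+(x_i))$, while $U$ increases to $I\setminus\{x_1,\ldots,x_M\}$ so that $|\bw_x|(U)\nearrow |\bw_x|(I\setminus\{x_1,\ldots,x_M\})\geq |\bw^d_x|(I)$ by continuity of the finite measure $|\bw_x|$ from below. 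Sending finally $M\to\infty$ and recalling \eqref{def:measure} produces the claimed inequality. The only mildly delicate point is the continuity of $\dist_\N$ at the relevant limit values, which is taken care of by Lemma \ref{lem-dist-comp} together with the standard fact that $\dist_\N$ is continuous in the intrinsic topology of a connected complete Riemannian manifold; everything else is routine bookkeeping.
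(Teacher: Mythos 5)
Your proof is correct, but it takes a genuinely different route from the paper, which does not actually write out a self-contained argument for Lemma~\ref{GM_lsc}: the paper derives it by citing Giaquinta--Mucci's relaxation result (their Proposition 6.1) and only explaining why the standing compactness and orientability assumptions there can be dropped in one dimension. You instead give a direct proof: after extracting a liminf-realizing, pointwise-a.e.\ convergent subsequence, you localize around finitely many jumps $x_1, \ldots, x_M$ inside small intervals $[a_i, b_i]$ with endpoints at convergence points of the subsequence; on each such interval you bound $\int_{a_i}^{b_i} |\bw^k_x|$ from below by $\dist_\N(\bw^k(a_i), \bw^k(b_i))$, using only that $\bw^k$ is a $C^1$ path in $\N$; on the open complement $U$ you invoke the classical Euclidean lower semicontinuity of the total variation; then you pass $k \to \infty$, shrink each $[a_i,b_i]$ onto $x_i$ (using continuity of $\dist_\N$ for the ambient topology, which indeed follows from Lemma~\ref{lem-dist-comp} plus the triangle inequality, and continuity from below of $|\bw_x|$), and finally send $M \to \infty$. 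Each step is sound: the two geometric inputs are the length-versus-distance inequality for $C^1$ curves in $\N$ and ambient continuity of $\dist_\N$, both correctly justified. What your approach buys is a proof independent of the Giaquinta--Mucci reference, at the cost of a slightly longer write-up; the small bookkeeping point you flag (dropping the residual jump mass so that $|\bw_x|(I\setminus\{x_1,\ldots,x_M\}) \geq |\bw_x^d|(I)$ and recovering the full jump contribution only in the limit $M\to\infty$) is handled correctly because the estimate holds for every $M$.
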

\begin{lemma}\label{GM_density}
	For any $\bw\in BV(I,\N)$ there exists a sequence $\bw^k\in C^1(I,\N)$ such that $\bw^k \to \bw$ strictly in $BV(I, \N)$, by which we mean that
	\begin{equation} \label{strict_conv_N}
\bw^k \stackrel{*}{\rightharpoonup} \bw \ \mbox{ in } \ BV(I, \R^N)\qquad\mbox{and}\qquad \int_I |\bw^k_x|  \to  \left|\bw_x\right|_{\N}(I).
\end{equation}
In particular, as $k\to +\infty$,
		\begin{equation}\label{u0d-1-cons}
\int_{a}^{b} |\bw^k_{x}| \to \int_{]a, b[}\!\!\!\! \dd |\bw_{x}|_{\N} \quad\mbox{if $a,b\notin J_{\bw}$.}
		\end{equation}
\end{lemma}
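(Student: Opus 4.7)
Following the strategy of Giaquinta--Mucci \cite{giaquinta-mucci}, I would perform a two-level approximation: first resolve the jumps of $\bw$ by inserting minimizing geodesic arcs on vanishingly small intervals to obtain a continuous $\N$-valued $BV$ approximant $\tilde{\bw}^k$, then smooth $\tilde{\bw}^k$ by mollification followed by the nearest-point projection $\Pi_\N$ onto $\N$. The only nontrivial deviation from \cite{giaquinta-mucci} is that $\N$ is not assumed compact, which is handled exactly as indicated in the paragraph preceding the statement: by Lemma \ref{lem-dist-comp} and the closed embedding, the essential image of $\bw$ is contained in $\overline{B(0,R)}\cap\N$ for some $R>0$, a compact set admitting a uniform tubular neighborhood on which $\Pi_\N$ is smooth with Lipschitz constant tending to $1$ as one approaches $\N$.

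Concretely, enumerate $J_\bw=\{x_i\}_{i\in \bbN}$ ordered so that $s_i:=\dist_{\N}(\bw^-(x_i),\bw^+(x_i))$ is non-increasing; summability $\sum_i s_i\le |\bw_x|_\N(I)<\infty$ lets me pick $N_k\to\infty$ with $\sum_{i>N_k}s_i<1/k$ and, for $i\le N_k$, pairwise disjoint intervals $[x_i-\delta_i^k, x_i+\delta_i^k]\Subset I$ with $\delta_i^k\to 0$. On each such interval I replace $\bw$ with a constant-speed reparametrization of a minimizing $\N$-geodesic (existing by Hopf--Rinow) from $\bw(x_i-\delta_i^k)$ to $\bw(x_i+\delta_i^k)$, leaving $\bw$ untouched elsewhere; call the resulting function $\tilde{\bw}^k$. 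Since the embedding $\N\hookrightarrow \R^N$ is isometric, the extrinsic arc length of each inserted geodesic coincides with $\dist_\N(\bw(x_i-\delta_i^k),\bw(x_i+\delta_i^k))\to s_i$, whence $|\tilde{\bw}^k_x|_\N(I)\to |\bw_x|_\N(I)$ and $\tilde{\bw}^k\to \bw$ in $L^1(I,\RN)$.

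For the second step I would set $\bw^k:=\Pi_\N(\tilde{\bw}^k*\rho_{\eps_k})$ with $\rho_\eps$ a standard mollifier. The decreasing ordering of jumps makes the unresolved jumps of $\tilde{\bw}^k$ uniformly small ($\le s_{N_k+1}\to 0$), so for $\eps_k$ small enough the convolution lies in the tubular neighborhood and $\bw^k$ is well defined, $C^\infty$, and $\N$-valued. Since chord length is bounded by arc length one has $|\tilde{\bw}^k_x|(I)\le |\tilde{\bw}^k_x|_\N(I)$, and the standard BV mollification estimate $\int_I |(\tilde{\bw}^k*\rho_{\eps_k})_x|\to |\tilde{\bw}^k_x|(I)$ as $\eps_k\to 0$ for fixed $k$, together with the fact that $\mathrm{Lip}(\Pi_\N)\to 1$ near $\N$, yields $\limsup_k \int_I |\bw^k_x|\le |\bw_x|_\N(I)$ via a diagonal choice $\eps_k\to 0$ fast enough. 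The matching lower bound is Lemma \ref{GM_lsc}, giving the strict convergence \eqref{strict_conv_N}; weak-$*$ convergence in $BV(I,\RN)$ then follows from $L^1$-convergence plus the uniform $BV$ bound.

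Finally, \eqref{u0d-1-cons} follows because, by construction, the measures $|\bw^k_x|\Lb^1$ are concentrated in shrinking neighborhoods of the jump points $\{x_i\}$ (plus an absolutely continuous part approximating $|\bw_x^d|$), so $|\bw^k_x|\Lb^1\stackrel{*}{\rightharpoonup} |\bw_x|_\N$ in $M(I)$; since $|\bw_x|_\N$ has no atom at $a$ or $b$ when $a,b\notin J_\bw$, the portmanteau theorem for positive Radon measures gives the claimed limit. The main technical obstacle is ensuring that all the convolved curves $\tilde{\bw}^k*\rho_{\eps_k}$ lie in a single tubular neighborhood of $\N$, uniformly over a diagonal subsequence; this is precisely what is guaranteed by (i) the compactness of the essential image of $\bw$ (via Lemma \ref{lem-dist-comp}) and (ii) the jump-ordering step, which keeps the residual unresolved jumps of $\tilde{\bw}^k$ below any prescribed threshold.
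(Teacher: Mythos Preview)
Your construction is essentially the one the paper refers to (Giaquinta--Mucci \cite{giaquinta-mucci}), with the non-compactness of $\N$ handled exactly as the paper indicates via Lemma~\ref{lem-dist-comp} and a uniform tubular neighborhood over the compact essential range. For \eqref{u0d-1-cons} the paper takes a slightly cleaner route: rather than arguing ``by construction'' that $|\bw^k_x|\,\Lb^1 \stackrel{*}{\rightharpoonup} |\bw_x|_\N$, it deduces \eqref{u0d-1-cons} directly from \eqref{strict_conv_N} together with the lower semicontinuity of Lemma~\ref{GM_lsc} applied on $]a,b[$ and on $I\setminus[a,b]$ via an indirect argument---this bypasses the need to track how mollification and projection affect the localization of mass, which your ``by construction'' phrase glosses over (the claim is true, but its real justification is precisely that indirect argument, not the explicit shape of the approximants).
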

\noindent

The limit in \eqref{u0d-1-cons} follows from \eqref{strict_conv_N} and lower semi-continuity via an indirect argument.
From these results we deduce
\begin{cor} \label{relaxation}
	The representation formula \eqref{representation_form}-\eqref{def:measure} holds.
\end{cor}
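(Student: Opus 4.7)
The plan is to unwind the definition of $TV_I^\N$ as the $L^2(I,\N)$-lower semicontinuous envelope of $\mathcal{TV}_I^\N$ and reduce everything to Lemmas \ref{GM_lsc} and \ref{GM_density}. The only subtlety is the discrepancy between the $L^2(I, \N)$ topology appearing in the relaxation and the weak-$*$ $BV(I, \R^N)$ topology used in the two lemmas; in one dimension this is a cheap adjustment via the embedding $BV(I) \hookrightarrow L^\infty(I)$.

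For the inequality $TV_I^\N(\bw) \geq |\bw_x|_\N(I)$, I would take an arbitrary sequence $\bw^k \in C^1(\overline I, \N)$ with $\bw^k \to \bw$ in $L^2(I, \N)$. If $\liminf_k \int_I |\bw^k_x| = +\infty$ there is nothing to prove; otherwise I pass to a subsequence with uniformly bounded total variation and a further subsequence that converges $\Lb^1$-a.\,e.\ to $\bw$. At any point $x_0 \in I$ of pointwise convergence, $|\bw^k(x_0)|$ is bounded, and combined with the total variation bound this yields a uniform $L^\infty$ estimate via the fundamental theorem of calculus for $BV$ functions. Standard one-dimensional $BV$ compactness then produces a further subsequence weakly-$*$ convergent in $BV(I, \R^N)$ to some limit, which must coincide with $\bw$ by uniqueness of the a.\,e.\ limit. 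In particular $\bw \in BV(I, \N)$, and Lemma \ref{GM_lsc} yields $\liminf_k \int_I |\bw^k_x| \geq |\bw_x|_\N(I)$.

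For the reverse inequality I would apply Lemma \ref{GM_density} to obtain a sequence $\bw^k \in C^1(I, \N)$ with $\bw^k \stackrel{*}{\rightharpoonup} \bw$ in $BV(I, \R^N)$ and $\int_I |\bw^k_x| \to |\bw_x|_\N(I)$. Such a sequence is uniformly bounded in $BV$, hence in $L^\infty(I)$, so the $L^1$ convergence coming from weak-$*$ $BV$ convergence upgrades by interpolation to $L^2$ convergence. The sequence is therefore admissible in the relaxation, giving $TV_I^\N(\bw) \leq \liminf_k \int_I |\bw^k_x| = |\bw_x|_\N(I)$. The genuine content of the corollary lies entirely in Lemmas \ref{GM_lsc} and \ref{GM_density}; neither curvature nor injectivity radius plays any role here, and no real obstacle is expected beyond the topological bookkeeping above.
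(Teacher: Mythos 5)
Your proposal is correct and follows essentially the same route as the paper, which deduces Corollary \ref{relaxation} directly from Lemma \ref{GM_lsc} (lower bound) and Lemma \ref{GM_density} (recovery sequence); you merely spell out the bookkeeping between the $L^2$ topology of the relaxation and the weak-$*$ $BV$ convergence in those lemmas, which the paper leaves implicit. No gap beyond what the paper itself glosses over (e.g.\ $C^1(\overline{I},\N)$ versus $C^1(I,\N)$ in the density lemma, a harmless point handled by a slight dilation of the interval).
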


We will also need the following mild generalization of Lemma \ref{GM_lsc}.
\begin{lemma}\label{phi_lsc}
	Let $\varphi \in C_c(I)$ be nonnegative and let $\bw \in BV(I, \N)$. If a sequence $\bw^k \in C^1(I, \N)$ is such that $\bw^k
\stackrel{*}{\rightharpoonup} \bw$ in $BV(I, \R^N)$, then
	\begin{equation}\label{phi_lsc_eq}
\liminf_{k \to \infty} \int_I \varphi |\bw^k_x| \geq \int_I \varphi \dd\! \left|\bw_x\right|_{\N}.
\end{equation}
\end{lemma}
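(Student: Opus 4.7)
The plan is to reduce the weighted lower-semicontinuity to the unweighted lower-semicontinuity on open sets, to which Lemma \ref{GM_lsc} applies on each component sub-interval. The bridge is the layer-cake representation: for any nonnegative Borel function $\varphi$ and any positive Radon measure $\mu$ on $I$, Tonelli's theorem yields
\[\int_I \varphi\,\dd\mu=\int_0^\infty \mu(\{\varphi>t\})\,\dd t.\]
Applying this to $\mu_k=|\bw^k_x|\,\Lb$ and to $\mu=|\bw_x|_\N$ and then invoking Fatou, the statement \eqref{phi_lsc_eq} reduces to the auxiliary claim that
\[\liminf_{k\to\infty}\int_U |\bw^k_x|\,\dd\Lb\;\geq\;|\bw_x|_\N(U)\qquad\text{for every open } U\subseteq I.\]
Since $\varphi\in C_c(I)$, the effective range of $t$ is the bounded interval $[0,\max\varphi]$, so the exchange of $\liminf_k$ and $\int_0^\infty\!\cdot\,\dd t$ via Fatou causes no trouble.

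To prove the auxiliary claim, I would decompose $U$ as a countable disjoint union of open sub-intervals, $U=\bigsqcup_i J_i$. On each $J_i$ the restriction $\bw^k|_{J_i}$ lies in $C^1(J_i,\N)$ and converges weakly-$*$ to $\bw|_{J_i}$ in $BV(J_i,\R^N)$ (testing against $C_c(J_i)$-functions extended by zero to $I$). Lemma \ref{GM_lsc}, applied on the interval $J_i$, then gives $\liminf_{k}\int_{J_i}|\bw^k_x|\,\dd\Lb\geq |\bw_x|_\N(J_i)$. Summing over the first $M$ components via Fatou, letting $M\to\infty$, and using countable additivity of $|\bw_x|_\N$ on $U$ completes the reduction.

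The only mildly delicate point is the applicability of Lemma \ref{GM_lsc} on arbitrary open sub-intervals $J_i\subsetneq I$. This causes no real difficulty: the Giaquinta--Mucci argument discussed above Lemma \ref{GM_lsc} is purely local, and the fact that $J_i$ is open---so omits its endpoints---means that possible jumps of $\bw$ at $\partial J_i$ are consistently ignored on both sides of the inequality. Everything else is bookkeeping.
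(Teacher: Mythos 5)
Your proof is correct, and it takes a genuinely different route from the paper's. The paper's argument fixes a subsequence realizing the liminf, extracts a further subsequence with $|\bw^{k_l}_x| \stackrel{*}{\rightharpoonup} \widetilde{\nu}$ in $M(I)$, and then proves the comparison of measures $\widetilde{\nu} \geq \left|\bw_x\right|_{\N}$: first on intervals $J\Subset I$ whose endpoints are charged by neither measure (convergence of masses on $\overline{J}$ via \cite[Proposition 1.62(b)]{afp} combined with Lemma \ref{GM_lsc} applied on $J$), then on open sets via a Besicovitch covering argument, and finally on Borel sets by outer regularity; \eqref{phi_lsc_eq} follows by integrating $\varphi$ against $\widetilde{\nu}$. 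You bypass the auxiliary measure entirely: the layer-cake formula plus Fatou in the $t$-variable reduces \eqref{phi_lsc_eq} to $\liminf_{k}\int_U |\bw^k_x| \geq \left|\bw_x\right|_{\N}(U)$ for open $U\subseteq I$ (the superlevel sets $\{\varphi>t\}$ are open), and you obtain this by decomposing $U$ into its connected components, applying Lemma \ref{GM_lsc} on each open subinterval $J_i$ --- which is legitimate, since $\bw^k|_{J_i}\to\bw|_{J_i}$ in $L^1(J_i)$ and the variations are uniformly bounded, so $\bw^k|_{J_i}\stackrel{*}{\rightharpoonup}\bw|_{J_i}$ in $BV(J_i,\R^N)$, and $\left|\bw_x\right|_{\N}\res J_i$ coincides with the corresponding measure of the restriction because $J_i$ is open --- followed by superadditivity of the liminf over the components. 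Both proofs hinge on Lemma \ref{GM_lsc} applied to subintervals; yours avoids the subsequence extraction, the boundary-charge bookkeeping, and the covering theorem, so it is more elementary and self-contained, whereas the paper's version yields the slightly stronger intermediate statement $\widetilde{\nu}\geq\left|\bw_x\right|_{\N}$ (cf.\ \eqref{pippo}), whose proof pattern is reused later in the paper, e.g.\ in establishing the pointwise estimate \eqref{clelimit2}.
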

\begin{proof} 	Let $\bw^{k_l}$ be a subsequence of $\bw^k$ such that
	\[\lim_{l \to \infty} \int_I \varphi |\bw^{k_l}_x| = \liminf_{k \to \infty} \int_I \varphi |\bw^k_x|  . \]
	Since the sequence $|\bw^{k_l}_x|$ is bounded in $M(I)$, there
exists a non-negative measure $\widetilde{\nu} \in M(I)$ and a subsequence which we again label $\bw^{k_l}$ such that $|\bw^{k_l}_x| \stackrel{*}{\rightharpoonup} \widetilde{\nu}$ in $M(I)$. Let $J$ be
any open interval compactly contained in $I$ such that $\widetilde{\nu}(\partial J) = \left|\bw_x\right|_{\N}(\partial J)= 0$. By \cite[Proposition 1.62(b)]{afp},
	\[\int_{\overline{J}} |\bw^{k_l}_x|\to \widetilde{\nu}(\overline{J}).\]
	Thus, by Lemma \ref{GM_lsc} (with $J$ in place of $I$)
\begin{equation}\label{star1}
\widetilde{\nu}(\overline{J}) = \lim_{k \to \infty}\int_{\overline{J}} |\bw^{k_l}_x|  = \lim_{k \to \infty}\int_{J} |\bw^{k_l}_x| \geq  \left|\bw_x\right|_{\N} (J) =
\left|\bw_x\right|_{\N}(\overline{J}).
\end{equation}
Property \eqref{star1} is satisfied for all intervals $J$ with $\widetilde{\nu}(\partial J) = \left|\bw_x\right|_{\N}(\partial J)= 0$. Hence,
by the Besicovitch covering lemma \cite[1.5.2., Corollary 1]{evansgariepy}, for any open set $U \subset I$ there exists a countable family $\mathcal J$  of disjoint closed intervals contained in $U$ whose boundary points are not charged by $\widetilde{\nu}$ or $\left|\bw_x\right|_{\N}$ and such that $\widetilde{\nu}(U \setminus \bigcup \mathcal{J}) = 0$. Therefore, it follows from \eqref{star1} that
\[
\widetilde{\nu}(U) \geq \left|\bw_x\right|_{\N}(U) \qquad\mbox{for any open $U \subset I$.}
\]
In view of \cite[1.1.1., Lemma 1]{evansgariepy}, given a Borel set $B\subset I$ and $\delta > 0$ we can find an open set $U\supset B$ with $\widetilde{\nu}(U \setminus B) \le \delta$. Therefore $\left|\bw_x\right|_{\N}(B)\le \left|\bw_x\right|_{\N}(U) \le \widetilde{\nu}(U) \le \widetilde{\nu}(B)+\delta$. As $\delta>0$ is arbitrary,
%
%
we deduce
\begin{equation}\label{pippo}
\widetilde{\nu} \geq \left|\bw_x\right|_{\N} \qquad\mbox{as Borel measures in $I$}
\end{equation}
and \eqref{phi_lsc_eq} follows from the definition of $\widetilde{\nu}$.
%
%
\end{proof}

\medskip
\noindent
{\bf Weakly-* measurable functions}
\smallskip

Let $X$ be a Banach space with dual $X^*$. We say that a function $f \colon [0,T] \to X^*$ (possibly defined up to a Lebesgue null set) is weakly-* (Lebesgue) measurable if
\[[0,T] \ni t \mapsto \langle f(t), w\rangle \]
is (Lebesgue) measurable for all $w \in X$. This notion gives rise to spaces $L^{p'}_{w^*}(0,T; X^*)$ of $p'$-integrable weakly-* measurable functions, $p' \in ]1, \infty]$, cf.\;\cite[Chapter
VI]{IonescuTulcea}.
Similarly, a function $f \colon [0,T] \to X$ is weakly (Lebesgue) measurable if
\[[0,T] \ni t \mapsto \langle w, f(t)\rangle \]
is (Lebesgue) measurable for all $w \in X^*$ and $L^p_w(0,T; X)$ denotes the corresponding space of $p$-integrable weakly measurable functions, $p \in [1, \infty[$.

\smallskip

The feature that makes spaces of weakly-* measurable functions indispensable to us, is the following representation theorem, which will allow us to extract convergent sequences via the Banach--Alaoglu theorem:
\begin{equation}\label{iso}
\mbox{$L^\infty_{w^*}(0,T; X^*)$ is isometric to $L^1(0,T;X)^*$}
\end{equation}
(see \cite[Chapter VII, Theorem 7]{IonescuTulcea} or \cite[Corollary 2.3]{schwartz}). We note that (see \cite[Proposition 2.1]{FUSCO20181370}) $BV(I)=X^*$ with $$X=\{\varphi+\psi_x : \varphi, \psi\in C_0(I)\}.$$ Therefore, in view of \eqref{iso} one can show that
\begin{equation}\label{impl}
\left\{\begin{array}{cc} w^k \text{ bounded in } L^\infty(]0,T[\times I) \\
w^k \stackrel{*}\rightharpoonup w \text{ in } L^\infty_{w^*}(0,T; BV(I))\end{array}\right. \ \Longrightarrow\  \left\{\begin{array}{l} w^k \stackrel{*}\rightharpoonup w \text{ in } L^\infty_{w^*}(0,T; L^\infty(I)), \\  w^k_x
\stackrel{*}\rightharpoonup w_x \text{ in } L^\infty_{w^*}(0,T; M(I)).\end{array}\right.
\end{equation}
%
%
%
Pettis' theorem (\cite[V.4.]{Yosida})  implies that if $X$ is separable,
then $L^p_w(0,T; X)$ coincides with the Bochner space $L^p(0,T; X)$ for all $p\in [1,\infty]$. Therefore, if $X^*$ is separable and reflexive, $L^{p'}_{w^*}(0,T; X^*)$ coincides with the Bochner space
$L^{p'}(0,T; X^*)$ for all $p'\in [1,\infty]$. This is not true in the cases $X^* = M(I)$, $X^* = BV(I)$, $X^* = L^\infty(I)$ which are of interest to us. However, by the Sobolev embedding $BV(I)
\subset L^q(I)$, 
reflexivity and separability of $L^q$ spaces, $1<q<\infty$, and Pettis' theorem, we have
\[
L^{p'}_{w^*}(0,T;BV(I)) \subset L^{p'}_{w^*}(0,T;L^q(I)) = 
L^{p'}(0,T;L^q(I))\subset L^{p'}(0,T;L^1(I))\quad\mbox{for $q \in ]1, \infty[$.}
\]
One can check that
\[
L^p(0,T;L^p(I)) \subset L^p(]0,T[\times I) \subset L_w^p(0,T;L^p(I)),
\]
so by Pettis' theorem all three spaces coincide for $p\in [1,\infty[$. More precisely, the map assigning to $f \in L^{p}(]0,T[\times I)$ an $L^{p}(I)$-valued function $\widetilde{f}$ given by
$\widetilde{f}(t) = f(t,\cdot)$ is an isomorphism between them. Therefore
\[
L^\infty_{w^*}(0,T, L^\infty(I)) \stackrel{\eqref{iso}}= L^1(0,T;L^1(I))^* = L^1(]0,T[\times I)^* = L^\infty(]0,T[\times I),
\]
a fact which we implicitly use in the sequel.

We also need the following lemma about measurability.

	\begin{lemma} \label{lem-salva}
		For $p \in [1, \infty[$, let $g \in L^p_{w^*}(0,T;BV(I))$ and $\nu \in L^{p'}_{w^*}(0,T; M(I))$. Then the function
		\[t \mapsto \int_I g^*(t) \dd \nu(t)\]
		is Lebesgue measurable.
	\end{lemma}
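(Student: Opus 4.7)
The plan is to express $\int_I g^*(t)\,\dd\nu(t)$ as a pointwise-in-$t$ limit of measurable functions, where measurability at each approximation level follows from Fubini combined with the weak-* hypotheses on $g$ and $\nu$.

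First, from the Sobolev embedding $BV(I)\hookrightarrow L^q(I)$ with $q\in\,]1,\infty[\,$ and Pettis' theorem applied to the separable reflexive space $L^q(I)$---exactly as invoked in the excerpt to identify $L^p_w(0,T;L^p(I))=L^p(]0,T[\times I)$---we know that $g$ is jointly Lebesgue measurable on $]0,T[\times I$, with $\|g(t)\|_{L^\infty(I)}\le C\|g(t)\|_{BV(I)}\in L^p(0,T)$. Fix a standard mollifier $(\rho_\epsilon)_{\epsilon>0}$ on $\R$, extend $g(t)$ by $0$ outside $I$, and set $g_\epsilon(t,x):=(g(t)*\rho_\epsilon)(x)$. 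The classical convergence of mollifications of $BV$ functions to their precise representative at every point gives $g_\epsilon(t,x)\to g^*(t,x)$ for every $(t,x)\in\, ]0,T[\times I$, with the uniform bound $|g_\epsilon(t,x)|\le\|g(t)\|_{L^\infty(I)}$. Since $|\nu(t)|(I)<\infty$, dominated convergence yields
\[
\int_I g_\epsilon(t)\,\dd\nu(t)\to \int_I g^*(t)\,\dd\nu(t)\qquad\text{for every }t\in\, ]0,T[,
\]
so it suffices to show that $t\mapsto \int_I g_\epsilon(t)\,\dd\nu(t)$ is measurable for each fixed $\epsilon>0$.

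By Fubini (justified since the integrand is bounded and both measures are finite),
\[
\int_I g_\epsilon(t,x)\,\dd\nu(t)(x)=\int_I g(t,y)\,h_\epsilon(t,y)\,\dd y,\qquad h_\epsilon(t,y):=\int_I\rho_\epsilon(x-y)\,\dd\nu(t)(x).
\]
For fixed $y\in I$ the function $x\mapsto \rho_\epsilon(x-y)$ is continuous and bounded on $I$ but may not vanish at $\partial I$, hence need not lie in the predual $C_0(I)$ of $M(I)$; however, $\nu(t)$ does not charge $\partial I$ (being a Radon measure on the open set $I$), so multiplying by cutoffs $\psi_n\in C_c(I)$ with $\psi_n\nearrow 1$ and invoking dominated convergence with respect to $|\nu(t)|$ expresses $h_\epsilon(t,y)$ as a pointwise limit of pairings against $C_0(I)$ functions, which are measurable in $t$ by the weak-* hypothesis on $\nu$. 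Combined with the continuity of $y\mapsto h_\epsilon(t,y)$ for each $t$, this shows $h_\epsilon$ is Carathéodory, hence jointly measurable on $]0,T[\times I$. The product $g\cdot h_\epsilon$ is then jointly measurable and bounded, so Fubini gives measurability of $t\mapsto \int_I g(t,y)h_\epsilon(t,y)\,\dd y$, and passing to the pointwise limit $\epsilon\to 0^+$ concludes. The only real subtlety is the $C_0(I)$-approximation just described: the kernels $\rho_\epsilon(\cdot-y)$ centered near $\partial I$ escape the predual, and it is precisely the fact that $\nu(t)$ assigns zero mass to $\partial I$ that rescues the weak-* measurability argument.
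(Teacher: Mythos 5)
Your proof is correct and follows essentially the same route as the paper: mollify $g(t)$, use the everywhere convergence of symmetric mollifications of a $BV$ function on the open interval to its precise representative together with dominated convergence against the finite measure $\nu(t)$, and thereby reduce to measurability in $t$ of the mollified pairings. The only difference is the technical treatment of the $C_0(I)$ predual: the paper truncates $g(t)$ by $\mathbf 1_{I_j}$ so that $g_j \in L^p(0,T;C_0(I))$ and pairs it directly with $\nu(t)$, whereas you transfer the mollifier onto $\nu(t)$ via Fubini and cut off the kernel with $\psi_n \in C_c(I)$ --- both implementations are sound.
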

	\begin{proof}
		We recall that $g \in L^p(0,T;L^p(I))$. Let $\varrho_\eps$ be a standard, symmetric, smooth approximate identity. For $j\in \mathbb N$ we denote by $I_j$ the set of points in $I$ whose distance
to $\partial I$ is at least $2/j$ and let
		\[g_j(t) = \varrho_{\frac{1}{j}}* \left(\mathbf 1_{I_j} \ g(t)\right).\]
		Then $g_j \in L^p(0,T;C_0(I))$, whence the function
		\[t \mapsto \int_I g_j(t) \dd \nu(t)\]
		is Lebesgue measurable (and belongs to $L^1(0,T)$). Let us now fix $t \in [0,T]$ such that $g(t) \in BV(I)$, $\nu(t) \in M(I)$. Since $g_j(t)$ are uniformly bounded and converge pointwise to
$g^*(t,\cdot)$, by dominated convergence
		\[\int_I g_j(t) \dd \nu(t) \to \int_I g^*(t) \dd \nu(t) \quad\mbox{as $j \to +\infty$,}
\]
hence the right hand side is measurable (as a pointwise limit of measurable functions).
\end{proof}

For simplicity, in this subsection we have only considered spaces of scalar-valued functions on $I$. Of course, analogous facts can be deduced for the vector-valued case.

\medskip
\noindent
{\bf Riemannian comparison theorems}
\smallskip

We will use two well-known theorems in Riemannian geometry that allow comparing our given manifold $\N$ with sectional curvature bounded above to a manifold with constant sectional curvature (\emph{space form}). We recall their statements here in generality that will be convenient to us. It will suffice to consider the case where the comparison manifold has non-negative curvature, i.\,e.\ it is a sphere or a Euclidean space.

The first of the two results can be seen as a corollary of the general Hessian comparison theorem for the distance function, see e.\,g.\ \cite[Theorem 1.1.]{SchoenYau1994}). Here we are interested in
the Hessian of the squared distance function $r_{\bp_0}^2$, where we denoted
\[r_{\bp_0}(\bp) =  \dist_{\N}(\bp, \bp_0).\]
Recall that for a smooth function $f \colon \N \to \R$, $\Hess f$ is a $(0,2)$-tensor which may be defined as
\[\Hess f (X,Y) = \langle \nabla_X \dd f, Y\rangle\]
where $\dd$ is the exterior derivative and $\nabla$ is the (Levi-Civita) covariant derivative on $\N$. Using basic properties of the Levi-Civita connection, we can rewrite
\[\Hess f (X,Y) = X(Yf) - (\nabla_X Y) f = Y(Xf) - (\nabla_Y X) f\]
which shows that the Hessian is symmetric. We note that in our case
\[\langle \dd \tfrac{1}{2}r_{\bp_0}^2, X\rangle(\bp) = - \log_{\bp} \bp_0 \cdot X(\bp) \]
provided that $r_{\bp_0}(\bp) < \injN$ \cite{jostriemannian} and
\begin{equation}\label{char-hess}
\Hess \tfrac{1}{2}r_{\bp_0}^2(X,Y) = - Y \cdot D_{\bp}\log_{\bp}\bp_0\, X.
\end{equation}

We define \begin{equation}\label{cotangent}h_{\N}(\sigma):= \left\{\begin{array}
		{ll} 1 & \text{if } K_\N\leq 0 \\  \sqrt{K_{\N}} \sigma \cot(\sqrt{K_{\N}} \sigma) & \text{if } K_\N >0.
	\end{array}\right.\end{equation}
\begin{lemma} \label{hct}
	Suppose that $r_{\bp_0}(\bp) < 2 \radN$. Then
	\[(\Hess \tfrac{1}{2} r_{\bp_0}^2)(\bp)(X,X)\geq h_\N(r_{\bp_0}(\bp))|X|^2.
	\]
\end{lemma}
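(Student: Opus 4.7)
The plan is to decompose any tangent vector into its radial and transverse components with respect to $\nabla r_{\bp_0}$ and reduce the claim to the classical Hessian comparison theorem for the distance function cited from \cite{SchoenYau1994}.

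The first step is to write, at any point $\bp$ with $r_{\bp_0}(\bp)<\injN$ (which is the case by hypothesis since $2\radN\le \injN$),
\begin{equation*}
\Hess \tfrac{1}{2}r_{\bp_0}^2 = dr_{\bp_0}\otimes dr_{\bp_0} + r_{\bp_0}\, \Hess r_{\bp_0}
\end{equation*}
on $T_{\bp}\N$, which follows from $d\tfrac{1}{2}r_{\bp_0}^2 = r_{\bp_0}\,dr_{\bp_0}$ and the Leibniz rule for the Levi-Civita connection. Then I split $X=X^\parallel+X^\perp$ with $X^\parallel$ collinear to $\nabla r_{\bp_0}$ and $X^\perp\perp\nabla r_{\bp_0}$. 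Since $\nabla r_{\bp_0}$ is the velocity of the unit-speed radial geodesic from $\bp_0$, it is parallel along its own flow lines, so $\Hess r_{\bp_0}(\nabla r_{\bp_0},Y)=0$ for every $Y$; combined with $dr_{\bp_0}(X^\perp)=0$, this gives
\begin{equation*}
\Hess \tfrac{1}{2}r_{\bp_0}^2(X,X)=|X^\parallel|^2+r_{\bp_0}\,\Hess r_{\bp_0}(X^\perp,X^\perp).
\end{equation*}

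The second step is to apply the Hessian comparison theorem to the transverse part. Because the sectional curvatures of $\N$ are bounded above by $K_\N$, comparison with the simply connected space form of constant curvature $K_\N^+:=\max\{K_\N,0\}$ yields
\begin{equation*}
\Hess r_{\bp_0}(X^\perp,X^\perp)\ge \mathrm{ct}_{K_\N^+}(r_{\bp_0})\,|X^\perp|^2,
\end{equation*}
where $\mathrm{ct}_0(\sigma)=1/\sigma$ and $\mathrm{ct}_K(\sigma)=\sqrt{K}\cot(\sqrt{K}\sigma)$ for $K>0$. The hypothesis $r_{\bp_0}(\bp)<2\radN$ guarantees, in the case $K_\N>0$, that $\sqrt{K_\N}\,r_{\bp_0}<\pi$, so we are strictly below the first conjugate radius of the model and the comparison applies. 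Multiplying by $r_{\bp_0}$ gives exactly $r_{\bp_0}\Hess r_{\bp_0}(X^\perp,X^\perp)\ge h_\N(r_{\bp_0})|X^\perp|^2$ in both cases of the definition \eqref{cotangent}.

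The last step handles the radial part: I note that $h_\N(\sigma)\le 1$ for all admissible $\sigma$ (trivially in the case $K_\N\le 0$; and because $x\cot x\le 1$ on $(0,\pi)$ in the case $K_\N>0$), so
\begin{equation*}
|X^\parallel|^2+h_\N(r_{\bp_0})|X^\perp|^2\ge h_\N(r_{\bp_0})\bigl(|X^\parallel|^2+|X^\perp|^2\bigr)=h_\N(r_{\bp_0})|X|^2,
\end{equation*}
which together with the previous displays yields the claim. There is no real obstacle here since every step is a direct consequence of well-documented Riemannian comparison geometry; the only point that requires care is verifying that $r_{\bp_0}<2\radN$ indeed places us inside the regime where the Hessian comparison theorem with the space form of curvature $K_\N^+$ is valid (no conjugate points along the minimizing geodesic from $\bp_0$), which is precisely the reason why $\radN$ is defined as in \eqref{def-radN}.
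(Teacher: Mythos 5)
Your argument is correct and follows essentially the same route as the paper, which deduces the lemma from the classical Hessian comparison theorem for $r_{\bp_0}$ via the identity $\Hess \tfrac{1}{2} r_{\bp_0}^2 = r_{\bp_0}\Hess r_{\bp_0} + \dd r_{\bp_0}\otimes\dd r_{\bp_0}$ (citing \cite[Theorem 27]{petersen}); you have merely written out the ``simple calculation'' explicitly, namely the radial/transverse splitting using $\Hess r_{\bp_0}(\nabla r_{\bp_0},\cdot)=0$ and the elementary bound $h_\N\le 1$, with the hypothesis $r_{\bp_0}(\bp)<2\radN$ correctly invoked to stay below both the injectivity radius and, when $K_\N>0$, the conjugate radius $\pi/\sqrt{K_\N}$ of the model.
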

The Hessian comparison theorem seems to be most commonly formulated in the literature in terms of the distance function $r_{\bp_0}$ itself. Lemma \ref{hct} can be deduced from such a statement in \cite[Theorem 27.]{petersen} by simple calculation
\[\Hess \tfrac{1}{2} r_{\bp_0}^2 = \nabla \dd \tfrac{1}{2} r^2_{\bp_0} = \nabla( r_{\bp_0} \dd r_{\bp_0}) = r_{\bp_0} \Hess r_{\bp_0} + \dd r_{\bp_0} \otimes \dd r_{\bp_0}. \]

We will also use the following Alexandrov comparison theorem \cite[4.1]{karcher-chern}.
\begin{lemma} \label{act}
  If $K_{\N} > 0$, let $S$ be the sphere of radius $1/\sqrt{K_{\N}}$. Otherwise, let $S$ be the Euclidean plane. Suppose that $\triangle$ is a geodesic triangle in $\N$ whose perimeter is smaller than $4 \radN$. Then, there exists a triangle $\triangle_s$ in $S$ with the same edge lengths as $\triangle$ satisfying
  	\[\alpha \leq \alpha_s, \quad \beta \leq \beta_s, \quad \gamma \leq \gamma_s, \]
  where $\alpha, \beta, \gamma$ are the angles of $\triangle$ and $\alpha_s, \beta_s, \gamma_s$ are the corresponding angles of $\triangle_s$.

Moreover, given two adjacent segments in $\mathcal N$ with lengths $a$ and $b$ and given the angle $\gamma$ between them, let $c$ be the length of the opposite segment and let $\triangle$ be the resulting triangle. Then there exists a triangle $\triangle_s$ in $S$ with two adjacent edges of lengths $a$ and $b$ and with angle $\gamma$ between them, and the length $c_s$ of the opposite edge satisfies $$c_s\leq c.$$

\end{lemma}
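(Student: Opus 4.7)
The statement is classical Toponogov/Alexandrov comparison for CAT$(K_\N)$-type estimates, and I would derive it in two stages: first the hinge (SAS) inequality, then the angle comparison as a consequence. The plan leans on Lemma \ref{hct}, so no further machinery beyond what is already established in the preliminaries is needed.

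\textbf{Step 1: the SAS hinge inequality.} Let $\bp\in\N$ be the common vertex of two unit-speed geodesic segments $\sigma\colon[0,a]\to\N$ and $\tau\colon[0,b]\to\N$ meeting at angle $\gamma$, and set $\bq=\sigma(a)$. Define $g(t)=\tfrac12 r_{\bq}^2(\tau(t))$ on $[0,b]$. Using \eqref{char-hess} I would compute
\[g'(t) = -\log_{\tau(t)}\bq\cdot\tau'(t), \qquad g''(t) = \Hess\tfrac12 r_\bq^2(\tau'(t),\tau'(t)).\]
Because the perimeter is less than $4\radN$, the whole configuration sits in a geodesic ball on which $r_\bq<2\radN$, so Lemma \ref{hct} applies and gives $g''(t)\ge h_\N(\sqrt{2g(t)})$. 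The same construction carried out in the model space $S$ (which has constant curvature $\max(K_\N,0)$, hence realizes equality in the Hessian comparison) produces a function $g_s$ satisfying $g_s''(t)=h_\N(\sqrt{2g_s(t)})$. The initial conditions match: $g(0)=g_s(0)=a^2/2$, and $g'(0)=g_s'(0)=-a\cos\gamma$ by the hinge hypothesis. A Sturm/ODE comparison then yields $g(t)\ge g_s(t)$ on $[0,b]$, and evaluating at $t=b$ gives $c=\sqrt{2g(b)}\ge\sqrt{2g_s(b)}=c_s$, which is exactly the second assertion of the lemma.

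\textbf{Step 2: the angle comparison.} Given a geodesic triangle $\triangle\subset\N$ with edge lengths $a,b,c$ and angle $\gamma$ opposite to $c$, Step 1 supplies a model SAS-triangle with edges $a,b$ meeting at angle $\gamma$ and opposite side $c'_s\le c$. The proper comparison triangle $\triangle_s\subset S$ with edges $a,b,c$ has angle $\gamma_s$ at the corresponding vertex. In the space form $S$, for fixed adjacent sides $a,b$ the opposite side length is a strictly increasing function of the included angle on the range dictated by the perimeter bound $a+b+c<4\radN\le 2\pi/\sqrt{K_\N}$ (monotonicity follows from the spherical/Euclidean law of cosines). Since $c'_s\le c$, this monotonicity gives $\gamma\le\gamma_s$, and applying the same reasoning at the other two vertices yields the full angle comparison.

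\textbf{Main obstacle.} The delicate point is ensuring that the ODE comparison in Step 1 is run in a regime where $h_\N$ is well-behaved, i.e.\ where $\sqrt{2g(t)}<\pi/\sqrt{K_\N}$ when $K_\N>0$, so that Lemma \ref{hct} is actually applicable throughout $[0,b]$ and $\cot$ has no singularity. This is precisely why the perimeter threshold in Definition \ref{rad} is chosen as $4\radN$: it forces every pair of points on $\triangle$ to lie within injective range and inside the convexity radius, so that $r_\bq\circ\tau$ stays below $2\radN$ and the geodesic segments do not leave the convex ball. Once this containment is verified (routine but necessary), the differential inequality argument and the monotonicity step go through as sketched. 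Since the result is stated and proved in \cite{karcher-chern}, I would finalize the exposition by citing that reference rather than reproducing the technical ODE estimates.
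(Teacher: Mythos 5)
The paper offers no proof of Lemma \ref{act} at all: it is invoked verbatim from \cite[4.1]{karcher-chern}, so your closing decision to cite Karcher is exactly what the authors do. The self-contained sketch you give before that, however, has a genuine gap in the case $K_\N>0$, located in the ODE comparison of Step 1. You claim that in the model sphere the function $g_s(t)=\tfrac12 r_{\bq_s}^2(\tau_s(t))$ satisfies the \emph{equality} $g_s''=h_\N\bigl(\sqrt{2g_s}\bigr)$ because the space form ``realizes equality in the Hessian comparison''. It does not: on the sphere of curvature $K$ one has, for a unit vector $X$ making angle $\theta$ with the radial direction, $\Hess\tfrac12 r^2(X,X)=\cos^2\theta+\sin^2\theta\,\sqrt K\,r\cot(\sqrt K\,r)$, so equality in Lemma \ref{hct} holds only for directions tangent to the distance spheres. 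Along the model geodesic $\tau_s$, which is in general not tangential, one only gets $g_s''\ge h_\N\bigl(\sqrt{2g_s}\bigr)$ --- the same inequality satisfied by $g$. Two functions with identical initial data that both satisfy $u''\ge F(u)$ admit no ordering (take $F\equiv 0$, $u_1=t^2$, $u_2=2t^2$), so the Sturm step does not close. Your argument survives only for $K_\N\le 0$, where $h_\N\equiv 1$ and the Euclidean model relation $g_s''=1$ is exact.

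The standard repair --- and essentially Karcher's proof --- replaces $\tfrac12 r^2$ by the modified distance $\mathrm{md}_K(r)=\tfrac1K\bigl(1-\cos(\sqrt K\,r)\bigr)$ with $K=K_\N>0$. The Hessian comparison for sectional curvature $\le K$ gives, for unit $X$, $\Hess(\mathrm{md}_K\circ r_\bq)(X,X)\ge 1-K\,\mathrm{md}_K(r_\bq)$ wherever $r_\bq<\min\{\injN,\pi/\sqrt K\}$, so $f=\mathrm{md}_K\circ r_\bq\circ\tau$ satisfies $f''+Kf\ge 1$, while in the model space $f_s''+Kf_s=1$ holds \emph{exactly and for every direction of the geodesic} --- this linearity is precisely what the squared distance lacks. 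With the hinge-matched data $f(0)=f_s(0)=\mathrm{md}_K(a)$ and $f'(0)=f_s'(0)=-\mathrm{md}_K'(a)\cos\gamma$, the linear comparison (multiply by $\sin(\sqrt K\,t)$, positive on $\bigl]0,\pi/\sqrt K\bigr[$) yields $f\ge f_s$ on $[0,b]$, and monotonicity of $\mathrm{md}_K$ on $\bigl[0,\pi/\sqrt K\bigr]$ gives $c\ge c_s$. Your containment remark is correct and is what legitimizes all of this: every point of the side opposite to $\bq$ lies within distance half the perimeter, which is $<2\radN\le\min\{\injN,\pi/\sqrt{K_\N}\}$, of $\bq$, and $b<2\radN$ keeps the comparison interval inside $\bigl]0,\pi/\sqrt K\bigr[$. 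Step 2 (deducing the angle comparison from the hinge inequality via monotonicity of the spherical law of cosines, each side being at most half the perimeter) is fine as stated. If you do not want to reproduce these estimates, citing \cite{karcher-chern} is consistent with the paper; but as a proof sketch, the exactness claim for $\tfrac12 r^2$ in the spherical model must be replaced by the $\mathrm{md}_K$ argument.
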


\section{Counterexamples to geodesic semiconvexity}\label{sec:counter}

Let $\lambda\in \R$ and $(X,d)$ a metric space. A curve $[0,1] \ni t \mapsto \bgamma_t \in X$ is a \emph{constant speed geodesic} if
\[d(\bgamma_0, \bgamma_t) = t\, d(\bgamma_0, \bgamma_1) \quad \text{for all } t \in [0,1].\]
A functional $\F\colon X\to ]-\infty,+\infty]$ is \emph{geodesically $\lambda$-convex} if for any $\bu,\bv\in X$ there exists a constant speed geodesic $\bgamma_t$ in $X$ from $\bu$ to $\bv$ such
that
\begin{equation}\label{def-l-conv}
\F(\bgamma_t)\le (1-t)\F(\bu)+t\F(\bv) -\tfrac12 \lambda t(1-t) d^2(\bu,\bv) \quad\mbox{for all $t\in[0,1].$}
\end{equation}
We call a functional $\F$ \emph{geodesically semiconvex} if it is geodesically $\lambda$-convex for some $\lambda\in \R$. In this section we consider $\N=\S^2$ and $X=L^2(I,\S^2)$ with the intrinsic distance
\[ d^2(\bu,\bv)= \int_I \dist_{\S^2}^2(\bu(x),\bv(x)). \]
We first observe that in this case, given $\bu, \bv \in X$, any curve $[0,1] \ni t \mapsto \bgamma_t \in X$ such that 
\begin{itemize}
\item $\bgamma_0 = \bu$, $\bgamma_1 = \bv$,  
\item $t \mapsto \bgamma_t(x)$ is a constant speed geodesic in $\N$ for a.\,e.\ $x \in I$  
\end{itemize}
is a constant speed geodesic in $X$. It is then easy to see that $TV_I^{\S^2}$ is not geodesically semiconvex. Indeed, choosing constant functions $\bu_\pm \equiv (0,0,\pm 1)$, any measurable function $\bw \colon I \to \mathbb S^2$ whose range is restricted to the equator $\mathbb S^1 \times\{0\}$ is a geodesic midpoint between $\bu_+$ and $\bu_-$. Naturally, we may choose such a function to have arbitrarily large, or infinite, total variation. A variant of this counterexample with $\bu_\pm$ jumping between $(\pm \cos \alpha, \sin \alpha, 0)$ and $(\pm \cos \alpha, -\sin \alpha, 0)$, $\alpha \to 0^+$, can be produced to show that a restriction of $TV_I^{\S^2}$ to $BV_{\rm rad}(I, \N)$ also fails to be semiconvex. One could then suspect that excluding sequences converging to antipodes could help. However, this is also not the case. In fact we shall see that no matter how we restrict the range of considered curves, it does not give semiconvexity. For $r > 0$, let us define 
$\mathcal F_r\colon L^2(I{,{\S^2}})\to [0,+\infty]$ by
\[
\mathcal{F}_{r}(\bu):=\left\{\begin{array}
	{ll} \displaystyle \int_I \dd |\bu_x|_{\N} & \mbox{if $ \ \bu\in BV(I,\mathbb S^2) \ $ and $\ \bu(x) \in \overline{B_{\mathbb S^2}(\be_1,r)}$ for a.\,e.\ $x \in I$,} \\ +\infty & {\rm otherwise, }
\end{array}\right.
\]
where we have arbitrarily chosen $\be_1 = (1,0,0) \in \mathbb S^2$ for the center of the geodesic ball. 
 
\begin{prop}
$\mathcal{F}_r$ is not geodesically semiconvex for any $r>0$.
\end{prop}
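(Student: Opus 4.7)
The plan is, for arbitrary $\lambda\in\R$, to exhibit admissible $\bu_\Theta$ and $\bv$ depending on a parameter $\Theta>0$ such that \eqref{def-l-conv} at $t=1/2$ fails along the unique constant speed $L^2$-geodesic joining them once $\Theta$ is sufficiently large. The underlying mechanism is positive curvature: the pointwise geodesic midpoint between a point of the circle $\{\bp\in\S^2\colon \dist_{\S^2}(\bp,\be_1)=\ell\}$ and the centre $\be_1$ lies on the smaller circle at distance $\ell/2$, whose length-per-unit-angle $\sin(\ell/2)$ exceeds half of the original $\sin\ell$ by the factor $1/\cos(\ell/2)>1$. Wrapping the small circle many times amplifies this excess beyond any prescribed quadratic correction in $d^2(\bu_\Theta,\bv)$.

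Fix $\ell \in (0, \min(r, \pi/2))$. Take $\bv \equiv \be_1$ and set
\[
\bu_\Theta(x) := \bigl(\cos\ell,\ \sin\ell\cos(\Theta x/|I|),\ \sin\ell\sin(\Theta x/|I|)\bigr),\qquad x\in I.
\]
Both $\bu_\Theta$ and $\bv$ take values in $\overline{B_{\S^2}(\be_1,r)}$ (as $\ell<r$), and since $\dist_{\S^2}(\bu_\Theta(x),\be_1)=\ell<\pi$, the minimizing $\S^2$-geodesic from $\bu_\Theta(x)$ to $\be_1$ is unique. The pointwise interpolant
\[
\bgamma_t^\Theta(x) := \bigl(\cos((1-t)\ell),\ \sin((1-t)\ell)\cos(\Theta x/|I|),\ \sin((1-t)\ell)\sin(\Theta x/|I|)\bigr)
\]
satisfies $\dist_{\S^2}(\bu_\Theta(x),\bgamma_t^\Theta(x))=t\ell$ for every $x$, hence $d(\bu_\Theta,\bgamma_t^\Theta)=t\, d(\bu_\Theta,\bv)$ and $d(\bgamma_s^\Theta,\bgamma_t^\Theta)=(t-s)\, d(\bu_\Theta,\bv)$, so $t\mapsto \bgamma_t^\Theta$ is a constant speed geodesic in $L^2(I,\S^2)$. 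Invoking the Minkowski equality condition in $L^2$ together with pointwise uniqueness of minimizing geodesics in $\S^2$, one checks that $\bgamma_t^\Theta$ is in fact the only constant speed geodesic joining $\bu_\Theta$ to $\bv$.

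A direct computation yields
\[
\mathcal F_r(\bu_\Theta)=\sin\ell\cdot\Theta,\quad \mathcal F_r(\bv)=0,\quad \mathcal F_r(\bgamma_{1/2}^\Theta)=\sin(\ell/2)\cdot\Theta,\quad d^2(\bu_\Theta,\bv)=\ell^2|I|,
\]
whence, using $\sin\ell=2\sin(\ell/2)\cos(\ell/2)$,
\[
\mathcal F_r(\bgamma_{1/2}^\Theta)-\tfrac12\bigl(\mathcal F_r(\bu_\Theta)+\mathcal F_r(\bv)\bigr)=\Theta\,\sin(\ell/2)\,(1-\cos(\ell/2))>0.
\]
This defect grows linearly in $\Theta$, while the correction $-\lambda\ell^2|I|/8$ in \eqref{def-l-conv} at $t=1/2$ is constant in $\Theta$. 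Hence for any $\lambda\in\R$, choosing $\Theta$ sufficiently large violates \eqref{def-l-conv}, so $\mathcal F_r$ fails to be geodesically semiconvex. The only subtle point is the uniqueness of the $L^2$-geodesic (needed because $\lambda$-convexity is an ``exists a geodesic'' condition); however this is a routine consequence of the equality case in Minkowski's inequality applied to the $L^2$-decomposition of the distance.
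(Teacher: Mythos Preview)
Your proof is correct and takes a genuinely different route from the paper's.

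The paper uses piecewise constant step functions: $\bu$ jumps once across a side of a small spherical square of side $a$, while $\bv^\eps$ makes three jumps around the other three sides, with two of those jumps supported on tiny intervals of width $\eps$. The midpoint then has a jump across the diagonal, and on $\S^2$ the diagonal exceeds the side length; the $\lambda$-correction is defeated by sending $\eps\to 0$ so that $d^2(\bu,\bv^\eps)\to 0$ while the TV defect stays bounded below by a positive constant. Your construction instead uses smooth curves that wrap $\Theta$ times around the latitude circle at distance $\ell$ from $\be_1$, paired with the constant $\bv\equiv\be_1$; the midpoint wraps the circle at distance $\ell/2$, and the curvature identity $\sin(\ell/2)>\tfrac12\sin\ell$ produces a TV defect that scales linearly in $\Theta$ while $d^2$ stays fixed.

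Both arguments ultimately exploit positive curvature, but in complementary ways: the paper shrinks the $L^2$ distance while keeping the defect fixed, you blow up the defect while keeping the distance fixed. Your version has the advantage of being entirely explicit with a one-line computation, and you are explicit about why the $L^2$-geodesic is unique (via the Minkowski/Cauchy--Schwarz equality case), a point the paper's proof passes over with ``let $\bgamma_t^\eps$ be the minimizing geodesic interpolation.'' The paper's step-function construction, on the other hand, is tailored to feed into their next proposition (non-semiconvexity of $\mathcal F_{\bw_0}$), where the constraint $|\bu_x|_\N\le |(\bw_0)_x|_\N$ forces all competitors to be piecewise constant with jumps only at prescribed atoms---there your wrapping construction would not be admissible.
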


\begin{proof}
For $a>0$ sufficiently small, let $\bp_0, \bp_1, \bq_0, \bq_1 \in \S^2$ be the vertices of a square of side $a$ on $\S^2$ centered at the point $\be_1$. We choose $\bp_0,\bq_0$ (resp.\ $\bp_1,\bq_1$) to be symmetric with
respect to the plane spanned by $\be_1$ and $\be_3$, and $\bp_0,\bp_1$ (resp.\ $\bq_0,\bq_1$) to be symmetric with respect to the plane spanned by $\be_1$ and $\be_2$ (see Figure \ref{fig:semiconvex}).
Given $\eps>0$, we consider
\[
\bu:=\bp_0\chi_{\left[0,\frac{1}{2}\right[}+\bq_0\chi_{\left[\frac{1}{2},1\right]}, \qquad \bv^\eps:=\bp_0\chi_{\left[0,\frac{1}{2}-\eps\right[}+\bp_1\chi_{\left[\frac{1}{2}-\eps,\frac{1}{2}\right[}+
\bq_1\chi_{\left[\frac{1}{2},\frac{1}{2}+\eps\right[}+\bq_0\chi_{\left[\frac{1}{2}+\eps,1\right]}.
\]
Let $\bgamma_t^\eps$ be the minimizing geodesic interpolation between $\bu$ and $\bv^\eps$ in $L^2(I, \mathbb{S}^2)$. Equivalently, $t \mapsto \bgamma_t^\eps(x)$ is the minimizing geodesic segment in $\mathbb S^2$ joining $\bu(x)$ and $\bv^\eps(x)$ for $x \in I$ (see Figure \ref{fig:semiconvex}).
\begin{figure}[htb]
  \begin{center}
    \includegraphics[width=0.3\textwidth]{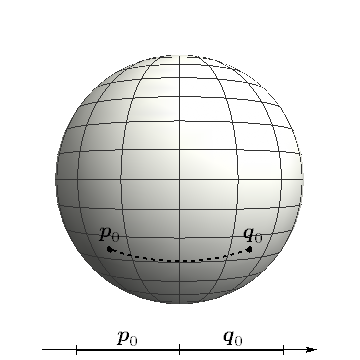}\quad \includegraphics[width=0.3\textwidth]{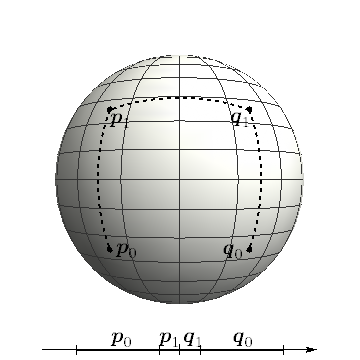} \quad
    \includegraphics[width=0.3\textwidth]{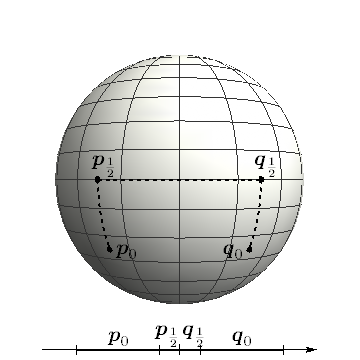}
  \end{center}
  \caption{From left to right: $\bu$, $\bv^\eps$, $\bgamma^\eps_{\frac{1}{2}}$}\label{fig:semiconvex}
\end{figure}
In order for $TV_I^{\S^2}$ to be geodesically semiconvex, the following inequality needs to be satisfied for some $\lambda \in\R$:
\[
\int_I \dd |(\bgamma_t^\eps)_x|_{\N} \leq (1-t)\int_I \dd |\bu_x|_{\N}+t\int_I\dd  |\bv^\eps_x|_{\N}-\tfrac{1}{2}\lambda t(1-t){\int_0^1}\dist_{\S^2} ^2{(\bu,\bv^\eps)}.
\]
In particular, at $t=\frac{1}{2}$, we would have
\begin{multline*}
\int_I \dd |(\bgamma_\frac{1}{2}^\eps)_x|_{\N}  \leq  \tfrac{1}{2}\dist_{\S^2}(\bp_0,\bq_0)+\tfrac{1}{2}\big(\dist_{\S^2}(\bp_0,\bp_1)+\dist_{\S^2}(\bp_1,\bq_1) +\dist_{\S^2}(\bq_1,\bq_0)\big)
\\  -\tfrac{1}{8}\lambda \eps (\dist_{\S^2}^2(\bp_0,\bp_1)+\dist_{\S^2}^2(\bq_0,\bq_1))=
2a-\tfrac14 \lambda{\eps}a
\end{multline*}
However,
\[
\int_I \dd |(\bgamma_\frac{1}{2}^\eps)_x|_{\N} = \tfrac{1}{2}\big(\dist_{\S^2}(\bp_0,\bp_1)+\dist_{\S^2}(\bq_1,\bq_0)\big) +  \dist_{\S^2}(\bp_{\frac{1}{2}},\bq_\frac{1}{2}) =
a+\dist_{\S^2}(\bp_{\frac{1}{2}},\bq_\frac{1}{2}),
\]
where $\bp_\frac{1}{2},\bq_\frac{1}{2}$ are the projections of $\bp_0,\bq_0$ onto the greatest circle parallel to $\be_1,\be_2$ (the equator in Fig.\ \ref{fig:semiconvex}). 
Therefore, one needs
\[a<\dist_{\S^2}(\bp_\frac{1}{2},\bq_\frac{1}{2})\leq a-\frac{\lambda\eps a}{4},
\]
which leads to a contradiction for any $\lambda\in \R$, by choosing $\eps=\eps_\lambda$ sufficiently small.
\end{proof}

Next, we modify the example above to show that $TV_I^{\S^2}$ fails to be semiconvex along geodesics even in the set where our solutions are found, i.\,e.\ the subset of $BV(I, \N)$ satisfying
\eqref{cle} with respect to a given datum $\bw_0$ (were it the case, one could still apply the general theory in \cite{afp} to obtain existence of solutions). Given $\bw_0\in BV(I,{\S^2})$, we thus
consider the functional $\mathcal F_{\bw_0}\colon L^2(I{,{\S^2}})\to [0,+\infty]$ defined by
\[
\mathcal{F}_{\bw_0}(\bu):=\left\{\begin{array}
   {ll} \displaystyle \int_I \dd |\bu_x|_{\N} & \mbox{if $ \ \bu\in BV(I,\mathbb S^2) \ $ and $\ |\bu_x|_{\N}\leq |\bw_{0,x}|_{\N}$} \\ +\infty & {\rm otherwise }
 \end{array}\right.
\]
\begin{prop}
There exists $\bw_0 \in BV_{\rm rad}(I,{\S^2})$ such that $\mathcal{F}_{\bw_0}$ is not geodesically semiconvex.
\end{prop}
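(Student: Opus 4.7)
The plan is to recycle the geometric construction from the proof of the previous proposition and set $\bw_0:=\bv^\eps$ for some fixed $\eps>0$, with $a>0$ so small that $a<\pi=2\radN$; this ensures $\bw_0\in BV_{\rm rad}(I,\S^2)$. The heuristic is that the only candidate constant-speed $L^2$-geodesic from $\bu$ to $\bv^\eps$---namely the pointwise geodesic interpolation $\bgamma_t$---produces at $t=1/2$ a jump at the point $1/2$ whose size strictly exceeds the atom of $|\bw_{0,x}|_{\N}$ sitting there, so the midpoint is pushed out of $\mathrm{dom}\,\F_{\bw_0}$ and \eqref{def-l-conv} collapses to $+\infty\le\text{finite}$ for every $\lambda\in\R$.

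First I would verify admissibility: $\bv^\eps\in\mathrm{dom}\,\F_{\bw_0}$ trivially, and $\bu\in\mathrm{dom}\,\F_{\bw_0}$ because its only atom sits at $1/2$ with mass $a=|\bw_{0,x}|_{\N}(\{1/2\})$. Second, I would show that the pointwise geodesic interpolation is the unique constant-speed geodesic from $\bu$ to $\bv^\eps$ in $L^2(I,\S^2)$: the identity $d(\bgamma_0,\bgamma_t)+d(\bgamma_t,\bgamma_1)=d(\bgamma_0,\bgamma_1)$ (forced by constant speed), combined with the pointwise triangle inequality in $\S^2$ and the equality case of Cauchy--Schwarz, forces $\dist_{\S^2}(\bgamma_0(x),\bgamma_t(x))=t\,\dist_{\S^2}(\bgamma_0(x),\bgamma_1(x))$ for a.\,e.\ $x$; since for $a$ small the relevant four values lie in a ball of radius less than $\injN/2=\pi/2$, the pointwise minimizing geodesic is unique and $\bgamma_t(x)$ is thereby determined.

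Third, the computation already carried out in the previous proof yields that the midpoint $\bgamma_{1/2}$ has a jump at $1/2$ of size $\dist_{\S^2}(\bp_{1/2},\bq_{1/2})>a$, the strict inequality being the spherical-geometry fact invoked there. Hence $|\bgamma_{1/2,x}|_{\N}(\{1/2\})>|\bw_{0,x}|_{\N}(\{1/2\})$, so the measure constraint is violated and $\F_{\bw_0}(\bgamma_{1/2})=+\infty$. Since the right-hand side of \eqref{def-l-conv} at $t=1/2$ equals $\tfrac12\F_{\bw_0}(\bu)+\tfrac12\F_{\bw_0}(\bv^\eps)-\tfrac18\lambda\,d^2(\bu,\bv^\eps)$, a finite quantity for every $\lambda\in\R$, the inequality fails and $\F_{\bw_0}$ cannot be geodesically semiconvex.

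The step I expect to be most delicate is the uniqueness of the constant-speed geodesic in $L^2(I,\S^2)$: although it is classical that pointwise interpolation yields an $L^2$-geodesic when the target is locally uniquely geodesic, one must carefully rule out any alternative constant-speed geodesic---perhaps exploiting the positive curvature of $\S^2$---that might sneak back into $\mathrm{dom}\,\F_{\bw_0}$ and rescue inequality \eqref{def-l-conv}.
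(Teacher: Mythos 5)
Your approach is correct but takes a genuinely different route from the paper's. The paper chooses a $\bw_0$ with infinitely many decreasing jumps, $|(\bw_0)_x|_\N=\sum_n\frac{1}{n(n+1)}\delta_{a_n}$, and for each $n$ exhibits a pair $\bu^n,\bv^n$ whose pointwise geodesic midpoint $\bgamma^n_{1/2}$ lies \emph{inside} $\mathrm{dom}\,\F_{\bw_0}$ (one checks that $2\arcsin\tan\frac{1}{8n(n+1)}<\frac{1}{n(n+1)}$) yet violates \eqref{def-l-conv} with effective modulus $\gtrsim n$; letting $n\to\infty$ then rules out every $\lambda$. You instead set $\bw_0:=\bv^\eps$ (only three jumps) and note that the midpoint of the unique constant-speed geodesic acquires a jump at $\tfrac12$ of size $\dist_{\S^2}(\bp_{1/2},\bq_{1/2})>a=|\bw_{0,x}|_\N(\{\tfrac12\})$, so $\F_{\bw_0}(\bgamma_{1/2})=+\infty$ and \eqref{def-l-conv} collapses to $+\infty\le\text{finite}$ for all $\lambda$. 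Your route is substantially shorter (no infinite sum, no haversine asymptotics) at the cost of a less informative failure mode: the midpoint simply leaves the domain rather than contradicting a finite estimate. Both proofs hinge on uniqueness of the constant-speed $L^2(I,\S^2)$-geodesic between $\bu$ and $\bv^\eps$, which you address explicitly via equality in the pointwise triangle inequality together with equality in Minkowski; the paper uses the same fact tacitly. One small caveat: your uniqueness argument implicitly upgrades the paper's stated definition of constant-speed geodesic (which only constrains $d(\bgamma_0,\bgamma_t)$) to the standard one $d(\bgamma_s,\bgamma_t)=|s-t|\,d(\bgamma_0,\bgamma_1)$, which is surely what is intended there. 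Overall, a valid and cleaner alternative proof.
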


\begin{proof}
We consider
\[
\bw_0:=\sum_{n=1}^\infty
{\left(\cos\tfrac{n-1}{n},\sin\tfrac{n-1}{n},0\right)
\chi_{[a_{n-1},a_n[}, \quad\text{where } a_n=\frac{2^n-1}{2^n}, \  n\in \mathbb N.
}
\]
Observe that $\bw_0\in BV(I{,\S^2})$ and that
\[
|(\bw_0)_x|_{\N}=\sum_{n=1}^\infty \frac{1}{n(n+1)}\delta_{a_n}.
\]
We are going to show that for any $n\in\mathbb N$ there exist $\bu^n,\bv^n\in BV(I,\S^2)$ such that $|\bu^n_x|_{\N} \leq |(\bw_0)_x|_{\N}$, $|\bv_x^n|_{\N}\leq |(\bw_0)_x|_{\N}$ and
\begin{equation}\label{counterexample}
\int_I \dd |(\bgamma_\frac{1}{2}^n)_x|_{\N} >\frac{1}{2}\int_I \dd|\bu^n_x|_{\N}+\frac{1}{2}\int_I \dd|\bv^n_x|_{\N}+n \int_I \dist_{\S^2} ^2(\bu^n,\bv^n), 
\end{equation}
$\bgamma_t^n$ being the geodesic interpolation between $\bu^n$ and $\bv^n$. Inequality \eqref{counterexample} implies that, for any $\lambda \in \R$, $\mathcal{F}_{\bw_0}$ does not satisfy
\eqref{def-l-conv} for all $\bu,\bv\in BV(I)$, whence the statement.

\smallskip

Let $\bp_{0n}, \bp_{1n}, \bq_{0n}, \bq_{1n} \in {\S^2}$ be the vertices of a square of side $(4n(n+1))^{-1}$ on $\S^2$ centered at the point $(1,0,0)$. We take $\bp_{0n},\bq_{0n}$ (resp.
$\bp_{1n},\bq_{1n})$ to be symmetric with respect to the plane spanned by $\be_1$ and $\be_3$ and $\bp_{0n},\bp_{1n}$ (resp. $\bq_{0n},\bq_{1n}$) to be symmetric with respect to the plane spanned by
$\be_1$ and $\be_2$. Let
\begin{align*}
\bu^n &:= \bp_{0n}\chi_{[0,a_n[}+\bq_{0n}\chi_{[a_n,1]}
\\
\bv^n &:= \bp_{0n}\chi_{[0,a_{n-1}[}+\bp_{1n}\chi_{[a_{n-1},a_n[}+\bq_{1n}\chi_{[a_n,a_{n+1}[}+\bq_{0n}\chi_{[a_{n+1},1]}.
\end{align*}
Observe that $\F_{\bw_0}$ is finite on $\bu^n,\bv^n$:
\begin{align*}
|\bu^n_x|_{\N} &= \frac{1}{4n(n+1)}\delta_{a_n}\leq |(\bw_0)_x|_{\N}, \\
|\bv^n_x|_{\N} &= \frac{1}{4n(n+1)}
\left(\delta_{a_{n-1}} + \delta_{a_n} + \delta_{a_{n+1}}\right)\leq |(\bw_0)_x|_{\N}.
\end{align*}
Letting $\bp_{\frac{1}{2}n},\bq_{\frac{1}{2}n}$ be the projections of $\bp_{0n},\bq_{0n}$ onto the greatest circle parallel to $\be_1,\be_2$ (the equator in Fig.\ \ref{fig:semiconvex}), we note that
\begin{align*}
\left|\left(\bgamma_{\frac{1}{2}}^n\right)_x\right|_{\N}
&= \frac{1}{8n(n+1)}
\left(\delta_{a_{n-1}} +\delta_{a_{n+1}}\right)
+\dist_{\S^2}(\bp_{\frac{1}{2}n},\bq_{\frac{1}{2}n})\delta_{a_n}
\end{align*}
In order to compute $\dist_{\S^2}(\bp_{\frac{1}{2}n},\bq_{\frac{1}{2}n})$, we recall the haversine formula for geodesic triangles on the sphere:
\begin{equation}\label{haversine}
\hav c = \hav (a-b) + \sin a\sin b \hav \gamma, \qquad \hav \theta:=\sin^2\tfrac \theta 2,
\end{equation}
where $a,b,c$ are the lengths of the three sides and $\gamma$ is the opening angle opposite to the side of length $c$. We apply \eqref{haversine} to the triangle $\bp_{0n},\bq_{0n},\bp_s=(0,0,-1)$
($\bp_s$ is the south pole in Fig.\ \ref{fig:semiconvex}) with $c=\dist_{\S^2}(\bp_{0n},\bq_{0n})= \frac{1}{4n(n+1)}$, leading to
$$
\sin^2\tfrac{1}{8n(n+1)} = 0 + \sin^2\left(\tfrac\pi 2 - \tfrac{1}{8n(n+1)}\right)\sin^2 \tfrac\gamma 2 \quad\iff \quad \gamma = 2\arcsin \tan\frac{1}{8n(n+1)}.
$$
Noting that $\gamma=\dist_{\S^2}(\bp_{\frac{1}{2}n},\bq_{\frac{1}{2}n})$, we obtain
\begin{align*}
\left|\left(\bgamma_{\frac{1}{2}}^n\right)_x\right|_{\N} &=
\frac{1}{8n(n+1)}\left(\delta_{a_{n-1}} +\delta_{a_{n+1}}\right) +2\arcsin\left(\tan\frac{1}{8n(n+1)}\right)\delta_{a_n},
\end{align*}

On the other hand,
\begin{align*}
\int_0^1\dist_{\S^2}^{2}(\bu^n,\bv^n) &=
(a_n-a_{n-1})\dist_{\S^2}^2(\bp_{0n},\bp_{1n}) + (a_{n+1}-a_{n})\dist_{\S^2}^2(\bq_{0n},\bq_{1n})
\\
&= (a_{n+1}-a_{n-1})\left(\frac{1}{4n(n+1)}\right)^2 = \frac{3}{2^{n+5} n^2(n+1)^2}.
\end{align*}
Therefore, \eqref{counterexample} reads as
\[
2\arcsin\left(\tan\left(\frac{1}{8n(n+1)}\right)\right)> \frac{1}{4n(n+1)}+ \frac{3}{2^{n+5} n(n+1)^2},
\]
which holds true for $n\gg 1$ since $\arcsin(\tan x)=x+\frac{x^3}{2}+O(x^5)>x+\frac{x^3}{3}$ for $0< x\ll 1$.
\end{proof}

\section{Regular solutions and the pointwise estimate on~$|\bu_x|_{\N}$}\label{sec:cle}

In this section, we focus on the pointwise estimate
\begin{equation}
	\label{clelimit2}
	|\bu_x(t,\cdot)|_{\N}\leq |\bu_{0,x}|_{\N} \quad\text{as measures for a.\,e. }t\in]0,T[,
\end{equation}
which is a weaker version of \eqref{clelimit}, and its consequences. First, we prove Lemma \ref{localest}, which includes \eqref{cle}, a version of \eqref{clelimit} for regular solutions.
\begin{proof}[Proof of Lemma \ref{localest}]
 Local existence of a solution $\bu$, satisfying \eqref{energyinequality} up to its maximal time of existence, $T_\dagger$, is already contained in
\cite[Theorem 2]{giacomellilasicamoll}, hence we only need to show that $T_\dagger=+\infty$ and that \eqref{cle} holds. To these aims, we note that $\bu$ is constructed in \cite{giacomellilasicamoll} as limit, as $\eps\to 0^+$ and $\delta\to 0^+$ (in this order) of smooth solutions $\bu^{\eps,\delta}$ to the following approximating problems: 
\begin{align}
 \label{ctvflowapprox}
 &\bu_t = \pi_{\bu} \bz_x\quad \text{in }]0,+\infty[\times I, \qquad \bz:= \tfrac{\bu_x}{\sqrt{\eps^2 + |\bu_x|^2}},
\\
\label{Neumannapprox}
 &\bu_x = 0\quad \text{in }]0,+\infty[\times
\partial I,
\\
\label{initapprox}
 &\bu(0,\cdot) = \bu^{\delta}_0,
\end{align}
where $\bu^{\delta}_0$ is a sequence of smooth functions that converges to $\bu_0$ uniformly and
\[
\|\bu^{\delta}_{0,x}\|_{L^\infty(I,\R^N)} \to \|\bu_{0,x}\|_{L^\infty(I,\R^N)} \quad \text{as } {\delta} \to 0^+.
\]
Hereafter we let $\bu= \bu^{\eps,\delta}$  for notational convenience. Each $\bu$ satisfies the following Bochner-type identity (see \cite[Lemma 2]{giacomellilasicamoll}):
 \begin{equation}
 \label{bochner}
\frac{1}{2}\frac{\dd}{\dd t} |\bu_x|^2 =\bu_x\cdot \bu_{xt}=
(\bu_x \cdot \bz_x)_{x}
- (\pi_{\bu} \bu_{xx}) \cdot \bz_x + \underbrace{\bz \cdot \mathcal R^\N_{\bu} (\bu_{x}, \bu_{x}) \bu_{x}}_{=0}.
\end{equation}
Here we have highlighted the role of one spatial dimension in the argument: the last term in the identity vanishes due to antisymmetry of the Riemannian tensor. Using \eqref{bochner}, arguing as in
\cite{giacomellilasicamoll} one sees that
\begin{equation}
\label{boch-global}
\frac{\dd}{\dd t}\int_I (\eps^2 + |\bu_x|^2)^{\frac{p}{2}} \le 0 \qquad \text{for all } p\ge 2,
\end{equation}
whence the global a~priori bound
\[
\|\bu_x(t,\cdot)\|_{L^\infty(I, \R^N)} \le \|\bu^{\delta}_{0,x}\|_{L^\infty(I, \R^N)}
\]
holds for all $\eps>0$ and $\delta>0$. This implies that each approximating solution, and the limit itself, exist globally in time (see the proof of \cite[Theorem 4]{giacomellilasicamoll}). Therefore $T_\dagger=+\infty$.

In order to prove \eqref{cle}, we use the method from  \cite{Lasica_thesis} (see also a version for the time-discretized problem \cite{giacomellilasicacurves}). We let $v^{\eps,\delta} =\sqrt{\eps^2 + |\bu^{\eps,\delta}_x|^2}$ and omit the double index $^{\eps,\delta}$ for notational convenience. For $p\ge 1$
and $\varphi$ a Lipschitz function on $I$, we calculate as in the proof of \cite[Lemma]{giacomellilasicacurves}:
\begin{multline}
\label{pnorm}
\frac{1}{p} \frac{\dd}{\dd t} \int_I \varphi^2 v^p =
\int_I \varphi^2 v^{p-2} \bu_x \cdot \bu_{xt} = \int_I \varphi^2 v^{p-1} \bz \cdot \bu_{xt}
\\ =
-2\int_I \varphi \varphi_x v^{p-1} \bz\cdot \pi_{\bu} \bz_x - (p-1)\int_I \varphi^2 v^{p-3} (\bu_x\cdot \bu_{xx}) (\bz \cdot \pi_{\bu}\bz_x) - \int_I \varphi^2 v^{p-1} \bz_x\cdot \pi_{\bu}\bz_{x}.
\end{multline}
Since $\bz\cdot \pi_{\bu} \bz_x= \bz\cdot \bz_x$ and $\bz_x\cdot \pi_{\bu}\bz_{x}\ge 0$, we have
\begin{equation*}
\frac{1}{p} \frac{\dd}{\dd t} \int_I \varphi^2 v^p \le
-2\int_I \varphi \varphi_x v^{p-1} \bz\cdot \bz_x - (p-1)\int_I \varphi^2 v^{p-3} (\bu_x\cdot \bu_{xx}) (\bz \cdot \bz_x).
\end{equation*}
Since
\begin{equation}\label{zzx}
\bz \cdot \bz_x = \frac{1}{2} \left(|\bz|^2\right)_x = \frac{1}{2}\left(\frac{|\bu_x|^2}{v^2}\right)_x
= \frac{1}{2} \left( 1 - \frac{\eps^2}{v^2}\right)_x = \frac{\eps^2 \bu_x \cdot \bu_{xx}}{v^4},
\end{equation}
we in fact have, using Cauchy--Schwarz inequality,
\begin{multline*}
\frac{1}{p} \frac{\dd}{\dd t} \int_I \varphi^2 v^p  \le
-2\eps^2 \int_I \varphi \varphi_x v^{p-5} \bu_x \cdot \bu_{xx} - (p-1)\eps^2\int_I \varphi^2 v^{p-7} |\bu_x\cdot \bu_{xx}|^2
\\  \le \frac{\eps^2}{(p-1)} \int_I \varphi_x^2 v^{p-3} \stackrel{p<3}\le  \frac{\eps^{p-1}}{(p-1)} \int_I \varphi_x^2.
\end{multline*}
For any $x_0\in I$ and any $0<r<R$, we choose $\varphi$ such that $\varphi\equiv 1$ in $B_r(x_0)$ and $\supp \varphi = \overline B_R(x_0)$, and $|\varphi_x|=|R-r|^{-1}$ in $B_R(x_0)\setminus B_r(x_0)$. Integrating in time and dividing both sides by $2R = |B_R(x_0)|$,
\[
\frac{1}{p} \frac{1}{2R}\int_{B_r(x_0)}  v^p(t) \le \frac{\eps^{p-1}t}{(p-1)|R-r|R} + \frac{1}{p} \frac{1}{2R}\int_{B_R(x_0)}  v^p(0).
\]
Passing to the limit as $\eps \to 0^+$, $p\to 1^+$, $r\to R^-$, $R\to 0^+$ and $\delta\to 0^+$ (in this order) completes the proof.
\end{proof}

We will later construct a strong solution $\bu$ to (\ref{smootheqn}, \ref{smoothbc}) with initial datum $\bu_0 \in BV_{rad}(I, \N)$ by approximating with a sequence $\bu^k$ of regular solutions with initial data $\bu^k \in Lip(I, \N)$. We will then easily deduce that the limit satisfies the pointwise estimate \eqref{clelimit2}. It will be used repeatedly in the successive stages of identification of the limit. In particular we will use some general facts about functions satisfying \eqref{clelimit2}, which we next illustrate.

\medskip

First of all, it turns out that continuity of $\bu^\pm(\cdot, x)$, $x\in I$, can be deduced from \eqref{clelimit2}. Similarly we can estimate moduli of continuity on $x$-slices of approximate sequence $\bu^k$.

\begin{lemma} \label{contxslice}
		Let $\bu$ be any function in $L^\infty_{w^*}(0,T; BV(I, \R^N))\cap H^1(0,T; L^2(I, \R^N))$ such that $\bu(t)\in BV_{rad}(I, \N)$ for a.\,e.\ $t \in [0,T[$, satisfying \eqref{clelimit2}. Then, $\bu^\pm(\cdot,x)$ are equicontinuous on $[0,T]$, i.\,e.\ for any $\eps > 0$ there exists $\delta > 0$ such that
		\[|\bu^+(t, x) - \bu^+(s, x)| < \eps \quad \text{if } |t-s| < \delta, \ t,s \in [0,T], \ x \in I.  \]
		
		For $k \in \N$ let $\bu^k_0 \in Lip(I, \N)$ be a sequence of functions converging strictly to $\bu_0 \in BV(I, \N)$, and let $\bu^k \in C([0, +\infty[\times \overline{I}, \N)$ with $\bu^k_t \in L^2(]0, +\infty[\times I,
		\R^N)$, $\bu^k_x \in L^\infty(]0,+\infty[ \times I,
		\R^{N})$ satisfying \eqref{energyinequality} and \eqref{cle} with $\bu^k_0$ in place of $\bu_0$.
		For any fixed $x \in I \setminus J_{\bu_0}$, $\bu^k(\cdot, x)$ are equicontinuous on $[0,\infty[$, i.\,e.\ for any $\eps > 0$ there exists $\delta > 0$ such that
		\[|\bu^k(t, x) - \bu^k(s, x)| < \eps \quad \text{if } |t-s| < \delta, \ t,s \geq 0, \ k \in \bbN.  \]
	\end{lemma}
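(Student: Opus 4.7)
The core of both parts is an averaging estimate that trades a short spatial interval against the time step. For $x\in I$ and $h>0$ small enough so that $]x,x+h[\subset I$, I would write
\[
h\bigl(\bu^+(t,x) - \bu^+(s,x)\bigr)
 = \int_x^{x+h}\bigl(\bu^+(t,x)-\bu(t,x')\bigr)\dd x'
 - \int_x^{x+h}\bigl(\bu^+(s,x)-\bu(s,x')\bigr)\dd x'
 + \int_x^{x+h}\bigl(\bu(t,x')-\bu(s,x')\bigr)\dd x',
\]
and estimate each of the three terms separately.

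For the first and third integrals, I would use that $|\bu^+(t,x)-\bu(t,x')|\le |\bu_x(t)|(]x,x'[)\le |\bu_x(t)|_{\mathcal N}(]x,x+h[)$ for every $x'\in]x,x+h[$ (via Lemma~\ref{lem-dist-comp} comparing Euclidean and intrinsic distances on jumps, together with the definition of $|\cdot|_{\mathcal N}$), and then invoke \eqref{clelimit2} to replace $|\bu_x(t)|_{\mathcal N}$ by $|\bu_{0,x}|_{\mathcal N}$. For the middle integral, I would apply Fubini, Cauchy--Schwarz in both variables, and $\bu\in H^1(0,T;L^2(I,\RN))$ to bound it by $h^{1/2}|t-s|^{1/2}\|\bu_t\|_{L^2(]0,T[\times I)}$. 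Dividing through by $h$ yields the master estimate
\[
|\bu^+(t,x)-\bu^+(s,x)|\le 2\, |\bu_{0,x}|_{\mathcal N}(]x,x+h[) + h^{-1/2}|t-s|^{1/2}\|\bu_t\|_{L^2(]0,T[\times I)}.
\]
Given $\eps>0$, one first fixes $h=h(\eps)>0$ so the spatial term is $<\eps/2$, and then takes $\delta=\delta(\eps,h)>0$ so the temporal term is $<\eps/2$ whenever $|t-s|<\delta$. The analogous estimate with $\bu^-$ uses the mirror interval $]x-h,x[$.

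For the second part, the strategy is identical but uses the pointwise bound \eqref{cle} for the smooth approximants, $|\bu^k_x(t,\cdot)|\le |(\bu^k_0)_x|$ $\Lb^1$-a.e., to replace $|\bu_x(t)|_{\mathcal N}$ by the scalar measure $|(\bu^k_0)_x|\Lb^1$ (no jumps since $\bu^k$ are Lipschitz); the temporal term is controlled by $\|\bu^k_t\|_{L^2}\le (TV_I^{\mathcal N}(\bu^k_0))^{1/2}$, uniformly in $k$ by strict convergence and \eqref{energyinequality}. To get uniformity in $k$ of the spatial term, I would use that strict convergence $\bu^k_0\to\bu_0$ (Lemma~\ref{GM_density}) implies $|(\bu^k_0)_x|\Lb^1\stackrel{*}\rightharpoonup|\bu_{0,x}|_{\mathcal N}$ on $I$; since $x\notin J_{\bu_0}$, the limiting measure puts no mass on $\{x\}$, so for every $\eps>0$ one can find $h>0$ (with $|\bu_{0,x}|_{\mathcal N}(\{x,x+h\})=0$) such that $|\bu_{0,x}|_{\mathcal N}(]x,x+h[)<\eps/8$, and then use convergence on the continuity interval to get $|(\bu^k_0)_x|(]x,x+h[)<\eps/4$ for $k$ large, handling the finitely many remaining $k$ individually.

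The main obstacle is the uniformity of the choice of $h$: in Part~1, uniform in $x\in I$ despite the possible presence of atoms of $|\bu_{0,x}|_{\mathcal N}$ (handled by enumerating the finitely many ``large'' atoms of mass $\ge\eps/8$ and treating their neighborhoods separately, using both the right-sided interval for $\bu^+$ and the left-sided one for $\bu^-$); in Part~2, uniform in $k\in\bbN$ for fixed $x$, which is where strict convergence of initial data and $x\notin J_{\bu_0}$ are decisive.
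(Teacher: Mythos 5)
Your master estimate is exactly the paper's \eqref{quantucont} (averaged rectangle inequality over a one-sided interval, spatial term controlled via \eqref{clelimit2} by $|\bu_{0,x}|_{\N}$, temporal term by Cauchy--Schwarz and the $H^1(0,T;L^2)$ bound), and your Part 2 is the paper's argument verbatim: \eqref{cle} plus strict convergence of the data (the paper invokes \eqref{u0d-1-cons}, you invoke the equivalent weak-* convergence on a continuity interval around $x\notin J_{\bu_0}$). So the route is the same; the issue is the one step you defer.

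That step --- uniformity of the choice of $h$ in $x$ --- is precisely where the paper has to work, and your sketch of it is not yet a proof. Enumerating the finitely many atoms of mass $\ge\eps/8$ and ``treating their neighborhoods separately'' does not by itself give what you need: an interval of length $h$ avoiding all such atoms can still carry $|\bu_{0,x}|_{\N}$-mass comparable to $\eps$, because infinitely many atoms of individual mass below the threshold may accumulate. The true statement is that for every $\eta>0$ there is $\sigma>0$ such that \emph{every} interval of length $\le\sigma$ disjoint from the large atoms has mass $<\eta$; this uses finiteness of the measure in an essential way (the paper proves it by a contradiction/limsup-of-intervals argument with continuity from above; an alternative is a tail-sum argument: pick finitely many atoms capturing all but $\eta$ of the atomic mass and use uniform continuity of the distribution function of the diffuse part). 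Without this, your ``first fix $h=h(\eps)$'' has no uniform-in-$x$ justification. A second, smaller point: your fixed pairing ``right-sided interval for $\bu^+$, left-sided for $\bu^-$'' breaks down when $x$ lies immediately to the left of a large atom $x_0\ne x$, since then $]x,x+h[$ contains $x_0$ for any fixed $h$; the rectangle inequality in fact allows averaging over \emph{any} interval $J$ with $x\in\overline J$, and the paper exploits this by taking $J=]x,x+\sigma]$ only when $x$ itself is a large atom and otherwise any $\sigma$-interval containing $x$ and avoiding the large atoms. With these two points supplied, your plan coincides with the paper's proof.
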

	
	\begin{proof}
		We begin with the proof of equicontinuity of $\bu^+$. Analogous reasoning works for $\bu^-$. Take $x \in I$ and an interval $J = ]a,b] \subset I$ such that $x \in \overline{J}$.  By the rectangle inequality, for a.\,e.\ $y \in J$ and a.\,e.\ $s,t\in [0,T]$ with $t>s$,
		\begin{multline}\label{rectineq}
			|\bu^+(t, x) - \bu^+(s, x)| \leq |\bu^+(t, x) - \bu(t, y)| + |\bu(t, y) - \bu(s, y)| + |\bu(s, y) - \bu^+(s, x)|\\  \stackrel{(\ref{equiv-metr})}\leq   |\bu_x(t,\cdot)|_{\N} (J) +
			\int_s^t|\bu_t(\tau,y)|\dd\tau + |\bu_x(s,\cdot)|_{\N} (J).
		\end{multline}
		Averaging \eqref{rectineq} over $y\in J$ and using H\" older's inequality, we get
		\begin{equation}\label{quantucont}
			|\bu^+(t, x) - \bu^+(s, x)|
			\stackrel{(\ref{clelimit2})}\leq 2  |\bu_{0,x}|_{\N}(J) + \left(\frac{t-s}{|J|}\int_s^t\int_J |\bu_t|^2\right)^{\frac12}.
		\end{equation}

		Let $\eps > 0$. We denote
		\[J_{\bu_0}^{\eps/3} = \{x \in J_{\bu_0} \colon |(\bu_0)_x|_{\N}(x) \geq \eps/3\}.\]
		We claim that there exists a $\sigma > 0$ such that for every interval $J \subset I\setminus J_{\bu_0}^{\eps/3}$ with $|J|\leq \sigma$ there holds $|(\bu_0)_x|_{\N}(J) < \eps/3$. Suppose otherwise, i.\,e.\ there exists a sequence of intervals $J_j\subset I\setminus J_{\bu_0}^{\eps/3}$ such that $|J_j|\to 0$ and $|(\bu_0)_x|_{\N}(J_j) \geq \eps/3$, $j \in \bbN$. By compactness, we can assume that both sequences of endpoints of $J_j$ converge to a point $x_0 \in \overline{I}$. Then
		\[ E := \bigcap_{i=1}^\infty \bigcup_{j=i}^\infty J_j \subseteq \{x_0\}.\]
		On the other hand, by \cite[1.1, Theorem 1]{evansgariepy}
		\begin{equation}\label{ineq_E} |(\bu_0)_x|_{\N}\left( E \right) = \lim_{i \to \infty} |(\bu_0)_x|_{\N}\left( \bigcup_{j=i}^\infty J_j\right) \geq \frac{\eps}{3}.
		\end{equation}
		Therefore $E$ cannot be empty, hence $E = \{x_0\}$. In particular $x_0$ belongs to some (actually infinitely many) of the sets $J_j$. Thus, \eqref{ineq_E} contradicts the choice of $J_j$. 
		
		Possibly decreasing $\sigma$, we can assume that it is smaller than the minimal distance between two distinct points in $J_{\bu_0}^{\eps/3}$. We deduce that for every $x \in I$ there exists an interval $J \subset I\setminus J_{\bu_0}^{\eps/3}$ of form $J= ]a,b]$ such that $|J| = \sigma$ and $x \in \overline{J}$: namely, if $x\in J_{\bu_0}^{\eps/3}$ we choose $J=]x,x+\sigma]$, while if $x\notin J_{\bu_0}^{\eps/3}$ we may choose any interval $J\subset I\setminus J_{\bu_0}^{\eps/3}$ of width $\sigma$ containing $x$. For such $J$, \eqref{quantucont} becomes
		\[|\bu^+(t, x) - \bu^+(s, x)| \leq \tfrac{2}{3}\eps + \left(\frac{t-s}{\sigma}\int_0^T\int_I |\bu_t|^2\right)^{\frac12}\]
		Choosing $\delta$ so that the r.\,h.\,s.\ of
		\eqref{quantucont} is smaller than $\eps$ whenever $t-s < \delta$, we obtain the first part of the assertion.
		\smallskip
		
		Repeating the calculations in \eqref{rectineq} and \eqref{quantucont} with $\bu^k$ in place of $\bu$ and $J = ]x-h, x+h[$, using \eqref{energyinequality} and \eqref{cle}, we obtain for any $x \in I$, $h>0$ sufficiently small, $k \in \mathbb N$, and a.\,e.\ $s,t\in [0,T]$ with $t>s$,
		\begin{equation}\label{quantudcont}
			|\bu^k(t, x) - \bu^k(s, x)|  \leq 2 \int_{x-h}^{x+h} |\bu^k_{0,x}| + \left(\frac{t-s}{h} \int_I |\bu^k_{0,x}|\right)^{\frac12}.
		\end{equation}
		By \eqref{u0d-1-cons}, if $x$ is not a jump point of $\bu_0$, for any $\eps>0$ we can choose $k_0\in \mathbb N, h>0$ small enough so that $\int_{x-h}^{x+h} |\bu^k_{0,x}|<\frac{\eps}{3}$ for all $k
		\geq k_0$. Since each $\bu^k_{0,x}$ is integrable, possibly reducing $h$ we can take $k_0 = 1$. Then, we choose $\delta>0$ so that the r.\,h.\,s.\ of \eqref{quantudcont} is smaller than $\eps$ whenever
		$t-s<\delta$.
\end{proof}

	As a first application of this result, we establish another lemma.
	
	\begin{lemma}
		Let $\bu$ be any function in $L^\infty_{w^*}(0,T; BV(I, \R^N))\cap H^1(0,T; L^2(I, \R^N))$ such that $\bu(t)\in BV_{rad}(I, \N)$ for a.\,e.\ $t \in [0,T[$, satisfying \eqref{clelimit2}. Let $\varphi \in L^1(0,T; C_0(I))$, $\varphi \geq 0$. Then, the function
		\[ [0,T] \ni t  \mapsto \int_I \varphi(t) \dd |\bu_x(t)|_{\N} \]
		is Lebesgue measurable.
	\end{lemma}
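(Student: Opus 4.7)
The plan is in two steps: first, establish measurability of $t\mapsto |\bu_x(t)|_{\N}(J)$ for open subintervals $J\subset I$; then approximate $\varphi$ by piecewise-constant-in-$x$ step functions to conclude the general case.

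For the first step, I would establish, for every open subinterval $J=(a,b)\subset I$, the representation
\[
|\bu_x(t)|_{\N}(J) = \sup \left\{ \sum_{i=1}^n \dist_{\N}\bigl(\bu^+(t,x_i),\bu^+(t,x_{i-1})\bigr) : a<x_0<\cdots<x_n<b\right\}.
\]
The inequality ``$\ge$'' would follow by perturbing partition nodes to continuity points of $\bu(t,\cdot)$, using smooth approximants from Lemma \ref{GM_density} together with the chord-length-versus-arc-length bound $\dist_{\N}(\bw^k(y),\bw^k(z))\le \int_y^z |\bw^k_x|$ available for $C^1$ curves; the reverse inequality is the standard variation formula applied to the right-continuous BV representative $\bu^+(t,\cdot)$, using (\ref{def:measure}) to handle atoms. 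Since by Lemma \ref{contxslice} the map $t\mapsto \bu^+(t,x)$ is continuous for every $x\in I$, each partition-sum above is continuous in $t$, so restricting to the countable family of partitions with rational nodes in $(a,b)$ exhibits $t\mapsto |\bu_x(t)|_{\N}(J)$ as a supremum of continuous functions, hence as a lower semicontinuous and thus Lebesgue measurable function.

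For the second step, since $\varphi\in L^1(0,T;C_0(I))$ is Bochner measurable and $\varphi(t,\cdot)\in C_0(\overline I)$ (extended by zero to $\partial I$) is uniformly continuous for a.e.\ $t$, I would fix a deterministic mesh $\{J_k^n\}_{k=1}^{k_n}$ of $\overline I$ with $\max_k|J_k^n|\to 0$, pick $\xi_k^n\in J_k^n$, and set $\varphi^n(t,x):=\sum_k \varphi(t,\xi_k^n)\chi_{J_k^n}(x)$. Then $\|\varphi^n(t,\cdot)-\varphi(t,\cdot)\|_\infty\to 0$ as $n\to\infty$ for a.e.\ $t$; since each coefficient $t\mapsto \varphi(t,\xi_k^n)$ is measurable (being the composition of $\varphi$ with the point-evaluation functional, continuous on $C_0(I)$), the first step gives that
\[
t\mapsto \int_I \varphi^n(t)\,\dd|\bu_x(t)|_{\N} = \sum_{k=1}^{k_n}\varphi(t,\xi_k^n)\,|\bu_x(t)|_{\N}(J_k^n)
\]
is Lebesgue measurable. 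The uniform bound $|\bu_x(t)|_{\N}(I)\le|\bu_{0,x}|_{\N}(I)<+\infty$ from \eqref{clelimit2} then yields
\[
\Bigl| \int_I (\varphi^n(t)-\varphi(t))\,\dd|\bu_x(t)|_{\N}\Bigr|\le \|\varphi^n(t,\cdot)-\varphi(t,\cdot)\|_\infty\cdot|\bu_{0,x}|_{\N}(I) \to 0 \quad \text{as } n\to\infty,
\]
for a.e.\ $t\in[0,T]$, so the target function is a pointwise a.e.\ limit of Lebesgue measurable functions and is therefore Lebesgue measurable. The main obstacle I expect is the first step: the sup representation must be phrased in terms of the one-sided limit $\bu^+$ rather than ambiguous pointwise values of $\bu(t,\cdot)$ in order to exploit the $t$-continuity from Lemma \ref{contxslice}, and its derivation from (\ref{def:measure}) requires some care in handling atoms of $|\bu_x(t)|_{\N}$ close to partition nodes.
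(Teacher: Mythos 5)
Your strategy is workable, but it is genuinely different from the paper's, and the place where you put most of the work is precisely the place the paper manages to avoid. The paper does not touch the metric pointwise variation at all: it writes $|\bu_x(t)|_{\N} = |\bu_x(t)| + \sum_{x\in J_{\bu_0}} f(t,x)\,\delta_x$ with $f(t,x)=\dist_{\N}(\bu^-(t,x),\bu^+(t,x)) - |\bu^+(t,x)-\bu^-(t,x)|$, where the inclusion $J_{\bu(t)}\subseteq J_{\bu_0}$ (from \eqref{clelimit2}) lets the sum run over the \emph{fixed} countable set $J_{\bu_0}$. Measurability of the first term is the standard dual-formula/Lemma~\ref{lem-salva} fact for the Euclidean total variation; measurability of the second term is just a countable sum of products $\varphi(t,x)f(t,x)$, each measurable because $f(\cdot,x)$ is continuous by Lemma~\ref{contxslice} and $t\mapsto\varphi(t,x)$ is measurable by Pettis. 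Your route — express $|\bu_x(t)|_{\N}(J)$ as a supremum of partition sums, pass to rational nodes, then approximate $\varphi$ by $x$-step functions — reaches the same conclusion, is more self-contained on the $\varphi$ side, but is substantially more laborious on the measure side and imports an identity that is nowhere established in the paper.

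That imported identity is the genuine gap. You assert
\[
|\bu_x(t)|_{\N}(J) = \sup \Bigl\{ \sum_{i=1}^n \dist_{\N}\bigl(\bu^+(t,x_i),\bu^+(t,x_{i-1})\bigr) : a<x_0<\cdots<x_n<b\Bigr\},
\]
and for the inequality ``$\le$'' you appeal to ``the standard variation formula applied to the right-continuous BV representative.'' But the standard pointwise-variation formula yields $\sup\{\sum|\bu^+(t,x_i)-\bu^+(t,x_{i-1})|\}=|\bu_x(t)|(J)$, the \emph{Euclidean} variation. Since $\dist_{\N}\ge|\cdot-\cdot|$, this only gives that your supremum dominates $|\bu_x(t)|(J)$, which can be strictly smaller than $|\bu_x(t)|_{\N}(J)$ whenever $\bu(t)$ has jumps. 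Closing this gap requires an explicit argument: fix $\eps>0$, select the finitely many jump points carrying all but $\eps$ of the atomic mass of $|\bu_x(t)|_{\N}$, bracket each by a pair of nearby partition nodes so that the corresponding summand tends to the geodesic jump distance (using that $\bu^+(t,y^-)\to\bu^-(t,y)$ as $y^-\uparrow y$ and $\bu^+(t,y^+)\to\bu^+(t,y)$ as $y^+\downarrow y$), and refine elsewhere to recover $|\bu_x^d(t)|$. This is doable but it is a nontrivial lemma about $\N$-valued BV functions of one variable that you would need to state and prove; it is not an application of the scalar result. Your ``$\ge$'' direction via smooth approximants and Lemma~\ref{GM_density} is essentially fine (after perturbing nodes to continuity points using right-continuity). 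Once the identity is in place, the rest of your argument — restriction to rational partitions, lower semicontinuity in $t$, step-function approximation of $\varphi$ and the uniform bound $|\bu_x(t)|_{\N}(I)\le|\bu_{0,x}|_{\N}(I)$ — is correct, and you also need to observe (as you implicitly do) that for mesh intervals $J_k^n$ with non-open boundary the singleton contributions $|\bu_x(t)|_{\N}(\{x\})=\dist_{\N}(\bu^-(t,x),\bu^+(t,x))$ are continuous in $t$ by Lemma~\ref{contxslice}, so measurability carries over from open intervals.
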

	\begin{proof}
		First we recall that the function
		\[[0,T] \ni t  \mapsto \int_I \varphi(t) \dd |\bu_x(t)|\]
		is Lebesgue measurable. This is because it is the pointwise supremum of the family of measurable functions
		\[\left\{t \mapsto \int_I  \varphi(t) \boldsymbol \varrho \cdot \dd \bu_x(t) \colon \boldsymbol \varrho \in C_0(I, \RN),\ |\boldsymbol \varrho|\leq 1 \right\}.\]
		Now, let us define
		\[
		f(t,x) = \dist_{\N}(\bu^-(t,x), \bu^+(t,x)) - |\bu^+(t,x) - \bu^-(t,x)|, \quad (t,x)\in Q_T.
		\]
		By \eqref{clelimit2} we have $J_{\bu(t)} \subset J_{\bu_0}$ for a.\,e.\ $t \in [0,T]$ and we can write
		\[\int_I \varphi(t) \dd |\bu_x(t)|_{\N} = \int_I \varphi(t) \dd |\bu_x(t)| + \sum_{x \in J_{\bu_0}}\varphi(t,x) f(t,x). \]
		By Lemma \ref{contxslice}, $f(\cdot, x)$ is continuous for $x \in I$. On the other hand, given any $x \in I$, evaluation at $x$ is a continuous linear functional on $C_0(I)$, hence $t \mapsto \varphi(t, x)$ is a measurable function by the ``easy" part of Pettis measurability theorem.
	\end{proof}

%

\section{Existence of solutions}\label{sec:pas}

\subsection{Basic facts}\label{sec:basic}

Let $\bu_0\in BV(I,\N)$ and let $(\bu_0^k)_{k \in \mathbb N}\subset Lip(I,\N)$ be such that (see Lemma \ref{GM_density})
\begin{equation}
\label{u0d-1}
\bu_0^k \to \bu_0\quad \text{strictly in }BV(I,\N).
\end{equation}

Let $\bu^k$ be the regular solution to (\ref{smootheqn},\ref{smoothbc}) emanating from $\bu_0^k$, as given by Lemma \ref{localest}. We recall that, for Lipschitz solutions, \eqref{ctvflowZ} may be
rewritten as
\begin{equation}\label{eq-sff}
\bu^k_t = \bz^k_x + \bmu^k,\qquad \bmu^k:=\A_{\bu^k}(\bz^k,\bz^k)|\bu^k_x|,
\end{equation}
where $\A_{\bp}$ denotes the second fundamental form of $\N$ at $\bp \in \N$. Let us denote
\[Q_T:=]0,T[\times I.\]
For any fixed $T>0$ we have 
\begin{equation}
\label{uL1}
\sup_{0<t<T}\int_I |\bu^k| \le \int_I|\bu^k_0| + \iint_{Q_T} |\bu^k_t| \stackrel{\eqref{energyinequality}}
{\le}  \int_I|\bu^k_0| + \left(T \int_I |\bu^k_{0,x}|\right)^{1/2}\stackrel{(\eqref{u0d-1})}\le C, 
\end{equation}
where we recall that $C$ denotes a generic constant which may depend on $|I|$, $T$, $\N$ and $\|\bu_0\|_{BV(I,\N)}$.
%
%
Therefore
\begin{equation}
\label{linfty}
\sup_{(t,x)\in Q_T}|\bu^k(t,x)| \le \sup_{0<t<T}\left(\frac{1}{|I|} \int_I |\bu^k| + \int_I |\bu^k_x|\right) \stackrel{(\ref{uL1}),(\ref{energyinequality}),(\ref{u0d-1})}\le C.
\end{equation}
{Because the embedding of $\N$ in $\R^N$ is closed, it follows from \eqref{linfty} that
\begin{equation}\label{Abound}
\sup_{(t,x)\in Q_T,\ |\bv_j|\le 1,\ k \in \bbN} |\A_{\bu^k(t,x)}(\bv_1,\bv_2)|\le A < + \infty,
\end{equation}
where $A$ depends on the same quantities as $C$.
%
%
Therefore
\begin{equation}\label{Abound-m-l}
|\bmu^k(t,x)| \stackrel{(\ref{Abound})}\le A  |\bu^k_x(t,x)| \stackrel{(\ref{cle})}\le A  |\bu^k_{0,x}(x)|  \quad\text{for all } (t,x)\in Q_T.
\end{equation}
In particular,
\begin{equation}
\label{Abound-m}
\sup_{0<t<T} \|\bmu^k\|_{L^1(I, \R^N)} \le A \|\bu_{0,x}^k\|_{L^1(I, \R^N)} \stackrel{(\ref{u0d-1})}\le C .
\end{equation}
We deduce that
\begin{equation}
\label{boundmudelta}(\bmu^k) \text{ is bounded in } L^\infty_{w^*}(0,T; M(I, \R^N)),
\end{equation}
\begin{equation} \label{boundmudelta|}
(|\bu^k_x|) \text{ is bounded in } L^\infty_{w^*}(0,T; M(I)).
\end{equation}

Using \eqref{inclZ}, \eqref{energyinequality}, \eqref{boundmudelta}, \eqref{boundmudelta|}, by standard arguments (see section \ref{sec:preli}) we have for a subsequence (not relabeled):
\begin{align}
\label{udw}
 \bu^k&\stackrel{*}\rightharpoonup \bu \ \text{ in } L^\infty_{w^*}(0,T;BV(I,\R^N)), \quad \bu^k\rightharpoonup \bu \ \text{ in } H^1(0,T;L^2(I)),
\\ \label{uds}
 \bu^k &\to \bu \ \text{ in } C([0,T],L^2(I, \R^N)) \text{ and } \Lb^2\text{-a.\,e.\ in } Q_T,
\\ \label{zdw}
 \bz^k &\stackrel{*}\rightharpoonup \bz \ \text{ in } L^\infty(Q_T), \quad |\bz|\le 1,
\\ \label{mdw}
\bmu^k &\stackrel{*}\rightharpoonup \bmu \ \text{ in } L^\infty_{w^*}(0,T; M(I;\R^N)),
\\ \label{weakconvergencederivative}
 |\bu^k_x| &\stackrel{*}\rightharpoonup \bnu \ \text{ in } L^\infty_{w^*}(0,T; M(I)),
\\ \label{utw}
|\bu^k_t| &\rightharpoonup \overline{|\bu_t|} \ \text{ in } L^2(Q_T),
\end{align}
and
\begin{equation}\label{eq-sff-w2}
\bu_t = \bz_x + \bmu \quad \text{in } \mathcal D'(Q_T).
\end{equation}
From \eqref{udw} it follows that $\bu^k_x \stackrel{*}\rightharpoonup \bu_x$ in $L^\infty_{w^*}(0,T;M(I,\R^N))$. We can improve that to
\begin{equation} \label{udwt}
	\bu^k_x(t) \stackrel{*}\rightharpoonup \bu_x(t) \quad \text{for a.\,e.\ } t \in [0,T].
\end{equation}
Indeed, by \eqref{uds},
\begin{equation*}  \int_I \varphi \bu^k_x(t) = - \int_I \varphi_x \bu^k(t)  \to - \int_I \varphi_x \bu(t)  = \int_I \varphi \dd \bu_x(t) \quad \text{for a.\,e.\ } t \in [0,T]
\end{equation*}
for any $\varphi \in C^1_c(I)$, a dense subset of $C_0(I)$. Furthermore, \eqref{Abound-m-l} and \eqref{cle} respectively imply that
\begin{align}
  \label{mdw2}
 |\bmu(t,\cdot)| &\le A |\bu_{0,x}| \quad \text{for a.\,e.\ }t\in]0,T[, \\ \label{mdw3}
 \bnu(t,\cdot) &\le C |\bu_{0,x}| \quad \text{for a.\,e.\ }t\in]0,T[.
\end{align}
By (\ref{udw}) and (\ref{uds}), since the embedding of $\N$ into $\R^N$ is closed, we have $\bu(t,\cdot)\in BV(I,\N)$ for a.\,e.\ $t\in[0, T[$. Using lower semicontinuity of $TV_I^\N$ and
\eqref{u0d-1}, we deduce that the energy inequality \eqref{energylimit} holds by passing to the limit in \eqref{energyinequality}. Furthermore, from \eqref{cle}, \eqref{u0d-1-cons} and lower semi-continuity, we deduce as in the proof of \cite[Theorem]{giacomellilasicacurves} (see also the proof of \eqref{pippo}) that the pointwise estimate \eqref{clelimit2} holds. Due to \eqref{eq-sff-w2}, \eqref{udw}, \eqref{mdw2} and \eqref{zdw}, it holds that
\begin{equation}
	\label{z_bv} \bz(t,\cdot)\in BV(I,\R^N)\quad \text{for a.\,e.\ } t\in]0,T[
\end{equation}
and
\begin{equation} \label{abscontzx}
	|\bz_x(t,\cdot)| \ll \Lb + |\bu_{0,x}| \quad \text{for a.\,e.\ } t\in]0,T[. \end{equation}
Finally, in view of \eqref{linfty}, Lemma \ref{lem-dist-comp} yields
\begin{equation}\label{eq-dist-comp}
C^{-1} \dist_\N(\bp_1,\bp_2)\le |\bp_1-\bp_2|\le \dist_\N(\bp_1,\bp_2) \quad\forall \bp_1,\bp_2\in \mbox{Range}(\bu).
\end{equation}

\subsection{Identifying $\bz$ away from the jump set}

In this subsection we identify the vector field $\bz$ outside of the jump set of the solution $\bu$ and prove that it satisfies the boundary condition \eqref{zbc}.

\begin{lemma}\label{lem:z-diffuse}
Let $\bnu$, $\bu$ and $\bz$ be those obtained in Section \ref{sec:basic}. Then \eqref{zeqn} holds, that is
\[
\bz(t)=\frac{\bu_x(t)}{|\bu_x(t)|} \qquad |\bu_x^d(t)|\text{-a.\,e.}\]
Moreover, \begin{equation}
  \label{lsc_diffuse}
\frac{|\bu_x(t)|}{|\bu_{0,x}|} = \frac{\bnu(t)}{|\bu_{0,x}|} \qquad |\bu^d_{0,x}|\text{-a.\,e.}
\end{equation}
\end{lemma}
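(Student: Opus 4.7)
For the first claim \eqref{zeqn}, I would pass to the limit in the pointwise polar decomposition $\bu^k_x = \bz^k|\bu^k_x|$ that holds for the Lipschitz approximants constructed in Section \ref{sec:basic} (Lemma \ref{localest}, with $|\bz^k|=1$ a.\,e.\ on $\{|\bu^k_x|>0\}$). The main tool is the nonnegativity of $\int_I \varphi|\bxi - \bz^k|^2 \dd|\bu^k_x|$ for continuous nonnegative $\varphi \in C_c(I)$ and continuous $\bxi\colon \overline I \to \R^N$; expanding the square and using both $\bu^k_x = \bz^k|\bu^k_x|$ in the cross term and $|\bz^k|^2 \dd|\bu^k_x| = \dd|\bu^k_x|$ in the diagonal, each of the resulting three terms passes to the limit via the weak-$\ast$ convergences \eqref{udwt} and \eqref{weakconvergencederivative} separately. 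Combining the resulting variational inequality with the distributional equality $\lim_k \int \bxi \cdot \bu^k_x \dd x = \int \bxi \cdot \dd \bu_x$ and an approximation argument letting $\bxi$ approach the precise representative $\bz^*(t)$ of the $BV$-function granted by \eqref{z_bv}, one identifies $\bu_x(t) = \bz^*(t)\bnu(t)$ as measures on $I$. Since $\bz^* = \bz$ outside the countable set $J_{\bz(t)}$, which is $|\bu_x^d(t)|$-null, Radon--Nikodym differentiation of this identity with respect to $|\bu_x(t)|$ yields $\bz(t) = \bu_x(t)/|\bu_x(t)|$ $|\bu_x^d(t)|$-a.\,e.

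For the second claim \eqref{lsc_diffuse}, I would proceed by Besicovitch differentiation. Passing the pointwise estimate \eqref{cle} to the weak-$\ast$ limit using Lemma \ref{GM_density} (strict convergence $|\bu^k_{0,x}| \to |\bu_{0,x}|_\N$) gives $\bnu(t) \leq |\bu_{0,x}|_\N$, which combined with $|\bu_x(t)|_\N \leq \bnu(t)$ (Lemma \ref{phi_lsc}) produces the chain $|\bu_x(t)| \leq \bnu(t) \leq |\bu_{0,x}|_\N$ as measures. At a Lebesgue point $x_0$ of $|\bu_{0,x}^d|$, evaluating these on intervals $[x_0-h, x_0+h]$ with endpoints chosen so as not to charge the countable set $J_{\bu_0}$, and dividing by $|\bu_{0,x}|([x_0-h, x_0+h])$ before sending $h \to 0^+$, reduces the claim to showing $\dd\bnu(t)/\dd|\bu_{0,x}| = \dd|\bu_x(t)|/\dd|\bu_{0,x}|$ at such $x_0$. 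Since $\bu_x(t) = \bz^*(t)\bnu(t)$ implies $|\bu_x(t)| = |\bz^*(t)|\bnu(t)$, the remaining task is to establish $|\bz^*(t)| = 1$ $|\bu_{0,x}^d|$-a.\,e.; this follows from the quadratic inequality specialized to $\bxi = \bz^*$ combined with the upper bound $\bnu(t) \leq |\bu_{0,x}|$ on $I \setminus J_{\bu_0}$ and the fact that $|\bu_{0,x}|_\N$ and $|\bu_{0,x}|$ agree off $J_{\bu_0}$.

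The main obstacle is making rigorous the passage to the limit in the product $\bz^k \dd|\bu^k_x|$: although weak-$\ast$ convergence of each factor is available, their product does not in general converge. The quadratic trick circumvents this by reducing the computation to sums of terms each amenable to a single weak-$\ast$ limit. Still, the crucial identification $\bff(t) = \bz^*(t)$ (writing $\bu_x(t) = \bff(t)\bnu(t)$ with $|\bff(t)|\leq 1$, as allowed by $|\bu_x|\leq \bnu$) does not follow from the quadratic inequality alone, which collapses to the tautology $\int \varphi(|\bxi-\bff|^2 + 1 - |\bff|^2)\dd\bnu \geq 0$; it requires exploiting the linear dual pairing $\int \bxi \cdot \bu^k_x \dd x \to \int \bxi \cdot \dd\bu_x$ for continuous $\bxi$ and then extending to $\bnu$-measurable test fields by density, which is the delicate part.
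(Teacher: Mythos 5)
There is a genuine gap, and you have in fact put your finger on it yourself without closing it. Your quadratic trick only yields, in the limit, the tautology $\int \varphi\,(|\bxi-\bff|^2+1-|\bff|^2)\,\dd\bnu\ge 0$ for the Radon--Nikodym density $\bff=\bu_x/\bnu$ with $|\bff|\le 1$, i.\,e.\ nothing beyond $|\bu_x(t)|\le\bnu(t)$. The step you then invoke to identify $\bff$ with $\bz^*(t)$ --- ``exploiting the linear dual pairing $\int\bxi\cdot\bu^k_x\to\int\bxi\cdot\dd\bu_x$ and extending to $\bnu$-measurable test fields by density'' --- is not an argument: weak-$*$ convergence of the measures $\bu^k_x(t)$ only allows continuous test fields, and $\bz^*(t)$ is merely $BV$; no density argument can recover $\int\varphi\,\bz^*\cdot\dd\bu_x$ as a limit of $\int\varphi\,\bz^*\cdot\bu^k_x$, and for continuous $\bxi$ the inequality carries no information. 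This is exactly the product-of-two-weak-limits obstruction (the weak limit of $\bz^k$ with respect to the varying measures $|\bu^k_x|$ need not coincide with the weak-$*$ $L^\infty$ limit $\bz$), and your proposal never resolves it. Moreover your intermediate claims are too strong: $\bu_x(t)=\bz^*(t)\,\bnu(t)$ as measures on $I$ fails on $J_{\bu(t)}$ (there $\bz^\pm$ are geodesic tangents while $\bu_x^j$ is the chord, and $\bnu$ charges the geodesic length), and $|\bz^*(t)|=1$ $|\bu_{0,x}^d|$-a.\,e.\ is false in general, since $|\bu_{0,x}^d|$ may charge regions where $\bu(t)$ has already flattened and $|\bz(t)|<1$ there.

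The paper's proof overcomes the obstruction by using the equation, which your argument never touches. One writes $\iint\varphi|\bu^k_x|=\iint\varphi\,\bz^k\cdot\bu^k_x$, integrates by parts, and substitutes $\bz^k_x=\bu^k_t-\bmu^k$ from \eqref{eq-sff}; the dangerous term involving $\bmu^k$ is split by subtracting the spatial average of $\bu^k$ on a small interval $J$, and the oscillation is controlled via \eqref{cle} and the strict convergence \eqref{u0d-1} by $C\,\bigl(|\bu_{0,x}|_{\N}(J)\bigr)^2$ --- an error \emph{quadratic} in the mass of $J$. Passing to the limit (with Lemma \ref{phi_lsc} and \eqref{green} on the limit side) gives $|\bu_x(t)|_\N(J)\le\bnu(t)(J)\le\int_J\bz^*(t)\cdot\dd\bu_x(t)+C(|\bu_{0,x}|_\N(J))^2$, and Besicovitch differentiation at points $x_0\notin J_{\bu_0}$, where $|\bu_{0,x}|(J_\eps)\to 0$, kills the quadratic error and yields $\bz^*(t)\cdot\bu_x(t)=|\bu_x(t)|=\bnu(t)$ relative to $|\bu_{0,x}^d|$; since $|\bz|\le 1$ and $J_{\bz(t)}$ is $|\bu_x^d(t)|$-null by \eqref{abscontzx}, both claims of the lemma follow. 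Without this use of the PDE and of the pointwise estimate \eqref{cle}, the identification you need cannot be reached.
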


\begin{proof}
Let $J$ be a subinterval of $I$ such that $|\bu_{0,x}|(\partial J)=0$, 
let $\varphi$ be a smooth function with support in $]0,T[\times J$, such that $0\le
\varphi \le 1$, and let
\[
\ubu^k(t):=\frac{1}{|J|}\int_J\bu^k(t,x)\dd x, \qquad \ubu(t):=\frac{1}{|J|}\int_J\bu(t,x)\dd x.
\]
With an integration by parts, we have
\begin{multline}
\label{tbp1}
\iint_{Q_T} \varphi|\bu^k_x| = \iint_{Q_T} \varphi \bu^k_x\cdot\bz^k \stackrel{(\ref{eq-sff})}= -\iint_{Q_T} \varphi \bu^k\cdot(\bu^k_t-\bmu^k)-  \iint_{Q_T} \varphi_x \bu^k \cdot\bz^k
\\=
 -\iint_{Q_T} \varphi \bu^k\cdot\bu^k_t + \int_0^T \ubu^k \cdot\int_I\varphi\bmu^k + \iint_{Q_T} \varphi \left(\bu^k -\ubu^k\right)\cdot\bmu^k -  \iint_{Q_T} \varphi_x \bu^k \cdot\bz^k
\\ =: I_0+I_1+I_2+I_3.
\end{multline}
The passage to the limit as $k\to\infty$ in $I_0$, $I_1$ and $I_3$ is straightforward in view of \eqref{udw}, \eqref{uds}, \eqref{zdw} and \eqref{mdw}. For $I_2$,
%
%
we note that
\[
\sup_J|\bu^k(t,\cdot) -\ubu^k(t)|\le \int_J |\bu^k_x(t,\cdot)| {\stackrel{\eqref{cle}}\le} \int_J |\bu^k_{0,x}|, \qquad t\in ]0,T[.
\]
Therefore, by strict convergence of $(\bu_0^k)$,
\[
|I_{2}| \stackrel{\eqref{Abound-m-l}}\le
%
%
A \left(\int_J |\bu^k_{0,x}|\right) \iint_{Q_T} \varphi |\bu^k_{0,x}| \to A R:= A |\bu_{0,x}|_\N(J) \iint_{Q_T} \varphi \dd |\bu_{0,x}|
\]
since $\partial J$ does not contain jump points of $\bu$.
%
%
On the left-hand side of \eqref{tbp1}, by Fatou's Lemma and Lemma \ref{phi_lsc},
\begin{multline} \label{awaylsc}
	\int_0^T\!\! \int_I \varphi \dd \bnu(t) \dd t {\stackrel{\eqref{weakconvergencederivative}}=}
	\liminf_{k \to \infty} \iint_{Q_T}\varphi |\bu^k_x| \dd \Lb^2 \\ \geq \int_0^T\!\! \liminf_{k \to \infty} \int_I \varphi |\bu^k_x(t)| \dd \Lb \dd t
	\geq \int_0^T\!\! \int_I \varphi \dd |\bu_x(t)|_{\N} \dd t.
\end{multline}
%
%
We pass to the limit as $k\to +\infty$ in \eqref{tbp1} using the previous convergences: 
\begin{multline}\label{salva}
\iint_{Q_T} \varphi \dd |\bu_x|_{\N} \le \iint_{Q_T}\varphi \dd \bnu  \le
-\iint_{Q_T} \varphi \bu \bu_t + \int_{Q_T}\varphi \ubu \cdot \dd\bmu -  \iint_{Q_T} \varphi_x \bu \cdot\bz +A\,R
%
%
\\
\stackrel{(\ref{eq-sff-w2})}=- \iint_{Q_T} \varphi \bu^*\cdot \dd \bz_x  - \iint_{Q_T} \varphi \left( \bu^* -\ubu\right)\cdot \dd \bmu- \iint_{Q_T} \varphi_x \bu \cdot\bz +A\,R.
\end{multline}
Note that the integrals on the right hand side make sense in view of  Lemma \ref{lem-salva}. In view of \eqref{mdw2}, arguing as in the estimate of $I_{2}$ above, we may absorb the second summand on
the r.\,h.\,s.\ into $R$. Therefore
\begin{multline*}
\iint_{Q_T} \varphi \dd|\bu_x|_{\N} \le \iint_{Q_T} \varphi \dd\bnu \le
-\iint_{Q_T} \varphi \bu^*\cdot \dd\bz_x -  \iint_{Q_T} \varphi_x \bu \cdot\bz + A\,R
\\
 \stackrel{\eqref{z_bv},\eqref{green}}=  \iint_{Q_T} \varphi \bz^* \cdot \dd\bu_x  +A\,R.
\end{multline*}
%
%
Choose now $\varphi(s,x)=\tau_l(s)\xi_m(x)$, with $\tau_l$ a sequence of mollifiers concentrating at point $t$ and $\xi_m$ converging to $\chi_J$. Since $|\bu_{0,x}|(\partial J)=0$, in view of
\eqref{mdw3} and  \eqref{clelimit2} we have $|\bu_x(t,\cdot)|(\partial J)=0$ and $\bnu(t)(\partial J)=0$. Therefore, passing to the limit as $l\to +\infty$ and  $m\to +\infty$ (in this order),
for a.\,e.\ $t \in ]0,T[$ 
we obtain
\[
|\bu_x(t)|_{\N}(J) \le \bnu(t)(J)\leq  \int_J \bz(t)^*\cdot d\bu_x(t)  + A\,(|\bu_{0,x}|_{\N}(J))^2.
\]
Finally, we let $J=J_\eps=]x_0-\eps,x_0+\eps[$, divide by $|\bu_{0,x}|(J_\eps)$ and pass to the limit as $\eps\to 0$. Provided that $x_0\notin J_{\bu_0}$, we have $|\bu_{0,x}|(J_\eps)\to 0$ (see
\cite[Proposition 3.92]{afp}). Therefore, by the Besicovitch derivation theorem \cite[Theorem 2.22]{afp}, we obtain
\[
\frac{|\bu_x(t)|}{|\bu_{0,x}|} \leq \frac{\bnu(t)}{|\bu_{0,x}|} \le \frac{\bz^*(t)\cdot\bu_x(t)}{|\bu_{0,x}|} \qquad |\bu_{0,x}^d|\text{-a.\,e.,}
\]
where the measure $\bz^*(t)\cdot\bu_x(t)$ is given by $\bz^*(t)\cdot\bu_x(t)(B):=\int_B \bz^*(t)\cdot \dd \bu_x(t)$ for any Borel set $B\subset I$.
{Since $|\bz|\le 1$}, we in fact have equality:
\[
\frac{|\bu_x(t)|}{|\bu_{0,x}|} =\frac{\bnu(t)}{|\bu_{0,x}|}= \frac{\bz^*(t)\cdot\bu_x(t)}{|\bu_{0,x}|} \qquad |\bu_{0,x}^d|\text{-a.\,e.}
\]
Finally, thanks to {\eqref{clelimit2}}, by the chain rule for Radon--Nikodym derivatives, we obtain
\[
\frac{|\bu_x(t)|}{|\bu_{0,x}|} = \frac{\bz^*(t)\cdot\bu_x(t)}{|\bu_{0,x}|}=\frac{\bz^*(t)\cdot\bu_x(t)}{|\bu_x(t)|}\frac{|\bu_x(t)|}{|\bu_{0,x}|} \qquad  |\bu_{0,x}^d|\text{-a.\,e.}
\]
which implies (using again {\eqref{clelimit2}}) that
\[
\frac{\bz^*(t)\cdot\bu_x(t)}{|\bu_x(t)|}=1 \qquad |\bu_x^d(t)|\text{-a.\,e.}
\]
This yields
\[
\bz^*(t)\cdot \bu_x^d(t)=(\bz^*(t)\cdot \bu_x^d(t))\res_{I\setminus J_{\bu(t)}}=(\bz^*(t)\cdot \bu_x(t))\res_{I\setminus J_{\bu(t)}}=|\bu_x^d(t)|,
\]
i.\,e.
\[\bz^*(t)=\frac{\bu_x(t)}{|\bu_x(t)|} \qquad |\bu_x^d(t)|\text{-a.\,e.}
\]
Since $\bz(t)\in BV(I)$ {and \eqref{abscontzx} holds}, $J_{\bz(t)}$ is a null set w.\,r.\,t.\ $|\bu(t)_x^d|$. We conclude that
\[ \bz(t)=\frac{\bu_x(t)}{|\bu_x(t)|}\qquad |\bu_x^d(t)|\text{-a.\,e.} \]
\end{proof}

\begin{lemma} \label{lem:bc}
	Let $\bz$ be the one obtained in Section \ref{sec:basic}. Then \eqref{zbc} holds, that is
	\[\bz = 0 \quad \text{on } \partial I \text{ for a.\,e.\ }t \in]0,T[.\]
\end{lemma}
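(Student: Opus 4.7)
The plan is to exploit the Neumann boundary condition \eqref{NeumannZ} available for the regular approximating solutions $\bu^k$, transferring it to the limit by comparing two integration by parts identities: one arising from the smooth approximate equation, the other from the BV structure of the limit $\bz(t)$.

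First, I would fix $\eta \in C^\infty_c(]0,T[)$ and $\boldsymbol{\varphi} \in C^1(\overline I, \R^N)$ and test the identity $\bu^k_t = \bz^k_x + \bmu^k$ (cf.\ \eqref{eq-sff}) against $\eta(t)\boldsymbol{\varphi}(x)$. Integration by parts in $x$ produces the boundary contribution $\int_0^T \eta(t)\,[\boldsymbol{\varphi} \cdot \bz^k(t,\cdot)]_{\partial I}\,\dd t$, which vanishes thanks to \eqref{NeumannZ}, so
\[
\int_0^T \eta \int_I \boldsymbol{\varphi} \cdot \bu^k_t = -\int_0^T \eta \int_I \boldsymbol{\varphi}_x \cdot \bz^k + \int_0^T \eta \int_I \boldsymbol{\varphi} \cdot \bmu^k.
\]
By \eqref{udw}, \eqref{zdw} and \eqref{mdw}, I can pass to the limit $k \to \infty$ to obtain the analogous identity with $\bu_t, \bz, \bmu$ in place of $\bu^k_t, \bz^k, \bmu^k$.

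On the other hand, \eqref{eq-sff-w2} combined with $\bu_t \in L^2(Q_T)$, $\bmu \in L^\infty_{w^*}(0,T; M(I, \R^N))$ and \eqref{z_bv} implies that, for a.\,e.\ $t$, one has the equality of Radon measures $\bu_t(t)\,\Lb^1 = \bz_x(t) + \bmu(t)$ on $I$. Writing $\partial I = \{a, b\}$ with $a < b$ and applying the BV integration by parts formula to $\bz(t) \in BV(I, \R^N)$ then gives
\[
\int_I \boldsymbol{\varphi} \cdot \bu_t(t) = \boldsymbol{\varphi}(b) \cdot \bz^-(t,b) - \boldsymbol{\varphi}(a) \cdot \bz^+(t, a) - \int_I \boldsymbol{\varphi}_x \cdot \bz(t) + \int_I \boldsymbol{\varphi} \cdot \dd \bmu(t).
\]
Multiplying by $\eta(t)$, integrating in time and subtracting the limit identity above, the bulk terms cancel and I am left with
\[
\int_0^T \eta(t)\, \bigl[\boldsymbol{\varphi}(b) \cdot \bz^-(t,b) - \boldsymbol{\varphi}(a) \cdot \bz^+(t, a)\bigr]\,\dd t = 0.
\]

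Choosing $\boldsymbol{\varphi}$ with $\boldsymbol{\varphi}(a) = \boldsymbol v$, $\boldsymbol{\varphi}(b) = \boldsymbol 0$ (and then swapping the roles of $a$ and $b$) for arbitrary $\boldsymbol v \in \R^N$, and varying $\eta$, I conclude $\bz^+(t,a) = \bz^-(t,b) = 0$ for a.\,e.\ $t$, which is \eqref{zbc}. The measurability in $t$ of $\bz^\pm(t, \cdot)|_{\partial I}$ needed to make sense of the last display can be read off from the BV integration by parts identity, all other summands being measurable in view of Lemma \ref{lem-salva}. I do not foresee any significant technical obstacle: the scheme simply transfers the information encoded in \eqref{NeumannZ} through the $L^\infty$ weak-$\ast$ convergence \eqref{zdw} of $\bz^k$, combined with the already established BV regularity \eqref{z_bv} of the limit.
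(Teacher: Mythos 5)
Your strategy is genuinely different from the paper's: rather than the quantitative estimate $\sup_{[0,h]}|\bz^k|\le \int_0^h|\bz^k_x|\le\int_0^h|\bu^k_t|+A\int_0^h|\bu^k_{0,x}|$ (which exploits \eqref{NeumannZ} via the fundamental theorem of calculus and then lets $h\to 0^+$), you compare two integration-by-parts identities, one on the level of the approximation and one on the level of the BV limit. The overall scheme is viable, but there is a gap precisely in the step you dismiss as routine. The convergence \eqref{mdw} is weak-$*$ in $L^\infty_{w^*}(0,T;M(I,\R^N))\cong L^1(0,T;C_0(I,\R^N))^*$, so it is only guaranteed to give
\[
\int_0^T\eta\int_I\boldsymbol\psi\cdot\bmu^k\ \longrightarrow\ \int_0^T\eta\int_I\boldsymbol\psi\cdot\dd\bmu
\]
for $\boldsymbol\psi\in C_0(I,\R^N)$. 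Your test function $\boldsymbol\varphi\in C^1(\overline I,\R^N)$ cannot vanish at $\partial I$ (otherwise there is no boundary information to extract), so \eqref{mdw} alone does not let you pass to the limit in the $\bmu^k$ term: in principle mass of $\bmu^k$ could escape to $\partial I$, and then the two identities would not match and the conclusion would fail.

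The gap is fixable, but the fix is exactly the estimate that powers the paper's shorter proof. By \eqref{Abound-m-l}, $|\bmu^k(t,x)|\le A|\bu^k_{0,x}(x)|$, and by \eqref{u0d-1-cons}, $\int_0^h|\bu^k_{0,x}|\to|\bu_{0,x}|_\N(]0,h])$ for $h\notin J_{\bu_0}$, a quantity that is arbitrarily small for small $h$ since $0\notin J_{\bu_0}$. This gives uniform-in-$k$ tightness of $\bmu^k(t)$ near $\partial I$, so you may approximate $\boldsymbol\varphi$ by a cut-off $\boldsymbol\varphi\chi_h\in C_0(I)$ with a controllable error, and then your two identities cancel as claimed. (The time-slicing of \eqref{eq-sff-w2} and the measurability of $t\mapsto\int_I\boldsymbol\varphi\,\dd\bz_x(t)$ via Lemma \ref{lem-salva} are, as you say, manageable.) Once this repair is made, however, you have invoked the same ingredients as the paper's argument in a more roundabout way, so the detour through two integration-by-parts identities does not seem to buy much.
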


\begin{proof}
	Let $h \in I$. By \eqref{NeumannZ}, \eqref{eq-sff} and  \eqref{Abound-m-l},
	\begin{equation} \label{bc_est}
		\int_0^T \sup_{[0,h]} |\bz^k|  \leq \int_0^T \!\! \int_0^h |\bz^k_x| \leq \int_0^T \!\! \int_0^h |\bu^k_t|+ A \int_0^h |\bu^k_{0,x}|.
	\end{equation}
	We pass to the limit as $k\to \infty$ in \eqref{bc_est} using \eqref{utw}, \eqref{u0d-1-cons} and lower semicontinuity of the l.\,h.\,s., obtaining
	\begin{equation} \label{bc_est_lim}
		\int_0^T \sup_{[0,h]} |\bz| \leq \int_0^T \!\! \int_0^h \overline{|\bu_t|}+ A\,|\bu_{0,x}|_{\N}(]0,h]).
	\end{equation}
	Since $0\notin J_{\bu_0}$, the r.\,h.\,s.\ of \eqref{bc_est_lim} tends to $0$ as $h \to 0^+$ and we deduce \eqref{zbc}.
\end{proof}

\subsection{Identifying $\bz$ on the jump set}

In this subsection we identify the vector field $\bz$ on the solution's jump set.
\begin{lemma}\label{lem:z-jump}
Let $\bu$, $\bz$ and $\bnu$ be those obtained in Section \ref{sec:basic}. Then \eqref{zeqnj} holds, that is
\begin{equation*}
\bz^\pm(t)= \pm \boldsymbol T_{\bu(t)}^\pm \quad \text{on } J_{\bu(t)}\quad \text{for a.\,e.\ } t\in [0,T].
\end{equation*}
Moreover,
\begin{equation}\label{strictconvergencejump}
	|\bu_x(t)|_{\N}\res J_{\bu(t)} =\bnu(t)\res J_{\bu(t)} \quad \text{for a.\,e.\ }t\in [0,T].
\end{equation}
\end{lemma}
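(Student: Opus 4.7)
The plan is to pinch the length of the approximating curves $\bu^k(t,\cdot)$ on a small interval $J_\delta=]x_0-\delta,x_0+\delta[$ around a jump point $x_0\in J_{\bu(t)}$ between matching lower and upper bounds, both converging to $\dist_\N(\bu^-(t,x_0),\bu^+(t,x_0))$. Since these endpoints lie strictly within the convexity radius (rad condition), this length-pinching forces $\bu^k(t,\cdot)|_{J_\delta}$ to converge, after reparametrization, to the unique minimizing geodesic between them, identifying both $\bz^\pm$ (as the unit tangents $\boldsymbol T^\pm_{\bu(t)}$ of this geodesic) and the mass of $\bnu(t)$ at $x_0$.

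\textbf{Localization and lower bound.} By \eqref{clelimit2}, $J_{\bu(t)}\subseteq J_{\bu_0}$ is countable for a.e.\ $t$; we pick $\delta>0$ so that $J_\delta\cap J_{\bu_0}=\{x_0\}$, the endpoints $x_0\pm\delta$ are not charged by $|\bu_{0,x}|$, and $\bu(t,\overline{J_\delta})$ is contained in a geodesically convex ball (possible by Lemma \ref{contxslice} and the rad condition). Since Lemma \ref{lem:z-diffuse} already handles the diffuse part, both \eqref{zeqnj} and \eqref{strictconvergencejump} reduce to statements at $x_0$ in the joint limit $k\to\infty$, $\delta\to 0^+$. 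Setting $\ell^k_\delta(t) := \int_{J_\delta}|\bu^k_x(t,\cdot)|$, the triangle inequality in $\N$ gives
\[ \ell^k_\delta(t) \geq \dist_\N(\bu^k(t,x_0-\delta),\bu^k(t,x_0+\delta)), \]
which by \eqref{uds} and Lemma \ref{contxslice} tends, as $k\to\infty$ and $\delta\to 0^+$, to $\dist_\N(\bu^-(t,x_0),\bu^+(t,x_0))$.

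\textbf{Matching upper bound and measure identity.} By \eqref{cle2} applied at any $0<s<t$ we have $\ell^k_\delta(t) \leq \int_{J_\delta}|\bu^k_x(s,\cdot)|$. Taking $k\to\infty$ at $s=0$ via strict convergence \eqref{u0d-1}, the upper bound converges as $\delta\to 0^+$ to $\dist_\N(\bu^-_0(x_0),\bu^+_0(x_0))$; then sending $s\to t^-$ and using the time-continuity of $\bu^\pm(\cdot,x_0)$ from Lemma \ref{contxslice}, namely $\dist_\N(\bu^-(s,x_0),\bu^+(s,x_0))\to\dist_\N(\bu^-(t,x_0),\bu^+(t,x_0))$, the upper and lower bounds coincide. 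Integrating against temporal mollifiers and converting pointwise-in-$t$ statements via \eqref{weakconvergencederivative} yields
\[ \bnu(t)(\{x_0\}) = \dist_\N(\bu^-(t,x_0),\bu^+(t,x_0)) = |\bu_x(t)|_\N(\{x_0\}), \]
which together with \eqref{lsc_diffuse} establishes \eqref{strictconvergencejump}.

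\textbf{Identification of $\bz^\pm$ and main obstacle.} The previous step shows $\bu^k(t,\cdot)|_{J_\delta}$ is asymptotically length-minimizing between endpoints tending to $\bu^\pm(t,x_0)$. A quantitative Alexandrov-type argument based on Lemma \ref{act} then forces $\bu^k(t,\cdot)|_{J_\delta}$ to converge in Hausdorff distance to $\bgamma_{\bu^-(t,x_0)}^{\bu^+(t,x_0)}$, while its normalized tangent $\bu^k_x/|\bu^k_x|$ converges in $L^1(J_\delta)$ to the unit tangent of this geodesic; passing to the weak-* limit \eqref{zdw} of $\bz^k = \bu^k_x/\sqrt{\eps^2+|\bu^k_x|^2}$ identifies $\bz^\pm(t,x_0)$ with $\boldsymbol T^\pm_{\bu(t)}(x_0)$, the one-sided traces being well defined at $x_0$ since $J_{\bz(t)}\subseteq J_{\bu(t)}$ by \eqref{abscontzx}. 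The main obstacle is precisely this quantitative geometric step: a curve in a convex ball whose length exceeds the distance between its endpoints by $\eta$ must lie in a tube of radius $O(\sqrt{\eta})$ around the unique minimizer, with tangents aligning in $L^1$. A secondary difficulty is the time-averaged nature of the convergence in \eqref{weakconvergencederivative}, which forces careful use of temporal mollification together with the $t$-continuity of $\bu^\pm$ from Lemma \ref{contxslice}.
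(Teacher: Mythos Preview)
Your pinching strategy has a genuine gap in the upper-bound step. From \eqref{cle2} you correctly obtain $\ell^k_\delta(t)\le \ell^k_\delta(s)$ for $0\le s\le t$, and at $s=0$ strict convergence \eqref{u0d-1} yields $\limsup_k \ell^k_\delta(0)=|\bu_{0,x}|_\N(J_\delta)\to \dist_\N(\bu_0^-(x_0),\bu_0^+(x_0))$ as $\delta\to 0$. But this quantity is generically \emph{strictly larger} than $\dist_\N(\bu^-(t,x_0),\bu^+(t,x_0))$, since the jump shrinks along the flow. Your fix ``send $s\to t^-$'' tacitly assumes $\limsup_k \ell^k_\delta(s)\le |\bu_x(s)|_\N(J_\delta)$ for $s>0$, i.e.\ strict convergence of $\bu^k(s)$ to $\bu(s)$ at intermediate times. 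That is precisely what is \emph{not} available here: weak-$*$ convergence only gives the lower-semicontinuity inequality in the wrong direction, and strict convergence at $s>0$ is established only later in Lemma~\ref{lem:strict}, whose proof \emph{uses} \eqref{strictconvergencejump}. The argument is therefore circular, and without a matching upper bound there is no length defect to feed into your Alexandrov step. (A minor side issue: $\bz^k$ is not $\bu^k_x/\sqrt{\eps^2+|\bu^k_x|^2}$; that is the inner $\eps$-approximation, already passed to the limit in Lemma~\ref{localest}.)

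The paper avoids length-pinching altogether by exploiting the PDE structure. The key tool is Lemma~\ref{sliceest}: at each time slice, $\bu^k(t,\cdot)$ solves the 1-harmonic map equation with forcing $\bu^k_t(t,\cdot)$, and the lemma bounds the Hausdorff distance from its image to the geodesic through its endpoints by $C\int_{J_\delta}|\bu^k_t(t,\cdot)|$. This closeness, controlled by the time derivative rather than by any length defect, is inserted into an integration-by-parts identity written in Fermi coordinates flat along $\bgamma_{\bu^-(t_0,x_0)}^{\bu^+(t_0,x_0)}$ (see \eqref{base4}--\eqref{fermiest}); the error terms involve Christoffel symbols and metric derivatives, which vanish on the geodesic and are thus dominated by the Hausdorff distance. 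After time-averaging and letting $k\to\infty$, $\tau\to 0$, $h\to 0$, one reaches the pointwise inequality \eqref{jumppointest}, which together with $|\bz^\pm|\le 1$ forces $\bz^\pm=(1,0,\dots,0)=\boldsymbol T_{\bu}^\pm$ in coordinates; \eqref{strictconvergencejump} then drops out from saturation of \eqref{jumppointest}.
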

\noindent
The rest of the section is devoted to its proof. Throughout the proof, we fix $t_0$ and $x_0$ such that
\begin{equation}\label{ass1}
\bu(t_0) \in BV(I), \quad x_0\in J_{\bu(t_0)}
\end{equation}
and we work with a subsequence $\bu^k$ (not relabeled) such that
\begin{equation}\label{sub-x}
\bu^k(t_0)\to \bu(t_0) \quad\text{a.\,e.\ in } I,
\end{equation}
whose existence follows from \eqref{uds}, since $\bu^k(t_0)\to \bu(t_0)$ strongly in $L^2(I)$. We also fix the notation $x_{\pm}^h:=x_0\pm h$, for $h>0$.

We will use \emph{Fermi normal coordinates}, whose notion we now recall. Given an open set $U \subset \N$ and a geodesic segment $\bgamma\colon ]-\frac{L(\bgamma)}{2}, \frac{L(\bgamma)}{2}[ \to U$
parametrized by the distance from its midpoint, we say that a diffeomorphism
 \[F\colon U \ni \bp \mapsto (p^1, \ldots, p^n) \in V \subset \mathbb R^n\]
 is a Fermi normal coordinate chart on $U$ flat along $\bgamma$, if
 \begin{equation}\label{fermi-prop}
 \begin{array}{rcl}
  \bullet && F(\bgamma(s)) = (s,0,\ldots,0) \text{ for } s\in ]-\frac{L(\bgamma)}{2}, \frac{L(\bgamma)}{2}[,
 \\ \bullet &&	g_{ij}|_{\bgamma} \equiv \delta_{ij}\text{ and }g_{ij,k}|_{\bgamma} \equiv 0\text{ for }i, j,k=1,\ldots, n,
 \\ \bullet && \text{in particular, } \Gamma_{ij}^k|_{\bgamma}=0\text{ for }i, j, k=1,\ldots, n.
 \end{array}
 \end{equation}
Such coordinate systems are known to exist locally, see e.\,g.\ \cite{manasse}. In order to deliver an optimal result, we need a quantitative estimate on the existence set in terms of $\radN$. A natural
domain of a Fermi coordinate chart flat along $\bgamma$ is a \emph{tube} around $\bgamma$ of radius $r>0$ \cite{GrayTubes}, defined by
  \begin{equation}\label{def-ugr}
  U(\bgamma, r) = \left\{\exp_{\bq} \bw \colon\, \bq \in \bgamma, \,\bw \in T_{\bq} \N,\, \bw \perp T_{\bq}\bgamma,\, |\bw|_{\N} < r\right\}.
  \end{equation}

 \begin{prop}\label{fermiprop}
 Let $\bgamma\colon ]\!-\radN, \radN[\to \N$ be a geodesic segment parametrized by the distance from its midpoint $\bp$. There exists a Fermi normal coordinate chart on $U(\bgamma, \radN)$ flat along
 $\bgamma$. Furthermore, if $\bw_1:=\bgamma(s_1)$, $\bw_2:=\bgamma(-s_1)$, for $0\leq s_1<\radN$ and $\bw\colon]a,b[\to \N$ is a curve joining $\bw_1$ and $\bw_2$
 which is not contained in $U(\bgamma, \radN)$, then $L(\bw)\ge 2\radN$. In particular, $B_{\N}(\bp,\radN)\subseteq U(\bgamma,\radN)$.
 \end{prop}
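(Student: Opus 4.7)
The plan is to construct the Fermi chart as the inverse of a specific normal exponential map from $\bgamma$ and verify its properties, then establish the length estimate (and the ball inclusion as its corollary) via a simple 1-Lipschitz argument. The main obstacle will be proving that the chart is globally injective on the whole tube.

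First I would fix a parallel orthonormal frame $(E_1,\ldots,E_n)$ along $\bgamma$ with $E_1\equiv \bgamma'$ and consider the candidate map
$$G(s,v^2,\ldots,v^n):=\exp_{\bgamma(s)}\!\Big(\textstyle\sum_{i=2}^n v^i E_i(s)\Big)$$
on $]\!-\!\radN,\radN[\,\times B^{n-1}(0,\radN)$, which is well defined because $\radN\le \injN/2$. The Fermi chart is $F:=G^{-1}$, and properties \eqref{fermi-prop} follow from standard computations: the coordinate vector fields $\partial_s,\partial_{v^i}$ restrict along $\bgamma$ to the orthonormal frame $(E_1,\ldots,E_n)$, giving $g_{ij}|_\bgamma=\delta_{ij}$, while the geodesic equations for $\bgamma$ and for the radial normal geodesics $t\mapsto \exp_{\bgamma(s)}(tE_i(s))$, combined with parallelism of $E_2,\ldots,E_n$, yield $\Gamma^k_{ij}|_\bgamma=0$ and hence $g_{ij,k}|_\bgamma=0$.

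For the length estimate, I would exploit that $t\mapsto \dist_\N(\bw(t),\bgamma)$ is 1-Lipschitz. If $\bw$ exits $U(\bgamma,\radN)$ at some $t^*\in ]a,b[$, then $\dist_\N(\bw(t^*),\bgamma)\ge \radN$; since $\bw_i\in \bgamma$, this forces $\dist_\N(\bw(t^*),\bw_i)\ge \radN$, and hence $L(\bw)\ge L(\bw|_{[a,t^*]})+L(\bw|_{[t^*,b]})\ge 2\radN$. The ball inclusion $B_\N(\bp,\radN)\subseteq U(\bgamma,\radN)$ follows by the same 1-Lipschitz device applied to a minimizing geodesic from $\bp=\bgamma(0)$ to any $\bq\in B_\N(\bp,\radN)$: were $\bq$ outside the tube, the distance-to-$\bgamma$ would have to grow from $0$ to at least $\radN$ along a curve of length strictly less than $\radN$, a contradiction.

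The hardest step is showing that $G$ is a diffeomorphism onto $U(\bgamma,\radN)$. That $G$ is a local diffeomorphism I would deduce from the absence of focal points of $\bgamma$ along normal geodesics of length $<\radN$, via Jacobi field comparison with the space of constant curvature $K_\N$ (the first focal distance there is $\pi/(2\sqrt{K_\N})\ge \radN$ when $K_\N>0$, and no focal points occur when $K_\N\le 0$). For global injectivity, I would argue by contradiction: if $G(s_1,v_1)=G(s_2,v_2)=\bq$ with $(s_1,v_1)\neq(s_2,v_2)$ and $|v_i|<\radN$, then necessarily $s_1\neq s_2$ (otherwise $v_1=v_2$ by $|v_i|<\injN$), and concatenating the two minimizing normal segments from $\bgamma(s_i)$ to $\bq$ with $\bgamma|_{[s_1,s_2]}$ produces a geodesic triangle with perpendicular base angles, side lengths $|v_1|,|v_2|,|s_2-s_1|$ and perimeter strictly less than $4\radN$. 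By the Alexandrov comparison (Lemma \ref{act}), the comparison triangle in the model space of curvature $K_\N$ has the same side lengths and base angles $\ge \pi/2$. In the Euclidean case this forces the third angle $\le 0$, hence degeneracy, contradicting $|s_1-s_2|>0$; in the spherical case, two applications of the spherical law of cosines yield $\cos(|v_i|\sqrt{K_\N})\le \cos(|v_j|\sqrt{K_\N})\cos(|s_2-s_1|\sqrt{K_\N})$ for $\{i,j\}=\{1,2\}$, whence $|v_1|,|v_2|\ge \pi/(2\sqrt{K_\N})\ge \radN$, contradicting $|v_i|<\radN$.
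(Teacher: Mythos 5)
Your construction of the chart $G$, the focal-point argument for local invertibility, and the Alexandrov-comparison argument for global injectivity are all sound and essentially mirror the paper's proof (the paper computes $DG$ explicitly rather than invoking focal points, but your route is a valid alternative). The genuine gap lies in the length estimate and the ball inclusion.

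You assert that if $\bw$ exits $U(\bgamma,\radN)$ at $t^*$, then $\dist_\N(\bw(t^*),\bgamma)\ge\radN$, and you use the same inference for the ball inclusion. This is false. The tube $U(\bgamma,\radN)$ from \eqref{def-ugr} is the diffeomorphic image of the \emph{open} box $]\!-\!\radN,\radN[\times B^{n-1}_{\radN}(0)$, so its boundary has two parts: the lateral wall (normal coordinate of length $\radN$) and the two end caps over $\bgamma(\pm\radN)$. On an end cap, a boundary point is of the form $\exp_{\bgamma(\pm\radN)}(\bv)$ with $|\bv|$ \emph{arbitrarily small}, so its distance to $\bgamma$ is roughly $|\bv|$, not $\radN$; indeed $\bgamma(\pm\radN)$ itself is at distance $0$ from $\bgamma$ yet is not in $U(\bgamma,\radN)$. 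Your $1$-Lipschitz device therefore gives nothing when the curve escapes through an end cap, and this is precisely the hard case (the case $|p^1_0|=\radN$ in the paper's proof), which requires a separate argument built on the symmetry $L(\bgamma_{\bw_1}^{\tilde\bp})+L(\bgamma_{\bw_2}^{\tilde\bp})=2\radN$ together with the spherical Pythagorean theorem and a monotonicity estimate for $x\mapsto\arccos(a\cos(x-b))-x$. Even on the lateral wall, the inequality $\dist_\N(\bw(t^*),\bgamma)\geq\radN$ is not automatic from the definition of the tube; the paper again argues through a right-triangle comparison rather than a plain distance observation. Since the ball inclusion is derived from the length estimate (in the paper, by applying it to $\bw=[\bp,\bq,\bp]$ with $s_1=0$), the gap propagates there as well.
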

 \noindent The proof of Proposition \ref{fermiprop} is relegated to the appendix.

\medskip
Since $\bu_0$ is rad, by {\eqref{clelimit2}} we have
\begin{equation}
\label{jump-t0}
\tfrac12 \dist_\N (\bu^-(t_0, x_0), \bu^+(t_0, x_0))< \radN.
\end{equation}
Thus, there exists a unique minimizing geodesic segment joining $\bu^-(t_0, x_0)$ and $\bu^+(t_0, x_0)$. We let $\overline{\bgamma}_{\bu^-(t_0, x_0)}^{\bu^+(t_0, x_0)}\colon ]-\radN, \radN[ \to \N$ be
the geodesic extension of that segment, pa\-ra\-me\-trized by the distance from its midpoint, which we denote $\bp_0$. {If $\radN=+\infty$ we may w.l.o.g. choose $\bp_0$ to be the midpoint of the segment.} We will work in Fermi normal coordinate chart $\bp \mapsto (p^1, \ldots, p^n)$ on
$U(\overline{\bgamma}_{\bu^-(t_0, x_0)}^{\bu^+(t_0, x_0)}, \radN)$ flat along ${\overline\bgamma}_{\bu^-(t_0, x_0)}^{\bu^+(t_0, x_0)}$. We now give closeness conditions, which will also be used later,
guaranteeing in particular that the image of $\bu^k$ on a time-space rectangle around $(t_0, x_0)$ is contained in the domain of such a chart for large $k$.
\begin{lemma}\label{localcontainmentnewer}
Assume \eqref{ass1} and \eqref{sub-x}. There exist $0 < r_0 < \radN$, $h_0, \tau_0 > 0$ and $k_0 \in \mathbb N$ such that {for any $h\in (0,h_0)$ such that  $x^{h}_\pm\notin J_{\bu_0}$ and $\bu^k(t_0,x^{h}_\pm)\to \bu(t_0,x^{h}_\pm)$ as $k\to +\infty$, it holds:} 
\begin{equation} \label{contain1}
\frac12 \int_{x_-^{h}}^{x_+^{h}} |\bu^k_x(t,\cdot)|< r_0 \quad \text{for all } t >0, \  k > k_0,
\end{equation}
\begin{equation}\label{contain2}
 \bu^k(t, x^{h}_\pm)\in B_{\N}\left(\bp_0, r_0\right)\quad\text{for all } t \in]t_0, t_0 + \tau_0[, \ k>k_0,
\end{equation}
\begin{equation}\label{contain3}
\bu^k(t,x) \in U\left(\overline{\bgamma}_{\bu^-(t_0, x_0)}^{\bu^+(t_0, x_0)}, \radN\right)
\quad\text{for all } (t,x)\in Q_{\tau_0,h}:=]t_0, t_0 + \tau_0[\times ]x^{h}_-, x^{h}_+[,\ k> k_0.
\end{equation}
\end{lemma}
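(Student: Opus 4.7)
Denote $d_0 := \dist_\N(\bu^-(t_0,x_0), \bu^+(t_0,x_0))$ and $J_0 := |\bu_{0,x}|_\N(\{x_0\})$; since $\bu_0$ is rad and by \eqref{clelimit2}, $d_0 \le J_0 < 2\radN$. I fix once and for all $r_0 \in (J_0/2, \radN)$, set $\delta_0 := \radN - r_0 > 0$, and calibrate all subsequent closeness requirements on $h_0$, $\tau_0$, $k_0$ against $\delta_0$. Note that $r_0 > J_0/2 \ge d_0/2 = \dist_\N(\bu^\pm(t_0,x_0),\bp_0)$, which will be essential for \eqref{contain2}.

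For \eqref{contain1}, I would combine the monotonicity \eqref{cle} with the strict convergence $\bu_0^k \to \bu_0$: by $\sigma$-additivity, $|\bu_{0,x}|_\N(]x_0 - h, x_0 + h[) \searrow J_0 < 2r_0$ as $h \to 0^+$, so I can fix $h_0$ small enough that the left-hand side is $< 2r_0$ whenever $h \in (0, h_0)$. For $h$ satisfying the lemma's hypotheses, \eqref{u0d-1-cons} then gives $\int_{x_-^h}^{x_+^h}|\bu_{0,x}^k| < 2r_0$ for $k$ large, and \eqref{cle} propagates the bound to all $t \ge 0$.

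For \eqref{contain2}, since $\dist_\N(\bu^\pm(t_0, x_0), \bp_0) = d_0/2 < r_0$, by the triangle inequality it suffices to make $\dist_\N(\bu^k(t, x_\pm^h), \bu^\pm(t_0, x_0))$ small uniformly for $t \in [t_0, t_0 + \tau_0]$ and $k > k_0$. This I would achieve by chaining three ingredients: (i) the one-sided limit $\bu(t_0, x_\pm^h) \to \bu^\pm(t_0, x_0)$ as $h \to 0^+$; (ii) the pointwise hypothesis $\bu^k(t_0, x_\pm^h) \to \bu(t_0, x_\pm^h)$, transferred to $\N$-distance via \eqref{equiv-metr} (valid on the bounded common range of the $\bu^k$ by \eqref{linfty}); and (iii) the equicontinuity of $\bu^k(\cdot, x_\pm^h)$ uniformly in $k$, furnished by the second part of Lemma \ref{contxslice} since $x_\pm^h \notin J_{\bu_0}$.

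For \eqref{contain3}, the key idea is to apply Proposition \ref{fermiprop} with the specific parameter $s_1 := d_0/2 < \radN$. Refining Step 2, I arrange $\dist_\N(\bu^k(t, x_\pm^h), \bu^\pm(t_0, x_0)) < \delta_0 - \eta$ uniformly in $t, k$ for some $\eta > 0$; as these distances are less than $\radN\le\injN$, unique minimizing geodesic segments $\bgamma_\pm^{k,t,h}$ join $\bu^\pm(t_0, x_0) = \overline{\bgamma}(\pm d_0/2)$ to $\bu^k(t, x_\pm^h)$. The concatenation
\[
\bgamma_-^{k,t,h} \;\cup\; \bu^k(t, \cdot)|_{[x_-^h, x_+^h]} \;\cup\; \bgamma_+^{k,t,h}
\]
joins $\overline{\bgamma}(-s_1)$ to $\overline{\bgamma}(s_1)$ and has total length, by \eqref{contain1}, strictly less than $2(\delta_0 - \eta) + 2r_0 = 2\radN - 2\eta < 2\radN$. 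By the contrapositive of the length statement in Proposition \ref{fermiprop}, this concatenation lies in $U(\overline{\bgamma}, \radN)$; in particular so does its middle arc, yielding \eqref{contain3}. The main obstacle is to identify the correct anchors for the concatenation: fixing them at $\overline{\bgamma}(\pm d_0/2) = \bu^\pm(t_0, x_0)$, rather than at the midpoint $\bp_0$, is what makes a length budget of $2\radN - O(\eta)$ available for an arbitrary rad datum, since the two geodesic extensions then contribute a total length that can be made arbitrarily small via Step 2.
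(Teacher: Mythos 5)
Your proposal is correct and follows essentially the same route as the paper: choose $r_0$ strictly between half the initial jump at $x_0$ and $\radN$, get \eqref{contain1} from strict convergence of $\bu_0^k$ localized via \eqref{u0d-1-cons} plus the monotonicity \eqref{cle}, get \eqref{contain2} from the one-sided limits at $x_0$, the a.e. convergence at the slices $x_\pm^h$ (with \eqref{equiv-metr}) and the time-equicontinuity of Lemma \ref{contxslice}, and get \eqref{contain3} by prolonging $\bu^k(t,\cdot)|_{[x_-^h,x_+^h]}$ with short geodesics to the anchors $\bu^\pm(t_0,x_0)$ and invoking Proposition \ref{fermiprop} through a total-length budget below $2\radN$. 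The only cosmetic difference is that you argue by contrapositive with an explicit margin $\eta$ where the paper argues by contradiction, which is the same computation.
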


\begin{proof}
Let
\begin{equation} \label{jump0}
 |\bu_{0,x}|_{\N}(\{x_0\}) = \dist_\N (\bu_0^-(x_0), \bu_0^+(x_0)) :=2\bar r_0.
\end{equation}
Since $\bu_0 \in BV_{rad}(I, \N)$, $\bar r_0<\radN$. Let 
{$r_0\in (\bar r_0,\radN)$}
%
%
and let $\eps>0$ so small that $\bar r_0+{\eps}<r_0$  {(the choices of $r_0$ and $\eps$ will be made more precise later)}. By continuity of measure $|\bu_{0,x}|_{\N}$, for all {$h<h_0$} sufficiently small we have
\begin{equation} \label{jump0cont}
	|\bu_{0,x}|_{\N}\!\left( ]x_-^{h}, x_0[  \right)< \frac{\eps}{2}, \quad |\bu_{0,x}|_{\N}\!\left( ]x_0, x_+^{h}[  \right)< \frac{\eps}{2}.
\end{equation}
We choose {any} $h{<h_0}$ 
as in the statement.

\smallskip

{
In order to prove \eqref{contain1}, we notice that
$ |\bu_{0,x}|_{\N}\!\left( ]x_-^{h}, x_+^{h}[ \right)< 2 \bar r_0 +\eps.$
 By the strict convergence \eqref{u0d-1}, $k_0>0$  exists such that
\begin{equation}\label{jump1contk}
 |\bu^k_{0,x}|_{\N}\!\left( ]x_-^{h}, x_+^{h}[ \right)< 2 \bar r_0 +2\eps \quad\mbox{for all $k>k_0$.}
\end{equation}
Therefore, by \eqref{cle} we deduce that
\begin{equation}
\label{pippo1}
 |\bu^k_{x}(t)|_{\N}\!\left( ]x_-^{h}, x_+^{h}[ \right)\le  |\bu^k_{0,x}|_{\N}\!\left( ]x_-^{h}, x_+^{h}[ \right)\le 2\bar r_0+2\eps <2r_0 \quad\mbox{for all $t>0$, $k>k_0$,}
\end{equation}
which coincides with \eqref{contain1}.

\smallskip

In order to prove \eqref{contain2},
}
we notice that \eqref{jump0}-\eqref{jump0cont} imply, by \eqref{clelimit2}, that
\begin{equation}\label{bux_small}
|\bu_x(t)|_{\N}\!\left( ]x_-^{h}, x_0[ \right)< \tfrac{\eps}{2},\quad |\bu_x(t)|_{\N}\!\{x_0\} {\le} 2\bar r_0,\quad |\bu_x(t)|_{\N}\!\left( ]x_0, x_+^{h}[ \right)< \tfrac{\eps}{2}
\end{equation}
for all $t>0$. {In particular, \eqref{jump-t0} holds. Hence $\bp_0$ is well defined and we may choose $r_0$ such that
\begin{equation}\label{def-r0}
\radN>r_0>\bar r_1 := \max\left\{\bar r_0, \dist_\N (\bu^-(t_0, x_0), \bp_0),\dist_\N (\bp_0, \bu^+(t_0, x_0)) \right\}.
\end{equation}
}
By definition of $\bp_0$, {we have
\begin{eqnarray*}
\dist_\N(\bu(t_0, x_\pm^{h}), \bp_0) & \le & \dist_\N(\bu(t_0, x_\pm^{h}), \bu^\pm(t_0, x_0)) + \dist_\N (\bu^\pm(t_0, x_0), \bp_0)
\\ & \stackrel{\eqref{bux_small},\eqref{def-r0}} \le & \frac \eps 2 + \bar r_1.
\end{eqnarray*}
Let $\eps>0$ be so small that $\bar r_1+\frac32 \eps<r_0$. Then} \eqref{bux_small} implies that
\[
\bu(t_0, x_\pm^{h}) \in B_{\N}\left(\bp_0,  r_0{-\eps}\right).
\]
Since $x_\pm^{h}$ are such that $\bu^k(t_0, x_\pm^{h}) \to \bu(t_0, x_\pm^{h})$, by equicontinuity of $t \mapsto \bu^k(t, x_\pm^{h})$ (Lemma \ref{contxslice}) we obtain
\begin{equation} \label{contxh0}
	\dist_{\N}(\bu^k(t_0 + \tau, x_\pm^{h}), \bu(t_0, x_\pm^{h})) < {\tfrac \eps 2}
\end{equation}
for $\tau$ small enough and $k$ large enough. We deduce \eqref{contain2}.

\smallskip

In order to prove \eqref{contain3}, assume by contradiction that the image of $[x^{h}_-,x^{h}_+]\ni x\mapsto \bu^k(t, x)$ is not contained in $U(\overline{\bgamma}_{\bu^-(t_0, x_0)}^{\bu^+(t_0,
x_0)}, \radN)$ for some $t \in [t_0, t_0 + \tau_0]$. We prolong this curve with the geodesic segments joining $\bu^k(t, x^{h}_\pm)$ and  $\bu^\pm(t_0, x_0)$. By Proposition \ref{fermiprop}, the
resulting curve $\tbgamma$ has $L(\tbgamma)\ge 2\radN$. On the other hand, by {\eqref{pippo1}} and \eqref{contxh0},
\[
L(\tbgamma)< 2\bar r_0 + 3 \eps <2r_0<2\radN,
\]
which is a contradiction.
\end{proof}

We now take $h \in ]0, h_0[$ such that $|\bu_{0,x}|_{\N}(\{x^h_\pm\}) = 0$,  $\tau\in ]0,\tau_0[$, and $k > k_0$. By Lemma \ref{localcontainmentnewer}, in particular by \eqref{contain3}, we can
work in Fermi coordinates. In coordinates, the equation \eqref{ctvflowZ} satisfied by $\bu^k$ takes the form (cf. \eqref{eq-sff})
\begin{equation}\label{eq-coord}
u^{k,l}_t = z^{k,l}_x + \Gamma^l_{ij}(\bu^k) u^{k,i}_x z^{k,j}.
\end{equation}
Let $\varphi^h \in C_c(I)$ be such that $\supp \varphi^h = [x^h_-, x^h_+]$, $\varphi^h(x_0) = 1$, $\varphi^h$ is affine on $[x^h_-, x_0]$ and on $[x_0, x^h_+]$. We let $Q_\tau:=]0,\tau[\times I$ and we calculate
 \begin{multline}
 \iint_{Q_\tau}\varphi^h |\bu^k_x| = \iint_{Q_\tau}\varphi^h g_{ij}(\bu^k) u^{k, i}_x z^{k, j}
 \\ = - \iint_{Q_\tau}\varphi^h_x g_{ij}(\bu^k) u^{k, i} z^{k, j} - \iint_{Q_\tau}\varphi^h g_{ij,l}(\bu^k) u^{k,l}_x u^{k, i} z^{k, j} - \iint_{Q_\tau}\varphi^h g_{ij}(\bu^k) u^{k, i}
 z^{k, j}_x
 \\ =
 - \iint_{Q_\tau}\varphi^h_x g_{ij}(\bu^k) u^{k, i} z^{k, j} - \iint_{Q_\tau}\varphi^h g_{ij,l}(\bu^k) u^{k,l}_x u^{k, i} z^{k, j}
\\ {+ \iint_{Q_\tau}\varphi^h g_{ij}(\bu^k) u^{k, i} \Gamma_{il}^j(\bu^k) u_x^{k, i} z^{k,l} - \iint_{Q_\tau}\varphi^h g_{ij}(\bu^k) u^{k, i}u^{k,j}_t}
\\ =: I^k_1+I^k_2+I^k_3+I^k_4.
\label{base4}
 \end{multline}
 As $k \to \infty$, we have
 \begin{equation}
 \label{jumpconv1} I^k_1 \to -\iint_{Q_\tau}\varphi^h_x g_{ij}(\bu) u^{i} z^{j},\qquad I^k_4 \to - \iint_{Q_\tau}\varphi^h g_{ij}(\bu) u^{i} u^{j}_t.
 \end{equation}
We estimate
\begin{multline} \label{fermiest}
	\left|I_2^k\right|{+|I_3^k|} {\stackrel{\eqref{inclZ},\eqref{linfty}}\leq} C \left(\int_{t_0}^{t_0+\tau}\!\!\!\! \sup_{x \in [x^h_-, x^h_+]} {\left|D
g\left(\bu^k(t,x)\right)\right|+\left|\bGamma\left(\bu^k(t,x)\right)\right|}\right) \left(\esssup_{t\in [t_0, t_0 + \tau]} \int_{x^h_-}^{x^h_+} \left|\bu^{k}_x(t,x)\right|\right) \\
{\stackrel{(\ref{clelimit2}),(\ref{fermi-prop})}\leq} C \left(\int_{t_0}^{t_0+\tau}\!\!\!\! \sup_{x \in [x^h_-, x^h_+]} \dist_\N\left(\bu^k(t,x), \gamma_{\bu^-(t_0, x_0)}^{\bu^+(t_0, x_0)}\right)\right)
\left(\int_I \left|\bu_{0,x}^{k}\right|\right) .
\end{multline}
We note that the first integrand on the right hand side is a one-sided Hausdorff distance between two curves. In general, given curves $\bgamma_1, \bgamma_2$, we will denote
\begin{equation}\label{def-dist}
\vec{\dd}(\bgamma_1, \bgamma_2) = \sup_{\bp \in \bgamma_1} \dist_\N(\bp, \bgamma_2).
\end{equation}
We recall that for any three curves $\bgamma_1, \bgamma_2, \bgamma_3$ there holds
\begin{equation}\label{dist-tri}
 \vec{\dd}(\bgamma_1, \bgamma_3) \leq \vec{\dd}(\bgamma_1, \bgamma_2) + \vec{\dd}(\bgamma_2, \bgamma_3).
\end{equation}
Thus we can estimate the first integrand on the r.\,h.\,s.\ of \eqref{fermiest} as
 \begin{equation}\label{bigtriangest--}
 \vec{\dd} \left(\left.\bu^k(t,\cdot)\right|_{[x^h_-, x^h_+]}, \bgamma_{\bu^-(t_0,x_0)}^{\bu^+(t_0,x_0)}\right) \leq \vec{\dd} \left(\left.\bu^k(t,\cdot)\right|_{[x^h_-, x^h_+]},
 \bgamma_{\bu^k(t,x^h_-)}^{\bu^k(t,x^h_+)}\right) +\vec{\dd} \left(\bgamma_{\bu^k(t,x^h_-)}^{\bu^k(t,x^h_+)}, \bgamma_{\bu^-(t_0,x_0)}^{\bu^+(t_0,x_0)}\right).
 \end{equation}

In order to estimate the second summand on the right hand side of \eqref{bigtriangest--}, we will make use of continuous dependence of not-too-long geodesics on their endpoints. The not-too-long
justification of this fact is provided in the Appendix.
\begin{lemma}\label{lem-what}
For any $r<\radN$ there exists a constant $C=C(r)>0$ such that
 \begin{equation}\label{what}
 \vec{\dd}\left(\bgamma_{\bp_1}^{\bq_1}, \bgamma_{\bp_2}^{\bq_2}\right) \leq C\max\left\{\dist_\N(\bp_1, \bp_2), \dist_\N(\bq_1, \bq_2)\right\}
 \end{equation}
for any $\bp_1, \bp_2, \bq_1, \bq_2 \in B_{\N}(\bp_0,r)$, $\bp_0 \in \N$.
\end{lemma}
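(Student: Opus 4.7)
The plan is to reparametrize both minimizing segments on the common interval $[0,1]$ via the exponential map, and then exploit smooth dependence of geodesics on their endpoints inside a convexity ball.

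First, I introduce the reparametrization
\[
\bar\bgamma_{\bp}^{\bq}(t) := \exp_{\bp}\bigl(t\, \log_{\bp} \bq\bigr), \qquad t \in [0,1],
\]
whose image coincides with that of $\bgamma_{\bp}^{\bq}$; since $\vec{\dd}$ in \eqref{def-dist} depends only on the image of the curves, it is unaffected by this change. For any $\bp, \bq \in B_\N(\bp_0, r)$ with $r < \radN$, we have $\dist_\N(\bp, \bq) < 2r < 2\radN \le \injN$, so $\log_\bp \bq$ is well defined; moreover, since $r$ is below the convexity radius, the entire segment $\bar\bgamma_{\bp}^{\bq}([0,1])$ stays in $\overline{B_\N(\bp_0, r)}$.

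Next, I would show that the map $(\bp, \bq) \mapsto \bar\bgamma_{\bp}^{\bq}(t)$ is Lipschitz on $\overline{B_\N(\bp_0, r)} \times \overline{B_\N(\bp_0, r)}$, with Lipschitz constant uniform in $t \in [0,1]$ and in $\bp_0 \in \N$. The partial derivatives of $(\bp, \bq, t) \mapsto \exp_{\bp}(t\log_{\bp} \bq)$ with respect to $\bp$ and $\bq$ are Jacobi fields along the geodesic $\bar\bgamma_{\bp}^{\bq}$ with prescribed boundary values at $t=0$ and $t=1$; since $r < \radN$, no conjugate points occur along the segment, and the Rauch/Jacobi field comparison theorem (depending on $r$ and on the curvature upper bound $K_\N$ only) yields a uniform pointwise bound on these fields. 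Consequently,
\[
\dist_\N\bigl(\bar\bgamma_{\bp_1}^{\bq_1}(t), \bar\bgamma_{\bp_2}^{\bq_2}(t)\bigr) \le C(r)\bigl(\dist_\N(\bp_1, \bp_2) + \dist_\N(\bq_1, \bq_2)\bigr) \quad \text{for all } t \in [0,1].
\]

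Finally, given $\bp \in \bgamma_{\bp_1}^{\bq_1}$, pick $t \in [0,1]$ with $\bp = \bar\bgamma_{\bp_1}^{\bq_1}(t)$, so that
\[
\dist_\N(\bp, \bgamma_{\bp_2}^{\bq_2}) \le \dist_\N\bigl(\bar\bgamma_{\bp_1}^{\bq_1}(t), \bar\bgamma_{\bp_2}^{\bq_2}(t)\bigr) \le 2 C(r) \max\{\dist_\N(\bp_1, \bp_2), \dist_\N(\bq_1, \bq_2)\};
\]
taking the supremum over $\bp$ gives \eqref{what} after absorbing the factor $2$ into $C$.

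The main obstacle is producing a Lipschitz constant that is independent of the base point $\bp_0$. This is exactly where the constraint $r < \radN$ and the bound $K_\N < \infty$ enter: they ensure no conjugate points along any segment considered and a uniform Jacobi field estimate that depends only on the geometry of the model space form. Alternatively, one could bypass explicit Jacobi field calculus by working in Fermi normal coordinates along a reference segment (using Proposition \ref{fermiprop}), combining smoothness of $\exp$ and $\log$ on the resulting tube with the comparability of intrinsic and Euclidean distances on bounded sets given by Lemma \ref{lem-dist-comp}.
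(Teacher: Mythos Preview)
Your approach is correct and genuinely different from the paper's. You argue infinitesimally: the $(\bp,\bq)$-derivatives of $\bar\bgamma_{\bp}^{\bq}(t)$ are Jacobi fields with prescribed boundary data, and under the upper curvature bound $K_\N$ Rauch~I (in the form ``$|J|/s_{K_\N}$ is non-decreasing when $J(0)=0$'') gives $|J(t)|\le \tfrac{s_{K_\N}(t\ell)}{s_{K_\N}(\ell)}|J(\ell)|$ along a unit-speed segment of length $\ell<2r$, which is bounded by a constant depending only on $r$ and $K_\N$ (here no lower curvature bound is needed, and the constraint $r<\radN$ ensures $\ell<\pi/\sqrt{K_\N}$ so no conjugate points interfere). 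Integrating along the product geodesic $s\mapsto(\bp(s),\bq(s))$ in $B_\N(\bp_0,r)\times B_\N(\bp_0,r)$---which stays in the ball by convexity---yields the global Lipschitz estimate and hence \eqref{what}.

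The paper instead argues synthetically. It first proves the lemma on $\S^2$ by explicit spherical trigonometry (the haversine formula), obtaining a one-sided estimate of the form \eqref{what-sphere}, and then transfers this to $\N$ via the $\mathrm{CAT}(K_\N)$ structure of convexity balls and Reshetnyak's majorization theorem (mapping a spherical convex region onto the geodesic quadrilateral $[\bp_1,\bp_2,\bq_2,\bq_1]$ in a distance non-increasing way); a separate crude estimate handles the degenerate case where the quadrilateral has perimeter $\ge 4\radN$. Your route is more self-contained in smooth Riemannian geometry and avoids the Reshetnyak machinery; the paper's route is coordinate-free and meshes with the Alexandrov comparison tools already used elsewhere (e.g.\ Lemma~\ref{act}). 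Either way the dependence of $C$ only on $r$ and $K_\N$ (hence not on $\bp_0$) is the crux, and both arguments deliver it.
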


The next lemma provides a version of 1-dimensional Poincar\'e--Sobolev inequality for solutions to the 1-harmonic map equation, formally
\begin{equation}\label{1hm_formal}
	\pi_{\bw} \left(\frac{\bw_x}{|\bw_x|}\right)_x = \bff \quad \text{on } I.
\end{equation}
As in the parabolic case, this concept needs a rigorous definition. We will only consider here the "regular" setting corresponding to Lemma \ref{localest}.
\begin{defn}
	Let $\bff \in L^1(I,\N)$. We say that $\bw \in Lip(I, \N)$ is a regular solution to \eqref{1hm_formal} if there exists $\bv \in W^{1,1}(I,\R^n)$ such that
	\begin{equation}\label{1hm_v}
		\bv(x) \in T_{\bw(x)} \N, \quad |\bv(x)| \leq 1, \quad \bv(x) \cdot \bw_x(x) = |\bw_x(x)| \quad \text{for a.\,e.\ } x \in I
	\end{equation}
	and
	\begin{equation} \label{1hm}
		\pi_{\bw} \bv_x = \bff \quad \text{a.\,e.\ on } I.
	\end{equation}
\end{defn}
Due to degeneracy of the 1-harmonic equation, the distance between $\bw$ and the arc-length parametrized geodesic (which solves \eqref{1hm_formal} with $\bff \equiv 0$) cannot be controlled pointwise by
$\bff$. Instead, we obtain a bound on the distance of the image of $\bw$ from the geodesic. It will be used to estimate the first term on the right-hand side of \eqref{bigtriangest--}.
\begin{lemma} \label{sliceest}
Let $\bff \in L^1(I,\N)$ and let $\bw \in Lip(I, \N)$ be a regular solution to \eqref{1hm_formal}
such that
	\begin{equation} \label{w_jump_r0}
		\int_{x^h_-}^{x^h_+} |\bw_x| \leq 2 r_0
	\end{equation}
	with given $h>0$, $r_0 \in ]0,\radN[$. There exists $C>0$ depending on $\N$, $r_0$ and $\|\bw\|_\infty$
such that
	\begin{equation}\label{sliceineq}
		\sup_{x \in [x_-^h,x_+^h]}\dist_\N\left(\bw(x), \bgamma_{\bw(x_-^h)}^{\bw(x_+^h)}\right) \leq C \int_{x_-^h}^{x_+^h} |\bff|.
	\end{equation}
\end{lemma}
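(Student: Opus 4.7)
The plan is to compare $\bw$ with the endpoint-minimizing geodesic $\bgamma:=\bgamma_{\bw(x_-^h)}^{\bw(x_+^h)}$ by testing the equation against a vector field pointing from $\bw$ back to $\bgamma$, expressed in Fermi normal coordinates. First I verify that $\bw|_{[x_-^h,x_+^h]}$ stays inside the tube $U(\bgamma,\radN)$: the length bound \eqref{w_jump_r0} gives $L(\bw|_{[x_-^h,x_+^h]})\le 2r_0<2\radN$, and since $\bw(x_\pm^h)$ are the symmetric endpoints of the minimizing segment $\bgamma$, Proposition \ref{fermiprop} forbids $\bw$ from leaving the tube. Hence Fermi normal coordinates $(y^1,\ldots,y^n)$ flat along $\bgamma$ are defined on the image of $\bw|_{[x_-^h,x_+^h]}$ and $\phi(y):=\dist_\N(y,\bgamma)=\sqrt{\sum_{i\ge 2}(y^i)^2}$, so $\tfrac12\phi^2$ is smooth on the tube.

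The core step is testing \eqref{1hm} against $X(x):=-\nabla(\tfrac12\phi^2)(\bw(x))$, a Lipschitz vector field along $\bw$ which is tangent to $\N$, has $|X|_g=\phi\circ\bw$, and vanishes at $x_\pm^h$ since $\bw(x_\pm^h)\in\bgamma$. Because $X\in T_{\bw}\N$, one has $X\cdot\bv_x=X\cdot\pi_\bw\bv_x=X\cdot\bff$; integrating by parts in $\R^N$ (no boundary terms, as $X(x_\pm^h)=\boldsymbol 0$) gives $\int X\cdot\bff\,\dd x=-\int X_x\cdot\bv\,\dd x$. Writing the ambient derivative of $X$ via the Gauss--Weingarten decomposition and using that $\bv$ is tangent, the normal component drops out and $X_x\cdot\bv=-\Hess(\tfrac12\phi^2)(\bw_x,\bv)$ a.e. Substituting $\bv=\bw_x/|\bw_x|_g$ on $\{\bw_x\ne \boldsymbol 0\}$ (both sides vanishing otherwise) yields the identity
\begin{equation}\label{step-test}
\int_{x_-^h}^{x_+^h}\frac{\Hess(\tfrac12\phi^2)(\bw_x,\bw_x)}{|\bw_x|_g}\,\dd x=\int_{x_-^h}^{x_+^h} X\cdot\bff\,\dd x\le\|\phi\circ\bw\|_\infty\int_{x_-^h}^{x_+^h}|\bff|\,\dd x.
\end{equation}

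To close the estimate I Poincar\'e-bound $\phi\circ\bw$ by the perpendicular length of $\bw$, then invoke a Hessian lower bound. From the Fermi identities \eqref{fermi-prop} one reads $\Hess(\tfrac12\phi^2)|_{\bgamma}=\diag(0,1,\ldots,1)$; propagating this off $\bgamma$ through the tube via a Jacobi-field/Hessian-comparison argument produces $c=c(\N,r_0,\|\bw\|_\infty)>0$ such that $\Hess(\tfrac12\phi^2)(Y,Y)\ge c|Y_\perp|_g^2$ for every $g$-tangent vector $Y$ in the tube, where $Y_\perp$ denotes the component orthogonal to $\partial_{y^1}$. On the other hand $|\nabla\phi|_g=1$ with $\nabla\phi$ Fermi-perpendicular, so $|(\phi\circ\bw)'|\le|(\bw_x)_\perp|_g$ a.e.; the vanishing at both endpoints gives
\[\phi(\bw(x))\le\tfrac12\int_{x_-^h}^{x_+^h}|(\bw_x)_\perp|_g\,\dd x\quad\text{for every }x\in[x_-^h,x_+^h].\]
Cauchy--Schwarz together with \eqref{w_jump_r0} and \eqref{step-test} then gives
\[\left(\int_{x_-^h}^{x_+^h}|(\bw_x)_\perp|_g\right)^2\le\left(\int_{x_-^h}^{x_+^h}|\bw_x|_g\right)\int_{x_-^h}^{x_+^h}\frac{|(\bw_x)_\perp|_g^2}{|\bw_x|_g}\le\frac{2r_0}{c}\|\phi\circ\bw\|_\infty\int_{x_-^h}^{x_+^h}|\bff|,\]
whence $\|\phi\circ\bw\|_\infty^2\le\tfrac{r_0}{2c}\|\phi\circ\bw\|_\infty\int|\bff|$ and \eqref{sliceineq} follows. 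The main technical obstacle is the Hessian lower bound across the \emph{full} tube of radius $\radN$: it requires adapting the Hessian comparison theorem (Lemma \ref{hct} is for distance to a \emph{point}) to the distance to a geodesic, which is done via a Jacobi-field computation with the Fermi expansions of $g_{ij}$ and $\Gamma^k_{ij}$; the resulting constant $c^{-1}$ captures the dependence on $\N$, $r_0$, and the uniform controls over the image of $\bw$ (hence on $\|\bw\|_\infty$).
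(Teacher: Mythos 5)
Your reduction to the tube, the integration by parts against $X=-\nabla(\tfrac12\phi^2)(\bw)$, and the Poincar\'e/Cauchy--Schwarz closure are all fine mechanically, but the argument hinges on the Hessian lower bound $\Hess(\tfrac12\phi^2)(Y,Y)\ge c\,|Y_\perp|_g^2$ throughout the tube, and this is false whenever $K_\N>0$ --- which is exactly the regime the lemma is designed for (the whole point of $\radN$). Test it on the round sphere of curvature $K$ with $\bgamma$ an arc of the equator and $\phi$ the latitude: in Fermi coordinates the metric is $d\phi^2+\cos^2(\sqrt K\phi)\,K^{-1}d\theta^2$ and for the unit vector $e_1$ along $\partial_{y^1}$ one computes $\Hess(\tfrac12\phi^2)(e_1,e_1)=-\phi\,\sqrt K\tan(\sqrt K\phi)<0$, while $|{(e_1)}_\perp|=0$ forces the claimed bound to require $\ge 0$; for mixed vectors the cross-term-free computation shows the inequality fails for every $c>0$ as soon as $\phi>0$. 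Moreover this defect cannot be absorbed: keeping the negative term in your identity produces an error of size $\|\phi\circ\bw\|_\infty\cdot O(r_0)$ (since $\int|(\bw_x)_\parallel|^2/|\bw_x|\le\int|\bw_x|\le 2r_0$ and $\phi\circ\bw$ can be of order $r_0$, with $r_0$ allowed up to $\radN$), so the closure only yields $\|\phi\circ\bw\|_\infty\lesssim\int|\bff|+r_0$, which is useless. The obstruction is structural: in positive curvature no increasing convex gauge of the distance to a geodesic is geodesically convex (any $g(\phi)$ with $g'\ge0$ has $\Hess\,g(\phi)(e_1,e_1)=-g'(\phi)\sqrt K\tan(\sqrt K\phi)\le0$), so the ``test against the gradient of a convex function of $\phi$'' strategy cannot be repaired without restricting to $K_\N\le 0$. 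Two further, smaller gaps: $\sqrt{\sum_{i\ge2}(y^i)^2}$ is (at best) the distance to the \emph{extended} geodesic, not to the segment $\bgamma_{\bw(x_-^h)}^{\bw(x_+^h)}$, so even granting your Hessian bound you would still need to control the ``overshoot'' of the foot point $y^1$ past $\pm\tfrac12\dist_\N(\bw(x_-^h),\bw(x_+^h))$ before concluding \eqref{sliceineq}; and the identities $|\nabla\phi|_g=1$, smoothness of $\tfrac12\phi^2$, and distance-realization by the normal exponential map on the full tube of radius $\radN$ are asserted but not established.

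For contrast, the paper avoids any convexity of a distance-type function: it compares $\bv$ with the velocity field $\tbv$ of the geodesic launched from $\bw(x_-^h)$ with initial direction $\bv(x_-^h)$, reparametrized to have speed $|\bv(x_-^h)|\,|\bw_x|$, and runs a Gronwall estimate on $\sup|\bv-\tbv|$ using the coordinate form of \eqref{1hm} versus the geodesic equation; this yields $\sup_x\dist_\N(\bw(x),\tbgamma(x))\le C\int|\bff|$ regardless of the sign of the curvature. It then passes from $\tbgamma$ to the minimizing segment between the actual endpoints via Lemma \ref{lem-what} (continuous dependence of short geodesics on their endpoints), treating separately the trivial case in which $\int|\bff|$ is not small compared with $\radN-r_0$. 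If you want to salvage your energy-type approach, you would have to either restrict to $K_\N\le0$ or replace $\phi$ by a quantity whose Hessian has a sign under an upper curvature bound --- which in effect leads back to an ODE-stability comparison of directions like the one in the paper.
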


\begin{proof}
We temporarily denote $a = x^h_-$, $b = x^h_+$. We have
\begin{equation}\label{redef-w}
\bw(x)= \bw(a) +\int_a^x\bw_x \stackrel{(\ref{1hm_v})}=\bw(a) +\int_a^x|\bw_x|\bv.
\end{equation}
Let $\bdelta$ be the constant speed geodesic emanating from $\bw(a)$ with initial speed $\bv(a)$:
\begin{equation}\label{def-delta}
\qquad \frac{\dd^2 \bdelta}{\dd s^2} = - \Gamma_{ij}(\bdelta)\frac{\dd \bdelta_i}{\dd s}\frac{\dd \bdelta_j}{\dd s}, \quad \bdelta(0) = \bw(a), \quad\bdelta'(0)=\bv(a).
\end{equation}
The geodesic $\bdelta$ is uniquely determined and satisfies $|\frac{\dd \bdelta}{\dd s}|=|\bv(a)|$ as long as it is defined. We now re-parametrize $\bdelta$ in such a way that its pace is close to that
of $\bw$, i.\,e., its scalar speed is $|\bv(a)\bw_x|$. Let
$$
\varphi(x)= \int_a^x |\bw_x(\xi)|\dd \xi, \quad \tbgamma(x)=\bdelta(\varphi(x)), \quad \widetilde \bv(x)=\frac{\dd\bdelta}{\dd s}(\varphi(x)).
$$
Then $\tbgamma_x =\frac{\dd\bdelta}{\dd s} \varphi_x = \widetilde\bv |\bw_x|$, hence
\begin{equation}\label{def-tbg}
\tbgamma (x)= \bw(a)+ \int_a^x \widetilde\bv |\bw_x|
\end{equation}
and
\begin{equation}\label{covtwexp}
\widetilde \bv_x = \frac{\dd^2 \bdelta}{\dd s^2} \varphi' \stackrel{(\ref{def-delta})} = - \Gamma_{ij}(\tbgamma)\widetilde \bv_i \widetilde \bv_j |\bw_x|.
\end{equation}
In addition, as long as $\tbgamma$ is defined,
\begin{equation}\label{constant-speed}
|\widetilde\bv(x)|=|\bv(a)|\le 1,\quad\mbox{hence}\quad \|\tbgamma\|_\infty \stackrel{(\ref{def-tbg})}\le \|\bw\|_\infty + 2r_0 \le C,
\end{equation}
where along this proof generic constants $C$ depend on $\N$, $r_0$, and $\|\bw\|_\infty$.

We claim that $C>0$ exists such that
\begin{equation}\label{distutbgamma}
\sup_{x\in [a,b]} \dist_\N(\bw(x), \tbgamma(x)) \le  C \int_a^b |\bff|.
\end{equation}
To this aim, we note that
%
in the same coordinates \eqref{1hm} reads (cf. \eqref{eq-coord})
 \begin{equation} \label{maineqGamma}
  \bv_x = - \bGamma_{ij}(\bw) |\bw_x| v^i v^j + \bff.
 \end{equation}
By \eqref{redef-w} and \eqref{def-tbg}, we have
 \begin{multline} \label{estGammadiff=}
  \left|\bGamma_{ij}(\bw(x)) - \bGamma_{ij}(\tbgamma(x))\right| = \left|\bGamma_{ij}\left(\bw(a) + \int_a^x |\bw_x| \bv\right) - \bGamma_{ij}\left(\bw(a) + \int_a^x |\bw_x| \tbv\right)\right|.
 \end{multline}
Therefore, letting
\[
M(x) = \sup_{[a,x]}|\bv -\tbv|,
\]
it follows from (\ref{estGammadiff=}) that positive constants $C_0$ and $C_1$ depending on $\N$, $r_0$ and $\|\bw\|_\infty$ exist such that
\begin{equation} \label{estGammadiff}
  \left|\bGamma_{ij}(\bw(x)) - \bGamma_{ij}(\tbgamma(x))\right| \stackrel{\eqref{constant-speed}}\leq C_0\int_a^x|\bw_x||\bv - \tbv| \le C_0\, M(x) \int_a^b |\bw_x| \stackrel{(\ref{w_jump_r0})}\le C_1\,M(x).
 \end{equation}
Recalling that $|\bv|\le 1$, it follows that, possibly increasing the value of $C_1$,
 \begin{multline} \label{Mxest}
 0 \leq M_x \leq |(\bv - \tbv)_x| \stackrel{(\ref{covtwexp}),(\ref{maineqGamma})}\leq |\bw_x|\Big(\left|\bGamma_{ij}(\bw) - \bGamma_{ij}(\tbgamma)\right||v^i v^j| + |\bGamma_{ij}(\tbgamma)| |v^i
 v^j - \tv^i \tv^j|\Big)+ |\bff|\\ \stackrel{(\ref{estGammadiff}),\eqref{constant-speed}}\leq  C_1 |\bw_x| M + |\bff|.
 \end{multline}
%
%
 Hence, denoting $\widetilde C(x) = \exp\left( C_1 \int_a^x|\bw_x|\right)$ and recalling that $\tbv(a)=\bv(a)$, by Gronwall's inequality
\[
|\bv(x) - \tbv(x)|\leq M(x) \leq \widetilde C(x) \int_a^x (\widetilde C(x'))^{-1} |\bff(x')|\dd x'\le \widetilde C(b) \int_a^b |\bff|
\]
Consequently, recalling Lemma \ref{lem-dist-comp},
%
%
\begin{multline*}
\sup_{x\in [a,b]} \dist_\N(\bw(x), \tbgamma(x)) \leq C\sup_{x\in [a,b]} |\bw(x)-\tbgamma(x)| \stackrel{(\ref{redef-w}),(\ref{def-tbg})}\le C \int_a^b |\bw_x||\bv-\tbv|\\ \le  C \widetilde
C(b)\int_a^b |\bw_x| \int_a^b |\bff|
 \stackrel{(\ref{w_jump_r0})}\le  C_2\int_a^b |\bff|,
\end{multline*}
with $C_2$ depending on $\N$, $r_0$, and $\|\bw\|_\infty$.
%
%

We are now ready to complete the proof. By triangle inequality,
\begin{equation}\label{passoq-}
\sup_{x\in [a,b]}\dist_\N\left(\bw(x), \bgamma_{\bw(a)}^{\bw(b)}\right) \le \sup_{x\in [a,b]} \dist_\N \left(\bw(x), \tbgamma(x)\right) +  \vec{\dd}\left(\tbgamma, \bgamma_{\bw(a)}^{\bw(b)} \right).
\end{equation}
We now wish to apply Lemma \ref{lem-what} to the second summand on the right-hand side of \eqref{passoq-}. Letting $\bp\in\N$ be the geodesic midpoint between $\bw(a)$ and $\bw(b)$, we note that
$\bw(a),\bw(b)\in B_{\N}(\bp,r_0)$ by \eqref{w_jump_r0}. If we assume in addition that
\begin{equation}\label{passos}
C_2\int_a^b |\bff|<\frac{\radN-r_0}{2},
\end{equation}
then \eqref{distutbgamma} implies that $\tbgamma(b)\in B_{\N}\left(\bp,r_0+\frac{\radN-r_0}{2}\right)$, so Lemma \ref{lem-what} can be applied. Thus \eqref{passoq-} turns into
\begin{multline}
\sup_{x\in [a,b]}\dist_\N\left(\bw(x), \bgamma_{\bw(a)}^{\bw(b)}\right) \le \sup_{x\in [a,b]}\dist_\N \left(\bw(x), \tbgamma(x)\right) +  C(r_0) \dist_\N\left(\tbgamma(b), \bw(b)\right) \\ \le
(1+C(r_0)) \sup_{x\in [a,b]}\dist_\N \left(\bw(x), \tbgamma(x)\right) \stackrel{\eqref{distutbgamma}}\le  (1+C(r_0))C_2\int_a^b|\bff| \quad\text{if \eqref{passos} holds.}
\label{passoq}
\end{multline}
On the other hand, if \eqref{passos} does not hold, we conclude:
\begin{align*}
	\sup_{x\in [a,b]}\dist_\N\left(\bw(x), \bgamma_{\bw(a)}^{\bw(b)}\right) & \le \sup_{x\in [a,b]}\min\{\dist_\N\left(\bw(x), \bw(a)\right), \dist_\N\left(\bw(x),\bw(b)\right)\}
\\ & \stackrel{(\ref{w_jump_r0})}< r_0 \le \frac{2 C_2 r_0}{\radN - r_0}\int_a^b |\bff|.
\end{align*}
\end{proof}

We can now conclude the proof of Lemma \ref{lem:z-jump}.
We use Lemma \ref{sliceest} to estimate the first term on the r.\,h.\,s.\ of \eqref{bigtriangest--}. Indeed, note that $\bu^k(t)$ is a regular solution to the $1$-harmonic map equation
\eqref{1hm_formal} with $\bff = \bu^k_t(t)$ for a.\,e.\ $t>0$, and that assumption \eqref{w_jump_r0} is satisfied due to  \eqref{contain1}.
On the other hand, the second term on the r.\,h.\,s.\ of \eqref{bigtriangest--} is estimated using Lemma \ref{lem-what}, whose assumptions are satisfied in view of \eqref{jump-t0} and \eqref{contain2}.
All together, this leads to
\begin{multline}\label{bigtriangest}
  \vec{\dd} \left(\left.\bu^k(t,\cdot)\right|_{[x^h_-, x^h_+]}, \bgamma_{\bu^-(t_0,x_0)}^{\bu^+(t_0,x_0)}\right)
 \leq C \int_{x^h_-}^{x^h_+} |\bu^k_t|
 \\
 + C\max\left\{\dist_\N(\bu^k(t,x^h_-), \bu^-(t_0,x_0)), \dist_\N(\bu^k(t,x^h_+), \bu^+(t_0,x_0))\right\}.
 \end{multline}

We integrate \eqref{bigtriangest} over the time interval $[t_0, t_0+ \tau]$, $\tau < \tau_0$ and plug the resulting estimate into \eqref{fermiest}. Since $\bu^k$ converge almost everywhere, we can restrict
to $h$ such that $\bu^k(t,x^h_\pm)$ converge for almost every $t>0$ and pass to the limit $k \to \infty$ obtaining
 \begin{multline} \label{jumpconvest}
\limsup_{k \to \infty}|I_2^k|+|I_3^k| \leq  C \left|\bu_{0,x}\right|_{\N}(I) \left( \iint_{Q_{\tau,h}} \overline{|\bu_t|}\right. \\
 \left.\phantom{\iint_{Q_{\tau,h}}}+\int_{t_0}^{t_0+\tau}\left(\dist_\N(\bu(t,x^h_-), \bu^-(t_0,x_0)) + \dist_\N(\bu(t,x^h_+), \bu^+(t_0,x_0))\right)\right),
\end{multline}
where $\overline{|\bu_t|}$ is the weak limit of $|\bu^\delta_t|$ in $L^2(]0,T[\times I)$ (see \eqref{utw}). By Fatou's Lemma and Lemma \ref{phi_lsc},
\begin{multline} \label{jumplsc}
\int_{t_0}^{t_0 + \tau}\!\! \int_I \varphi^h \dd \bnu(t) \dd t {\stackrel{\eqref{weakconvergencederivative}}=}
\liminf_{k \to \infty} \iint_{Q_\tau}\varphi^h |\bu^k_x| \dd \Lb^2 \\ \geq \int_{t_0}^{t_0 + \tau}\!\! \liminf_{k \to \infty} \int_I \varphi^h |\bu^k_x(t)| \dd \Lb \dd t
\geq \int_{t_0}^{t_0 + \tau}\!\! \int_I \varphi^h \dd|\bu_x(t)|_{\N} \dd t.
\end{multline}
We pass to the limit $k \to \infty$ in \eqref{base4} applying \eqref{jumpconv1}, \eqref{jumpconvest}, and \eqref{jumplsc}. We divide the result by $\tau$ and pass to the limit $\tau \to 0^+$ using Lemma
\ref{contxslice}. For a.\,e.\ $t_0\in [0,T]$ functions $\bu_t(t)$ and $\overline{|\bu_t|}(t)$ are integrable and we get
\begin{multline} \label{jumpsliceest}
\int_{x^h_-}^{x^h_+}\varphi^h \dd |\bu_x|_{\N}  \leq  \int_{x^h_-}^{x^h_+}\varphi^h \dd\bnu
\leq - \int_{x^h_-}^{x^h_+} \varphi_x^h g_{ij}(\bu) u^{i} z^{j} - \int_{x^h_-}^{x^h_+}\varphi^h g_{ij}(\bu) u^{i} u^{j}_t
\\ + C \int_I \dd \left|\bu_{0,x}\right|_{\N} \, \left( \int_{x^h_-}^{x^h_+} \overline{|\bu_t|} +  \dist_\N(\bu(t_0,x^h_-), \bu^-(t_0,x_0)) + \dist_\N(\bu(t_0,x^h_+), \bu^+(t_0,x_0))\right).
\end{multline}
Taking into account the definition of one-sided limits $u^\pm(t_0,x_0)$, the last two terms on the r.\,h.\,s.\ of \eqref{jumpsliceest} converge to $0$ as $h \to 0^+$. Recalling our choice of $\varphi$,
passing with $h\to 0^+$ in \eqref{jumpsliceest} leads to
\begin{equation} \label{jumppointest}
|\bu_x(t_0,\cdot)|_{\N}(\{x_0\}) \leq \bnu(t_0)(\{x_0\})\leq -\left[g_{ij}(\bu) u^{i} z^{j}\right]^-(t_0,x_0) +\left[g_{ij}(\bu) u^{i} z^{j}\right]^+(t_0,x_0).
\end{equation}
By the choice of Fermi coordinates, $g_{ij}=\delta_{ij}$ and $(\bu^\pm)^i= \pm\frac12 \delta_{i1}\dist_\N(\bu^-(t_0, x_0),\bu^+(t_0, x_0))$. Therefore,
\begin{equation*}
\dist_\N(\bu^-(t_0, x_0),\bu^+(t_0, x_0)) \leq \tfrac{1}{2}\dist_\N(\bu^-(t_0, x_0),\bu^+(t_0, x_0)) \left(\left(z^1\right)^-(t_0,x_0) + \left(z^1\right)^+(t_0,x_0)\right).
\end{equation*}
This inequality, together with the pointwise estimate $\left|\bz^\pm(t_0, x_0)\right| \leq 1$, implies that $\bz^+(t_0, x_0) = \bz^-(t_0,x_0) = (1,0,\ldots,0)$ in Fermi coordinates, thus \eqref{zeqnj}
holds. As a byproduct, recalling \eqref{jumppointest}, we obtain \eqref{strictconvergencejump}, concluding the proof of Lemma \ref{lem:z-jump}. \qed

\subsection{Identifying $\bu_t$ }

In this subsection we identify the products of the limit passage $k \to \infty$ in equation \eqref{ctvflowZ} satisfied by $\bu^k$, i.\,e.
\begin{equation*}
	\bu^k_t = \pi_{\bu^k} \bz^k_x .
\end{equation*}

\begin{lemma}
Let $\bu$ and $\bz$ be those obtained in Section \ref{sec:basic}. Then \eqref{maineqn} and \eqref{cantor-m}, i.\,e.
\begin{equation*}
\bu_t(t)=\pi_{\bu(t)}^*\bz_x^a(t) \quad\Lb^1\text{-a.\,e.\ in }I,
\end{equation*}
\begin{equation*}
\pi_{\bu(t)}^*\bz_x^c(t) = 0 \quad\text{as measures on } I,
\end{equation*}
hold for a.\,e.\ $t\in]0,T[$\,.
\end{lemma}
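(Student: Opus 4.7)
The plan is to derive \eqref{maineqn} and \eqref{cantor-m} by passing from \eqref{eq-sff-w2} to a pointwise-in-time equality of vector measures, multiplying by the precise representative of the projection, and then performing a Lebesgue decomposition. First, from \eqref{eq-sff-w2} combined with $\bu_t \in L^2(Q_T, \R^N)$, $\bz(t) \in BV(I, \R^N)$ for a.\,e.\ $t$, and $\bmu \in L^\infty_{w^*}(0,T; M(I, \R^N))$, I would extract the slice-wise identity
\[
\bu_t(t)\,\Lb^1 \;=\; \bz_x(t) + \bmu(t) \quad\text{as measures on } I, \quad \text{a.\,e.\ } t \in ]0,T[.
\]
Since each $\bu^k_t = \pi_{\bu^k}\bz^k_x \in T_{\bu^k}\N$ $\Lb^2$-a.\,e., closedness of $\N \hookrightarrow \R^N$ together with \eqref{uds} yields $\bu_t(t,x) \in T_{\bu(t,x)}\N$ $\Lb^2$-a.\,e.\ in $Q_T$. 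I would then multiply by $\pi_{\bu(t)}^* \in BV(I, \R^{N\times N})$, interpreted as pointwise multiplication of a BV matrix-valued function on a vector Radon measure (via Lemma~\ref{lem-salva}). Because $\pi_{\bu(t)}^* = \pi_{\bu(t)}$ $\Lb^1$-a.\,e.\ (jumps are countable) and $\pi_{\bu(t)}\bu_t(t) = \bu_t(t)$ by tangency, the LHS is unchanged and I arrive at
\[
\bu_t(t)\,\Lb^1 \;=\; \pi_{\bu(t)}^* \bz_x(t) + \pi_{\bu(t)}^* \bmu(t).
\]

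Splitting both sides into their absolutely continuous, Cantor and jump parts—and using that the LHS is purely absolutely continuous—gives three separate identities:
\begin{equation*}
\bu_t = \pi_{\bu}\bz_x^a + \pi_{\bu}\bmu^a \ \text{ a.\,e.},\qquad \pi_{\bu}^*\bz_x^c + \pi_{\bu}^*\bmu^c = 0,\qquad \pi_{\bu}^*\bz_x^j + \pi_{\bu}^*\bmu^j = 0,
\end{equation*}
all at fixed a.\,e.\ $t$. Hence \eqref{maineqn} and \eqref{cantor-m} will follow once I establish that the diffuse part of $\pi_{\bu(t)}^*\bmu(t)$ vanishes, i.\,e.\ $\pi_{\bu(t)}\bmu^a(t) = 0$ $\Lb^1$-a.\,e.\ and $\pi_{\bu(t)}^*\bmu^c(t) = 0$ as measures.

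To prove the diffuse vanishing I would pass to the limit in the pointwise identity $\pi_{\bu^k}\bmu^k \equiv 0$, which holds because $\bmu^k = \A_{\bu^k}(\bz^k,\bz^k)|\bu^k_x|$ is normal-valued (the second fundamental form takes values in $N_{\bu^k}\N$). Testing against a smooth vector field $\bphi \in C_c^\infty(Q_T, \R^N)$ with $\supp \bphi$ disjoint from $\R \times J_{\bu_0}$, symmetry of $\pi$ gives
\[
0 = \iint_{Q_T} \pi_{\bu^k}\bphi \cdot \dd\bmu^k.
\]
On such supports, using the equicontinuity estimates from Lemma~\ref{contxslice} together with the a.\,e.\ convergence $\bu^k \to \bu$ (Lemma~\ref{GM_density} and \eqref{uds}), one has $\pi_{\bu^k}\bphi \to \pi_{\bu}^*\bphi$ uniformly on compact subsets of the support. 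Combined with the weak-$*$ convergence $\bmu^k \stackrel{*}{\rightharpoonup}\bmu$ in $L^\infty_{w^*}(0,T;M(I,\R^N))$ and the uniform domination $|\bmu^k| \le A|\bu^k_{0,x}|$ with $|\bu^k_{0,x}|\,\Lb^1$ converging strictly to $|\bu_{0,x}|_\N$, the product passes to the limit, yielding $\iint \bphi \cdot \pi_\bu^*\,\dd\bmu = 0$ for all such $\bphi$. Hence $\pi_\bu^*\bmu$ is concentrated on the atomic set $J_{\bu_0}$, and both its absolutely continuous and Cantor parts vanish.

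The main obstacle is precisely the last limit passage: $\pi_{\bu^k}$ converges to $\pi_\bu^*$ only $\Lb^2$-a.\,e.\ in $Q_T$, which is a priori too weak to multiply against the measure-valued $\bmu^k$—in particular on the Cantor part of $|\bu_{0,x}|$, which carries no Lebesgue mass. Handling this requires, as in the proof of Lemma~\ref{lem:z-diffuse}, a Besicovitch-type localisation around points in the support of $|\bu_{0,x}^d|$ combined with the slice equicontinuity of Lemma~\ref{contxslice} to upgrade a.\,e.\ convergence to uniform convergence on compact sets avoiding $J_{\bu_0}$. Once this uniform convergence is in place, absorbing the jump contribution by the support condition on $\bphi$ closes the argument.
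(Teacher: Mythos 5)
Your reduction (slicing \eqref{eq-sff-w2} in time, multiplying by $\pi_{\bu(t)}^*$, matching absolutely continuous, Cantor and jump parts, and observing that \eqref{maineqn}--\eqref{cantor-m} are equivalent to the vanishing of the diffuse part of $\pi_{\bu(t)}^*\bmu(t)$) is sound, but that vanishing is where the whole content of the lemma sits, and your argument for it has a genuine gap. First, testing $\pi_{\bu^k}\bmu^k=0$ only against smooth fields $\boldsymbol\varphi$ with $\supp\boldsymbol\varphi\cap(\R\times J_{\bu_0})=\emptyset$ is in general vacuous: $J_{\bu_0}$ is only countable and may be dense in $I$, in which case $\R\times J_{\bu_0}$ is dense in $Q_T$ and the open set $\{\boldsymbol\varphi\neq 0\}$, if nonempty, must meet it; so the admissible test class can reduce to $\boldsymbol\varphi\equiv 0$ and you cannot conclude that $\pi_{\bu}^*\bmu$ is concentrated on $J_{\bu_0}$ this way. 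Second, the limit passage $\iint_{Q_T}\pi_{\bu^k}\boldsymbol\varphi\cdot\dd\bmu^k\to\iint_{Q_T}\pi_{\bu}^*\boldsymbol\varphi\cdot\dd\bmu$ is exactly the hard point and is not obtained by ``upgrading to uniform convergence on compact sets avoiding $J_{\bu_0}$'': the $\pi_{\bu^k}$ are continuous while $\pi_{\bu(t)}^*$ is in general discontinuous on every neighbourhood of such a compact set, so uniform convergence on neighbourhoods is impossible; weak-$*$ convergence of $\bmu^k$ cannot be tested against functions that are merely continuous on a compact subset (restrictions of the measures to that subset need not converge); and Lemma \ref{contxslice} gives only temporal equicontinuity at a fixed $x\notin J_{\bu_0}$, with a modulus depending on the local mass of $|\bu^k_{0,x}|$, which is far from the space--time control you would need against the Cantor part of $|\bu_{0,x}|$.

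The paper's proof does not prove any such convergence; it replaces it by a quantitative localization. For an interval $J$ with $|\bu_{0,x}|(\partial J)=0$ it substitutes for $\pi_{\bu^k}$ its spatial average $\underline{\pi}^k$ over $J$, bounds $|\pi_{\bu^k}-\underline{\pi}^k|\le C\int_J|\bu^k_{0,x}|$ via \eqref{cle} (and the analogous bound at the limit level via \eqref{clelimit2}), so that all troublesome products are controlled by the quadratic error $C\,(|\bu_{0,x}|_{\N}(J))^2$; localizing in time gives the inequality \eqref{Jh}, and then Besicovitch differentiation with respect to $\Lb^1$ yields \eqref{maineqn}, while differentiation with respect to $\Lb^1+|\bu_{0,x}|$ on $I\setminus J_{\bu_0}$ (using $|\bz_x|\ll\Lb^1+|\bu_{0,x}|$, i.e.\ \eqref{abscontzx}) followed by subtraction yields \eqref{cantor-m}. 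If you wish to keep your structure, the missing step ``diffuse part of $\pi_{\bu}^*\bmu$ is zero'' must itself be proved by such an averaged-projection/quadratic-error argument applied to $\bmu^k$ (using \eqref{Abound-m-l} and \eqref{mdw2}); as written, your sketch postulates the needed convergence rather than establishing it.
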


\begin{proof}
Let $J$ be a subinterval of $I$ such that $|\bu_{0,x}|(\partial J) = 0$, let $\varphi$ be smooth and nonnegative, with support in $]0,T[\times J$, and let
\[
{\underline{\pi}^k}(t):=\frac{1}{|J|}\int_J\pi_{\bu^k(t)}\dd \Lb^1, \qquad {\underline{\pi}}(t):=\frac{1}{|J|}\int_J\pi_{\bu(t)}\dd \Lb^1.
\]
We have
\begin{multline*}
\iint_{Q_T} \varphi\bu^k_t = \iint_{Q_T} \varphi \pi_{\bu^k}\bz_x^k  = \iint_{Q_T} \varphi {\underline{\pi}^k}\bz^k_x + \iint_{Q_T} \varphi \left(\pi_{\bu^k}-{\underline{\pi}^k}\right)\bz^k_x
\\  \stackrel{\eqref{eq-sff}}=\iint_{Q_T} \varphi {\underline{\pi}^k}\bz^k_x + \iint_{Q_T} \varphi \left(\pi_{\bu^k}-{\underline{\pi}^k}\right)\bu^k_t -  \iint_{Q_T} \varphi
\left(\pi_{\bu^k}-{\underline{\pi}^k}\right) \A_{\bu^k}(\bz^k,\bz^k)|\bu^k_x|
\\ =: I_1+I_2+R_k.
\end{multline*}
The passage to the limit as $k\to \infty$ on the left-hand side, as well as in $I_1$ and $I_2$, is straightforward in view of \eqref{uds}, \eqref{udw}, and \eqref{zdw}. For $R_k$, we note that
\[
|\pi_{\bu^k(t)}-{\underline{\pi}^k}(t)|\le \int_J |(\pi_{\bu^k(t)})_x|\le C \int_J |\bu^k_x(t)|\stackrel{\eqref{cle}}{\le} C \int_J |\bu^k_{0,x}|.
\]
Therefore
\begin{multline*}
|R_k|  \stackrel{\eqref{Abound}}\le C\left(\int_J |\bu^k_{0,x}|\right) \iint_{Q_T} \varphi |\bu^k_x| \stackrel{\eqref{cle}}\le C \left(\int_J |\bu^k_{0,x}|\right) \iint_{Q_T} \varphi |\bu^k_{0,x}|
\\  \stackrel{\eqref{u0d-1}}\to  C  |\bu_{0,x}|_{\N}(J) \int_0^T\!\!\int_J \varphi \dd |\bu_{0,x}|_{\N}\dd t =: R \quad\text{as } k\to \infty,
\end{multline*}
hence
\begin{multline} \label{Rest}
R \ge \left|\iint_{Q_T} \varphi\bu_t - \int_0^T\!\! \int_J \varphi \underline{\pi}(t) \dd \bz_x(t) \dd t - \iint_{Q_T} \varphi \left(\pi_{\bu}-{\underline{\pi}}\right)\bu_t \right|
\\ = \left|\iint_Q \varphi\bu_t -\int_0^T\!\!\int_J \varphi \left((\pi_{\bu(t)})^* -\underline{\pi}(t)\right) \dd \bmu(t)\dd t -  \int_0^T\!\!\int_I \varphi (\pi_{\bu(t)})^* \dd \bz_x(t) \dd t \right|.
\end{multline}
We note that, for a.\,e.\ $t \in ]0,T[$, $(\pi_{\bu(t)})^*$ is the representative of the function $\pi_{\bu(t)} \in BV(I, \R^{N\times N})$ that is defined pointwise as an average of left and right
limits:
\[(\pi_{\bu(t)})^* = \tfrac{1}{2}\left((\pi_{\bu(t)})^- + (\pi_{\bu(t)})^+\right). \]
Moreover, we clarify that
\[ \left( \int_J \varphi (\pi_{\bu(t)})^* \dd \bmu(t) \right)^i = \int_J \varphi (\pi^{ij}_{\bu(t)})^* \dd \mu^j(t) \quad \text{for } i=1,\ldots, N.\]
By Lemma \ref{lem-salva}, these are measurable functions of $t$. The same applies to $\int_I \varphi (\pi_{\bu(t)})^* \dd \bz_x(t)$.

Just as on the level of approximation, we estimate
\[
|(\pi_{\bu(t)})^*-\underline{\pi}(t)|\le |(\pi_{\bu(t)})_x|(J)\le C  |\bu_x(t)|_{\N}(J)\stackrel{\eqref{clelimit2}}{\le} C |\bu_{0,x}|_{\N}(J).
\]
Taking into account \eqref{mdw2}, the middle integral on the r.\,h.\,s.\ of \eqref{Rest} may be absorbed into $R$ and we obtain
\[
\left|\int_0^T\!\!\left(\int_I \varphi \bu_t(t) \dd \Lb^1 - \int_I \varphi \left(\pi_{\bu(t)}\right)^* \dd \bz_x(t) \right)\dd t\right|\le R.
\]
Choose now $\varphi(t,x)=\tau_l(t)\xi_m(x)$, with $\tau_l$ a sequence of mollifiers concentrating at point $t$ and $\xi_m$ converging to $\chi_J$. Passing to the limit as $l\to +\infty$ and $m\to
+\infty$ (in this order), for a.\,e.\ $t \in ]0,T[$ and for any set $J$ such that $|\bu_{0,x}|_{\N}(\partial J)=0$ (whence, in view of \eqref{cle}, $|\bu_x(t)|_{\N}(\partial J)=0$) we obtain
\begin{equation}\label{Jh}
\left|\int_J \bu_t(t) \dd \Lb^1 - \int_J \left(\pi_{\bu(t)}\right)^* \dd \bz_x(t) \right|\le C(|\bu_{0,x}|_{\N}(J))^2.
\end{equation}
From now on, we fix $t$ and disregard dependence on it for notational convenience. For $x_0\notin J_{\bu_0}$, we let $J=J_h=]x_0-h,x_0+h[$. Dividing \eqref{Jh} by $|J_h|$ and passing to the limit $h\to
0^+$ along a subsequence such that $|\bu_{0,x}|_{\N}(\partial J_h)=0$, we obtain
\begin{equation}\label{ac-passo0}
\bu_t=\frac{\bu_t \Lb^1}{\Lb^1} = \frac{\pi_{\bu}^*\bz_x}{\Lb^1} \quad \Lb^1\text{-a.\,e.\ in } I,
\end{equation}
We claim that
\begin{equation}\label{claim-RN}
\frac{\pi_{\bu}^* \bz_x}{\Lb^1} =\pi_{\bu}^*\frac{\bz_x}{\Lb^1}=\pi_{\bu}\bz_x^a \quad \Lb^1\text{-a.\,e.\ in }I.
\end{equation}
Indeed, for $\Lb^1$-a.\,e.\ $x_0\in I$ it holds that
\begin{multline*}
\left|\frac{1}{|J_h|}\int_{J_h} \pi_{\bu}^* \dd \bz_x - \pi_{\bu}^*(x_0) \frac{1}{|J_h|}\int_{J_h}\dd\bz_x\right|  \le  \frac{1}{|J_h|}\int_{J_h} \left|\pi_{\bu}^*- \pi_{\bu}^*(x_0) \right| \dd |\bz_x|
\\  \le  C |\bu_{0,x}|_{\N}(J_h) \frac{|\bz_x|(J_h)}{|J_h|}\ \to\ 0\cdot |\bz_x^a|(x_0) = 0 \quad\text{as } h\to 0^+.
\end{multline*}
Combining \eqref{ac-passo0} and \eqref{claim-RN}, we obtain \eqref{maineqn}.

Now, observe that we can rewrite \eqref{maineqn} in the form
\begin{equation}\label{ac-passo1}
\bu_t\Lb^1 =\left(\pi_{\bu}\bz_x^a\right)\Lb^1 = \pi_{\bu}\left(\bz_x^a\Lb^1\right)
\end{equation}
(the latter equality is an easy observation from the definition). On the other hand, recall that $|\bz_x|\ll \Lb^1+|\bu_{0,x}|$ (cf.\ \eqref{abscontzx}). Thus, dividing \eqref{Jh} by
$(\Lb^1+|\bu_{0,x}|)(J)$ and passing to the limit, we obtain
\[
\frac{\bu_t \Lb^1}{\Lb^1+|\bu_{0,x}|} = \frac{\pi_{\bu}\bz_x}{\Lb^1+|\bu_{0,x}|} \quad (\Lb^1+|\bu_{0,x}|)\text{-a.\,e.\ in } I\setminus J_{\bu_0}.
\]
Thus
\[
\bu_t \Lb^1= \left(\pi_{\bu}\bz_x\right)\res_{I\setminus J_{\bu_0}} = \pi_{\bu}\left(\bz_x^a\Lb^1 + \bz_x^c\right) = \pi_{\bu}\left(\bz_x^a\Lb^1\right) +\pi_{\bu}\bz_x^c.
\]
Subtracting this equality from \eqref{ac-passo1} we obtain \eqref{cantor-m}.
\end{proof}

 \begin{cor}\label{lem:secondfundamental}
  Let $(\bu,\bz)$ be a strong solution to \eqref{smootheqn}-\eqref{smoothbc}. Then,
  \begin{equation}\label{secondfund-eqn}\bu_t=\bz_x^a+\mathcal A_{\bu}(\bz,\bu_x^a).\end{equation}
\end{cor}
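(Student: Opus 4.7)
The plan is to derive \eqref{secondfund-eqn} from \eqref{maineqn} by identifying the normal component of $\bz_x^a$ through a $BV$ Leibniz-rule calculation based on the tangency relation $\pi_\bu\bz=\bz$.

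I first observe that \eqref{ztang}, together with $\bu(t)\in BV(I,\N)$ and $\bz(t)\in BV(I,\R^N)$ (see \eqref{z_bv}), gives the $BV$-identity $\pi_{\bu(t)}\bz(t)=\bz(t)$ for a.\,e.\ $t\in\,]0,T[$; note that $\pi_{\bu(t)}\in BV(I,\R^{N\times N})$ since $\bp\mapsto\pi_\bp$ is smooth in a tubular neighborhood of $\N$. Applying \eqref{green} component-by-component to the matrix--vector product $\pi_\bu\bz$ and using $\pi_\bu\bz=\bz$ yields, as $\R^N$-valued Radon measures on $I$,
\[
\bz_x \;=\; (\pi_\bu)^*\bz_x + (\pi_\bu)_x\,\bz^*,\qquad\text{hence}\qquad \bigl(I-(\pi_\bu)^*\bigr)\bz_x \;=\; (\pi_\bu)_x\,\bz^*.
\]

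Next I take absolutely continuous parts with respect to $\Lb^1$. Since $(\pi_\bu)^*$ is bounded and agrees with $\pi_\bu$ $\Lb^1$-a.\,e., the left-hand side reduces to $(I-\pi_\bu)\bz_x^a$; on the right, $\bz^*$ is bounded and the standard $BV$ chain rule for the composition $\pi\circ\bu$ of a smooth map with a $BV$ curve gives $((\pi_\bu)_x)^a = D\pi|_\bu\,\bu_x^a$. Hence, $\Lb^1$-a.\,e.\ in $I$,
\[
(I-\pi_\bu)\bz_x^a \;=\; \bigl(D_{\bu_x^a}\pi_\bu\bigr)(\bz).
\]

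Two geometric facts now close the argument. First, since $\bu$ is $\N$-valued, the approximate derivative $\bu_x^a(x)$ lies in $T_{\bu(x)}\N$ for $\Lb^1$-a.\,e.\ $x$ (being a limit of secant vectors of curves in $\N$), and $\bz\in T_\bu\N$ by \eqref{ztang}. Second, for $V,W\in T_\bp\N$ the orthogonal projection onto the tangent bundle satisfies the classical identity $(D_V\pi_\bp)(W) = -\A_\bp(V,W)$, consistently with the sign convention encoded in \eqref{eq-sff}. Combined with the symmetry of $\A$, this yields $(I-\pi_\bu)\bz_x^a = -\A_\bu(\bz,\bu_x^a)$; substituting $\pi_\bu\bz_x^a=\bu_t$ from \eqref{maineqn} produces \eqref{secondfund-eqn}. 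I expect the main delicate step to be the identification of absolutely continuous parts in the Leibniz formula, which rests on the $BV$ chain rule for $\pi\circ\bu$ and on the boundedness of $\pi_\bu$ and $\bz$.
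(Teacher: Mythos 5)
Correct, and essentially the paper's own argument: the paper likewise identifies the normal part of $\bz_x^a$ by differentiating the tangency constraint through the Leibniz formula \eqref{green} (phrased with a local orthonormal normal frame $N^j_{\bu}$ rather than your projection identity $\pi_{\bu}\bz=\bz$) and by invoking Lemma \ref{lem:u_x_tangent} for the tangency of $\bu_x^a$, exactly as you do. Your sign identity $(D_V\pi_{\bp})(W)=-\A_{\bp}(V,W)$ is indeed the one consistent with the paper's convention for $\A$ (cf.\ \eqref{eq-sff} and $\A_{\bp}(X,Y)=\bp\,X\cdot Y$ on the sphere), so the final identification $\bu_t=\bz_x^a+\A_{\bu}(\bz,\bu_x^a)$ goes through.
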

\begin{proof}
  The proof is identical to part of  \cite[Lemma 2]{giacomellilasicamoll}. The only difference is that we have to use \eqref{maineqn} together with Lemma \ref{lem:u_x_tangent} and the fact that, thanks
  to \eqref{green}, $$N_{\bu}^{j}\cdot \bz_x^a=(N_{\bu}^{j})_x^{a}\cdot\bz,$$ where $(N^j)_{j=1,\ldots,N-n}$ is a local orthonormal frame around $\bu(t,x)$.
\end{proof}

\subsection{Strict convergence and its consequences}

In this section, we conclude the proof of Theorem \ref{existence} by proving 
\eqref{zjumpset} and the monotonicity property \eqref{clelimit}. These facts are consequences of the following

\begin{lemma} \label{lem:strict}
Let $\bu$ be the strong solution to (\ref{smootheqn}-\ref{smoothbc}) constructed in this section and $\bu^k$ the sequence of approximating regular solutions. Then
\begin{equation} \label{weakstar_refine}
	|\bu^k_x|\stackrel{*}\rightharpoonup|\bu_x|_{\N}\quad \text{in } L^\infty_{w^*}(0,T; M(I)).
\end{equation}
Moreover, for almost every $s\in [0,T]$ there exists a subsequence $\bu^{k_l}(s)$ such that
\begin{equation}\label{strict_ae_s}
	\lim_{l \to \infty} \int_I |\bu^{k_l}_x(s)|  =  |\bu_x(s)|_{\N}(I).
\end{equation}
\end{lemma}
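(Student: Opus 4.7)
The weak-$\ast$ convergence $|\bu^k_x| \stackrel{\ast}{\rightharpoonup} \bnu$ is already in hand from \eqref{weakconvergencederivative}, so proving \eqref{weakstar_refine} reduces to the pointwise-in-$t$ identification $\bnu(t) = |\bu_x(t)|_{\N}$ for a.e.\ $t$. The plan is to establish this by partitioning $I$ into $I \setminus J_{\bu_0}$, $J_{\bu(t)}$, and $J_{\bu_0} \setminus J_{\bu(t)}$, using the inclusion $J_{\bu(t)} \subseteq J_{\bu_0}$ (which follows from \eqref{clelimit2}) and the countability of $J_{\bu_0}$.

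On $I \setminus J_{\bu_0}$, passing to the limit in \eqref{cle} against the strict convergence $|\bu^k_{0,x}|\Lb^1 \stackrel{\ast}{\rightharpoonup} |\bu_{0,x}|_{\N}$ gives $\bnu(t) \leq |\bu_{0,x}|_{\N}$, whence $\bnu(t) \ll |\bu^d_{0,x}|$ on this set; likewise $|\bu_x(t)|_{\N} \ll |\bu^d_{0,x}|$ by \eqref{clelimit2}, and $|\bu_x(t)|_{\N} = |\bu_x(t)|$ there. Equation \eqref{lsc_diffuse} equates the Radon--Nikodym derivatives and hence the two measures on $I \setminus J_{\bu_0}$. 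On $J_{\bu(t)}$ the identification is exactly \eqref{strictconvergencejump}.

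The delicate third piece is $J_{\bu_0} \setminus J_{\bu(t)}$. At such a point $x_0$ with $\bu^-(t_0, x_0) = \bu^+(t_0, x_0) =: \bp$, I would rerun the proof of Lemma \ref{lem:z-jump} using normal coordinates centered at $\bp$ in place of Fermi coordinates along the (now degenerate) geodesic, the tube $U(\bgamma, \radN)$ collapsing to the geodesic ball $B_{\N}(\bp, \radN)$, which still contains the approximating curves thanks to Proposition \ref{fermiprop} and the adapted version of Lemma \ref{localcontainmentnewer}. The estimate chain \eqref{base4}--\eqref{jumppointest} then goes through verbatim, but since $u^{\pm}(t_0, x_0) = 0$ in these normal coordinates, the right-hand side of \eqref{jumppointest} collapses to zero, forcing $\bnu(t_0)(\{x_0\}) = 0 = |\bu_x(t_0)|_{\N}(\{x_0\})$. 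Combined with the previous two cases, this yields \eqref{weakstar_refine}.

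For \eqref{strict_ae_s}, I would test \eqref{weakstar_refine} against $\varphi(t)\eta_n(x)$ with $\varphi \in L^1(0,T)$ and $\eta_n \in C_0(I)$ approximating $\chi_I$ (legitimate since $|\bu_{0,x}|_{\N}$ and hence $\bnu(t)$ do not charge $\partial I$), obtaining weak $L^1(0,T)$ convergence of $f_k(t) := \int_I |\bu^k_x(t)|$ to $f(t) := |\bu_x(t)|_{\N}(I)$. Combined with the pointwise lower semicontinuity $\liminf_k f_k(t) \geq f(t)$ a.e., which follows from \eqref{udwt} and Lemma \ref{GM_lsc}, Fatou's lemma together with the weak convergence forces $\liminf_k f_k = f$ a.e., from which the existence of a pointwise-convergent subsequence for a.e.\ $s$ is immediate. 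The main obstacle I expect is the $J_{\bu_0} \setminus J_{\bu(t)}$ case: the Fermi-coordinate machinery of Lemma \ref{lem:z-jump} exploits flatness of the metric along a nondegenerate geodesic, and it will require careful verification that the curvature-type remainders $I_2^k, I_3^k$ in \eqref{base4}, now controlled by distances to a single point $\bp$ rather than to a geodesic segment, remain suitably small in the joint limit $k \to \infty$, $h, \tau \to 0^+$.
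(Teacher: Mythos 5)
Your proof is correct and follows the same backbone as the paper's: the identification $\bnu(t)=|\bu_x(t)|_{\N}$ via Radon--Nikodym derivatives with respect to $|\bu_{0,x}|_{\N}$ (using \eqref{lsc_diffuse} on the diffuse part and \eqref{strictconvergencejump} on the jump set, together with $\bnu(t),|\bu_x(t)|_{\N}\ll|\bu_{0,x}|_{\N}$ from \eqref{mdw3} and \eqref{clelimit2}), and then \eqref{strict_ae_s} via a boundary cutoff (the paper's $\varphi^h$ in \eqref{strict_triangle}), convergence of the time-integrated total masses, Fatou, and the pointwise-in-$t$ lower semicontinuity coming from \eqref{udwt} and Lemma \ref{GM_lsc}. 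The one genuine divergence is your third case: the paper's proof passes directly from \eqref{lsc_diffuse} and \eqref{strictconvergencejump} to the statement that the derivatives agree $|\bu_{0,x}|_{\N}$-a.e., without separately addressing the atoms of $|\bu_{0,x}|_{\N}$ located at $J_{\bu_0}\setminus J_{\bu(t)}$, where a priori $\bnu(t)$ could still carry mass (the bounds \eqref{mdw3} and the localized estimate from Lemma \ref{lem:z-diffuse} only cap such an atom by a positive quantity). Your explicit treatment of these points -- rerunning the argument of Lemma \ref{lem:z-jump} in normal coordinates centered at $\bp=\bu^\pm(t_0,x_0)$, with the tube replaced by the ball $B_{\N}(\bp,\radN)$ and the containment lemma adapted (its radius $\bar r_0$ is anyway half the \emph{initial} jump, so nothing changes), so that the right-hand side of the analogue of \eqref{jumppointest} vanishes because $u^{i,\pm}(t_0,x_0)=0$ -- is sound: the remainder estimate \eqref{fermiest} only needs $|Dg|+|\bGamma|\lesssim\dist_\N(\cdot,\bp)$, which normal coordinates provide, and the slice estimate (Lemma \ref{sliceest}) and Lemma \ref{lem-what} degenerate harmlessly when the target geodesic is a point; since $J_{\bu_0}$ is countable the argument can be carried out for a.e.\ $t$ simultaneously at all such $x_0$. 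So what you flag as the "main obstacle" is exactly the step the paper leaves implicit, and your verification does buy a fully explicit derivation of the $|\bu_{0,x}|_{\N}$-a.e.\ identity; aside from this, your route and the paper's coincide (your appeal to weak convergence of $f_k(t)=\int_I|\bu^k_x(t)|$ is, in substance, the paper's use of \eqref{strict_int} together with \eqref{strict_ae_ineq}).
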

\begin{proof}
Let $\bnu$ be the measure defined in \eqref{weakconvergencederivative}. By \eqref{strictconvergencejump} and \eqref{lsc_diffuse}, we have for a.\,e.\ $t\in [0,T]$
\begin{equation*}
	\frac{|\bu_x(t)|_{\N}}{|\bu_{0,x}|_{\N}}= \frac{\bnu(t)}{|\bu_{0,x}|_{\N}} \qquad |\bu_{0,x}|_{\N}\text{--a.\,e.}
\end{equation*}
Then, since both measures are absolutely continuous with respect to $|(\bu_0)_x|_{\N}$ (see \eqref{clelimit2} and \eqref{mdw3}),
\begin{equation*}
	\bnu(t) = |\bu_x(t)|_{\N}.
\end{equation*}
By \eqref{weakconvergencederivative}, we have obtained \eqref{weakstar_refine}.

For $h>0$ sufficiently small, let $\varphi^h \in C_0(I)$ be such that $\varphi^h(x) = 1$ if $\dist(x,\partial I) \geq h$ and $\varphi^h$ is affine on $[0, h]$ and on
$[1-h, 1]$. By \eqref{cle}, \eqref{weakstar_refine}, \eqref{clelimit} and strict convergence of $\bu^k_0$ to $\bu_0$,
\begin{multline} \label{strict_triangle}
	\limsup_{k \to \infty}\left|\iint_{Q_T} |\bu^k_x|  - \int_0^T |\bu_x(t)|_{\N}(I)\dd t\right|  \leq \limsup_{k \to \infty}\iint_{Q_T} (1-\varphi^h)|\bu^k_x| \dd \Lb^2 \\+ \limsup_{k
\to \infty} \left|\iint_{Q_T} \varphi^h |\bu^k_x| \dd \Lb^2 - \int_0^T\!\! \int_I \varphi^h \dd |\bu_x(t)|_{\N}\dd t\right| + \int_0^T \!\!\int_I (1 - \varphi^h) \dd |\bu_x(t)|_{\N} \dd t \\ \leq  2 T
\int_I (1-\varphi^h) \dd |(\bu_0)_x|_{\N}.
\end{multline}
Since the right hand side of \eqref{strict_triangle} converges to $0$ as $h \to 0^+$, we obtain
\begin{equation} \label{strict_int}
	\iint_{Q_T} |\bu^k_x| \dd \Lb^2 \to \int_0^T |\bu_x(t)|_{\N}(I)\dd t .
\end{equation}
On the other hand, by Lemma \ref{GM_lsc}, for a.\,e.\ $t \in [0,T[$ we have
\begin{equation}\label{strict_ae_ineq}
	\liminf_{k \to \infty} \int_I |\bu^k_x(t)| \dd \Lb^1 \geq  |\bu_x(t)|_{\N}(I).
\end{equation}
From \eqref{strict_int} it follows that the inequality in \eqref{strict_ae_ineq} is in fact an equality for a.\,e.\ $t\in [0,T[$. In particular, for a.\,e.\ $s\in [0,T[$ there exists a subsequence $\bu^{k_l}$ such that \eqref{strict_ae_s} holds.
\end{proof}

\begin{proof}[Proof of \eqref{zjumpset} and \eqref{clelimit}] 
%
Applying \cite[Proposition 1.80]{afp}, it follows from \eqref{strict_ae_s} and lower semi-continuity that for a.\,e.\ $s$ there exists $k_\ell$ such that $\bu_x^{k_\ell}(s,\cdot) \stackrel{*}{\rightharpoonup} \bu_x(s,\cdot)$. Then, again by \eqref{strict_ae_s}, $\bu^{k_\ell}(s,\cdot) \to \bu(s,\cdot)$ strictly in $BV(I)$. Hence from \eqref{Abound-m-l} and \eqref{mdw} we obtain $|\bmu(s,\cdot)|\le A|\bu_x(s,\cdot)|$ for a.\,e.\ $s$. Therefore, it follows from \eqref{eq-sff-w2} that  $|\bz_x(s,\cdot)|\ll |\bu_x(s,\cdot)|$ for a.\,e.\ $s$, which implies \eqref{zjumpset}.

By \eqref{cle2},
\begin{equation} \label{clest_approx} 	
	|\bu^k_x(t)|\leq |\bu^k_x(s)|\quad \text{for } 0\leq s\leq t <T.
\end{equation} 	
Let us take a subsequence $\bu^{k_l}$ such that \eqref{strict_ae_s} holds. By \cite[Proposition 1.80]{afp} we deduce that $|\bu^{k_l}_x(s)| \stackrel{*}{\rightharpoonup}  |\bu_x(s)|_{\N}$. Then, arguing as in the proof of
\cite[Theorem]{giacomellilasicacurves}, it follows from \eqref{clest_approx} that
\begin{equation} \label{clest}
	|\bu_x(t)|_{\N} \leq |\bu_x(s)|_{\N} \quad \text{for a.\,e.\ $s,t \in ]0,T[$ with $s\le t$.}
\end{equation}
\end{proof}

\section{Variational (in)equality and uniqueness}\label{sec:unique}


In this section we prove Theorem \ref{NPC-result}. We will first show a more general "variational equality", which implies the variational inequality \eqref{var_ineq}. It will also be used repeatedly in the next section.

\begin{lemma} \label{var_eq}
	Let $\bu$ be a strong solution to (\ref{smootheqn}, \ref{smoothbc}). Let $0\leq T_1 < T_2 \leq T$, let $a, b \in \overline{I}$, $a < b$. Let $\bv \in H^1_{loc}(]T_1, T_2[\times [a,b], \N) $ be such that for a.\,e.\ $t \in ]T_1, T_2[$
	\begin{equation} \label{var_eq_cond}
		\dist_{\N}\left(\bv, \bu\right) < 2 \radN \quad \text{$|\bu_x^d|$-a.\,e.\ in } ]a,b[,
	\end{equation}
	\begin{equation}\label{var_eq_cond_j}
	\vec{\dd}(\gamma_{\bu^-}^{\bu^+},\bv)
 < 2 \radN  \quad  \text{$|\bu_x^j|$-a.\,e.\ in } ]a,b[,
	\end{equation}
where $\vec{\dd}$ is defined in \eqref{def-dist}. Then, for a.\,e.\ $t \in ]T_1, T_2[$ there holds
\begin{align}
\nonumber \dd H_{\bv, \bu}^j(t) &:= (\bz^-\cdot\log_{\bu^-} (\bv) -\bz^+\cdot\log_{\bu^+} (\bv))\dd \Hd^0
\\ \label{hess-j}
&= \left(\int_{0}^{\dist_\N(\bu^-,\bu^+)} \Hess \tfrac{1}{2} r^2_{\bv}(\bgamma_{\bu^-}^{\bu^+}(s))(\dot{\bgamma}_{\bu^-}^{\bu^+}(s), \dot{\bgamma}_{\bu^-}^{\bu^+}(s))\dd s\right) \dd \Hd^0 ,
\end{align}
which together with formulae
\begin{equation}
\label{hess-d}
H_{\bv, \bu}(t) = H_{\bv, \bu}^d(t) + H_{\bv, \bu}^j(t), \quad \dd H_{\bv, \bu}^d(t) :=\Hess \tfrac{1}{2} r_{\bv}^2(\bu)(\bz,\bz) \dd |\bu_x^d|_{\N}
\end{equation}
defines a measure $H_{\bv, \bu}(t) \in M(I)$, and we have
\begin{multline}\label{ambrosio_eq}
	\frac{1}{2}\frac{\dd}{\dd t}\int_a^b \dist_{\N}^2 (\bu,\bv) + H_{\bv, \bu}(t)(]a,b[) + \int_a^b \bv_t \cdot \log_{\bv} (\bu)\\ = \int_a^b \bz \cdot D_{\bv} \log_{\bu} (\bv)\, \bv_x - \bz \cdot \log_{\bv} (\bu)\bigg|_{a^+}^{b^-}.
\end{multline}

Moreover, if there exists $B < 2 \radN$ and a Borel set $U \subseteq ]a,b[$ such that
\begin{equation} \label{hess_comp_B_cond}
	\dist_{\N}\left(\bv, \bu\right) \leq B\quad \text{$|\bu_x^d|$-a.\,e.\ in } U, \quad
	\dist_{\N}\left(\bv, \gamma_{\bu^-}^{\bu^+}\right) \leq B  \quad  \text{$|\bu_x^j|$-a.\,e.\ in } U,
\end{equation}
then
\begin{equation} \label{hess_comp_B}
	H_{\bv, \bu}(U) \geq h_{\N}(B)|\bu_x|_{\N}(U).
\end{equation}
\end{lemma}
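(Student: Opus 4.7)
The proof establishes the three claimed displays sequentially, with the first feeding directly into the second. For \eqref{hess-j}, I parametrize $\bgamma:=\bgamma_{\bu^-}^{\bu^+}$ by arc-length $s\in[0,d]$ with $d=\dist_\N(\bu^-,\bu^+)$; then the definition of $\boldsymbol T_\bu^\pm$ and \eqref{zeqnj} give $\bz^-=\dot\bgamma(0)$ and $\bz^+=\dot\bgamma(d)$. Using the gradient identity $\nabla_\bp\tfrac12 r_\bv^2=-\log_\bp\bv$ recalled just before \eqref{char-hess}, the integrand on the left of \eqref{hess-j} equals the boundary evaluation of $\tfrac{d}{ds}\tfrac12 r_\bv^2(\bgamma(s))$, which by the fundamental theorem of calculus is $\int_0^d\tfrac{d^2}{ds^2}\tfrac12 r_\bv^2(\bgamma(s))\,\dd s$. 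Because $\bgamma$ is a geodesic in $\N$, its acceleration $\ddot\bgamma$ lies in the normal bundle and is orthogonal to $\log_{\bgamma}\bv\in T_{\bgamma}\N$, so \eqref{char-hess} identifies this second derivative with $\Hess\tfrac12 r_\bv^2(\bgamma)(\dot\bgamma,\dot\bgamma)$; this proves \eqref{hess-j} and justifies that the two expressions defining $H_{\bv,\bu}^j$ coincide.

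For \eqref{ambrosio_eq}, I start from the pointwise identity $\partial_t\tfrac12\dist_\N^2(\bu,\bv)=-\log_\bu\bv\cdot\bu_t-\log_\bv\bu\cdot\bv_t$ (two instances of the same gradient formula). Integrating in $x\in]a,b[$ and transferring the $\bv_t$-term to the claimed LHS reduces the task to identifying $-\int_a^b\log_\bu\bv\cdot\bu_t$. Since $\log_\bu\bv\in T_\bu\N$, \eqref{maineqn} absorbs the projection $\pi_\bu$ and gives $\log_\bu\bv\cdot\bu_t=\log_\bu\bv\cdot\bz_x^a$ a.e. I then apply the BV Leibniz formula \eqref{green} to the scalar BV product $\bz\cdot\log_\bu\bv$ (BV regularity being ensured by the smoothness of the logarithm map over the set controlled by \eqref{var_eq_cond}) together with the FTC on $]a,b[$, and decompose $\bz_x$ and $\bu_x$ into absolutely continuous, Cantor and jump parts. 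Four contributions must be tracked: $(i)$ the Cantor piece of $\bz_x$ contracts with $(\log_\bu\bv)^*\in T_\bu\N$ to zero by \eqref{cantor-m}; $(ii)$ the diffuse piece of $\bu_x$ contracted with $\bz\cdot D_\bu\log_\bu\bv$ reassembles, via \eqref{char-hess} and the identifications $\bu_x^a=|\bu_x^a|\bz$ $\Lb$-a.e.\ and $\bu_x^c=\bz^*|\bu_x^c|$ with $|\bz^*|=1$ on $\supp|\bu_x^c|$ coming from \eqref{zeqn}, into $-H_{\bv,\bu}^d$; $(iii)$ at each jump $x_0\in J_\bu$ the Leibniz contribution is exactly the jump of $\bz\cdot\log_\bu\bv$, which by \eqref{hess-j} just proved equals $-\dd H_{\bv,\bu}^j(x_0)$; and $(iv)$ the $\bv_x$-piece of the chain rule for $(\log_\bu\bv)_x$ supplies $\int_a^b\bz\cdot D_\bv\log_\bu\bv\cdot\bv_x$. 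Rearranging with the boundary term produced by the FTC delivers \eqref{ambrosio_eq}.

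For \eqref{hess_comp_B}, Lemma \ref{hct} and monotonicity of $h_\N$ give $\Hess\tfrac12 r_\bv^2(\bp)(X,X)\ge h_\N(B)|X|^2$ whenever $\dist_\N(\bp,\bv)\le B<2\radN$. On the diffuse part, $|\bz|=1$ $|\bu_x^d|$-a.e.\ by \eqref{zeqn}, so $H_{\bv,\bu}^d(U)\ge h_\N(B)|\bu_x^d|_\N(U)$. On the jump part, \eqref{hess_comp_B_cond} forces every point of $\bgamma_{\bu^-}^{\bu^+}$ to lie within distance $B$ of $\bv$, hence the integrand in \eqref{hess-j} is uniformly at least $h_\N(B)$; integration over $s\in[0,\dist_\N(\bu^-,\bu^+)]$ combined with the definition \eqref{def:measure} of $|\bu_x^j|_\N$ gives $H_{\bv,\bu}^j(U)\ge h_\N(B)|\bu_x^j|_\N(U)$. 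Summing the two estimates yields \eqref{hess_comp_B}.

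The main technical obstacle is the BV calculus in the second step: one must carefully apply the Leibniz formula \eqref{green} and the BV chain rule to a non-trivial composition built from the logarithm map, keep the Cantor parts of $\bz$ and $\bu_x$ consistent with the tangent-space constraints supplied by \eqref{cantor-m} and \eqref{zeqn}, and verify that the jump pieces of the Leibniz decomposition reassemble \emph{exactly} into $-H_{\bv,\bu}^j$ through the representation formula \eqref{hess-j} proved in the first step.
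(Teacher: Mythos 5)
Your proposal is correct and follows essentially the same route as the paper's own proof: the same time-derivative identity for $\dist_\N^2(\bu,\bv)$, the same substitution of \eqref{maineqn} and elimination of the Cantor part of $\bz_x$ via \eqref{cantor-m}, the same application of the Leibniz formula \eqref{green} and of the BV chain rule to $\log_\bu\bv$, the same reassembly of the diffuse terms into $-H^d_{\bv,\bu}$ via \eqref{zeqn} and \eqref{char-hess}, the same identification of the jump contributions with $-H^j_{\bv,\bu}$, and the same fundamental-theorem-of-calculus argument along the connecting geodesic to prove \eqref{hess-j} (the paper integrates $\frac{\dd}{\dd s}(\dot\bgamma\cdot\log_\bgamma\bv)$, you integrate $\frac{\dd^2}{\dd s^2}\tfrac12 r_\bv^2(\bgamma)$, which is the same up to sign). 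The only organizational difference is that you prove \eqref{hess-j} first and then \eqref{ambrosio_eq}, while the paper does the reverse; this has no bearing on correctness.
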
 	

\begin{proof}
	For $t \in ]T_1, T_2[$ we calculate, implicitly using \eqref{var_eq_cond}-\eqref{var_eq_cond_j},
	\begin{equation*}
		\frac{1}{2}\frac{\dd}{\dd t}\int_a^b \dist_{\mathcal N}^2(\bu,\bv) = -\int_a^b \bv_t\cdot \log_{\bv}(\bu) -\int_a^b \bu_t\cdot \log_{\bu}(\bv),
	\end{equation*}
\begin{equation*}
	-\int_a^b \bu_t\cdot \log_{\bu}(\bv)
 \stackrel{\eqref{maineqn}}{=} -\int_a^b \bz_x^a\cdot\log_{\bu}(\bv)
		 {\stackrel{\eqref{cantor-m}}=}  -\int_{]a,b[}\log_{\bu}(\bv)^*\cdot \dd \bz_x +\int_{]a,b[}\log_{\bu}(\bv)^*\cdot\dd \bz_x^j,
\end{equation*}
\begin{equation*}	
	-\int_{]a,b[}\log_{\bu}(\bv)^*\cdot \dd \bz_x
		{\stackrel{\eqref{green}} =} - \bz \cdot \log_{\bv} (\bu)\bigg|_{a^+}^{b^-} + \int_{]a,b[}\bz^* \cdot \dd \log_{\bu}(\bv)_x,
\end{equation*}
\begin{equation*}
	\int_{]a,b[}\bz^* \cdot \dd \log_{\bu}(\bv)_x		
	 =  \int_a^b \bz\cdot D_{\bv}(\log_{\bu}(\bv))\bv_x +\int_{]a,b[} \bz\cdot D_{\bu}(\log_{\bu}(\bv)) \dd \bu_x^d+\int_{]a,b[} \bz^* \cdot \dd (\log_{\bu}(\bv))_x^j.
\end{equation*}
Summing up the above equations,
\begin{multline} \label{var_eq_mid}
	\frac{1}{2}\frac{\dd}{\dd t}\int_a^b \dist_{\mathcal N}^2(\bu,\bv) = -\int_a^b \bv_t\cdot \log_{\bv}(\bu) - \bz \cdot \log_{\bv} (\bu)\bigg|_{a^+}^{b^-} + \int_a^b \bz\cdot D_{\bv}(\log_{\bu}(\bv))\bv_x \\+\int_{]a,b[} \bz\cdot D_{\bu}(\log_{\bu}(\bv)) \dd \bu_x^d +\int_{]a,b[} \bz^* \cdot \dd (\log_{\bu}(\bv))_x^j +\int_{]a,b[}\log_{\bu}(\bv)^*\cdot\dd \bz_x^j .
\end{multline}
It remains to take care of the last three terms on the r.\,h.\,s.\ of \eqref{var_eq_mid}. We have
\[
\int_{]a,b[} \bz\cdot D_{\bu}(\log_{\bu}(\bv)) \dd \bu_x^d \stackrel{\eqref{zeqn}} = \int_{]a,b[} \bz\cdot D_{\bu}(\log_{\bu}(\bv)) \bz \dd |\bu_x^d| \stackrel{\eqref{char-hess}}= - H_{\bv, \bu}^d (]a,b[)
\]
and
\begin{multline*}
	\int_{]a,b[}\bz^* \cdot \dd (\log_{\bu}(\bv))_x^j + (\log_{\bu}(\bv))^*\cdot  \dd \bz_x^j
	\\ {\stackrel{\eqref{zjumpset}}=}  \tfrac{1}{2} \int_{]a,b[\cap J_{\bu}}(\bz^++\bz^-)\cdot (\log_{\bu^+}(\bv)-\log_{\bu^-}(\bv))+(\bz^+-\bz^-)\cdot(\log_{\bu^+}(\bv)+\log_{\bu^-}(\bv))\dd \Hd^0
	\\  = \int_{]a,b[\cap J_{\bu}}(\bz^+\cdot\log_{\bu^+}(\bv)-\bz^-\cdot\log_{\bu^-}(\bv))\dd \Hd^0 .
\end{multline*}
At this point, we have proved \eqref{ambrosio_eq}.
In order to prove the characterization \eqref{hess-j}, we
recall that at any point $x \in J_{\bu}$, $\bz^\pm$  coincides with the vector
$T_{\bu}^\pm$ tangent at $\bu^\pm$ to the geodesic segment $\bgamma_{\bu^-}^{\bu^+}$. Thus, using the property $\nabla_{\dot{\bgamma}}\dot{\bgamma} =0$ of geodesics and \eqref{var_eq_cond_j}, we can write
\begin{equation*}
	\bz^+\cdot\log_{\bu^+}(\bv) -  \bz^-\cdot\log_{\bu^-}(\bv) = \int \frac{\dd}{\dd s} \left(\dot{\bgamma}_{\bu^-}^{\bu^+} \cdot \log_{\bgamma_{\bu^-}^{\bu^+}} \bv\right)  = - \int \Hess \tfrac{1}{2} r^2_{\bv}(\bgamma_{\bu^-}^{\bu^+})(\dot{\bgamma}_{\bu^-}^{\bu^+}, \dot{\bgamma}_{\bu^-}^{\bu^+}),
\end{equation*}
whence \eqref{hess-j}. Then \eqref{hess_comp_B} is a simple consequence of \eqref{hess-j}, \eqref{hess-d}, and  Lemma \ref{hct}.
\end{proof}

\begin{proof}[Proof of Theorem \ref{NPC-result}]
Let $]a,b[ = I$ and $\bv \in C^1(I, \N)$. By our assumptions, $\radN=\infty$ and restrictions \eqref{var_eq_cond}, \eqref{var_eq_cond_j} in Lemma \ref{var_eq} are trivially satisfied. In this case, the variational equality \eqref{ambrosio_eq} reduces to
\[\frac{1}{2}\frac{\dd}{\dd t}\int_I \dist_{\N}^2 (\bu,\bv) + H_{\bv, \bu}(t)(I) = \int_I \bz \cdot D_{\bv} \log_{\bu} (\bv)\, \bv_x .\]
By \eqref{hess_comp_B} and Lemma \ref{hct}, we have
\[H_{\bv, \bu}(t)(I) \geq |\bu_x|_{\N}(I). \]
On the other hand, by \cite[bottom of p.\,540]{karcher},
\[
 D_{\bv}(\log_{\bu}(\bv))\bv_x=J'(0),
\]
 where $J(s)$ is the Jacobi field along $s\mapsto \exp_{\bu}(s\cdot \log_{\bu}(\bv))$ determined by $J(0)=0$, $J(1)=\bv_x$. Therefore, since $K_{\mathcal N}\leq 0$,
 \[
 |J'(0)|\leq |J(1)|=|\bv_x|
 \]
(see for instance \cite[(*) on p.\,545]{eberlein}).
Altogether, we deduce that the variational inequality \eqref{var_ineq} holds true for all $\bv\in C^1(I,\mathcal N)$. By strict approximation, we get that \eqref{var_ineq} holds true for all $\bv\in BV(I,\mathcal N)$. 
\end{proof}

\section{Asymptotic behavior}\label{sec:asymp}

In this section we prove Theorem \ref{thm:asymp}. Given $\bu_0\in BV_{rad}(I,\N)$, let $T>0$ and let $\bu$ a strong solution to (\ref{smootheqn},\ref{smoothbc}) in $[0,T[$ with initial datum $\bu_0$ satisfying the energy inequality \eqref{energylimit} and the pointwise estimate \eqref{clelimit}.

\begin{lemma}\label{lem:asymp_case1}
	Let $t_0 \in [0, T[$ and $B\in ]0,\radN[$. Suppose $x_2 \in I$ is such that
	\begin{equation}\label{asymp_lem_case1}
	0 < |\bu_x(t_0)|_{\N}(]0, x_2[) \leq B < \radN.
	\end{equation}
	 Then for every $\eps > 0$ there exists $\tau_* \geq 0$ depending on $B$, $x_2$ and $\bu_0$ but neither on $\bu$, $t_0$ nor on $T$ such that we have
	\[|\bu_x(t)|_{\N}(]0, x_2[) < \eps \quad \text{if } \ t_0 + \tau_* \leq t < T.\]
\end{lemma}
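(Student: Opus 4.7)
Set $V(t):=|\bu_x(t)|_{\N}(]0,x_2[)$. The pointwise monotonicity \eqref{clelimit} gives $V(t)\le V(t_0)\le B<\radN$ for $t\in[t_0,T[$, so the image of $[0,x_2]$ under $\bu(t)$ stays within a geodesic ball of radius $\sim B$. My plan is to apply the variational equality (Lemma \ref{var_eq}) with the constant test $\bv\equiv\bp:=\bu^+(t_0,0)$; by the triangle inequality and the hypothesis, $\dist_{\N}(\bp,\bu^{\pm}(t_0,x))\le B$ for $x\in[0,x_2]$, and analogously along the connecting geodesics at jump points.

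\textbf{Uniform closeness window.} I fix $B'\in\,]B,\radN[$ and use the equicontinuity in $t$ of $\bu^+(\cdot,0)$ from Lemma \ref{contxslice}, whose modulus depends only on $\bu_0$ (not on $\bu$ nor $t_0$), to select $\tau_\#=\tau_\#(\bu_0,B,B')>0$ such that $\dist_{\N}(\bp,\bu^+(t,0))\le B'-B$ for $t\in[t_0,t_0+\tau_\#]$. Combined with monotonicity of $V$, this ensures the closeness hypotheses \eqref{var_eq_cond}--\eqref{var_eq_cond_j} hold with bound $B'<2\radN$ on $]0,x_2[\,\times\,]t_0,t_0+\tau_\#[$, so Lemma \ref{var_eq} applies.

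\textbf{Differential inequality.} Since $\bv_t=\bv_x=0$ and the Neumann condition \eqref{zbc} gives $\bz(t,0^+)=0$, the identity \eqref{ambrosio_eq} reduces to
\[
\tfrac{1}{2}\tfrac{\dd}{\dd t}\int_0^{x_2}\dist_{\N}^2(\bu,\bp)+H_{\bp,\bu}(t)(]0,x_2[)=-\bz(t,x_2^-)\cdot\log_{\bp}(\bu^-(t,x_2)),
\]
and the Hessian comparison \eqref{hess_comp_B} yields $H_{\bp,\bu}(t)(]0,x_2[)\ge h_{\N}(B')\,V(t)$ with $h_{\N}(B')>0$ since $B'<\radN$. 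Using $|\bz|\le 1$ and $|\log_{\bp}\bu^-|=\dist_{\N}(\bp,\bu^-(t,x_2))$, I integrate over $[t_0,t_0+\tau_\#]$ and invoke monotonicity of $V$ and boundedness of $F(t):=\tfrac12\int_0^{x_2}\dist_{\N}^2(\bu,\bp)$ to extract a quantitative control on $V(t_0+\tau_\#)$ in terms of $V(t_0)$ and the geometric constants.

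\textbf{Iteration and main obstacle.} A single application of the above estimate typically reduces $V$ only to a value proportional to $B$, not to an arbitrary $\eps$; so I iterate, choosing at each step the new base point $\bp_k:=\bu^+(t_k,0)$ at time $t_k:=t_0+k\tau_\#$, which allows re-running the argument with the (smaller) radius $V(t_k)$ in place of $B$. Since $\tau_\#$ depends only on $\bu_0,B,B'$ and is bounded away from zero, after finitely many iterations $V$ drops below $\eps$; the total elapsed time $\tau_*$ depends only on $B,x_2,\bu_0,\eps$, as required. The main technical obstacle is twofold: first, the boundary term at $x_2^-$ does not vanish and must be carefully bounded (the estimate $\dist_{\N}(\bp,\bu^-(t,x_2))\le V(t)+\dist_{\N}(\bp,\bu^+(t,0))$ being the critical inequality), and second, the iteration must be carried out while retaining the uniform dependence on $\bu_0$ (not on the particular solution $\bu$), which relies crucially on the fact that the modulus provided by Lemma \ref{contxslice} is intrinsic to $\bu_0$.
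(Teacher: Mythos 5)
Your plan has a genuine gap at its core: with the constant comparison point $\bv\equiv\bp=\bu^+(t_0,0)$, the boundary term in \eqref{ambrosio_eq} at $x_2^-$, namely $-\bz(t,x_2^-)\cdot\log_{\bp}(\bu^-(t,x_2))$, is of size up to $\dist_\N(\bp,\bu^-(t,x_2))\le \dist_\N(\bp,\bu^+(t,0))+V(t)$, i.e.\ of the \emph{same order} as the quantity you are trying to dissipate. Since $h_\N\le 1$ always (and $h_\N(B')<1$ when $K_\N>0$), the Hessian dissipation $h_\N(B')\,V(t)$ cannot dominate it: in the case $K_\N\le 0$ your differential inequality degenerates to $\tfrac{\dd}{\dd t}\int_0^{x_2}\dist_\N^2(\bu,\bp)\le 2\,\dist_\N(\bp,\bu^+(t,0))$, which gives no decay of $V$ at all, and for $K_\N>0$ the right-hand side is even worse. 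Consequently a single "step" of your scheme produces no quantitative decrease of $V$ (not even a decrease proportional to $B$), so there is nothing to iterate; shrinking the time window via Lemma \ref{contxslice} only controls the drift term, not the $V(t)$-sized flux through $x_2$.

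The paper's proof avoids exactly this obstruction by a different choice of comparison map: it first passes from $x_2$ to an auxiliary point $x_0<x_2$ with $|(\bu_0)_x|_\N(]x_0,x_2[)\le\eps/2$, then picks a slice $x_1\in ]x_0,x_2[$, chosen by averaging the energy inequality over $J=]x_0,x_2[$ so that $\bu_1:=\bu(\cdot,x_1)\in H^1(t_0,T)$ with $\int_{t_0}^T\bu_{1,t}^2\le C_1^2$ ($C_1$ depending only on $\bu_0$, $x_2$, $\eps$), and applies Lemma \ref{var_eq} on $]0,x_1[$ with the \emph{moving} comparison $\bv=\bu_1$. Then the boundary term vanishes identically, $\bz(x_1)\cdot\log_{\bu_1}(\bu_1)=0$, and the price is the $\bv_t$-term, which by Cauchy--Schwarz is only $C\sqrt{\tau}$ over a time window of length $\tau$; comparing $h_\N(B)\,\tau\,|\bu_x(t_0+\tau)|_\N(]0,x_1[)$ with $C\sqrt{\tau}$ plus the bounded initial distance $\tfrac12|I|\,TV_I^\N(\bu_0)^2$ forces $|\bu_x(t_0+\tau)|_\N(]0,x_1[)<\eps/2$ for all $\tau\ge\tau_*$, with $\tau_*$ depending only on $B$, $x_2$, $\eps$, $\bu_0$; the piece on $[x_1,x_2[$ is $\le\eps/2$ by \eqref{clelimit}. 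The essential ingredients you are missing are this moving-slice comparison (killing the flux term) and the use of the space--time $L^2$ bound on $\bu_t$ from \eqref{energylimit}, which is the actual source of the uniform decay; without them the constant-point argument cannot be repaired by iteration.
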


\begin{proof}
If
	\[|\bu_x(t_0)|_{\N}(]0,x_2[) \leq |(\bu_0)_x|_{\N}(]0,x_2[) < \eps,\]
	we can take $\tau_* = 0$. Otherwise, take the smallest $x_0 \in [0, x_2[$ such that
	\begin{equation}\label{eps_interv}
		|\bu_x(t_0)|_{\N}(]x_0,x_2[) \leq |(\bu_0)_x|_{\N}(]x_0,x_2[) \leq \tfrac{\eps}{2}.
	\end{equation}
   Denoting $J = ]x_0, x_2[$, by \eqref{energylimit},
   \begin{equation}\label{defC1} \frac{1}{|J|} \int_J\int_{t_0}^T \bu_t^2 \leq \frac{1}{|J|} \int_0^T \int_I \bu_t^2 \leq \frac{1}{|J|} TV_{\N}(\bu_0)=:\!C_1^2
   \end{equation}
(note that $x_0$, whence $|J|$, depends only on $\bu_0$).	Therefore, we can choose $x_1 \in J \setminus J_{\bu(t_0)}$ so that $\bu_1 \!:=\bu(\cdot, x_1) \in H^1(t_0,T)$ with
	\begin{equation} \label{C1ut}
		\int_{t_0}^T \bu_{1,t}^2 \leq C_1^2.
	\end{equation}
By \eqref{asymp_lem_case1} and \eqref{clelimit}, we can apply Lemma \ref{var_eq} with $\bv = \bu_1$, obtaining
	\begin{equation} \label{dist_deriv}
		\frac{1}{2}\frac{\dd}{\dd t}\int_0^{x_1} \dist_\N^2(\bu,\bu_1) = - \bu_{1,t}\cdot\int_0^{x_1}  \log_{\bu}(\bu_1)- H_{\bu_1, \bu}(]0, x_1[)  - \bz(x_1)\cdot \underbrace{\log_{\bu_1}(\bu_1)}_{=0}
	\end{equation}
and, using \eqref{hess_comp_B},
	\begin{equation} \label{ghj}
	\frac{1}{2}\frac{\dd}{\dd t}\int_0^{x_1} \dist_\N^2(\bu,\bu_1) \leq  - \bu_{1,t}\cdot\int_0^{x_1}  \log_{\bu}(\bu_1) - h_{\N}(B)|\bu_x|_{\N}(]0, x_1[).
\end{equation}
For $0< \tau < T- t_0$ we estimate using H\" older's inequality, \eqref{C1ut} and \eqref{asymp_lem_case1}
\[
\int_{t_0}^{t_0 + \tau} \!\!\! \bu_{1,t} \cdot  \int_0^{x_1}\log_{\bu}(\bu_1) \leq \int_{t_0}^{t_0 + \tau} \!\!\! |\bu_{1,t}| \sup_{[t_0, t_0 + \tau]}  \int_0^{x_1}\dist_\N(\bu,\bu_1)  \leq C_1 \sqrt{\tau}\,
|I|\, B =:\! C \sqrt{\tau}.
\]
Recall that $h_\N(B)> 0$ since $B<\radN$ (cf. Lemma \ref{hct}). Thus, integrating \eqref{ghj} over $[t_0, t_0 + \tau]$ and recalling \eqref{clelimit} yields
\begin{equation}\label{distancedissipation}
	0 \leq \left. \frac{1}{2}\int_0^{x_1} \dist_\N^2(\bu,\bu_1) \right|_{t_0 + \tau} \!\! \leq C \tau^{1/2}
	-h_{\N}(B) \tau |\bu_x(t_0 + \tau)|_{\N}(]0,x_1[)  + \frac{1}{2}\left. \int_0^{x_1} \dist_\N^2(\bu,\bu_1)\right|_{t_0}.
\end{equation}
The last term can easily be estimated in terms of only $\bu_0$:
\[\frac{1}{2}\left. \int_0^{x_1} \dist_\N^2(\bu,\bu_1)\right|_{t_0}\leq \frac{1}{2} |I|\, TV_I^{\N}(\bu_0)^2. \]
Thus, in view of \eqref{distancedissipation}, $\tau_*>0$ exists as desired such that
\[
 |\bu_x(t)|_{\N}(]0,x_1[) < \tfrac{\eps}{2} \quad \text{if } t_0+\tau_* \leq t < T
\]
and we conclude by recalling \eqref{eps_interv}.
\end{proof}

\begin{lemma} \label{lem:sphere_est_asymp2}
	Suppose that $R>0$, $0 < m < M < \pi R$ and $d, a^+, a^- \in ]0,M[$ are side lengths of a geodesic triangle in a sphere in $\R^3$ of radius $R$. Assume moreover that $d, a^- \geq m$. Let $\alpha_s^-,
\alpha_s^+$ denote the angles opposite to, respectively, $a^+, a^-$. For every $S_0 > 0$ there exists $\delta_0 = \delta_0(S_0,m,M)> 0$ such that if $a^+ \leq \delta_0$, then
	\begin{equation} \label{sphere_est_asymp2_eq}
		a^+\cos \alpha_s^+ + a^-\cos\alpha_s^- \geq d - S_0.
	\end{equation}
\end{lemma}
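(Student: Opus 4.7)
The plan is to prove Lemma~\ref{lem:sphere_est_asymp2} by contradiction, using a compactness argument combined with the spherical law of cosines. Observe first that the case $a^+ = 0$ is degenerate: it forces $a^- = d$ and $\alpha_s^- = 0$, so the inequality holds trivially with LHS $= d$. Hence we may assume $a^+ > 0$.

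Suppose the statement fails. Then there exist $S_0, m, M$ as in the hypotheses and sequences of triangles with sides $(a_n^+, a_n^-, d_n)$ and opposite angles $(\alpha_{s,n}^-, \alpha_{s,n}^+, \alpha_n)$ satisfying $a_n^+ \to 0^+$, $d_n, a_n^- \in [m, M]$, $a_n^+ \in (0, M]$, and
\[
a_n^+\cos \alpha_{s,n}^+ + a_n^-\cos\alpha_{s,n}^- < d_n - S_0 \qquad \text{for all } n.
\]
By the triangle inequality $|a_n^- - d_n| \leq a_n^+ \to 0$, and by compactness of $[m, M]$ we may extract a subsequence (not relabeled) such that $d_n \to d_\ast$ and $a_n^- \to d_\ast$ for some $d_\ast \in [m, M]$.

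The key step is to show $\cos \alpha_{s,n}^- \to 1$. Applying the spherical law of cosines to the side $a^+$ (opposite to $\alpha_s^-$) yields
\[
\cos(a_n^+/R) = \cos(a_n^-/R)\cos(d_n/R) + \sin(a_n^-/R)\sin(d_n/R)\cos\alpha_{s,n}^-.
\]
Because $d_\ast \in [m, M] \subset (0, \pi R)$, both $\sin(d_\ast/R) > 0$ and $\sin(a_n^-/R), \sin(d_n/R)$ are bounded away from zero for large $n$. Solving for $\cos \alpha_{s,n}^-$ and passing to the limit, using $\cos(a_n^+/R) \to 1$ and $a_n^-, d_n \to d_\ast$, gives
\[
\cos\alpha_{s,n}^- \longrightarrow \frac{1 - \cos^2(d_\ast/R)}{\sin^2(d_\ast/R)} = 1.
\]

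Therefore $a_n^- \cos\alpha_{s,n}^- \to d_\ast \cdot 1 = d_\ast$, while $|a_n^+ \cos\alpha_{s,n}^+| \leq a_n^+ \to 0$. Passing to the limit in the assumed violated inequality gives $d_\ast \leq d_\ast - S_0$, a contradiction since $S_0 > 0$. The resulting $\delta_0$ depends only on $S_0, m, M$ (and on $R$, which enters through the constraint $M < \pi R$ used to keep the sines bounded below). The only delicate point is to guarantee the uniform lower bound on $\sin(a_n^-/R)\sin(d_n/R)$, which is exactly what the hypotheses $a^-, d \in [m, M]$ with $M < \pi R$ provide; no separate treatment of acute vs.\ obtuse $\alpha_s^+$ is needed, since the $a^+\cos\alpha_{s,n}^+$ term is swallowed by the bound $a_n^+ \to 0$.
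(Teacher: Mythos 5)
Your proof is correct, but it takes a different route from the paper's. The paper proves the estimate directly and explicitly: applying the haversine formula for side $a^+$ and dropping the non-negative term $\hav((a^--d)/R)$, it obtains the pointwise bound $\cos\alpha_s^- \geq 1 - 2\sin^2(a^+/(2R))/(\sin(a^-/R)\sin(d/R))$, then combines this with the (spherical) triangle inequality $a^- \geq d - a^+$ and the trivial bound $\cos\alpha_s^+ \geq -1$ to get an explicit lower bound for the left-hand side of \eqref{sphere_est_asymp2_eq} in terms of $a^+, d, a^-, R$ alone, from which a concrete $\delta_0$ can be read off. You instead argue by contradiction and compactness: a putative sequence of violating triangles with $a_n^+\to 0$ must satisfy $a_n^-, d_n \to d_* \in [m,M]$, and the spherical law of cosines then forces $\cos\alpha_{s,n}^- \to 1$, so the limit of the left-hand side equals $d_*$, contradicting the presumed gap of size $S_0$. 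Both arguments hinge on the same geometric facts (spherical trigonometry plus $|a^- - d| \leq a^+$ and boundedness of $\sin(\cdot/R)$ away from $0$ on $[m,M]$). The paper's approach has the advantage of being constructive — $\delta_0$ is computable from the displayed inequality — whereas yours is shorter but non-constructive. Both are valid.

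Two minor points worth noting. First, your sequence must be taken with $a_n^+ > 0$ (which you may do, having disposed of $a^+=0$ separately), so you can apply the law of cosines without degeneracy. Second, the lemma writes $\delta_0 = \delta_0(S_0,m,M)$, but the dependence on $R$ is implicit in both proofs via the lower bound on $\sin(a^-/R)\sin(d/R)$; since $R$ is fixed as part of the hypotheses, this is not an issue.
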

\begin{proof}
First of all, we observe that in the planar case; i.e in a triangle in $\R^N$, one has \begin{equation}\label{planar_case}
a^+\cos \alpha_s^+ + a^-\cos\alpha_s^- =d,\end{equation}
 without any restriction on the lengths $a^+,a^-$ and $d$.

By the haversine formula,
	\begin{equation*}
		\hav \tfrac{a^+}{R} = \hav\tfrac{a^- - d}{R} + \sin \tfrac{a^-}{R} \sin \tfrac{d}{R} \hav \alpha_s^- \ge \sin \tfrac{a^-}{R} \sin \tfrac{d}{R} \hav \alpha_s^-,
	\end{equation*}
	we obtain
	\begin{equation*}
		\hav \alpha_s^- \leq \frac{\hav \tfrac{a^+}{R}}{\sin \tfrac{a^-}{R} \sin \tfrac{d}{R}}, \quad\mbox{hence}\quad
		\cos \alpha_s^- \geq 1 - \frac{2 \sin^2 \tfrac{a^+}{2R}}{\sin \tfrac{a^-}{R} \sin \tfrac{d}{R}}.
	\end{equation*}
	Therefore, by the triangle inequality,
	\begin{equation*}
				a^+\cos \alpha_s^+ + a^-\cos \alpha_s^- \geq \left(1 - \frac{2 \sin^2 \tfrac{a^+}{2R}}{\sin \tfrac{a^-}{R} \sin \tfrac{d}{R}}\right) (d - a^+) - a^+ .
	\end{equation*}
Since $0<m\le a^-,d<M<\pi R$, for any $S_0>0$ we may choose $\delta_0 > 0$ suitably small (depending on $S_0$, $m$ and $M$)
such that \eqref{sphere_est_asymp2_eq} holds for all $a^+<\delta_0$.
\end{proof}

Note that since $\bu_0 \in BV_{rad}(I, \N)$, if $\radN<+\infty$, there exists a number $M > \radN$ such that
\begin{equation}\label{defM}
	\max_{x \in J_{\bu_0}} |(\bu_0)_x|_{\N}(\{x\}) < M < 2 \radN.
\end{equation}
\begin{lemma}\label{lem:asymp_case2} Let $\N$ be such that $\radN<+\infty$.
	Let $t_0 \in ]0, T[$. Suppose that $x_0 \in J_{\bu(t_0)}$. For every $S >0 $ there exist $\delta >0$ and $\tau_* \geq 0$ depending on $\bu_0$ and $x_0$ but not on $\bu$ or $t_0$ such that if
    \begin{equation} \label{asymp_lem_jump_ass}
    	|\bu_x(t_0)|_{\N}(]0, x_0[) < \delta
    \end{equation}
    then
    \begin{equation}
    	|\bu_x(t)|_{\N}(]0, x_0]) < S \quad \text{if }t_0 + \tau_* \leq t < T.
    \end{equation}
\end{lemma}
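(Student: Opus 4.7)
The plan is to control the jump $d(t):=|\bu_x(t)|_\N(\{x_0\})=\dist_\N(\bu^-(t,x_0),\bu^+(t,x_0))$, since by \eqref{clelimit} and the hypothesis \eqref{asymp_lem_jump_ass} we already have $|\bu_x(t)|_\N(]0,x_0[)\le |\bu_x(t_0)|_\N(]0,x_0[)<\delta$ for all $t\ge t_0$, so that $|\bu_x(t)|_\N(]0,x_0])<\delta+d(t)$. The key is to test the variational equality of Lemma \ref{var_eq} on $]0,x_1[$ against $\bu_1(t):=\bu(t,x_1)$ where $x_1>x_0$ is chosen just to the right of $x_0$; then both $\bu_1(t)$ and $\bu^+(t,x_0)$ are close, and the jump contribution to $H_{\bu_1,\bu}$ picks up the whole of $d(t)$, forcing $d(t)$ to dissipate.

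Fix $M\in(M_0,2\radN)$ with $M_0:=\max_{x\in J_{\bu_0}}|\bu_{0,x}|_\N(\{x\})$, cf.\ \eqref{defM}. Set $S_0:=S/8$, $m:=S/8$, and let $\delta_0=\delta_0(S_0,m,M)$ be given by Lemma \ref{lem:sphere_est_asymp2}, reduced if necessary so that $M+\delta_0<2\radN$. Following the procedure from the proof of Lemma \ref{lem:asymp_case1}, pick $x_1\in\,]x_0,\sup I[\,\setminus J_{\bu_0}$ with $|\bu_{0,x}|_\N(]x_0,x_1[)<\delta_0/2$, and such that $\bu_1:=\bu(\cdot,x_1)\in H^1(t_0,T;\R^N)$ satisfies $\|\bu_{1,t}\|_{L^2(t_0,T)}\le C_1$ with $C_1$ depending only on $\bu_0$ and $x_1-x_0$. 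Finally, choose $\delta\in(0,\delta_0/2)$ so small that $C_h\delta<S/16$, where $C_h$ is a bound on $|H_{\bv,\bu}|/|\bu_x|_\N$ for $\dist_\N(\bv,\bu)\le M+\delta_0<2\radN$ (finite because $\Hess\tfrac12 r_\bv^2$ has bounded operator norm on this range).

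With these choices, the variational equality on $]0,x_1[$ with $\bv=\bu_1$ has both boundary terms zero (at $0$ by \eqref{zbc}, at $x_1$ because $\log_{\bu_1}(\bu_1)=0$) and no $\bv_x$ term since $\bu_1$ is $x$-constant, so \eqref{ambrosio_eq} reduces to
\[
\tfrac12 \tfrac{\dd}{\dd t}\!\!\int_0^{x_1}\!\!\dist_\N^2(\bu,\bu_1)=-\!\!\int_0^{x_1}\!\!\bu_{1,t}\cdot\log_{\bu_1}(\bu) - H_{\bu_1,\bu}(]0,x_1[).
\]
Splitting $]0,x_1[=\,]0,x_0[\,\cup\,\{x_0\}\,\cup\,]x_0,x_1[$ and using \eqref{clelimit}: on $]x_0,x_1[$ distances to $\bu_1$ are bounded by $\delta_0/2$, so \eqref{hess_comp_B} gives $H\ge 0$ there; on $]0,x_0[$ distances are bounded by $\delta+d(t)+\delta_0/2\le \delta+M+\delta_0/2<2\radN$, so $|H(]0,x_0[)|\le C_h\delta$. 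At $\{x_0\}$, the identification \eqref{zeqnj} $\bz^\pm(t,x_0)=\boldsymbol T^\pm_{\bu(t)}$ and \eqref{hess-j} express the jump contribution as $a^+\cos\alpha^++a^-\cos\alpha^-$, where $a^\pm:=\dist_\N(\bu^\pm(t,x_0),\bu_1(t))$ and $\alpha^\pm$ are the interior angles at $\bu^\pm$ of the geodesic triangle with vertices $\bu^-,\bu^+,\bu_1$. Provided $d(t)\ge m$, since $a^+\le\delta_0/2\le\delta_0$ and $a^-\ge d(t)-a^+\ge m/2$, the Alexandrov comparison (Lemma \ref{act}) bounds $\alpha^\pm$ above by those of a comparison sphere triangle, to which Lemma \ref{lem:sphere_est_asymp2} applies and yields $H(\{x_0\})\ge d(t)-S_0$. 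Summing, $H_{\bu_1,\bu}(]0,x_1[)\ge d(t)-S_0-C_h\delta$ for such $t$.

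Integrating over $[t_0,t_0+\tau]$ and estimating the $\bu_{1,t}$ term by Cauchy--Schwarz exactly as in the proof of Lemma \ref{lem:asymp_case1} yields, as long as $d\ge m$ throughout,
\[
\int_{t_0}^{t_0+\tau}\!\!d(t)\,\dd t \le C_0+C\sqrt{\tau}+(S_0+C_h\delta)\tau,
\]
with $C_0,C$ depending only on $\bu_0$ and $x_0$. Since $d$ is non-increasing by \eqref{clelimit}, dividing by $\tau$ gives $d(t_0+\tau)\le C_0/\tau+C/\sqrt\tau+S_0+C_h\delta$. Choose $\tau_*$ so large that $C_0/\tau_*+C/\sqrt{\tau_*}<S/8$: either there exists $t^*\in[t_0,t_0+\tau_*]$ with $d(t^*)<m$, whence $d(t)\le d(t^*)<m<S/2$ for all $t\ge t^*$ by monotonicity; or the sphere estimate applies throughout $[t_0,t_0+\tau_*]$ and the above bound gives $d(t_0+\tau_*)<S_0+C_h\delta+S/8<S/2$. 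Either way $d(t)<S/2$ for $t\ge t_0+\tau_*$, and $|\bu_x(t)|_\N(]0,x_0])<\delta+S/2<S$. The main obstacle is that the diffuse Hessian contribution on $]0,x_0[$ may be negative (since distances of size $\sim M$ can exceed $\radN$); this is absorbed via the hypothesis $|\bu_x|_\N(]0,x_0[)<\delta$ and the fact that $M<2\radN$ keeps $|H|/|\bu_x|_\N$ bounded.
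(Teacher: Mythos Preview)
Your approach is essentially the same as the paper's: you test the variational equality of Lemma~\ref{var_eq} on $]0,x_1[$ against $\bu_1=\bu(\cdot,x_1)$ for $x_1$ just to the right of $x_0$, split $H_{\bu_1,\bu}$ into $]0,x_0[\cup\{x_0\}\cup]x_0,x_1[$, and at the jump point reduce to the Alexandrov comparison plus Lemma~\ref{lem:sphere_est_asymp2}. The only substantive difference is cosmetic: on $]0,x_0[$ you bound $|H|$ by a generic Hessian operator-norm constant $C_h$, whereas the paper uses the explicit lower bound $h_\N(M)$ from \eqref{hess_comp_B} (negative when $M>\pi/(2\sqrt{K_\N})$); both serve the same purpose of absorbing a term of size $O(\delta)$.

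There is, however, a real (if easily repaired) parameter slip in your application of Lemma~\ref{lem:sphere_est_asymp2}. That lemma requires \emph{both} $d\ge m$ and $a^-\ge m$. With your threshold $d(t)\ge m$ and $a^+\le \delta_0/2$ you only obtain $a^-\ge d(t)-a^+\ge m-\delta_0/2$, which is $m/2$ at best, not $m$. The paper handles this by taking the threshold $d>S/2=2m$ (with their $m=S/4$), so that $a^-\ge 2m-\delta_0/2\ge m$ once $\delta_0$ is small; you should do the same, or else feed $m/2$ rather than $m$ into Lemma~\ref{lem:sphere_est_asymp2}. Two further small points: you should impose $\delta<S/2$ explicitly (your $\delta<\delta_0/2$ does not guarantee this), and you omit the case $K_\N\le 0$, where the comparison triangle is planar and one has the clean identity $a^+\cos\alpha_s^++a^-\cos\alpha_s^-=d$, making the sphere lemma unnecessary.
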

\begin{proof}
In case $K_\N>0$, we will use Lemma \ref{lem:sphere_est_asymp2} with $S_0=S/4$,$m=S/4$ and $R = K_{\N}^{-\frac{1}{2}}$. Let $\delta_0$ as in Lemma \ref{lem:sphere_est_asymp2}. We take
\begin{equation}\label{def-delta_2}
0<\delta <\left\{\begin{array}{cc}\min\left\{\tfrac12 S ,- \frac{S}{4 h_{\N}(M)},\tfrac12 (M-|(\bu_0)_x|_{\N}(\{x_0\})),\delta_0\right\} & {\rm if \ }K_\N>0 \\
\tfrac12 (M-|(\bu_0)_x|_{\N}(\{x_0\})) & {\rm if \ }K_\N\leq 0\end{array}\right..
\end{equation}
%
%
We take the largest $x_2 >x_0$ such that
\begin{equation}\label{sdrum}
|\bu_x(t_0)|_{\N}(J) \leq |(\bu_0)_x|_{\N}(J) \leq \tfrac{1}{2}\delta, \quad J:= ]x_0, x_2[.
\end{equation}
As in the proof of Lemma \ref{lem:asymp_case1}, we can find $x_1 \in J\setminus J_{\bu(t_0)}$ such that $\bu_1 \!:=\bu(\cdot, x_1) \in H^1(t_0,T)$ with
\begin{equation} \label{C1ut_case2}
	\int_{t_0}^T \bu_{1,t}^2 \leq C_1^2,
\end{equation}
where $C_1>0$ is defined in \eqref{defC1}. As in the proof of Lemma \ref{lem:asymp_case1}, we apply Lemma \ref{var_eq} for $t\ge t_0$ in $]0,x_1[$with $\bv = \bu_1$, obtaining \eqref{dist_deriv}.

We estimate $H_{\bu_1, \bu}$ using \eqref{hess_comp_B} as follows{:}
\begin{equation} \label{est_asymp_jump_close}
	-H_{\bu_1, \bu}(]x_0,x_1[) \leq - h_{\N}\left(|\bu_x|_{\N}(]x_0,
x_1[)\right)|\bu_x|_{\N}(]x_0, x_1[) \stackrel{(\ref{sdrum}),(\ref{def-delta_2})}\leq 0,
\end{equation}
\begin{equation} \label{est_asymp_jump_far}
		-H_{\bu_1, \bu}(]0,x_0[) \leq - h_{\N}(M)|\bu_x|_{\N}(]0, x_0[) \stackrel{(\ref{asymp_lem_jump_ass})}< - h_{\N}(M)\delta. 
\end{equation}
It remains to estimate $H_{\bu_1, \bu}(\{x_0\})$. We rewrite
\begin{multline}\label{estimate_sphere}
	H_{\bu_1, \bu}(\{x_0\}) =\bz^-\cdot\log_{\bu^-}(\bu_1) -\bz^+\cdot\log_{\bu^+}(\bu_1)
	\\   = \underbrace{\dist_{\mathcal N}(\bu^+,\bu_1)}_{=:a^+}\cos\underbrace{(\log_{\bu^+}(\bu_1),\log_{\bu^+}(\bu^-))}_{=:\alpha^+} +\underbrace{\dist_{\mathcal
N}(\bu^-,\bu_1)}_{=:a^-}\cos\underbrace{(\log_{\bu^-}(\bu_1),
		\log_{\bu^-}(\bu^+))}_{=:\alpha^-}
\end{multline}
and we note that $a^+\le\frac\delta 2$ in view of \eqref{sdrum}.  We wish to bound the right hand side of \eqref{estimate_sphere} from below by $d\!:=\dist_{\mathcal N}(\bu^-,\bu^+)$. To this end, we will apply the Alexandrov comparison theorem (Lemma \ref{act}) to
the geodesic triangle $(\bu^-, \bu^+, \bu_1)$. By \eqref{asymp_lem_jump_ass}, we have
		\begin{multline*}
			d + a^+ + a^- = \dist_{\N}(\bu^-, \bu^+) + \dist_{\N}(\bu^+, \bu^1) + \dist_{\N}(\bu^1, \bu^-) \\ \leq 2(|\bu_x|_{\N}(\{x_0\}) + |\bu_x|_{\N}(]x_0,x_1[) ) 
\stackrel{(\ref{def-delta_2}),(\ref{sdrum})}< 2(M-2\delta) +\delta < 2M < 4 \radN.
		\end{multline*}
Therefore, in the sphere of radius $R=1/\sqrt{K_\N}$, if $K_\N>0$ or in the plane if $K_\N\leq 0$,  there exists a comparison triangle with vertices $\bu^+_s,\bu^-_s,\bu_{1s}$, respective angles $\alpha_s^+,\alpha_s^-,\gamma_s$ and side lengths
$a^+,a^-,d$, satisfying $\alpha^\pm\le \alpha^\pm_s$.

In the case that $K_\N\leq 0$, by \eqref{planar_case} we obtain
\begin{equation}\label{planar_estimate}a^+\cos \alpha^+ + a^-\cos\alpha^- \geq d.\end{equation}
Else, note that if, at a given time instance $t>t_0$ we have $d \leq S/2$, then by \eqref{asymp_lem_jump_ass} and our choice of $\delta$,
\[
|\bu_x(t)|_{\N}(]0, x_0])\stackrel{(\ref{asymp_lem_jump_ass})}< \delta + d \stackrel{(\ref{def-delta_2})}< \tfrac{S}{2} + \tfrac{S}{2} = S
\]
and we are done. Therefore, we can assume that $d > S/2$. Then by triangle inequality
\[
 a^- \geq d - a^+ \ge d-\tfrac12 \delta \stackrel{(\ref{def-delta_2})}>d-\tfrac S 4> \tfrac{S}{4}.
\]
Since $\delta<\delta_0$, we can apply Lemma \ref{lem:sphere_est_asymp2} yielding
\begin{equation} \label{est_asymp2_eq}
H_{\bu_1, \bu}(\{x_0\}) \stackrel{(\ref{estimate_sphere})}=	a^+\cos \alpha^+ + a^-\cos\alpha^- \geq a^+\cos \alpha_s^+ + a^-\cos\alpha_s^- \geq d - \tfrac{S}{4}.
\end{equation}
Combining \eqref{dist_deriv}, \eqref{est_asymp_jump_close}, \eqref{est_asymp_jump_far}, and \eqref{planar_estimate}, we obtain, for $K_\N\leq 0$,
\begin{equation}\label{ghjk}
	\frac{1}{2}\frac{\dd}{\dd t}\int_0^{x_1} \dist_\N^2(\bu,\bu_1) \leq  \bu_{1,t}\cdot\int_0^{x_1}  \log_{\bu}(\bu_1) -|\bu_x|_{\N}(\{x_0\}) 
\end{equation}
In the case that $\mathcal K_\N>0$, \eqref{dist_deriv}, \eqref{est_asymp_jump_close}, \eqref{est_asymp_jump_far}, and \eqref{est_asymp2_eq}, yield
\begin{equation}\label{ghjkl}
	\frac{1}{2}\frac{\dd}{\dd t}\int_0^{x_1} \dist_\N^2(\bu,\bu_1) \leq  \bu_{1,t}\cdot\int_0^{x_1}  \log_{\bu}(\bu_1) - |\bu_x|_{\N}(\{x_0\}) + \tfrac{S}{2}.
\end{equation}
In both cases, again reasoning as in the proof of Lemma \ref{lem:asymp_case1}, we deduce that there exists a $\tau_* \geq 0$ as desired such that
\[|\bu_x(t)|_{\N}(\{x_0\})< S \quad \text{if } t_0+\tau_* \leq t < T \ {\rm if \ } K_\N\leq 0,\]
\[|\bu_x(t)|_{\N}(\{x_0\}) - \tfrac{S}{2} < \tfrac{S}{2} - \delta \quad \text{if } t_0+\tau_* \leq t < T \ {\rm if \ }K_\N>0.\]
Thus, recalling \eqref{asymp_lem_jump_ass} and our choice of $\delta$, we conclude.
\end{proof}

\begin{lemma} \label{lem:biting} There exists $t_\bullet= t_\bullet(\bu_0)>0$ such that \[TV(\bu(t_\bullet)) < \tfrac{1}{2}\radN,\]
provided that $t_\bullet < T$.
\end{lemma}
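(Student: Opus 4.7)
The plan is to iteratively apply Lemmas \ref{lem:asymp_case1} and \ref{lem:asymp_case2} in order to extinguish $|\bu_x(t)|_{\N}$ on progressively larger left-initial segments of $I=\,]0,L[$, proceeding from the left endpoint rightwards.

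First, fix a threshold $\eta\in(0,\radN/4)$ and exploit that $|(\bu_0)_x|_{\N}$ is a finite measure: the set of large jumps $J_L:=\{x\in J_{\bu_0}\colon |(\bu_0)_x|_{\N}(\{x\})\ge \eta/4\}$ is necessarily finite, so one can pick a partition $0=y_0<y_1<\ldots<y_m$ with $y_m\in I$, containing $J_L$, such that every $y_i\notin J_L$ avoids $J_{\bu_0}$, each open subinterval $]y_i,y_{i+1}[$ has $|(\bu_0)_x|_{\N}$-measure less than $\eta/4$, and $|(\bu_0)_x|_{\N}(]y_m,L[)<\eta/4$. This is possible because the cumulative function $x\mapsto |(\bu_0)_x|_{\N}(]0,x[)$ is bounded and has only finitely many atoms of size at least $\eta/4$.

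The core of the argument is an induction on $i=0,1,\ldots,m$ proving the claim: there exists $t_i\ge 0$ depending only on $\bu_0$ such that, provided $t_i<T$, $|\bu_x(t_i)|_{\N}(]0,y_i])<\eta/2$. The base case $i=0$ is immediate with $t_0=0$, since $]0,y_0]=\emptyset$. For the inductive step, the pointwise monotonicity \eqref{clelimit} combined with the hypothesis gives $|\bu_x(t_i)|_{\N}(]0,y_{i+1}[)<3\eta/4<\radN$, so Lemma \ref{lem:asymp_case1} with $x_2=y_{i+1}$ and $B=3\eta/4$ will shrink $|\bu_x(t)|_{\N}(]0,y_{i+1}[)$ below any prescribed $\eps>0$ after a waiting time determined by $\bu_0$. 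When $y_{i+1}\notin J_L$, the construction of the partition forces $y_{i+1}\notin J_{\bu_0}\supseteq J_{\bu(t)}$, hence $|\bu_x(t)|_{\N}(\{y_{i+1}\})=0$ and taking $\eps=\eta/2$ closes the step. When $y_{i+1}\in J_L$, the strategy is to chain the two lemmas: first determine the $\delta>0$ produced by Lemma \ref{lem:asymp_case2} at $x_0=y_{i+1}$ and $S=\eta/2$, then apply Lemma \ref{lem:asymp_case1} with $\eps<\delta$ so as to meet the hypothesis of Lemma \ref{lem:asymp_case2}, which finally yields $|\bu_x(t)|_{\N}(]0,y_{i+1}])<\eta/2$ after an additional waiting time (if the jump at $y_{i+1}$ has instead already disappeared, the bound $|\bu_x(t)|_{\N}(]0,y_{i+1}])=|\bu_x(t)|_{\N}(]0,y_{i+1}[)<\delta$ is sufficient).

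Once the induction is complete, the conclusion follows by combining the claim at $i=m$ with $|\bu_x(t_m)|_{\N}(]y_m,L[)\le |(\bu_0)_x|_{\N}(]y_m,L[)<\eta/4$ to get
\[
TV_I^{\N}(\bu(t_m))=|\bu_x(t_m)|_{\N}(]0,y_m])+|\bu_x(t_m)|_{\N}(]y_m,L[)<\tfrac{\eta}{2}+\tfrac{\eta}{4}<\tfrac{\radN}{2},
\]
and setting $t_\bullet:=t_m$ (which depends only on $\bu_0$). The delicate point is the chaining at large-jump partition points: one must fix the tolerance $\delta$ prescribed by Lemma \ref{lem:asymp_case2} before tuning the precision $\eps$ in Lemma \ref{lem:asymp_case1}, and accumulate both waiting times into $t_i$, verifying throughout that the dependence is solely on $\bu_0$.
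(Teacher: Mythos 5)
Your proof is correct and follows essentially the same route as the paper's: a left-to-right induction over a finite partition of $I$ determined solely by $\bu_0$, applying Lemma \ref{lem:asymp_case1} on the low-variation stretches and chaining it with Lemma \ref{lem:asymp_case2} at the finitely many large jumps, with all tolerances and waiting times depending only on $\bu_0$ so that $t_\bullet$ is solution-independent. Your bookkeeping (large jumps placed at partition points, single invariant $|\bu_x(t_i)|_{\N}(]0,y_i])<\eta/2$) in fact streamlines the paper's four-case alternation between \eqref{smalljumpcondtk-1} and \eqref{bigjumpcondtk-1}; just record the trivial case $\radN=+\infty$ (where Lemma \ref{lem:asymp_case2} is unavailable but the claim is immediate), and take $\eps=\min\{\delta,\eta/2\}$ (equivalently, shrink $\delta$) so that the fallback when the jump at $y_{i+1}$ has already vanished indeed yields the $\eta/2$ bound.
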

\begin{proof}We can assume that $\radN<+\infty$.
For $\bw \in BV_{rad}(I, \N)$, we denote \[\mathfrak{J}_{\bw} = \left\{x \in I \colon |(\bu_0)_x|_{\N} (\{x\}) \geq \tfrac{1}{2}\radN\right\}.\]
We fix $D>0$ equal to half of the minimum of numbers $\delta>0$ given by Lemma \ref{lem:asymp_case2} with $S = \frac{1}{2} \radN$ over $x_0\in \mathfrak{J}_{\bu_0}$. We take a partition $0 = y_0 < y_1 < \ldots < y_l = |I|$ of $I$ such that $y_k \not \in J_{\bu_0}$ and for $k = 0, \ldots, l-1$
either
\begin{equation} \label{smalljumpcond-} |(\bu_0)_x|_{\N} (]y_k, y_{k+1}[) < \radN
\end{equation}
or there exists a unique $y \in ]y_k, y_{k+1}[$ such that
\begin{equation} \label{bigjumpcond}
|(\bu_0)_x|_{\N} (]y_k, y[) < D, \quad |(\bu_0)_x|_{\N} (\{y\}) \geq \tfrac{1}{2}\radN, \quad |(\bu_0)_x|_{\N} (]y, y_{k+1}[) < \tfrac{1}{2}\radN - D,
\end{equation}
Such a partition can be constructed by first selecting points close enough to each $y \in \mathfrak{J}_{\bu_0}$ so that \eqref{bigjumpcond} holds, and then filling in the remainder of $I$ so that \eqref{smalljumpcond-} is satisfied. We then fix $B$ such that
\begin{equation}\label{defB}
\max\big\{\radN-D, \max\big\{|(\bu_0)_x|_{\N} (]y_k, y_{k+1}[): \mbox{ \eqref{smalljumpcond-} holds}\big\}\big\} <B<\radN.
\end{equation}
%
%
Thus \eqref{smalljumpcond-} may be rewritten as
\begin{equation}
\label{smalljumpcond} |(\bu_0)_x|_{\N} (]y_k, y_{k+1}[) <B< \radN\ .
\end{equation}
We produce the time instance $t_\bullet$ in $l$ steps corresponding to stages of evolution of $\bu$. In $k$-th step, we begin with a time instance $t_{k-1}\geq 0$ ($t_0 = 0$) such that, for any $\bu$, either
\begin{equation}
\label{smalljumpcondtk-1} |\bu_x(t_{k-1})|_{\N} (]0, y_k[) \le B
\end{equation}
or there exists $y \in ]0, y_k[$ such that
\begin{equation} \label{bigjumpcondtk-1}
	|\bu_x(t_{k-1})|_{\N} (]0, y[) < D, \quad |\bu_x(t_{k-1})|_{\N} (\{y\}) \geq \tfrac{1}{2}\radN, \quad |\bu_x(t_{k-1})|_{\N} (]y, y_k[) < \tfrac{1}{2}\radN - D.
\end{equation}
If $k < l$, we want to produce a time instance $t_k \geq t_{k-1}$ such that, for any $\bu$, either
\begin{equation} \label{smalljumpcondtk} |\bu_x(t_k)|_{\N} (]0, y_{k+1}[) \le B 
\end{equation}
or there exists $y \in ]0, y_{k+1}[$ such that
\begin{equation} \label{bigjumpcondtk}
	|\bu_x(t_k)|_{\N} (]0, y[) < D, \quad |\bu_x(t_k)|_{\N} (\{y\}) \geq \tfrac{1}{2}\radN, \quad |\bu_x(t_k)|_{\N} (]y, y_{k+1}[) < \tfrac{1}{2}\radN - D.
\end{equation}
We split our procedure into four cases.
\begin{itemize}
	 \item If \eqref{smalljumpcondtk-1} and \eqref{smalljumpcond} are satisfied, we use Lemma \ref{lem:asymp_case1} with $t_0=t_{k-1}$, $x_2=y_k$, and $\eps < B - |(\bu_0)_x|_{\N} (]y_k, y_{k+1}[)$. Since $y_k\notin J_{\bu}$, this yields \eqref{smalljumpcondtk}.

	\item Otherwise, if \eqref{bigjumpcondtk-1} and \eqref{smalljumpcond} are satisfied, we first use Lemma \ref{lem:asymp_case2} with $t_0=t_{k-1}$, $x_0=y$, and $S = \frac{1}{2}\radN$. Recalling the choice of $D$, this yields $|\bu_x(t_{k-1}+\tau_*)|_{\N} (]0, y]) < \frac12\radN$, hence (by \eqref{bigjumpcondtk-1}, \eqref{clelimit}, and \eqref{defB}) $|\bu_x(t_{k-1}+\tau_*)|_{\N} (]0, y_k[) < \radN-D<B$. Then we use Lemma \ref{lem:asymp_case1} with $t_0=t_{k-1}+\tau_*$, $x_2=y_k$, and $\eps < B - |(\bu_0)_x|_{\N} (]y_k, y_{k+1}[)$. Since $y_k\notin J_{\bu}$, this yields \eqref{smalljumpcondtk}.

	 \item Otherwise, if \eqref{smalljumpcondtk-1} and \eqref{bigjumpcond} are satisfied, we use Lemma \ref{lem:asymp_case1} with $t_0=t_{k-1}$, $x_2=y_k$, and $\eps < D-|(\bu_0)_x|_{\N} (]y_k, y[)$,
%
%
where $y$ is the unique element of $\mathfrak{J}_{\bu_0} \cap ]y_k, y_{k+1}[$. This yields $|\bu_x(t_{k-1}+\tau_*)|_{\N} (]0, y[)<D$; by \eqref{bigjumpcond}, this yields \eqref{bigjumpcondtk}.

	 \item Otherwise, \eqref{bigjumpcondtk-1} and \eqref{bigjumpcond} are necessarily satisfied. We first use Lemma \ref{lem:asymp_case2} with  $t_0=t_{k-1}$, $x_0=y$, and $S = \frac{1}{2}\radN$. As above, this yields $|\bu_x(t_{k-1}+\tau_*)|_{\N} (]0, y_k[) < \radN-D<B$. Then we use Lemma \ref{lem:asymp_case1} with $t_0=t_{k-1}+\tau_*$, $x_2=y_k$, and $\eps < D-|(\bu_0)_x|_{\N} (]y_k, y[)$,
%
%
where $y$ is the unique element of $\mathfrak{J}_{\bu_0} \cap ]y_k, y_{k+1}[$. Then, $|\bu_x(t_{k-1}+\tau_*)|_{\N} (]0, y[)<D$; by \eqref{bigjumpcond}, this yields \eqref{bigjumpcondtk}.
\end{itemize}
In the last,  $l$-th step, either \eqref{smalljumpcondtk} or \eqref{bigjumpcondtk} hold with $y_{k+1}=1$.
%
%
If \eqref{smalljumpcondtk} holds, we use Lemma \ref{lem:asymp_case1} with $\eps = \frac{1}{2} \radN$. Otherwise, \eqref{bigjumpcondtk} is satisfied. We first use Lemma \ref{lem:asymp_case2} with $S = \frac{1}{2}\radN$. Then we use Lemma \ref{lem:asymp_case1} with $\eps = \frac{1}{2}\radN$.
\end{proof}

We may now proceed with an argument similar to the one in the proof of \cite[Theorem 3]{giacomellilasicamoll}. It follows from Lemma \ref{lem:biting} that for each $t \in [t_\bullet, T[$ there exists a ball $B_{N}(\bp(t), R) \subset \N$ with $R < \frac{1}{2} \radN$ that contains the image of $\bu(t)$. By \cite[Theorem 1.2]{karcher}, the center of mass of $\bu(t)$, i.\,e.\ the unique minimizer $\bp_c(t)$ of $\bp\mapsto
\int_I\dist_\N^2(\bu(t),\bp)$ exists and satisfies
\begin{equation}\label{zeromean}
\int_I \log_{\bp_c(t)} \bu(t) =0.
\end{equation}
Moreover, it follows from Lemma \ref{contxslice}, that for $\tau >0$ sufficiently small (independently of $t$), the image of $\bu(t+\tau)$ is contained in $B_{N}(\bp(t), \frac{1}{2} \radN)$. (In fact, one can show that the image of $\bu(t)$ is contained in $B_{N}(\bp(t_\bullet), \frac{1}{2} \radN)$ for $t \in [t_\bullet, T[$, but we do not make use of this here.) Therefore, we can use \cite[Corollary 1.6]{karcher} in conjunction with \cite[5.8., Theorem 3]{evans} to show that $\bp_c \in H^1_{loc}([t_\bullet, T[, \N)$. Thus, we can use Lemma \ref{var_eq}, obtaining
\begin{equation}\label{passw}
\frac{1}{2}\frac{\dd}{\dd t}\int_I \dist_\N^2(\bu,\bp_c) \stackrel{(\ref{zeromean})}\leq -h_{\N}(|\bu_x|_{\N}(I))  |\bu_x|_{\N}(I)
\end{equation}
for $t_{\bullet} \leq t < T$. Since (ignoring for a moment dependence on time)
\[ |\bu_x|_{\N}(I) \geq \sup_{I} \dist_\N(\bu, \bu(0)) \geq |I|^{-\frac{1}{2}}\left( \int_I \dist_\N^2(\bu, \bu(0))\right)^{\frac{1}{2}} \geq |I|^{-\frac{1}{2}}\left( \int_I \dist_\N^2(\bu,
\bp_c)\right)^{\frac{1}{2}},\]
we obtain
\[\frac{\dd}{\dd t} \left( \int_I \dist_\N^2(\bu, \bp_c)\right)^{\frac{1}{2}} \leq - C  \]
as long as $\int_I \dist_\N^2(\bu, \bp_c) > 0$, and so $\bu(t, \cdot)$ is a constant function for
\[t \geq t_\bullet + C^{-1} \left( \int_I \dist_\N^2(\bu(t_\bullet,\cdot), \bu(t_\bullet,0))\right)^{\frac{1}{2}}. \]

\section{The case $\N=\S_+^{N-1}$}\label{sec:sphere}

In this section we show the equivalence between our notion of solution and the one introduced in \cite{gmmn} whenever both are applicable, i.\,e.\ in the case that the spatial domain is an interval and
$\N$ is the (hyper)octant $\S^{N-1}_+ = \{ (p^1, \ldots, p^n)\in \S^{N-1} \colon p^k > 0, k=1, \ldots, n\}$ in the (hyper)sphere $\S^{N-1}$. We first recall, in the case of single spatial dimension, the
concept of solution to (\ref{smootheqn},\ref{smoothbc}) given in \cite{gmmn}.

\begin{defn} Let $T \in ]0, + \infty]$ and let
	\[{\bu \in H^1_{loc}([0,T[; L^2(I, \S^{N-1}_+)) \cap L^\infty_{loc}([0,T[; BV(I,\S^{N-1}_+))}.\]
	We say that $\bu$ is a solution to (\ref{smootheqn},\ref{smoothbc}) in the sense of \cite{gmmn} if there exists
	\[{\bz \in L^\infty_{w^*}(0,T;L^\infty(I, \R^N)) \cap L^2_{w^*,loc}([0,T[;BV(I,\R^N))}\]
	satisfying for a.\,e.\ $t\in]0,T[$ \eqref{zineq}, \eqref{zbc} and
\begin{equation}
  \label{sphere-eqn} \bu_t \, \Lb^1 =\bz_x+\frac{\bu^*}{|\bu^*|}|\bu_x|\quad \text{as Radon measures on } I,
\end{equation}
\begin{equation}
  \label{wedge-eqn} \bu_t\wedge \bu=(\bz\wedge \bu)_x \quad \text{in } L^2(I,\R^N \wedge \R^N),
\end{equation}
and
\begin{equation}\label{tangent-eqn}
  \bz\cdot \bu=0 \quad \text{a.\,e.\ on } I.
\end{equation}
\end{defn}
In addition, any solution to (\ref{smootheqn},\ref{smoothbc}) in the sense of \cite{gmmn} satisfies
\begin{equation} \label{zeqn-sphere}
\bu_x \cdot \bz^* = |\bu^*| |\bu_x|
\end{equation}
as Radon measures in a.\,e.\ time instance \cite[Proposition 3.5]{gmmn}.
\begin{thm}
  $\bu$ is a strong solution to (\ref{smootheqn},\ref{smoothbc}) in the sense of Definition \ref{defsol} iff it is a solution in the sense of \cite{gmmn}.
\end{thm}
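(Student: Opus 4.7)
The plan is to verify the equivalence component by component, exploiting the special geometry of $\S^{N-1}$: the tangent space at $\bp$ is $\bp^\perp$ and $\pi_\bp v = v - (v\cdot\bp)\bp$. A short calculation with the exponential map shows that for $\bp,\bq\in \S^{N-1}$ with $d=\dist_\N(\bp,\bq)<\pi$ the unit tangents to the minimizing geodesic satisfy $\T^+ - \T^- = -\tan(d/2)(\bp+\bq)$; noting $|\bp+\bq| = 2\cos(d/2)$ and $|\bp-\bq|=2\sin(d/2)$, this equals $-(\bp+\bq)|\bp-\bq|/|\bp+\bq|$. This algebraic identity is the linchpin connecting \eqref{zeqnj} with the jump part of \eqref{sphere-eqn}.

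For the forward implication ($\Rightarrow$), I would begin by observing that \eqref{ztang} and tangentiality on the sphere give $\bz\cdot\bu \equiv 0$, which is \eqref{tangent-eqn}. The Leibniz rule \eqref{green} applied to the identically-zero BV function $\bz\cdot\bu$ yields $\bu^*\cdot\bz_x + \bz^*\cdot\bu_x = 0$ as measures. Splitting into diffuse and jump parts: on the diffuse part \eqref{zeqn} gives $\bz\,d\bu_x^d = d|\bu_x^d|$, so $\bu\cdot\bz_x^d = -|\bu_x^d|$; combined with \eqref{maineqn}, \eqref{cantor-m} and $|\bu^*|=1$ (on the diffuse part) this reproduces the diffuse part of \eqref{sphere-eqn}. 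On the jump part, \eqref{zeqnj} and the trigonometric identity above give exactly the jump part of \eqref{sphere-eqn}. For \eqref{wedge-eqn}, the Leibniz rule yields $(\bz\wedge\bu)_x = \bz_x\wedge\bu^* + \bz^*\wedge\bu_x$; the Cantor and jump contributions vanish because \eqref{cantor-m} forces $\bz_x^c\parallel\bu$, \eqref{zeqn} forces $\bz\parallel\bu_x^d$, and at jumps both $\bz^+ - \bz^- \parallel \bu^+ + \bu^-$ and $\bz^+ + \bz^- \parallel \bu^+ - \bu^-$ by direct computation, so the absolutely continuous part reduces to $\bz_x^a\wedge\bu = \pi_\bu\bz_x^a\wedge\bu = \bu_t\wedge\bu$.

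For the reverse implication ($\Leftarrow$), \eqref{ztang} follows from \eqref{tangent-eqn} by passing to one-sided limits. Identity \eqref{zeqn-sphere}, which is \cite[Prop.~3.5]{gmmn}, combined with $|\bu^*|=1$ on the diffuse part and $|\bz|\le 1$, yields \eqref{zeqn} via the equality case of Cauchy--Schwarz. The absolutely continuous part of \eqref{sphere-eqn} reads $\bu_t = \bz_x^a + \bu|\bu_x^a|$; differentiating $\bz\cdot\bu=0$ and using \eqref{zeqn} shows $\bu\cdot\bz_x^a = -|\bu_x^a|$, so $\bz_x^a + \bu|\bu_x^a| = \pi_\bu\bz_x^a$, giving \eqref{maineqn}. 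The Cantor part gives $\bz_x^c = -\bu^*|\bu_x^c|$ (purely normal), hence $\pi_\bu\bz_x^c = 0$, which is \eqref{cantor-m}. Containment \eqref{zjumpset} follows since the singular part of $\bz_x$ in \eqref{sphere-eqn} is $-\bu^*|\bu_x^s|/|\bu^*|$, concentrated on $J_\bu$.

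The main obstacle — and where care is required — is deriving \eqref{zeqnj} from the jump part of \eqref{sphere-eqn}. This jump equation only prescribes $\bz^+ - \bz^-$, so one must show that the additional constraints $\bz^\pm\cdot\bu^\pm = 0$ and $|\bz^\pm|\le 1$ pin $\bz^\pm$ down uniquely. Decomposing $\bz^-$ in the orthonormal frame $\{\bu^-, \be\}$ of the geodesic plane, where $\be = (\bu^+ - \cos d\,\bu^-)/\sin d$, the jump identity forces $\bz^-\cdot\bu^+ = \sin d$, which via $\bz^-\cdot\bu^- = 0$ gives $\bz^-\cdot\be = 1$; the bound $|\bz^-|\le 1$ then yields $\bz^- = \be = \T^-_\bu$, and symmetrically for $\bz^+$. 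This rigidity, together with the trigonometric identity from the opening paragraph, closes the equivalence.
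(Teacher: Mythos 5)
Your proof is correct and, for most of its steps, follows the same path as the paper's: in both directions you exploit the sphere-specific facts $T_\bp\S^{N-1}=\bp^\perp$, $\pi_\bp = \mathbf I - \bp\otimes\bp$, $\A_\bp(X,Y)=\bp\, X\cdot Y$, and the trigonometric identity
\[
\T^+_{\bw}-\T^-_{\bw} = -\tan\!\left(\tfrac{d}{2}\right)(\bw^-+\bw^+) = -\frac{|\bw^+-\bw^-|}{|\bw^++\bw^-|}\,(\bw^-+\bw^+),
\]
which is exactly the observation the paper records as ``$\T^+_{\bu}-\T^-_{\bu}$ is parallel to $\bu^g=\bu^*/|\bu^*|$''. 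Your forward direction, built on Leibniz applied to $\bz\cdot\bu\equiv 0$ and on matching the absolutely continuous, Cantor, and atomic pieces, is effectively the same computation as the paper's (which writes $\bu_t = \pi_{\bu^g}\bz_x$ and expands), and your verification of \eqref{wedge-eqn} via the two parallelisms $\bz^+-\bz^-\parallel\bu^++\bu^-$ and $\bz^++\bz^-\parallel\bu^+-\bu^-$ is the same cancellation the paper obtains by wedging \eqref{sphere-eqn} with $\bu^*$.

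The one genuinely different step is the derivation of \eqref{zeqnj} in the reverse direction. The paper uses \eqref{wedge-eqn} to deduce $\bz^+\wedge\bu^+ = \bz^-\wedge\bu^-$, which confines $\bz^\pm$ to the geodesic plane and yields $|\bz^+|=|\bz^-|$, and then invokes the jump part of \eqref{zeqn-sphere} to pin down magnitude and sign. You instead use only the atomic identity extracted from \eqref{sphere-eqn} (which fixes $\bz^+-\bz^-$), tangency $\bz^\pm\cdot\bu^\pm=0$, and $|\bz^\pm|\le 1$: these force $\bz^-\cdot\bu^+=\sin d$, hence $\bz^-\cdot\T^-_{\bu}=1$, and saturation of the norm bound annihilates the component of $\bz^-$ orthogonal to the geodesic plane (so the ``decomposition in the frame $\{\bu^-,\be\}$'' should be stated as including an a priori orthogonal remainder, which is what the norm constraint then eliminates). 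This is a cleaner rigidity argument that does not touch \eqref{wedge-eqn} at all in that step; it buys a leaner proof and makes transparent exactly which of the \cite{gmmn} conditions force the unit-tangent identification at jumps.
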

\begin{proof}
We recall that
\begin{equation}\label{qaz}
T_{\bu} \S^{N-1} =\{ \bv \in \R^N \colon \bu \cdot \bv =0\}.
\end{equation}
Suppose that $\bu$ is a strong solution in the sense of Definition \ref{defsol} and fix a time instance such that (\ref{maineqn}-\ref{zbc}) hold. Denoting by $\bu^g$ the geodesic representative
$\frac{\bu^*}{|\bu^*|}$, we have
  \begin{equation*}\bu_t = \pi_{\bu} \bz_x^a = \pi_{\bu^g} \bz_x - \pi_{\bu^g} \bz^j_x - \pi_{\bu^g} \bz_x^c.
  \end{equation*}
  Due to symmetry of the sphere, at any jump point of $\bu$, $\T^+_{\bu} - \T^-_{\bu}$ is parallel to $\bu^g$, whence orthogonal to $T_{\bu^g}\mathbb S^{N-1}$, so by \eqref{zeqnj}
  \begin{equation}\label{sphere-jump-identity}
   \pi_{\bu^g} \bz^j_x = 0.
   \end{equation}
  Recalling also property \eqref{cantor-m},
  \begin{equation*}
  \bu_t = \left( \mathbf I - \frac{\bu^*}{|\bu^*|} \otimes \frac{\bu^*}{|\bu^*|}\right) \bz_x = \bz_x - \frac{\bu^*}{|\bu^*|^2} \bu^* \cdot \bz_x\end{equation*}
  Using the Leibniz' formula for BV functions in 1D, property \eqref{ztang}, and recalling \eqref{qaz},
  \[
  \bu^* \cdot \bz_x = - \bu_x \cdot \bz^* = - (\bu_x^d + \bu_x^j) \cdot \bz^*.
  \]
Again due to symmetry of the sphere, using \eqref{zeqnj} it is easy to check that at any jump point $|\bz^*| = |\bu^*|$ and $\bz^* \parallel \frac{\bu_x}{|\bu_x|}$. Consequently, $\bu_x^j \cdot
\bz^*(t) = |\bu^*| |\bu_x^j|$. Together with \eqref{zeqn}, this yields \eqref{zeqn-sphere}, demonstrating \eqref{sphere-eqn}. Now, taking the wedge product of \eqref{sphere-eqn} with $\bu^*$ and using
Leibniz' formula, we get
  \[\bu_t \wedge \bu = \bz_x \wedge \bu^* = (\bz \wedge \bu)_x - \bz^* \wedge \bu_x.  \]
  By \eqref{zeqn} and \eqref{zeqnj}, the last term vanishes, leading to \eqref{wedge-eqn}. Condition \eqref{tangent-eqn} follows trivially from \eqref{ztang}.

   Next, we consider a solution $\bu$ in the sense of \cite{gmmn}. Taking the Radon derivative of \eqref{sphere-eqn} with respect to $\Lb^1$ yields
  \begin{equation}\label{sphere-eqn-ac}\bu_t = \bz_x^a + \bu |\bu_x^a|.
  \end{equation}
  Equation \eqref{zeqn-sphere} implies $|\bu_x^a| = \bz \cdot \bu_x^a$. Recalling that in the case $\N = \S^{N-1}_+$, $\A_{\bp}(X,Y) = \bp\, X\cdot Y$, we obtain \eqref{secondfund-eqn},
  which is an equivalent form of \eqref{maineqn}. Subtracting \eqref{sphere-eqn-ac} from \eqref{sphere-eqn}, we get
  \begin{equation} \label{sphere_sing}
  0 = \bz_x^s + \bu^g |\bu_x^s|,
  \end{equation}
  whence \eqref{zjumpset} follows. Projecting \eqref{sphere_sing} onto $T_{\bu^g} \N$ yields
  \[\pi_{\bu^g} \bz_x^s = 0,\]
  whence \eqref{cantor-m} follows. Since $\bz^-$ (resp.\ $\bz^+$) is left-continuous (resp.\ right-continuous) and tangent spaces of a submanifold in $\R^N$ vary continuously with respect to the point,
  \eqref{ztang} is an immediate consequence of \eqref{tangent-eqn}. \ Equation \eqref{zeqn} follows from \eqref{zeqn-sphere} by Radon derivation, taking into account \eqref{zineq}. Equation
  \eqref{wedge-eqn} implies that $\bz \wedge \bu$ does not have jumps, so
  \begin{equation} \label{wedge-jump} \bz^+ \wedge \bu^+ = \bz^- \wedge \bu^-
  \end{equation}
  at any jump point of $\bu$. Taking wedge product of \eqref{wedge-jump} with $\bu^\pm$, we obtain $\bz^+ \wedge \bu^- \wedge \bu^+ = 0 = \bz^- \wedge \bu^+ \wedge \bu^-$, so $\bz^+$ and $\bz^-$ are in
  the plane spanned by $\bu^+$ and $\bu^-$. Since we already know that $\bz^\pm$ belongs to $T_{\bu^\pm}\N$, we see that $\bz^\pm$ is parallel to $\T_{\bu}^\pm$. Now, \eqref{wedge-jump} allows us to
  deduce that $|\bz^-| = a = |\bz^+|$ with $a\geq 0$. By \eqref{zeqn-sphere} and \eqref{ztang}, at any jump point of $\bu$,
  \begin{equation}\label{zeqn-sphere-jump} \bu^+ \cdot \bz^- - \bu^-\cdot \bz^+ = |\bu^+ + \bu^-| |\bu^+ - \bu^-|.
  \end{equation}
  Denoting $\bu^+ \cdot \bu^- = \cos \alpha$, we have $\bu^+ \cdot \bz^- = \pm a \sin \alpha$ and $\bu^-\cdot \bz^+ = \mp a \sin \alpha$ (the signs are opposite, because the orientation of pairs
  $(\bu^+, \bz^+)$ and $(\bu^-, \bz^-)$ in the plane is the same due to \eqref{wedge-jump}). Thus, having in mind that $|\bu^\pm|=1$, we obtain from \eqref{zeqn-sphere-jump}
  \[ \pm 2 a \sin \alpha = 2 \sqrt{1 - \cos^2 \alpha} = 2 |\sin \alpha|.\]
  Therefore, $a=1$ and $\bu^+ \cdot \bz^- >0$, $\bu^-\cdot \bz^+ <0$. This concludes the proof of \eqref{zeqnj}.

\end{proof}

\section{Examples of solutions}
\label{sec:examples}
In this section we provide two (semi-)explicit examples of strong solutions to (\ref{smootheqn},\ref{smoothbc}), which may also help understanding the features of Definition \ref{defsol}.

\medskip

\noindent {\bf Example 1: flow on a geodesic.}

\medskip

Take a geodesic segment $\bgamma$ in $\N$ of length $2s_0<\injN$, parametrized by its arc-length originating from its midpoint. Assume $\bu_0=\bgamma\circ \sigma_0$ with $\sigma_0\in BV(I;[-s_0,s_0])$. Then a strong solution to \eqref{smootheqn}-\eqref{smoothbc} is given by $\bu=\bgamma\circ \sigma$, where $(\sigma,\zeta)$ solves the total variation flow with initial datum $\sigma_0$. It follows from \cite[Def.\ 2.5 and Thm.\ 2.6]{acmbook} that
\begin{align}
\label{tv-sigma}
& \sigma\in L^\infty_{loc}([0,+\infty);BV(I))\cap W^{1,2}_{loc}([0,+\infty);L^2(I)), \ \|\sigma\|_\infty \le s_0,
\\
\label{tv-zeta}
& \zeta \in L^2_{loc}([0,+\infty);H^1(I)), \ \|\zeta\|_\infty\le 1, \ \zeta|_{\partial I}=0 \ \mbox{in $L^2([0,+\infty))$}, \
\\
\label{tv-pde}
& \sigma_t=\zeta_x \in L^2([0,+\infty)\times I),
\\
\label{tv-id}
&
\int_I \zeta(t) \dd \sigma_x(t) = \int_I \dd |\sigma_x| \quad\mbox{for a.\,e.\ $t$}.
\end{align}
With respect to the general Definition 2.5 in \cite{acmbook}, $\|\sigma\|_\infty \le s_0$ follows from weak comparison principle, the additional regularities of $\zeta$ and its trace follow from \eqref{tv-pde} since $I\subseteq \R$, and the additional regularity of $\sigma$ follows from \cite[Thm 3.6]{brezisomm} since $\sigma_0\in BV(I)$.

Because of $\|\sigma\|_\infty \le s_0$, $\bu$ is well defined. Its regularity properties are immediate from \eqref{tv-sigma}, and the inequality $\dist_\N(\bu^-,\bu^+)<\injN$ follows from $\|\sigma\|_\infty\le s_0<\frac12 \injN$. Let
\begin{align}
\label{def-z}
\bz(t,x)&:=\zeta(t,x)\bgamma'(\sigma(t,x)),
\end{align}
so that for a.\,e.\ $t\ge 0$ it holds that
\begin{align}
\label{zx1}
\bz_x(t)&=\underbrace{\zeta_x(t)}_{\in L^2(I)}\underbrace{\bgamma'(\sigma(t))}_{\in BV(I)} + \underbrace{\zeta(t)}_{\in C(\overline I)}\underbrace{(\bgamma'(\sigma(t)))_x}_{\in \M(I)}
\\ \label{zx2}
& = \zeta_x(t)\bgamma'(\sigma(t)) + \zeta\bgamma''(\sigma(t))\sigma_z^d(t) + \zeta(t) \left((\bgamma'(\sigma(t)))^+-(\bgamma'(\sigma(t)))^-\right)\llcorner J_{\bu(t)} \ .
\end{align}
It is easily seen from \eqref{def-z}-\eqref{zx1} that the regularity properties of $\bz$, as well as \eqref{zineq} and \eqref{zbc}, follow from \eqref{tv-zeta} and the smoothness of $\N$. We now argue for a.\,e.\ fixed $t$ and omit dependence on it for notational convenience. Since $\bgamma$ is a geodesic we have $\nabla_{\bgamma'}\bgamma'=0$, hence $\pi_{\bu}\bgamma''=0$. Therefore, by \eqref{zx2}, $\pi_{\bu} \bz_x^c = \pi_{\bu} \bgamma''(\sigma(t))\zeta\sigma_z^c =0$, i.\,e.\ \eqref{cantor-m} holds, and
$$\
\bu_t= \bgamma'(\sigma)\sigma_t \stackrel{(\ref{tv-pde})}=\bgamma'(\sigma)\zeta_x = \pi_{\bu} \bgamma'(\sigma)\zeta_x + \pi_{\bu}\bgamma''\zeta\sigma^a_x  \stackrel{(\ref{zx2})} = \pi_{\bu} \bz_x^a,
$$
whence \eqref{maineqn}. Conditions \eqref{ztang} and \eqref{zjumpset} are obvious by definition. It follows from \eqref{tv-id} and $|\zeta|\le 1$ that $\zeta\sigma_x=|\sigma_x|$ as measures, which means in particular that
\begin{equation}\label{z-rn}
\zeta \sigma_x^d=|\sigma_x^d| \ \mbox{ $|\sigma_x^d|$-a.\,e.} \quad\mbox{and}\quad \zeta \sigma_x^j=|\sigma_x^j| \ \mbox{pointwise on $J_{\bu}$.}
\end{equation}
Hence on the diffuse part
$$
\bz=\bgamma'(\sigma)\zeta \stackrel{(\ref{z-rn})}= \bgamma'(\sigma)\frac{\sigma_x^d}{|\sigma_x^d|} =\frac{ \bgamma'(\sigma)\sigma_x^d}{|\bgamma'(\sigma)\sigma_x^d|} = \frac{\bu_x^d}{|\bu_x^d|} \mbox{$\bu_x^d$-a.\,e.},
$$
which is \eqref{zeqn}.  At a jump point $x_0\in I$, if $\sigma^+-\sigma^-=\sigma(x_0^+)-\sigma(x_0)^->0$, then by \eqref{z-rn} $\zeta=1$ and $(\bgamma'(\sigma))^\pm=T_{\bu}^\pm$, hence $\bz^\pm = \zeta (\bgamma'(\sigma))^\pm = (\bgamma'(\sigma))^\pm = T_{\bu}^\pm$ at $x=x_0$. If the opposite inequality holds, then $\zeta=-1$ and $(\bgamma'(\sigma))^\pm=-T_{\bu}^\pm$, yielding again $\bz^\pm= T_{\bu}^\pm$. Hence \eqref{zeqnj} holds, which completes the verification of Definition \ref{defsol}.

\medskip

\noindent {\bf Example 2: piecewise constant initial data.}

\medskip

The case of a single jump, $\bu_0(x) = \bu_0^- \chi_- + \bu_0^+\chi_+$ with $\dist_\N(\bu_0^-,\bu_0^+)<\injN$, $\chi_-=\chi_{[0,x_0]}$ and $\chi_+=\chi_{[x_0,1]}$, boils down to Example 1: $\bu=\bgamma\circ\sigma$, where $\bgamma$ is the geodesic connecting $\bu_0^-$ to $\bu_0^+$ and $\sigma_0(x)=-s_0\chi_-+s_0\chi_+$, where $2s_0=L(\bgamma)$. Simple computations, see e.\,g.\ \cite[Remark 2.7]{bonfortefigalli}, permit to solve explicitly for $\sigma$, yielding
$$
\bu(t,x) = \bgamma\left(-s_0+x_0^{-1} t\right) \chi_0(x) +  \bgamma\left(s_0-(1-x_0)^{-1} t\right) \chi_1 =: \bu^-\chi_0 +\bu^+\chi_1.
$$
In particular, the jump point remains fixed at $x_0$ and vanishes at $t=2s_0x_0(1-x_0)$, and $\bu^\pm$ move towards each other along $\bgamma$.

\medskip

We now consider the case of two jump points, the generalization to a finite number of them being conceptually straightforward. Let then
$$
\bu_0 = \ba_{0,0}\chi_0+\ba_{0,1}\chi_1+\ba_{0,2}\chi_2, \quad \chi_i=\chi_{[x_{i-1},x_i]}, \quad 0=x_{-1} <x_0<x_1<x_2=1,
$$
with $\dist_\N(\ba_{0,i},\ba_{0,i+1})<\injN$.  We make the Ansatz that $\bu$ is piecewise constant with fixed jump points, i.\,e.\ $\bu(t,x) = \ba_0(t)\chi_0(x)+\ba_1(t)\chi_1(x)+\ba_2(t)\chi_2(x)$, and that $\bz$ is piecewise linear: then (1.6), (1.9) and (1.10)
%
%
are trivial, and (1.8), (1.11) and (1.12)
%
%
enforce
\begin{align*}
\bz(t,x) & = \frac{x}{x_0 \dd_{0,1}}\log_{\ba_0(t)}\ba_1(t) \chi_0 - \frac{1-x}{(1-x_1)\dd_{1,2}}\log_{\ba_2(t)}\ba_1(t) \chi_2
\\
& + \frac{1}{x_1-x_0}\left( \frac{x-x_0}{\dd_{1,2} }\log_{\ba_1(t)}\ba_2(t)  -  \frac{x_1-x}{\dd_{0,1} }\log_{\ba_1(t)}\ba_0(t) \right) \chi_1
\end{align*}
where $\dd_{i,i+1}=\dist_N(\ba_i,\ba_{i+1})=|\log_{\ba_i}\ba_{i+1}|$.  With this choice (1.7)
%
%
holds and (1.5)
%
%
yields the ODE system
\begin{equation}\label{e2-syst}
\left\{\begin{array}{l}
\ba_0' = \frac{1}{x_0 \dd_{0,1}}\log_{\ba_0(t)}\ba_1(t)
\\
\ba_1'= \frac{1}{x_1-x_0}\left( \frac{1}{\dd_{1,2} }\log_{\ba_1(t)}\ba_2(t)  +  \frac{1}{\dd_{0,1} }\log_{\ba_1(t)}\ba_0(t) \right)
\\
\ba_2' = \frac{1}{(1-x_1) \dd_{1,2}}\log_{\ba_2(t)}\ba_1(t)
\end{array}\right.
\end{equation}
which has a unique solution, hence (1.5)
%
%
holds, as long as $\dd_{i,i+1}>0$. Hence $\bu$ is indeed a solution provided $\dist_\N(\ba_{i},\ba_{i+1})<\injN$. To check this, we note that
\begin{align*}
\frac12 \frac{\dd}{\dd t} \dist_\N^2(\ba_0,\ba_1) &=\frac12 \frac{\dd}{\dd t} |\log_{\ba_0}\ba_1|^2 = -\log_{\ba_0}\ba_1 \cdot \ba_0' - \log_{\ba_1}\ba_0  \cdot \ba_1'
\end{align*}
and simple computations using \eqref{e2-syst} yield
\begin{align*}
\frac12 \frac{\dd}{\dd t} \dist_\N^2(\ba_0,\ba_1) \leq - \frac{1}{x_0} \dist_\N (\ba_0,\ba_1).
\end{align*}
Analogously $\frac12 \frac{\dd}{\dd t} \dist_\N^2(\ba_1,\ba_2)  \leq - \frac{1}{1-x_1} \dist_\N (\ba_1,\ba_2)$, whence the desired bound. Therefore $\bu$ is a strong solution. Solving these ODEs for $\dist_\N^2$ implies that the first jump point vanishes at $t_0<2\min\{x_0 \dist_\N(\ba_0,\ba_1), (1-x_1)\dist_\N(\ba_1,\ba_2)\}$. For $t>t_0$ $\bu$ behaves as in the first part of the example (one jump point).

\bigskip

\noindent\textit{Acknowledgments.} Partial support is acknowledged from GNAMPA-INdAM Projects.
The second author was partially supported by grants no.\ 2020/36/C/ST1/00492 and 2024/55/D/ST1/03055 of the National Science Center, Poland.
 The third author acknowledges partial support by project PID2022-136589NB-I00
and the network RED2022-134077 funded by MCIN/AEI/ 10.13039/501100011033 and
by ERDF A way of making Europe. Part of this work was created during second author’s Postdoctoral Fellowship at SBAI Department, Sapienza University of Rome, and JSPS Postdoctoral Fellowship at the University of Tokyo.

\appendix\section{Appendix}\label{sec:app1}
In this Appendix, we collect several results on Riemannian manifolds that we use in the paper and we have not found in the literature.

\begin{lemma}\label{lem:u_x_tangent}
  For any $\bw\in BV(I;\mathcal N)$, $\bw_x^a\in T_{\bw}\N$ $\Lb$-a.\,e.
\end{lemma}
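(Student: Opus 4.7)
The plan is to reduce the claim to the scalar Vol'pert chain rule for $BV$ compositions, applied to smooth functions locally defining $\N$ in $\R^N$.

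First, exploit the fact that $\N$ is a closed, embedded $n$-dimensional submanifold of $\R^N$ which is second-countable. Thus there exists a countable family of open sets $U_k \subset \R^N$ covering $\N$, such that on each $U_k$ there are smooth functions $F^k_1,\ldots,F^k_{N-n}\colon U_k\to\R$ with $\N \cap U_k = \{F^k = 0\}$ and $\{\nabla F^k_j(\bp)\}_{j=1}^{N-n}$ linearly independent for all $\bp \in \N\cap U_k$, so that
\[T_{\bp}\N = \bigcap_{j=1}^{N-n}\ker \nabla F^k_j(\bp) \quad\text{for all }\bp \in \N\cap U_k.\]
By multiplying by a smooth cut-off supported in $U_k$ and equal to $1$ in a neighborhood of $\N \cap U_k$, extend each $F^k_j$ to a globally defined $C^1$ function $\widetilde F^k_j\colon \R^N\to \R$ preserving both the vanishing on $\N\cap U_k$ and the gradient $\nabla F^k_j$ there.

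Next, fix $\bw \in BV(I,\N)$; since $I$ is bounded, $\bw$ is bounded, and each $\widetilde F^k_j$ is Lipschitz on a ball containing the image of $\bw$. Applying the Vol'pert chain rule for $BV$ functions (\cite[Theorem 3.96]{afp}), the composition $g^k_j := \widetilde F^k_j\circ \bw$ lies in $BV(I)$ and
\[(g^k_j)_x^a(x) = \nabla \widetilde F^k_j(\bw(x))\cdot \bw_x^a(x) \quad \Lb\text{-a.\,e.\ in } I.\]
Introduce the Borel set $E_k := \{x\in I:\bw^*(x)\in U_k\}$. On $E_k$, up to the countable jump set, $\bw(x)\in \N\cap U_k$, hence $g^k_j \equiv 0$ $\Lb$-a.\,e.\ on $E_k$.

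The key observation is then that for any scalar $g\in BV(I)$, the absolutely continuous derivative $g_x^a$ vanishes $\Lb$-a.\,e.\ on the set $\{g=0\}$; this follows since $g_x^a$ coincides $\Lb$-a.\,e.\ with the approximate derivative of $g$, and at any Lebesgue density-$1$ point of $\{g=0\}$ the approximate derivative is necessarily zero. Applied here, this gives $(g^k_j)_x^a=0$ $\Lb$-a.\,e.\ on $E_k$, and combined with the chain rule we deduce
\[\nabla F^k_j(\bw(x))\cdot \bw_x^a(x) = 0 \quad \Lb\text{-a.\,e.\ in }E_k,\ j=1,\ldots,N-n,\]
so $\bw_x^a(x)\in T_{\bw(x)}\N$ for $\Lb$-a.\,e.\ $x\in E_k$. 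Since $I = \bigcup_k E_k$ modulo a null set, the conclusion follows. The only mildly delicate step is the scalar fact that $g_x^a = 0$ a.\,e.\ on $\{g=0\}$, which is a standard consequence of Lebesgue differentiation and the approximate-derivative characterization of $g_x^a$; everything else is a clean application of the chain rule together with the defining equations for $\N$.
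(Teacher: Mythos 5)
Your argument is correct in substance, but it takes a genuinely different route from the paper. The paper's proof is purely one-dimensional and pointwise: identifying $\bw$ with its good representative, a $BV$ function of one variable is classically differentiable $\Lb$-a.\,e.\ with derivative equal to $\bw_x^a$, and at such a point $x_0$ one uses that $\N$ is a smooth submanifold to write $\bw(x_0+h)-\bw(x_0)\in T_{\bw(x_0)}\N+O(|\bw(x_0+h)-\bw(x_0)|^2)=T_{\bw(x_0)}\N+O(|h|^2)$, so dividing by $h$ and letting $h\to 0$ gives $\bw'(x_0)\in T_{\bw(x_0)}\N$ since the tangent space is closed. Your proof instead describes $T_{\bp}\N$ as the common kernel of the differentials of local defining functions, applies the Vol'pert chain rule to $\widetilde F^k_j\circ\bw$, and uses the locality property that the absolutely continuous derivative of a scalar $BV$ function vanishes a.\,e.\ on its zero set; all three ingredients are sound, and your justification of the locality step via approximate differentiability is correct. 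What each approach buys: the paper's argument is shorter and exploits the 1D setting (classical a.\,e.\ differentiability), while yours avoids any pointwise differentiability and would extend verbatim to $BV(\Omega,\N)$ with a multi-dimensional domain, where the paper's argument is not available.

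One technical slip: a smooth cut-off supported in $U_k$ and identically $1$ on a neighborhood of $\N\cap U_k$ need not exist, since $\N\cap U_k$ may accumulate at $\partial U_k$ (continuity would force the cut-off to equal $1$ at a boundary point where it must vanish). This is harmless and routinely repaired: shrink the cover, i.\,e.\ choose open sets $V_k$ compactly contained in $U_k$ with $\N\subset\bigcup_k V_k$, take the cut-off equal to $1$ on $\overline{V_k}$ and supported in $U_k$, and define $E_k$ using $V_k$ instead of $U_k$; at points of $\N\cap V_k$ one still has $F^k_j=0$, so $\nabla\widetilde F^k_j=\nabla F^k_j$ there and the rest of your argument goes through unchanged.
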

\begin{proof}
By definition of $BV(I;\mathcal N)$, any representative of $\bw$ belongs to $\N$ $\Lb$-a.\,e. Therefore, if we identify $\bw$ with its semicontinuous representative, then $\bw$ belongs to $\N$
everywhere. Furthermore, the classical derivative $\bw'$ exists $\Lb$-a.\,e.\ and the $L^1$ function it defines coincides with $\bw_x^a$ \cite[Theorem 3.28]{afp}. At any point $x_0$ where $\bw'(x_0)$
exists,
	\[\bw(x_0 + h) - \bw(x_0) = O(|h|)\]
and therefore
\[
\bw(x_0 + h) - \bw(x_0) \in T_{\bw(x_0)} \N + {O(|\bw(x_0 + h) - \bw(x_0)|^2) = T_{\bw(x_0)} \N + O(|h|^2)}. 
\]
Consequently, $\bw'(x_0) \in T_{\bw(x_0)} \N$.
\end{proof}

\begin{proof}[Proof of Lemma \ref{lem-dist-comp}]
Since the embedding is closed and it is  a homeomorphism, $\overline{B(0,R)}\cap \N$ is compact in $\R^N$ and in $(\N,\dist_\N)$. The function
$$
f(\bp_1,\bp_2):=\left\{\begin{array}{ll} \frac{|\bp_1-\bp_2|}{\dist_\N(\bp_1,\bp_2)} & \bp_1\ne \bp_2 \\
1 & \bp_1= \bp_2
\end{array}\right.
$$
is continuous (at $\bp_1=\bp_2$), since $\bp_2-\bp_1= \exp^{-1}_{\bp_1}\bp_2 + O(|\bp_1-\bp_2|^2)$ as $\bp_2\to \bp_1$, hence
$$
1+o(1) {\le} \frac{|\bp_1-\bp_2|}{\dist_\N(\bp_1,\bp_2)}\le 1 \quad\mbox{as $\bp_2\to \bp_1$.}
$$
Hence $f$ is bounded {from} below in $\overline{B(0,R)}\cap \N$.
\end{proof}

\begin{proof}[Proof of Proposition \ref{fermiprop}]

Let
	\[V\!= \, ]\!-\radN, \radN[ \times B^{n-1}_{\radN}(\boldsymbol 0) = \left\{(p^1, \ldots, p^n)\in \Rn \colon |p_1|< \radN, \sum_{k=2}^n p_k^2 < \radN^2 \right\}.\]
	We denote by $\bp_0$ the point $\bgamma(0)$ and, by $P(t)$, the parallel transport from $T_{\bp_0}\N$ to $T_{\bgamma(t)}\N$ along $\bgamma$. We recall that $t \mapsto P(t)$ is the family of linear
isometries such that, for every $\bv \in T_{\bp_0}\N$, $P(0)\bv = \bv$ and
	\begin{equation}\label{paralleldef}
	\left(P(t) \bv \right)_{;t} = \boldsymbol 0 \quad \text{for  } t \in ]\!-\radN, \radN[,
	\end{equation}
	i.\,e.\ the covariant derivative of $P(\cdot) \bv$ vanishes. We choose an orthonormal basis $\be_1, \ldots, \be_n$ of $T_{\bp_0}\N$ such that $\be_1 = \bgamma'(0)$. We define a smooth map $G\colon V
\to U(\bgamma)=U(\bgamma,\radN)$ by
	\[G(p^1, \ldots, p^n) = \exp_{\bgamma(p^1)} P(p^1) \sum_{k=2}^n p^k \be_k,
\]
where $U(\bgamma,\radN)$ is defined in \eqref{def-ugr}. We will prove that $G$ is a diffeomorphism and that its inverse is a Fermi normal coordinate chart on $U(\bgamma)$ flat along $\bgamma$. It is easy to check that $G$ is surjective.
	
	Since $\radN \leq \injN$, $\bgamma$ does not self-intersect. Therefore, $G(\cdot, 0, \ldots,0)$ is injective. Let $p^1 \in ]\!-\radN, \radN[$ and for $i=1,2$ let $\bw_i \in T_{\bgamma(p^1)}\N$,
$\bw_i \perp T_{\bp_1}\bgamma$, $|\bw_i|_{\N} = 1$. Again, since $\radN \leq \injN$, geodesic segments $\{\exp_{\bgamma(p^1,\cdot,\cdots,\cdot)} t \bw_i, t \in ]0, \radN[ \}$ cannot intersect. Thus, $G(p^1, \cdot, \ldots,
\cdot)$ is injective for all $p^1 \in ]\!-\radN, \radN[$.
	
	Let $\bp_1, \bp_2 \in \bgamma$, $\bp_1 \neq \bp_2$, and let $\bw_i \in T_{\bp_i} \N$, $\bw_i \perp T_{\bp_i}\bgamma$, $|\bw_i|_{\N} = 1$ for $i=1,2$. We denote by $\bgamma_i$ the geodesic segment
$\{\exp_{\bp_i} t \bw_i, t \in ]0, \radN[ \}$. Since $2\radN \leq \injN$, $\bgamma_i$ does not intersect $\bgamma$ for $i=1,2$.
%
%
	We will now show that $\bgamma_1$ and $\bgamma_2$ do not intersect. We suppose that they do and denote by $\bp_3$ the first point in their intersection. Points $\bp_1, \bp_2, \bp_3$ together with
parts of $\bgamma$, $\bgamma_1, \bgamma_2$ form a geodesic triangle $\triangle$. We first consider the case $K_\N \leq 0$. Since $2 \radN \leq \injN$, $\triangle$ does not meet the cut locus of its
vertices. 	By Alexandrov's comparison theorem (Lemma \ref{act}), there exists a triangle $\triangle'$ in the Euclidean plane of the same edge lengths whose angle sizes are not smaller than
those of $\triangle$. By definition of $\bw_1$, $\bw_2$, $\triangle'$ has two angles of size at least $\frac{\pi}{2}$ which cannot be.
	
	Next, we suppose $K_{\N} > 0$. In order to apply Alexandrov's comparison theorem, we now need to check that the perimeter of $\triangle$ is not greater than $\frac{2 \pi}{\sqrt{K_\N}}$, which is
indeed the case by definition of $\radN$. Thus, there exists a triangle $\triangle'$ in the sphere of radius $\frac{1}{\sqrt{K_\N}}$ of the same edge lengths as $\triangle$ whose angle sizes are not
smaller than those of $\triangle$. We denote the vertex of $\triangle'$ corresponding to $\bp_i$ by $\bq_i$ for $i=1,2,3$. Since $\triangle'$ is contained in the open hemisphere centered at $\bq_3$, it
is again an impossibility that both $\angle \bq_1\bq_2\bq_3$ and $\angle \bq_2\bq_1\bq_3$ are larger than $\frac{\pi}{2}$. This concludes the proof that $G$ is injective.

	It remains to show that the inverse of $G$ is smooth. It is enough to demonstrate that the linear map $DG(p^1, \ldots, p^n)\colon V \to T_{G(p^1, \ldots, p^n)}\N$ is an isomorphism for all $(p^1,
\ldots, p^n) \in V$. For readability, throughout this argument we will denote $X = P(p^1) \sum_{k=2}^n p^k \be_k$. Appealing to \eqref{paralleldef}, we calculate
	\begin{equation}\label{DGe1prel}
	DG(p^1, \ldots, p^n) \be_1 = D_{\bp} (\exp_{\bp} Y) \left|_{\subalign{\bp &= \bgamma(p^1) \\ Y&=X}}\right. \bgamma'(p^1).
	\end{equation}
	We recall that, since $\exp_{\bp}\log_{\bp} \bq = \bq$ trivially whenever $\dist_{\N}(\bq, \bp)< \injN$, we have
	\[\left.D_{\bp}(\exp_{\bp} Y)\right|_{Y = \exp^{-1}_{\bp} \bq}+ (D\exp_{\bp})(\log_{\bp} \bq) D_{\bp} \log^{-1}_{\bp} \bq = 0,\]
	and so, denoting $\bq = \exp_{\bp} Y$,
	\begin{equation} \label{Dpexpp}
	D_{\bp}(\exp_{\bp} Y) = - D\exp_{\bp} (Y)\, D_{\bp} \log_{\bp} \bq = D\exp_{\bp} (Y)\, D^2_{\bp}\, \tfrac{1}{2} \dist_{\N}^2(\bp, \bq).
	\end{equation}
	Plugging \eqref{Dpexpp} into \eqref{DGe1prel} yields
	\[DG(p^1, \ldots, p^n) \be_1 = D \exp_{\bgamma(p^1)} (X)\, D^2_{\bp}\, \tfrac{1}{2} \dist_{\N}^2(\bp, \exp_{\bgamma(p^1)} X)\left|_{\bp = \bgamma(p^1)}\right. \, P(p^1) \be_1.\]
	On the other hand, for $k = 2, \ldots, n$, we have
	\begin{equation*}\label{DGek}
	DG(p^1, \ldots, p^n) \be_k = D \exp_{\bgamma(p^1)} (X)\, P(p^1) \be_k.
	\end{equation*}
	Summing up,
	\[DG(p^1, \ldots, p^n) = D \exp_{\bgamma(p^1)} (X)\,A\, P(p^1),\]
	where $A \colon T_{\bgamma(p^1)}\N \to T_{\bgamma(p^1)}\N$ is a linear endomorphism whose matrix in basis $P(p^1) \be_1, \ldots,$ $P(p^1) \be_n$ is lower triangular of form
	\[\begin{bmatrix}
	a_{11} & 0 &  \ldots & 0 \\
	a_{12} & 1 & & 0 \\
	\vdots &  & \ddots & \\
	a_{1n} & 0 & & 1
	\end{bmatrix}.   \]
	Now, since $|X|_{\N} < \radN$, we have
	\[a_{11} = P(p^1) \be_1\cdot D^2_{\bp}\, \tfrac{1}{2} \dist_{\N}^2(\bp, \exp_{\bgamma(p^1)} X)\left|_{\bp = \bgamma(p^1)}\right. \, P(p^1) \be_1 > 0, \]
	hence $A$ is invertible. $D \exp_{\bgamma(p^1)} (X)$ is invertible for the same reason.
	
	We have proved that $F = G^{-1}$ is a diffeomorphism. The fact that it satisfies properties \eqref{fermi-prop} is well-known and easy to check.

\medskip

We now prove the additional assertion. It suffices to suppose that $K_\N>0$. Let $\bw_1,\bw_2\in \bgamma$ be symmetric with respect to $\bp$ and let $\bw\colon]a,b[\to \N$ be a curve joining $\bw_1$ and $\bw_2$ which is not contained in
$U:=U(\bgamma, \radN)$. Let $\bw_0$ be the first point which is not contained in $U$: then $\bw_0\in \partial U\setminus U$. Since $G$ is continuous up to $\partial V$, $\bw_0=G(p_0^1,\dots,p_0^n)$
for some $(p_0^1,\dots,p_0^n)\in \partial V$. Consider the geodesic segments  $\bgamma_1=\bgamma_{\bw_1}^{\bw_0}$ and $\bgamma_2=\bgamma_{\bw_0}^{\bw_2}$. Since $L(\bw)\ge L(\bgamma_1)+L(\bgamma_2)$, it
will suffice to show that $L(\bgamma_1)+L(\bgamma_2)
\ge 2\radN$. We now distinguish two cases.

\smallskip

If $|p^1_0|<\radN$, then $\sum_k (p^k_0)^2=\radN^2$, that is, $\bw_0=\exp_{\tilde \bp}\bv_0$ with $|\bv_0|=\radN$ and $\tilde \bp\in \bgamma$. Consider the geodesic $\tbgamma$ joining $\bw_0$
and $\tilde\bp$, parametrized by $\exp_{\tilde \bp}(t\bv_0)$, $t\in [0,1]$. We have $L(\tbgamma)=\radN$ and, by construction of Fermi coordinates, $\angle \bw_0\tilde\bp\bw_1= \angle
\bw_0\tilde\bp\bw_2=\pi/2$. Suppose that $L(\bgamma_1)<\radN$. Then the perimeter of the geodesic triangle with vertices $\tilde\bp$, $\bw_0$ and $\bw_1$ is strictly smaller than $4\radN$,
whence {the second part of} Lemma \ref{act} may be applied on the sphere of radius $R=\frac{1}{\sqrt{K_\N}}${, yielding (with corresponding notation) $0\le \frac1R c_s\le \frac1R c=\frac1R L(\bgamma_1)<\frac1R \radN\le \frac\pi 2$, hence $\cos( \tfrac1R L(\bgamma_1))\le \cos (\frac1R c_s)$.} By the Pythagorean theorem on the sphere, this leads to
\[
\cos( \tfrac1R L(\bgamma_1)) \leq\cos (\tfrac1R c_s)=\cos(\tfrac1R L(\tbgamma)) \cos(\tfrac1R L(\bgamma_{\bw_1}^{\tilde \bp}))= \cos(\tfrac1R\radN) \cos(\tfrac1R L(\bgamma_{\bw_1}^{\tilde \bp})).
\]
{Since $\frac1R\radN\leq \frac{\pi}{2}$, this implies that $\cos( \tfrac1R L(\bgamma_1)) \le \cos(\tfrac1R\radN)$, that is,} $L(\bgamma_1) \geq \radN$, a contradiction. By the same argument $L(\bgamma_2) \geq \radN$.

\smallskip

If $|p^1_0|=\radN$, assume w.\,l.\,o.\,g.\ that $p^1_0=-\radN$, let $\tilde\bp=G(-\radN,0,\dots,0)$, and assume by contradiction that
\begin{equation}\label{vbn0}
L(\bgamma_1)+L(\bgamma_2)<2\radN.
\end{equation}
We have, by the symmetry of $\bw_1$ and $\bw_2$,
\begin{equation}\label{vbn1}
L(\bgamma_{\bw_1}^{\tilde \bp})+ L(\bgamma_{\bw_2}^{\tilde \bp})=2\radN,
\end{equation}
and we can assume w.l.o.g. that $L(\bgamma_{\bw_1}^{\tilde \bp})\leq\radN$. Using \eqref{vbn0} and \eqref{vbn1}, we compute:
\begin{equation*}
L(\bgamma_{\bw_0}^{\tilde \bp}) + L(\bgamma_2) + L(\bgamma_{\bw_2}^{\tilde\bp}) < L(\bgamma_{\bw_0}^{\tilde \bp}) +4\radN - L(\bgamma_1) - L(\bgamma_{\bw_1}^{\tilde \bp}),
\end{equation*}
whence by the triangle inequality
\begin{equation*}
L(\bgamma_{\bw_0}^{\tilde \bp}) + L(\bgamma_2) + L(\bgamma_{\bw_2}^{\tilde\bp}) < 4\radN,
\end{equation*}
and by the same argument also the perimeter of the triangle with vertices in $\bw_0$, ${\tilde \bp}$ and $\bw_1$ is less than $4\radN$. We can suppose that $\bw_0\neq \tilde\bp$ since otherwise, by \eqref{vbn1}, we would have finished the proof. Therefore {the second part of} Lemma \ref{act} may be applied on the sphere of radius $R=\frac{1}{\sqrt{K_\N}}$ to both triangles, keeping the angles $\angle \bw_0\tilde\bp\bw_1= \angle \bw_0\tilde\bp\bw_2=\pi/2$. Then (with corresponding notation) $c_{s,i}\leq  L(\bgamma_i)$ 
 for $i=1,2$. Again by the Pythagorean theorem on the sphere, we have
\begin{align*}
\cos(\tfrac1R c_{s,i}) =\cos (\tfrac1R L(\bgamma_{\bw_0}^{\tilde \bp})) \cos(\tfrac1R L(\bgamma_{\bw_i}^{\tilde \bp})),\quad i=1,2.
\end{align*}
{By \eqref{vbn0}, $\tfrac1R c_{s,i}<\pi$. Therefore we may pass to the inverse, yielding}
\begin{align*} \frac{c_{s,1}+c_{s,2}}{R} &=\arccos(\cos(\tfrac1R L(\bgamma_{\bw_0}^{\tilde \bp})) \cos(\tfrac1R L(\bgamma_{\bw_1}^{\tilde \bp})))+\arccos(\cos(\tfrac1R L(\bgamma_{\bw_0}^{\tilde \bp}) \cos(\tfrac1R L(\bgamma_{\bw_2}^{\tilde \bp})))
\\ & \stackrel{\eqref{vbn1}}= \arccos(\cos(\tfrac1R L(\bgamma_{\bw_0}^{\tilde \bp})) \cos(\tfrac1R L(\bgamma_{\bw_1}^{\tilde \bp})))+\arccos(\cos(\tfrac1R L(\bgamma_{\bw_0}^{\tilde \bp}) \cos(\tfrac1R (2{\rm rad}_\N -L(\bgamma_{\bw_1}^{\tilde \bp}))).
\end{align*}
Note that the function $x\mapsto \arccos(a\cos(x-b))-x$ is decreasing in $\R$. Therefore
\begin{align*}
\arccos(a\cos b) + \arccos(a\cos(x-b))-x  & \ge \arccos(a\cos b) + \arccos(a\cos(\pi-b))-\pi
\\
& = \arccos(a\cos b) + \arccos(-a\cos b)-\pi =0
\end{align*}
for all $x\le \pi$. Applying this inequality with $x= \tfrac 2 R \radN\in [0,\pi]$ yields
$$
\frac{c_{s,1}+c_{s,2}}{R} \geq  \frac{2{\rm rad}_\N}{R},
$$
in contradiction with \eqref{vbn0} since $L(\bgamma_i)\ge c_{s,i}$.
The last assertion follows from the previous one choosing $\bw:=[\bp,\bq,\bp]$ for any $\bq\in B_{\N}(\bp,{\rm rad}_\N)$.
\end{proof}

In the remainder of the section we prove Lemma \ref{lem-what}. Let us first consider the case of an Alexandrov triangle on the sphere $\mathbb S^2$.

\begin{lemma}\label{lem-what-S2}
	Let $r_0<\pi$, $\bp\in \mathbb S^{2}$, and $\bq,\tilde \bq\in B_{\N}(\bp,r_0)$. There exists a constant $C>0$ (depending on $r_0$) such that
	\begin{equation}\label{what-sphere}
		\forall\, \hat \bp\in \bgamma_{\bp}^{\bq} \ \exists\, \overline\bp\in \bgamma_{\bp}^{\tilde \bq} : \ \dist_{\mathbb S^{2}}(\hat \bp, \overline \bp) \le C \dist_{\mathbb S^{2}}(\bq, \tilde \bq)
		%
		%
	\end{equation}
\end{lemma}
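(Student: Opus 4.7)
The plan is to work directly in the ambient $\R^3$, using the parametrizations $f(s)=\cos s\,\bp+\sin s\,\bv$ on $\bgamma_\bp^\bq$ and $\tilde f(t)=\cos t\,\bp+\sin t\,\tilde\bv$ on $\bgamma_\bp^{\tilde\bq}$, where $\bv,\tilde\bv\in T_\bp\mathbb S^{2}$ are unit tangent vectors making an angle $\theta$ at $\bp$. Set $a:=\dist_{\mathbb S^2}(\bp,\bq)$, $\tilde a:=\dist_{\mathbb S^2}(\bp,\tilde\bq)$, $d:=\dist_{\mathbb S^2}(\bq,\tilde\bq)$, and assume without loss of generality $a\ge\tilde a$; the degenerate cases $\bq=\tilde\bq$ or $\tilde\bq=\bp$ are handled directly (taking $\overline\bp=\hat\bp$ or $\overline\bp=\bp$ respectively, so that $\dist(\hat\bp,\bp)=s\le a=d$). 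The cornerstone identity will be the spherical haversine formula for the triangle $\{\bp,\bq,\tilde\bq\}$ with apex angle $\theta$ at $\bp$:
\[
\sin^2\tfrac{d}{2}=\sin^2\tfrac{a-\tilde a}{2}+\sin a\sin\tilde a\,\sin^2\tfrac\theta 2.
\]

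For $\hat\bp=f(s)$ with $s\in[0,a]$, the natural candidate is $\overline\bp:=\tilde f(\min(s,\tilde a))$. Let $e:=\dist_{\mathbb S^2}(\hat\bp,\overline\bp)$. In the first case $s\le\tilde a$, haversine on the isoceles triangle $\{\bp,\hat\bp,\overline\bp\}$ (legs of length $s$, apex angle $\theta$) yields $\sin^2(e/2)=\sin^2 s\,\sin^2(\theta/2)$, which combined with the haversine for $d$ gives
\[
\sin\tfrac e 2 \le \frac{\sin s}{\sqrt{\sin a\sin\tilde a}}\,\sin\tfrac d 2.
\]
In the second case $s>\tilde a$ (so $\overline\bp=\tilde\bq$), haversine on $\{\bp,\hat\bp,\tilde\bq\}$ gives $\sin^2(e/2)=\sin^2((s-\tilde a)/2)+\sin s\sin\tilde a\sin^2(\theta/2)$; I would bound the first summand by $\sin^2((a-\tilde a)/2)\le\sin^2(d/2)$ and factor the second as $\tfrac{\sin s}{\sin a}\sin a\sin\tilde a\sin^2(\theta/2)\le\tfrac{\sin s}{\sin a}\sin^2(d/2)$. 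Both cases thereby reduce to a uniform bound on the ratios $\sin s/\sqrt{\sin a\sin\tilde a}$ and $\sin s/\sin a$ by a constant $C(r_0)$, after which I would pass from $\sin(e/2)\le C(r_0)\sin(d/2)$ to $e\le C'(r_0)\,d$ via the elementary inequalities $2x/\pi\le\sin x\le x$ on $[0,\pi/2]$.

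The main obstacle is the uniform control of these ratios, which is threatened both as $a,\tilde a\to 0^+$ and as $a,\tilde a\to\pi^-$, since $\sin$ vanishes at both endpoints of $[0,\pi]$. The first regime is tamed because $s\le\tilde a\le a$ and the monotonicity of $\sin$ on $[0,\pi/2]$ gives the ratio $\le 1$ whenever both $a,\tilde a\le\pi/2$; the second regime is precisely where the hypothesis $r_0<\pi$ is used, forcing $\sin a,\sin\tilde a\ge\sin r_0>0$ whenever $a,\tilde a\in[\pi/2,r_0]$. A short case analysis over the four combinations of $a,\tilde a$ lying on either side of $\pi/2$ then yields $C(r_0)=\max(1,1/\sin r_0)$, which correctly diverges as $r_0\to\pi^-$, reflecting the fact that geodesic segments cease to depend continuously on their endpoints at antipodal configurations.
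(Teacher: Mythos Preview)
Your proof is correct and follows essentially the same strategy as the paper: both use the haversine formula, choose $\overline\bp$ at arc-length $\min(s,\tilde a)$ along $\bgamma_\bp^{\tilde\bq}$, split into the two cases $s\le\tilde a$ and $s>\tilde a$, and control the sine ratios by distinguishing whether the relevant lengths lie in $[0,\pi/2]$ (monotonicity) or in $[\pi/2,r_0]$ (lower bound $\sin r_0$). One small remark: the phrase ``without loss of generality $a\ge\tilde a$'' is misleading, since the claim is not symmetric in $\bq$ and $\tilde\bq$; what you actually use is that if $a<\tilde a$ then only Case~1 occurs, which is harmless but worth stating explicitly. The paper's treatment of Case~2 is slightly sharper (it exploits monotonicity of $a\mapsto\hav(a-b)/\sin a$ to absorb the $\hav(\tilde a-b)$ term into a single $C_0\hav c$), whereas you simply add the two contributions to get $(1+\sin s/\sin a)\sin^2(d/2)$; both are sufficient.
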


\begin{proof}
	Let $a< r_0$, $b< r_0$, 
$\tilde a\le a$, $\tilde b\le b$, $c=\dist_{\mathbb S^{2}}(\bq, \tilde \bq)$, and $\tilde c$ be the lengths of $\bgamma_{\bp}^{\bq}$, $\bgamma_{\bp}^{\tilde\bq}$, $\bgamma_{\bp}^{\hat \bp}$,
$\bgamma_{\bp}^{\overline\bp}$, $\bgamma_{\bq}^{\tilde\bq}$, and $\bgamma_{\hat\bp}^{\overline\bp}$, respectively. Hence $a,b,c,\tilde a$ are given, $\tilde b$ is to be chosen, and  $\tilde c$ is to be
estimated. Let $\alpha$ be the opening angle between $\bgamma_{\bp}^{\bq}$ and $\bgamma_{\bp}^{\tilde\bq}$. We recall that $\hav\theta=\sin^2(\theta/2)$, and that
	\begin{eqnarray*}
		\hav c = \hav(a-b)+\sin a \sin b \hav\alpha, \quad  \hav \tilde c = \hav(\tilde a-\tilde b)+\sin \tilde a \sin \tilde b \hav\alpha.
	\end{eqnarray*}
	Plugging the former into the latter, we see that
	\begin{equation}\label{havtc}
		\hav \tilde c = \hav(\tilde a-\tilde b)+\frac{\sin \tilde a}{\sin a} \frac{\sin \tilde b}{\sin b}(\hav c- \underbrace{\hav(a-b)}_{\ge 0}).
	\end{equation}
	Note that
	$$
	\frac{\sin \tilde a}{\sin a}\le \left\{\begin{array}{ll} 1 & \mbox{if $a\le \tfrac{\pi}{2}$} \\ \sqrt{C_0} & \mbox{if $\tfrac{\pi}{2}\le a<r_0$}\end{array}\right.
	$$
	with $C_0 = C_0(r_0)\ge 1$, and the same holds for $b,\tilde b$. Therefore
	\begin{equation}\label{app1-base}
		\hav \tilde c \leq \hav(\tilde a-\tilde b)+C_0 \hav c.
	\end{equation}
If $\tilde a\le b$, we choose $\overline \bp$ such that $\tilde b=\tilde a$. Then \eqref{app1-base} simplifies to $\hav \tilde c \le C_0 \hav c$. If on the other hand $\tilde a>b$, we choose $\overline
\bp=\tilde\bq$, i.\,e.\ $\tilde b=b$. Then \eqref{havtc} reads as
	\begin{equation*}
		\hav \tilde c = \hav(\tilde a-b) +\frac{\sin \tilde a}{\sin a} (\hav c- \hav(a-b)) \le \sqrt{C_0} \hav c +\sin \tilde a\left( \frac{\hav(\tilde a-b)}{\sin\tilde a} -\frac{\hav (a-b)}{\sin
a}\right).
	\end{equation*}
	Since the function $a\mapsto \frac{\hav (a-b)}{\sin a}$ is increasing for $b\le a<\pi$ 
and $C_0\ge 1$, we obtain again $\hav \tilde c\leq C_0\hav c$. Now,
	$$
	\hav \tilde c \le C_0 \hav c \ \iff \ \tilde c \le 2\arcsin\left(\sqrt{C_0}\sin(c/2)\right).
	$$
	From Taylor's expansion, we see that $\tilde c\le 2\sqrt{C_0} c$ for $c\le \epsilon$. For $c\ge \epsilon$ we have $\hav c\ge \epsilon^2/8$: hence we may just write $\tilde c\le \pi = \frac{\pi
c}{c}\le \frac{8\pi }{\epsilon^2}c$, and the claim follows choosing $C= \max\{2\sqrt{C_0},8\pi/\epsilon^2\}$.
\end{proof}

\begin{proof}[Proof of Lemma  \ref{lem-what}]
	Consider first the case that $\mathcal N=\mathbb{S}^2$. We take $\bp:=\bp_1$, $\bq:=\bq_1$ and $\tilde \bq:=\bq_2$ in the previous Lemma \ref{lem-what-S2}. By the assumption of Lemma \ref{lem-what},
we have $r_0:=2r < 2 \rm{rad}_{\S^2} = \pi$. Therefore, for any $\hat\bp\in \bgamma_{\bp_1}^{\bq_1}$ there exists $\overline \bp\in \bgamma_{\bp_1}^{\bq_2}$ such that
$$
{\rm dist}_{\S^2}(\hat \bp,\overline \bp)\leq C{\rm dist}_{\S^2}(\bq_1,\bq_2).
$$
Applying again Lemma \ref{lem-what-S2}, with $\bp:=\bq_2$, $\bq:=\bp_1$ and $\tilde\bq:=\bp_2$ it follows that there exists $\overline{\overline \bp}\in \bgamma_{\bq_2}^{\bp_2}$ such that
	$$
	\dist_{\S^2}(\overline \bp,\overline{\overline \bp})\leq C \dist_{\S^2}(\bp_1,\bp_2),
	$$ which implies that
	\begin{equation}\label{what-sn}
	\dist_{\S^2}(\hat\bp,\bgamma_{\bp_2}^{\bq_2})\le\dist_{\S^2}(\hat\bp,\overline{\overline \bp})\leq C(\dist_{\S^2}({\bp_1},{\bp_2})+\dist_{\S^2}({\bq_1},{\bq_2})) \quad\forall \hat \bp \in
\bgamma_{\bp_1}^{\bq_1}.
	\end{equation}
Taking the supremum with respect to $\hat\bp$ and recalling the definition \eqref{def-dist} of $\vec{\dd}$ yields \eqref{what} in the case $\mathcal N=\mathbb{S}^2$.

By rescaling we pass from the unit sphere $\S^2$ to a sphere with curvature $K$. Let now $\N$ be a generic manifold without boundary, such that its sectional curvature is bounded above by $K_{\mathcal
N} > 0$. We consider the curve $\bgamma=[\bp_1,\bp_2,\bq_2,\bq_1,\bp_1]$. By the assumption of Lemma \ref{lem-what} we have $\bp_1, \bp_2, \bq_1, \bq_2 \in B_{\N}(\bp_0,r)$ with $r<\radN$; in
particular,
\[\dist_{\N}(\bp_1, \bq_1) < 2r, \quad \dist_{\N}(\bp_2, \bq_2) < 2r.\]
	Thus, if $L(\bgamma)\ge 4\radN$, we necessarily have
$$
2m:=\dist_{\N}(\bp_1, \bp_2) + \dist_{\N}(\bq_1, \bq_2)>4(\radN -r).
$$
By triangle inequality,
\begin{align}
\vec{\dd}(\bgamma_{\bp_1}^{\bq_1}, \bgamma_{\bp_2}^{\bq_2}) & \stackrel{(\ref{def-dist})}= \sup_{\bp\in \bgamma_{\bp_1}^{\bq_1}} \dist_\N(\bp, \bgamma_{\bp_2}^{\bq_2}) \le  \sup_{\bp\in
\bgamma_{\bp_1}^{\bq_1}}\left(\dist_\N(\bp,\bp_1)+\dist_\N(\bp_1,  \bgamma_{\bp_2}^{\bq_2} )\right)
\\ & \le \dist_\N(\bq_1,\bp_1) + \dist_\N(\bp_1,\bp_2)
\end{align}
and analogously
\begin{align}
\vec{\dd}(\bgamma_{\bp_1}^{\bq_1}, \bgamma_{\bp_2}^{\bq_2}) \le \dist_\N(\bp_1,\bq_1) + \dist_\N(\bq_1,\bq_2),
\end{align}
hence
\begin{align}
\vec{\dd}(\bgamma_{\bp_1}^{\bq_1}, \bgamma_{\bp_2}^{\bq_2}) & < 2r +m  = \left(1 + \frac{2r}{m}\right)m < \left(1 + \frac{r}{2(\radN - r)}\right)m
\\ & \le \left(2 + \frac{r}{\radN - r}\right)\max\left\{\dist_{\N}(\bp_1, \bp_2), \dist_{\N}(\bq_1, \bq_2)\right\}.
\end{align}
Otherwise, if $L(\bgamma)< 4 \radN$, we can apply Reshetnyak's majorization theorem \cite{Reshetnyak1968K} (see also \cite{alexander2019alexandrov}).
	We first observe that since $\bp_1, \bp_2, \bq_1, \bq_2 \in B_{\N}(\bp_0,r)$, by convexity of $B_{\N}(\bp_0,r)$, the whole curve $\bgamma$ is contained in $B_{\N}(\bp_0,r)$. Geodesic balls of
radius less than $\min\{{\rm inj}_N,\frac{\pi}{2\sqrt{K_\N}}\}$ are ${\rm CAT}(K_\N)$ spaces (see \cite[Propostion 6.4.6]{buser1981gromov} or \cite[Theorem 4.3]{ALEXANDER199667}). Thus, there is a
convex region $D$ in the sphere of curvature $K_{\N}$ and a length-non-increasing map $F\colon D\to \mathcal N$ such that $F(\partial D)=\bgamma$ which is length preserving on $\partial D$
\cite[9.56]{alexander2019alexandrov}. Therefore, \cite[9.54]{alexander2019alexandrov} any geodesic subarc of $\bgamma$ in $\mathcal N$ is the image of a geodesic subarc of $\partial D$ in $\S^2$. This
implies that
\begin{align*}
		{\rm dist}_{\N}(\hat\bp,\bgamma_{\bp_2}^{\bq_2}) & \leq {\rm dist}_{\S^2}(F^{-1}(\hat\bp),\bgamma_{F^{-1}(\bp_2)}^{F^{-1}(\bq_2)})
\\
&  \stackrel{(\ref{what-sn})}\leq C({\rm dist}_{\S^2}({F^{-1}(\bp_1)},{F^{-1}(\bp_2)})+{\rm dist}_{\S^2}({F^{-1}(\bq_1)},{F^{-1}(\bq_2)}))
\\
& =C({\rm dist}_{\N}({\bp_1},{\bp_2})+{\rm dist}_{\N}({\bq_1},{\bq_2})) \quad\forall \hat\bp\in\bgamma_{\bp_1}^{\bq_1}.
	\end{align*}
\end{proof}

\bibliographystyle{twojstyl}
\bibliography{./aniso}
\end{document}